\newcommand{\Sets}{\mathbf{Sets}}
\newcommand{\sSets}{\mathbf{sSets}}
\newcommand{\dSets}{\mathbf{dSets}}
\newcommand{\odSets}{\mathbf{odSets}}
\newcommand{\cdSets}{\mathbf{cdSets}}
\newcommand{\dSpaces}{\mathbf{dSpaces}}
\newcommand{\odSpaces}{\mathbf{odSpaces}}
\newcommand{\cdSpaces}{\mathbf{cdSpaces}}
\newcommand{\Cat}{\mathbf{Categories}}
\newcommand{\Oper}{\mathbf{Operads}}
\newcommand{\oOper}{\mathbf{oOperads}}
\newcommand{\cOper}{\mathbf{cOperads}}
\DeclareMathOperator{\Int}{Int}
\DeclareMathOperator{\Aut}{Aut}
\DeclareMathOperator{\cSpine}{cSpine}
\DeclareMathOperator{\hhom}{\underline{hom}}
\DeclareMathOperator{\Hom}{Hom}
\DeclareMathOperator{\HHom}{\underline{Hom}}
\theoremstyle{plain}
\newtheorem{theorem}{Theorem}[section]
\newtheorem{lemma}[theorem]{Lemma}
\newtheorem{proposition}[theorem]{Proposition}
\newtheorem{corollary}[theorem]{Corollary}
\newtheorem*{proposition*}{Proposition}
\newtheorem*{corollary*}{Corollary}
\theoremstyle{definition}
\newtheorem{definition}[theorem]{Definition}
\newtheorem{example}[theorem]{Example}
\newtheorem{remark}[theorem]{Remark}
\newcommand{\colim@}[2]{%
	\vtop{\m@th\ialign{##\cr
			\hfil$#1\operator@font colim$\hfil\cr
			\noalign{\nointerlineskip\kern1.5\ex@}#2\cr
			\noalign{\nointerlineskip\kern-\ex@}\cr}}%
}
\newcommand{\colim}{%
	\mathop{\mathpalette\colim@{\rightarrowfill@\scriptscriptstyle}}\nmlimits@
}
\renewcommand{\varprojlim}{%
	\mathop{\mathpalette\varlim@{\leftarrowfill@\scriptscriptstyle}}\nmlimits@
}
\renewcommand{\varinjlim}{%
	\mathop{\mathpalette\varlim@{\rightarrowfill@\scriptscriptstyle}}\nmlimits@
}
\DeclareMathAlphabet{\mathpzc}{OT1}{pzc}{m}{it}
\begin{document}
	
	\title{Closed dendroidal sets and unital operads}
		
	\author[Ieke Moerdijk]{Ieke Moerdijk}
	\address{Utrecht Universiteit, The Netherlands}
	\email{i.moerdijk@uu.nl}

	\date{}
	
	\begin{abstract}
		We discuss a variant of the category of dendroidal sets, the so-called closed dendroidal sets which are indexed by trees without leaves. This category carries a Quillen model structure which behaves better than the one on general dendroidal sets, mainly because it satisfies the pushout-product property, hence induces a symmetric monoidal structure on its homotopy category. We also study complete Segal style model structures on closed dendroidal spaces, and various Quillen adjunctions to model categories on (all) dendroidal sets or spaces. As a consequence, we deduce a Quillen equivalence from closed dendroidal sets to the category of unital simplicial operads, as well as to that of simplicial operads which are unital up-to-homotopy. The proofs exhibit several new combinatorial features of categories of closed trees.
	\end{abstract}
	
	\maketitle

\section*{Introduction}

A simplicial or topological operad $P$ is called \emph{unital} \cite{may} if its space $P(0)$  of nullary operations or ``constants''  consists of a single point. More generally, one says that a coloured operad  is unital if this condition is satisfied for each colour. Many operads arising in algebraic topology, such as the various models for $\textbf{E}_n$ operads \cite{may, boardmanvogt} and their variations and extensions \cite{salv2, salv1} have this property. The goal of this paper is to describe a combinatorial model for these unital coloured operads by means of dendroidal sets.

Recall that the category $\dSets$ of dendroidal sets is the category of presheaves of sets on a category $\Omega$ of trees. This category is an extension of that of simplicial sets, which are presheaves of sets on the subcategory  $\Delta$ of $\Omega$ consisting of linear trees. Among the main results of \cite{cisinskimoerdijk1, cisinskimoerdijk2, cisinskimoerdijk3}, we recall that there is a so-called ``operadic'' Quillen model structure on this category $\dSets$, which is Quillen equivalent to a Dwyer-Kan style model structure on simplicial coloured operads in which the weak equivalences are the  morphisms of operads which are essentially surjective and fully faithful in the appropriate homotopical sense. The fibrant objects in this model structure are the operads all of whose spaces of operations are fibrant (i.e., Kan complexes), and for this reason we also refer to this model structure as the projective one. The  Quillen equivalence is given by an explicit Quillen pair, obtained by Kan extension of the functor from $\Omega$ to simplicial operads which views a tree as an operad and then takes its Boardman-Vogt resolution;  see loc.  cit. for details.

When a tree is viewed as a coloured operad, this operad is unital iff the tree has no leaves, i.e.  is closed. Therefore, it is natural to try and model unital operads by  ``closed" dendroidal sets, presheaves of the category of closed trees. This is indeed possible, and we will establish a theorem of the following form (see Theorems \ref{teo11-4}  and \ref{teo13-1} for a precise formulation).\\

\noindent \textbf{Theorem.} \emph{There are Quillen model structures on the categories of closed dendroidal sets and of unital simplicial coloured operads, respectively,  between which a suitable adaptation of the Boardman-Vogt resolution induces a Quillen equivalence.}

Motivated by this theorem, and because unitality of an operad has little to do with the units of the operad, we will also refer to unital operads as ``closed operads". These are to be contrasted with ``open operads" for which the space(s) of constants, one for each colour, are all empty. It is an easy consequence of the results of \cite{cisinskimoerdijk1, cisinskimoerdijk2, cisinskimoerdijk3} that the Quillen equivalence between all dendroidal sets and all simplicial operads restricts to one between open dendroidal sets (presheaves on open trees, trees without stumps) and open simplicial operads.

The theorem stated above  has been expected to hold since the start of the development of the theory of dendroidal sets (at least by the author of this paper). However,  its proof is more  involved than one would perhaps expect. Indeed, unlike the case of open dendroidal sets, the model structure on dendroidal sets does not restrict in an obvious way to closed dendroidal sets. Instead, it seems one has to construct a new model structure from scratch. The fibrant objects in this model category deserve the name ``unital infinity operads" .This model structure on closed dendroidal sets does have one important feature that it shares with open dendroidal sets (cf. \cite{HHM}) but which fails for general dendroidal sets; namely, it satisfies the pushout-product property for cofibrations and trivial cofibrations, hence induces a well-defined symmetric monoidal structure on its homotopy category (Corollary \ref{cor9-4} below).  For this model category of closed dendroidal sets, we will also describe a  Quillen equivalent  dendroidal complete Segal  version, a localization of either the projective or the Reedy model structure on closed dendroidal spaces, i.e. simplicial presheaves on the category of closed trees. These closed dendroidal spaces play a role, for example, in the study by Boavida and Weiss of mapping spaces between operads, cf. \cite{boavidaweiss}.

For this model structure on closed dendroidal sets, the Boardman-Vogt resolution is not left Quillen, nor does  it produce unital or closed operads. (It yields operads which are weakly unital, in the sense of having weakly contractible spaces of constants.) For this reason, we both have to modify the Boardman-Vogt resolution, and have to change the ``projective" model category structure on closed operads to a Reedy type model structure with more cofibrations. Once these model structures and the Quillen pair induced by a modified Boardman-Vogt resolution have been put in place, it is possible to deduce the fact that this Quillen pair is in fact a Quillen equivalence from the corresponding statement for open operads and open dendroidal sets, by passing through the various complete Segal space versions of these model categories.

Using these complete Segal model categories, it is also possible to show that the category of closed dendroidal sets is equivalent to a left Bousfield localization of the category of all dendroidal sets, see Corollary \ref{cor14-3} below for a precise formulation. This has as a consequence that the homotopy category of unital operads is equivalent to that of weakly unital operads, a fact that has been suspected to be true but for which I am unaware of a proof in the literature.\\

The plan of this paper, then, is as follows. In the first section, we review the definitions of the categories of coloured simplicial operads and their closed and open variants, the model structures these carry,  and the adjoint Quillen functors that exist between these. In Section \ref{sect3},  we introduce the model category structures on open and  closed operads with a fixed set of colours. For closed operads, these are the projective and Reedy style model structures, which can be viewed as induced by transfer from the standard model category structure on collections (i.e., simplicial presheaves on the category of finite sets and bijections),  and the generalized Reedy model structure \cite{bergermoerdijkReedy} on the category of simplicial presheaves on the category of finite sets and injections, respectively. The first model structure was already considered by \cite{caviglia} in the more general context of operads in a monoidal model category, and the second was already considered by \cite{fresse} in the more restrictive monochromatic case (operads with a single colour). In a third section, we extend these model structures to model structures on operads with arbitrary sets of colours and morphisms that can change colours, in which the Dwyer-Kan style weak equivalences only need to be surjective on colours up to equivalence

In Sections \ref{sect5}-\ref{sect9}, we discuss the category of closed dendroidal sets and its model structure. Compared to \cite{cisinskimoerdijk1, cisinskimoerdijk2, cisinskimoerdijk3}, the set of inner horns and its saturation of inner anodyne morphisms will have to be replaced by the set of ``very inner horns'' the saturation of which we will refer to as very inner anodyne (``via") morphisms. This model category has all the properties of a \emph{monoidal} model category, except for the fact that the monoidal structure is only associative up to via morphisms, see Theorem \ref{teo9-2} for a precise formulation.

In Section \ref{sect10}, we will compare this model structure on closed dendroidal sets to model structures on open dendroidal sets and arbitrary dendroidal sets through various Quillen pairs. These Quillen pairs will all play a role in the proof of the main theorem. The proofs in Sections \ref{sect5}-\ref{sect10} involve several rather delicate combinatorial arguments for closed trees which haven't occurred in the literature before; cf. Lemmas \ref{lem7-2} and \ref{lemma10-5}, for an example.

Towards the proof of the main theorem, we will introduce a modified Boardman-Vogt resolution in Section \ref{sect11} and prove that it yields a Quillen adjunction between closed dendroidal sets and unital or closed operads. Section \ref{sect12} then describes dendroidal complete Segal space versions of the model strcutures on (closed, open and arbitrary) dendroidal sets, and some special properties of the Quillen pairs of Section \ref{sect10}. With all these preparations out of the way, we can then prove our main theorem in Section \ref{sect13}.

In a final Section \ref{sect14}, we prove that the model category of closed dendroidal sets can be viewed as a localization of that of all dendroidal sets, and prove the equivalence to weakly unital operads.

For most of this paper, we expect the reader to be familiar with the theory of Quillen model structures and their left Bousfield localizations \cite{quillenHA, hirschhorn, hovey}, as well as with the basic theory of dendroidal sets. The most accessible source for the latter is probably the first part of the book \cite{HMbook}.

\textbf{Acknowledgments.} This paper obviously builds on earlier and ongoing 
\cite{HMbook} work and helpful discussions with Denis-Charles Cisinski, Gijs 
Heuts and Vladimir Hinich. In addition, I am grateful to Matija Ba\v si\' c for his careful reading of the manuscript.

\tableofcontents

\section{Unital simplicial operads}  \label{section2}

In this paper, the term \emph{operad} will always mean \emph{symmetric coloured simplicial operad}. For such an operad $P$, we write $P=(P,C)$, where $C$ is the set of ``colours'' or ``objects'' of the operad. For a sequence $c_1,\ldots, c_n, c$ of colours, the corresponding simplicial set of operations is denoted $P(c_1, \ldots, c_n; c)$. We refer to \cite{bergermoerdijk2} for a detailed definition and examples. An operad P is called \emph{open} if $P(c_1, \ldots, c_n; c)=\emptyset$ whenever $n=0$, and \emph{closed} or \emph{unital} if $P(c_1, \ldots, c_n; c)=\Delta[0]=\mathrm{pt}$ whenever $n=0$. We will refer to the unique element of $P(-; c)$ as \emph{the constant} of colour $c$. In the latter case, we will denote the category of coloured operads and its full subcategories of open and closed ones by 
\begin{equation} \label{eq1}
\Oper, \quad \oOper, \quad \cOper,
\end{equation}
respectively. For each of them, and for a fixed set $C$ of colours, the subcategories of operads with colours $C$ are denoted
\begin{equation} \label{eq2}
\Oper_C, \quad \oOper_C, \quad \cOper_C,
\end{equation}
respectively. The morphism in these subcategories are the identities on the colours, so these subcategories are not full. 
The categories in (\ref{eq1}) are related by several adjoint functors. The ones most relevant for us are the functors which we denote by 
\begin{equation} \label{eq3}
\xymatrix@C=40pt{
	\oOper \ar@/^1pc/[r]^{g_!} \ar@/_1pc/[r]_{g_*} & \cOper  \ar[l]_{g^*} 
}
\end{equation}
For a closed operad $Q$, the operad $g^*(Q)$ is defined by simply forgetting the nullary operations, i.e. the constants. Its left adjoint $g_!$ maps an open operad $P$ to its `closure' $g_!(P)$ which can most easily be defined in terms of the Boardman-Vogt tensor product (\cite{boardmanvogt}), as 
\begin{equation} \label{eq4}
g_!(P)=P\otimes \overline{\eta}.
\end{equation}
Here $\overline{\eta}$ is the closed operad with one colour, and the identity as only non-nullary operation. Another way to describe this closure is as the operad with the same set $C$ of colours as $P$, and operations defined by the formula
\begin{equation} \label{eq5}
g_!(P)(c_1,\ldots, c_n;c) = \varinjlim_A P(c_1, \ldots, c_n, A; c)  
\end{equation}
where $A$ ranges over finite sequences of colours, while the morphisms in the colimit diagram are the morphisms in the prop $P^\otimes$ associated to $P$. So, if $q=(q_b)_{b\in B} \colon A \to~B$ is such a morphism in $P^\otimes$ and $p\in P(c_1, \ldots, c_n, B; c)$ then in $g_!(P)(c_1, \ldots, c_n; c)$ the element $p$ is identified with $p\circ_B q \in P(c_1, \ldots, c_n, A; c)$. Note that for a fixed colour $c$ and $n=0$, the colimit diagram has a terminal object given by $A=\{c\}$ and the identity $1_c\in P(c;c) = P(A;c)$. So $g_!(P)$ as described by (\ref{eq5}) is indeed closed. If is straightforward to give an explicit description of the composition operations of the operad $g_!(P)$ in terms of (\ref{eq5}), which we will leave to the reader. 

The restriction functor $g^*$ from closed to open operads also has a right adjoint $g_*$. For an open operad $P$, this operad $g_*(P)$ has the same set $C$ of colours of $P$, while the operations are defined by 
\begin{equation} \label{eq6}
g_*(P)(c_1,\ldots, c_n; c) = \prod_I P(c_I; c). 
\end{equation}
Here the product ranges over all \emph{non-empty} subsets $I\subseteq \{1,\ldots, n\}$ and $c_I$ denotes the corresponding subsequence $(c_i\colon i\in I)$ of $(c_1, \ldots, c_n)$. This is a product over the empty set if $n=0$, so $g_*(P)$ is indeed closed. The operadic composition in $g_*(P)$ is defined as follows. Suppose $p=(p_I)\in g_*(P)(c_1, \ldots, c_n; c)$, and $q^{(i)} \in g_*(P)(d_1^{(i)},\ldots, d_{k_i}^{(i)}; c_i )$ for $i=1, \ldots, n$. Then the composition $p(q^{(1)}, \ldots, q^{(n)}) \in g_*(P) (d_1^{(1)}, \ldots, d_{k_n}^{(n)}; c)$ has coordinates for non-empty subsets $J=J_1+\ldots+J_n$ where $J_i\subseteq \{1,\ldots, k_i\}$, defined in terms of the composition operation of $P$ by 
\begin{equation} \label{eq7}
p(q^{(1)}, \ldots, q^{(n)})_J=p_I \circ (q_{J_1}^{(1)}, \ldots, q_{J_n}^{(n)})
\end{equation}
where $I=\{i | J_i \neq \emptyset\}$ and $(q_{J_1}^{(1)}, \ldots, q_{J_n}^{(n)})$ in (\ref{eq7}) only contains those operations $q_{J_i}^{(i)}$ for which $J_i\neq \emptyset$. 

We will need the explicit description of the units and counits of these adjunctions. The unit $\alpha\colon Q \to g^* g_! (Q)$ is given by the evident maps 
\[
Q(c_1, \ldots, c_n; c) \to \varinjlim_A Q(c_1, \ldots, c_n, A; c) 
\]
given by the vertex for $A=\emptyset$ in the colimit diagram.  The counit $ \beta\colon  g_!g^* (Q) \to Q$ if this adjunction maps an operation in $Q(c_1,\ldots, c_n, A; c)$ to the one in $Q(c_1, \ldots, c_n;c)$ given by substitution of the constants of colours in $A$. For the other adjunction, the unit  $\alpha'\colon Q \to g_* g^* (Q)$ has components $Q(c_1, \ldots, c_n; c) \to Q(c_I; c)$ for $\emptyset \neq I \subseteq \{1,\ldots, n\}$ given by substitution of the constants of colours $c_j$ with $j\not\in I$. The counit of this adjunction 
\[ \beta'\colon g^*g_* (Q) \to Q \]
is simply given by the projection maps 
\[ 
  g^*g_* (Q) (c_1, \ldots, c_n; c) = \prod_I Q(c_I; c)  \to Q(c_1,\ldots, c_n; c), 
\]
onto the factor $I=\{1,\ldots, n\}$ for $n>0$. (For $n=0$, it is the unique map from a point to a point.)
Observe that the same projection map also defines a retraction $\rho$, 
\[
\xymatrix@C=40pt{
	Q(c_1, \ldots, c_n; c) \ar@<.5ex>[r]_{\alpha'} & g_*g^* Q(c_1, \ldots, c_n; c) \ar@<.5ex>[l]_\rho} 
\]
which one can easily check to be a map of operads. We summarize the discussion as follows. 

\begin{theorem}
	The restriction functor $g^*\colon \cOper \to \oOper$, from closed simplicial operads to open ones, admits both a left adjoin $g_!$ and a right adjoint $g_*$ (described explicitly in (\ref{eq5}) and (\ref{eq6}) above). Moreover, for each closed operad $Q$ the unit of the adjunction $\alpha'\colon Q \to g_*g^*  (Q)$ admits a retraction $\rho$. 
\end{theorem}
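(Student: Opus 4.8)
The statement collects the constructions carried out above, so proving it amounts to checking that the explicitly described functors and natural transformations have the asserted properties. The plan is to verify the triangle identities for each of the two candidate adjunctions, using the units and counits already written down; since a pair of functors equipped with natural transformations satisfying the triangle identities is automatically an adjoint pair, nothing further is needed beyond those identities, together with the routine bookkeeping that $\alpha,\beta,\alpha',\beta'$ are morphisms of operads, natural in their argument. The only genuine work is hidden in two preliminaries — that formula (\ref{eq7}) really endows $g_*(P)$ with a symmetric operad structure, and that the colimit (\ref{eq5}) is compatible with operadic composition so that $g_!(P)$ is an operad and $\alpha,\beta$ are operad maps. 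Both are somewhat lengthy diagram chases of the kind the text has already left to the reader, and I expect them to be the main, though not conceptually hard, obstacle.

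For $g_!\dashv g^*$, I would argue as follows. Given an open operad $P$ and $p\in P(c_1,\ldots,c_n;c)$, the unit $\alpha$ places $p$ at the vertex $A=\emptyset$ of the colimit (\ref{eq5}), whereupon the counit $\beta$ substitutes the empty family of constants and returns $p$; this is one triangle identity. For the other, given a closed operad $Q$ and $q\in Q(c_1,\ldots,c_n,A;c)$, one substitutes the constants of $Q$ for the colours of $A$ and reinserts the result at the vertex $\emptyset$, and the resulting element of $g_!g^*(Q)(c_1,\ldots,c_n;c)$ equals the class of $q$ precisely because the operation ``substitute the constants of $A$'' is realized by one of the structure maps of the colimit diagram defining $g_!g^*(Q)$. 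One should record separately that $\beta$ is well defined on the colimit at all, i.e.\ that substituting constants in a closed operad commutes with precomposition by the morphisms of the prop $P^\otimes$; this is elementary. (Alternatively, $g_!\dashv g^*$ can be extracted from the description $g_!(P)=P\otimes\overline{\eta}$ of (\ref{eq4}) together with the adjunction defining the internal hom of the Boardman--Vogt tensor product.)

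For $g^*\dashv g_*$, once (\ref{eq7}) is known to define a symmetric operad structure on $g_*(P)$ — associativity, unitality and equivariance all reducing to the corresponding identities in $P$ plus the elementary combinatorics of the block decompositions $J=J_1+\cdots+J_n$ and of which blocks are empty, so that for $n=0$ one gets the empty product and $g_*(P)$ is indeed closed — the two triangle identities for $\alpha'$ and $\beta'$ become immediate, since $\beta'$ projects onto the factor $I=\{1,\ldots,n\}$ while $\alpha'$ places a given operation into that factor, making both relevant composites identities. The final assertion is then read off: $\rho$ is the same projection onto the $I=\{1,\ldots,n\}$ factor, $\rho\circ\alpha'=\mathrm{id}$ is visible from the formula for $\alpha'$ (whose $\{1,\ldots,n\}$-component is the identity on operations), and that $\rho$ is a morphism of operads is a short direct computation with (\ref{eq7}).
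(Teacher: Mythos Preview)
Your proposal is correct and takes essentially the same approach as the paper: the theorem is stated as a summary of the preceding explicit constructions, and the paper gives no separate proof, so verifying the triangle identities for the displayed units and counits (and the retraction identity $\rho\circ\alpha'=\mathrm{id}$) is exactly what is required. Your outline does precisely this, with the appropriate acknowledgment that the well-definedness of the operad structures on $g_!(P)$ and $g_*(P)$ is a routine but necessary check.
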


\begin{remark}
	We observed that the functors in the theorem do not change the colours, hence restrict to adjoint functors 
	\begin{equation*} 
	\xymatrix@C=40pt{
		\oOper_C \ar@/^1pc/[r]^{g_!} \ar@/_1pc/[r]_{g_*} & \cOper_C  \ar[l]_{g^*} 
	}
	\end{equation*}
for some fixed set of colours $C$. The functor $g^*$ is moreover natural in $C$, in the sense that for a map of colours $f\colon D\to C$, the diagram of right adjoints (all obvious restriction functors) 
\[
\xymatrix{
\oOper_C \ar[d]_{f^*} & \cOper_C \ar[d]^{f^*} \ar[l]_{g^*}\\
\oOper_D & \cOper_D \ar[l]_{g^*}
}
\]
commutes, and hence so does the diagram of all the left adjoints $f_!$ and $g_!$, respectively. The same is true with $g^*$ replaced by $g_*$. 
\end{remark}

\section{Quillen model structures for closed operads with fixed colours} 	\label{sect3}

In this section and the next we discuss two types of model structures on closed operads, one of ``transfer type'' and of ``Reedy type''. We begin with the case of a fixed set of colours, and leave that of varying sets of colours to the next section. 

\begin{proposition}
	The category $\cOper_C$ of coloured operads with $C$ as set of colours carries a cofibrantly generated Quillen model structure for which a map $Q\to P$ is a fibration, respectively a weak equivalence, if and only if for each sequence of colours $c_1, \ldots, c_n, c$ the map $Q(c_1,\ldots, c_n; c)\to P(c_1,\ldots, c_n; c)$ is a Kan fibration, respectively a weak equivalence, of simplicial sets. 
\end{proposition}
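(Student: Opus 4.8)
The plan is to produce this model structure by \emph{transfer} along the free--forgetful adjunction from the category of $C$-collections. Write $\mathrm{Coll}_C$ for the category whose objects are families of simplicial sets $M(c_1,\dots,c_n;c)$ with the evident symmetric-group actions, indexed by the sequences of colours of length $n\geq 1$; equivalently, $\mathrm{Coll}_C$ is the category of simplicial presheaves on the groupoid of such signatures and their permutations. Since this index is small and $\sSets$ is cofibrantly generated, $\mathrm{Coll}_C$ is locally finitely presentable and carries the projective model structure, which is cofibrantly generated and in which the weak equivalences and fibrations are exactly the maps that are levelwise weak equivalences, resp.\ Kan fibrations. The forgetful functor $U\colon\cOper_C\to\mathrm{Coll}_C$, recording the $Q(c_1,\dots,c_n;c)$ for $n\geq 1$ (the value at $n=0$ being the point, by closedness), has a left adjoint $F$ --- the free closed operad functor, obtainable as the closure $g_!$ of Section \ref{section2} applied to a free open operad --- and $U$ is monadic (the hypotheses of Beck's theorem, that $U$ reflect isomorphisms and create $U$-split coequalizers, being routine). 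The monad $T=UF$ is finitary, operadic composition and substitution of constants having finite arity, so $\cOper_C$ is locally presentable.

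I would then invoke the standard transfer (Kan recognition) theorem, see e.g.\ \cite{hirschhorn}: as $\mathrm{Coll}_C$ is cofibrantly generated and $\cOper_C$ is locally presentable, so that the small object argument applies to $F$ of the generators, there is a cofibrantly generated model structure on $\cOper_C$, with generating (trivial) cofibrations the images under $F$ of those of $\mathrm{Coll}_C$ --- that is, free closed operads on a boundary, resp.\ horn, inclusion in a single signature --- in which a morphism is a weak equivalence, resp.\ a fibration, iff its image under $U$ is one; since $U$ detects these levelwise, this is precisely the characterisation in the statement. What remains is the acyclicity condition: that $U$ carries every relative cell complex built from the maps $F(\Lambda^k[m]\hookrightarrow\Delta[m])$ to a weak equivalence.

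To verify this I would use the path-object argument, for which two ingredients suffice. First, $\cOper_C$ admits a functorial fibrant replacement: applying $\mathrm{Ex}^\infty$ levelwise to an operad $Q$ yields an operad $\mathrm{Ex}^\infty Q$ (the structure maps transport because $\mathrm{Ex}^\infty$ preserves finite products) which is levelwise Kan, i.e.\ fibrant, and $Q\to\mathrm{Ex}^\infty Q$ is a levelwise weak equivalence. Second, every operad has a functorial path object: a fibrant operad $Q$ has one in the levelwise cotensor $Q^{\Delta[1]}$ --- the map $Q\to Q^{\Delta[1]}$ is a simplicial homotopy equivalence, while $Q^{\Delta[1]}\to Q^{\partial\Delta[1]}=Q\times Q$ is a levelwise Kan fibration since each $Q(c_1,\dots,c_n;c)$ is a Kan complex --- and for general $Q$ one pulls $(\mathrm{Ex}^\infty Q)^{\Delta[1]}$ back along $Q\times Q\to\mathrm{Ex}^\infty Q\times\mathrm{Ex}^\infty Q$, the resulting map to $Q\times Q$ being a fibration and $Q$ mapping into it by a levelwise weak equivalence, by right-properness of $\sSets$ together with the fact that limits of operads are computed levelwise. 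With these in hand the acyclicity condition, and hence the proposition, follows by the usual argument.

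I expect the acyclicity step to be the only real obstacle; the subtlety is that one must juggle levelwise factorisations (which are available in $\sSets$ for each signature) against the operad structure (which a naive levelwise factorisation of the diagonal in $\cOper_C$ need not respect), so the path objects have to be produced by the slightly indirect route above. Everything else --- identification of the generators, cofibrant generation, and left-properness (inherited levelwise from $\sSets$) --- is then formal. (The same proposition is also an instance of the general existence results for model structures on operads valued in a monoidal model category, here $\sSets$ with its Kan--Quillen structure; cf.\ \cite{caviglia}.)
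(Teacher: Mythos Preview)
Your proposal is correct and follows essentially the same strategy as the paper: transfer of the projective model structure along the free--forgetful adjunction. The paper's proof, however, is a single sentence observing that $\cOper_C$ is the category of algebras for a coloured operad in $\sSets$ and invoking \cite{bergermoerdijk2}, Theorem 2.1; you have in effect unpacked the content of that reference, supplying the path-object argument (via $\mathrm{Ex}^\infty$ and the simplicial cotensor) that underlies it. Your closing parenthetical remark citing \cite{caviglia} is close in spirit to what the paper actually does, and would by itself constitute an acceptable proof.

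One small remark: the detour through path objects for \emph{arbitrary} (non-fibrant) operads, via the pullback of $(\mathrm{Ex}^\infty Q)^{\Delta[1]}$, is correct but unnecessary. The standard formulation of the path-object criterion only requires a fibrant replacement together with path objects for \emph{fibrant} objects, and you have already produced both directly.
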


In other words, this model structure is obtained by transfer form the one on simplicial sets, and we refer to it as the \emph{transferred} or the \emph{projective} model structure. 

\begin{proof}
	As the category $\cOper_C$ is itself the category of algebras in simplicial sets for another coloured operad the proposition is a special case of \cite{bergermoerdijk2}, Theorem 2.1. 
\end{proof}

The same category $\cOper_C$ also carries another model structure with the same weak equivalences, more relevant for what follows in this paper. We refer to this model structure as the one \emph{of Reedy type}. Its formulation requires a bit of notation. For a sequence of colours $c_1, \ldots, c_n, c$ in an operad $P$ write 
\[
P^-(c_1, \ldots, c_n; c) = \varprojlim_I P(c_I; c)
\] 
where the limit is taken over proper subsets $I\subseteq \{1, \ldots, n\}$. For two such sets $I\subseteq J$, the maps $P(c_J; c)\to P(c_I; c)$ in the system are induced by the substitution of the constants of the colours $c_j$ for $j\in J - I$. The same substitution induces a map 
\begin{equation} \label{eq3.1}
P(c_1, \ldots, c_n; c) \to P^- (c_1, \ldots, c_n; c). 
\end{equation}
We call a closed operad $P$ with $C$ as set of colours \emph{Reedy fibrant} if each of the maps is a Kan fibration. More generally, we call a map $Q\to P$ between such operads a \emph{Reedy fibration} if for each sequence $c_1, \ldots, c_n; c$ of colours, the map 
\begin{equation} \label{eq3.2}
Q(c_1, \ldots, c_n; c)\to Q^-(c_1, \ldots, c_n; c) \times_{P^-(c_1, \ldots, c_n; c)} P(c_1, \ldots, c_n; c)
\end{equation}
is a Kan fibration. 

\begin{proposition} \label{prop3.2} 
	The category $\cOper_C$ carries a cofibrantly generated model structure, in which the fibrations are the Reedy fibrations and in which a map $Q\to P$ is a weak equivalence if each $Q(c_1, \ldots, c_n; c) \to P(c_1, \ldots, c_n; c)$ is a weak equivalence of simplicial sets. 
\end{proposition}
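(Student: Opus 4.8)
The plan is to obtain this model structure by \emph{transfer} along a free-forgetful adjunction, exactly as for the projective structure of the previous proposition, except that one now starts from a \emph{generalized} Reedy model structure in place of the product model structure on collections. Let $\mathcal{E}$ be the category whose objects are the pairs $(c_1,\dots,c_n;c)$ consisting of a non-empty finite sequence of colours from $C$ together with an output colour $c$, and whose non-invertible morphisms are the ``constant-substitution'' maps $(c_1,\dots,c_n;c)\to(c_{i_1},\dots,c_{i_k};c)$, one for each non-empty proper subsequence, the isomorphisms being the colour-preserving permutations; thus $\mathcal{E}$ is the $C$-coloured analogue of the category of non-empty finite sets and injections. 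It is a generalized Reedy category \cite{bergermoerdijkReedy} in which every non-invertible morphism strictly lowers the degree $n$, so that all latching objects are trivial and the matching object of a diagram $X$ at $(c_1,\dots,c_n;c)$ is exactly $\varprojlim_{\emptyset\ne I\subsetneq\{1,\dots,n\}}X(c_I;c)$, i.e.\ the object $X^-(c_1,\dots,c_n;c)$ of (\ref{eq3.1}) (the omitted term $I=\emptyset$ would be $\Delta[0]$, reflecting the unitality of closed operads). Put $\mathcal{M}=\sSets^{\mathcal{E}}$; by \cite{bergermoerdijkReedy} this is a cofibrantly generated model category whose weak equivalences are the pointwise ones and whose fibrations are the maps $X\to Y$ for which every relative matching map $X_e\to Y_e\times_{M_eY}M_eX$ is a Kan fibration, the model structure on $\Aut(e)$-simplicial sets that intervenes being the one whose weak equivalences and fibrations are detected on underlying simplicial sets. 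The forgetful functor $U\colon\cOper_C\to\mathcal{M}$, which remembers the operations of the operad together with their symmetric-group actions and the constant-substitution maps among them, preserves limits and filtered colimits, so (as $\cOper_C$ is locally presentable) it has a left adjoint $F$, the free closed operad functor, and $\cOper_C$ becomes the category of algebras for the resulting finitary monad on $\mathcal{M}$.

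By the very construction of $\mathcal{M}$, the model structure transferred from it along $F\dashv U$ is precisely the one asserted: a map of $\cOper_C$ is a weak equivalence, resp.\ a fibration, iff its image under $U$ is, i.e.\ iff it is a pointwise weak equivalence, resp.\ a Reedy fibration in the sense of (\ref{eq3.2}). By the standard transfer criterion (cf.\ \cite{bergermoerdijk2}), since $\mathcal{M}$ is cofibrantly generated and $\cOper_C$ is locally presentable, the only thing one has to check is the \emph{acyclicity condition}: for each generating trivial cofibration $j\colon A\to B$ of $\mathcal{M}$ and each map $F(A)\to Q$ into a closed operad, the canonical map $Q\to Q\sqcup_{F(A)}F(B)$ is a pointwise weak equivalence (the case of transfinite composites then follows as usual). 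Since latching objects in $\mathcal{E}$ are trivial, one may take as generating trivial cofibrations of $\mathcal{M}$ the maps $\mathcal{E}(e,-)\times_{\Aut(e)}\bigl(\Aut(e)\cdot(\Lambda^k[m]\to\Delta[m])\bigr)$, that is, the $\Aut(e)$-free horn inclusions placed at an object $e$ of $\mathcal{E}$.

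The heart of the proof is this acyclicity condition, and here the combinatorics peculiar to \emph{closed} operads enters. One would analyse the pushout $Q'=Q\sqcup_{F(A)}F(B)$ by the familiar filtration $Q=Q_0\to Q_1\to Q_2\to\cdots$ of a pushout of free operad maps, with colimit $Q'$, in which $Q_{p-1}\to Q_p$ is itself a pushout along a map built from the $p$-fold pushout-product $j^{\square p}$ (with the symmetric group $\Sigma_p$ permuting the new factors) glued into $Q$ along the appropriate walls---but now with the extra feature, absent for plain operads and for open operads alike, that some of the leaves of the trees indexing this construction may be capped off by constants, which is exactly the information carried by the constant-substitution maps in $\mathcal{M}$. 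Each transition $Q_{p-1}\to Q_p$ would then be a pointwise trivial cofibration: on underlying simplicial sets an iterated pushout-product of the trivial cofibration $j$ is again a trivial cofibration, the $\Sigma_p$- and $\Aut(e)$-actions present no equivariance obstruction because the relevant model structures on equivariant simplicial sets only see the underlying objects, and capping leaves by constants merely imposes further, compatible, degeneracy-type identifications. It would follow that $Q\to Q'$ is a pointwise weak equivalence, which establishes acyclicity and hence the proposition. In the monochromatic case this filtration analysis is the one carried out in \cite{fresse}; the general case is obtained from it by running the same argument with all of the trees occurring in it decorated by colours from $C$.

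I expect the genuine difficulty to be confined entirely to this last step. The formal part of the transfer---the existence of $F$, the local presentability of $\cOper_C$, the cofibrant generation of the generalized Reedy structure on $\mathcal{M}$, and the identification of its matching objects with the objects $X^-$ of (\ref{eq3.1})---is routine. What takes real work is keeping the filtration of $Q'$ and the accompanying pushout-product and symmetric-group bookkeeping under control while simultaneously accounting for the colourings \emph{and} for the constant-substitution structure that separates $\cOper_C$ from $\oOper_C$ and from the category of plain operads; this is exactly the point where the monochromatic argument of \cite{fresse} must be re-run in the coloured setting.
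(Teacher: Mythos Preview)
Your overall strategy---transfer from the generalized Reedy structure on $\mathcal{M}=\sSets^{\mathcal{E}}$, with the acyclicity condition as the crux---is exactly the paper's approach (and, as the introduction notes, the paper views the structure as transferred from the generalized Reedy structure on presheaves on finite sets and injections). The paper does not set up $\mathcal{M}$ abstractly but writes down the generators directly as the maps~(\ref{eq3.3}) and then proves the acyclicity by a self-contained tree argument (Lemma~\ref{lemma2.4}), rather than invoking Fresse's filtration.

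There is, however, a concrete error in your identification of the generating trivial cofibrations of $\mathcal{M}$. From ``latching objects are trivial'' you conclude that the generators are the corepresentable-times-horn maps $\mathcal{E}(e,-)\times\Lambda^k[m]\to\mathcal{E}(e,-)\times\Delta[m]$. But having the RLP against these only says that each $Q(e)\to P(e)$ is a Kan fibration, i.e.\ it detects the \emph{projective} fibrations; transferring them reproduces the projective structure of the previous proposition, not the Reedy one. (Test this on the arrow category $1\to 0$, which is inverse: your generator at $e=1$ tests whether $X_1\to Y_1$ is a fibration, whereas the Reedy condition asks that $X_1\to Y_1\times_{Y_0}X_0$ be one.) For an inverse generalized Reedy category the correct generating trivial cofibrations are the pushout-products
\[
\mathcal{E}(e,-)\times\Lambda^k[m]\ \cup_{\partial\mathcal{E}(e,-)\times\Lambda^k[m]}\ \partial\mathcal{E}(e,-)\times\Delta[m]\ \longrightarrow\ \mathcal{E}(e,-)\times\Delta[m],
\]
where $\partial\mathcal{E}(e,-)\subset\mathcal{E}(e,-)$ is the subfunctor of non-isomorphisms out of $e$; applying $F$ to these gives precisely the paper's maps~(\ref{eq3.3}). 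The acyclicity check you then need is for these corrected generators, and this is what Lemma~\ref{lemma2.4} establishes: the condition~(g) in that tree description (each white input edge has a leaf above it) is exactly the trace of the $\partial$-part of the generator and is what makes the quotient~(\ref{eq3.5}) behave well. Fresse's filtration in the monochromatic case does address the correct generators, so your deferral is not unreasonable, but your written reduction to ``representable times horn'' would, as stated, prove the wrong proposition.
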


\begin{remark}
	Suppose $Q\to P$ is a Reedy fibration. Then for each sequence $c_1, \ldots, c_n, c$ of colours, an easy induction on $I\subseteq \{1,\ldots, n\}$ shows that each of the maps $Q(c_I; c )\to P(c_I; c)$ is a fibration. In particular, each Reedy fibration is a fibration in the transferred model structure. Thus, the identity functor on $\cOper_C$ is a right Quillen equivalence from the Reedy model structure to the transferred one. 
\end{remark}

The proof of Proposition \ref{prop3.2} is quite standard, but requires an explicit description of pushouts of generating trivial cofibrations, and we need some notation for this. First of all, for a simplicial set $X$ let us write 
\[
\overline{C}_n[X]
\]
for the closed operad with $\{0,1,\ldots, n\}$ as set of colours, and characterized by the universal property that for each closed operad $P$ with set $C$ of colours, and for each sequence $c_i$, $i=0,\ldots, n$ of colours, maps of coloured operads $\overline{C}_n[X]\to P$ over the evident map $\varphi \colon \{0,\ldots, n\} \to C$ of colours mapping $i$ to $c_i$ correspond to maps of simplicial sets $X\to P(c_1, \ldots, c_n; c)$. Concretely, $\overline{C}_n[X]$ is the closed operad with spaces of operations $\overline{C}_n[X](i_1, \ldots, i_k; 0)=X$ for any non-empty subset $\{i_1, \ldots, i_k\} \subseteq\{1, \ldots, n\}$, and besides this only identity operations in $\overline{C}_n[X](i,i)$ and unique constants in $\overline{C}_n[X](-, i)$ for $i=0,\ldots, n$. If $I\subseteq \{1,\ldots, n\}$ we also write $\overline{C}_I[X]$ for the similar operad with colours $0$ and $i\in I$ only. (Thus $\overline{C}_I[X]\cong \overline{C}_k[X]$ for $k=|I|$.) If $I\subseteq J \subseteq \{1,\ldots, n\}$ there is an obvious map of coloured operads 
\[
\overline{C}_I[X]\to \overline{C}_J[X]
\]
and we write 
\[
\overline{C}_n^\partial[X]= \varinjlim_I \overline{C}_I[X]
\]
where the colimit ranges over \emph{proper} subsets $I\subseteq \{1,\ldots, n\}$. 

For an arbitrary set $C$ of colours and a sequence $c_1, \ldots, c_n, c$ of elements of $C$, there is a corresponding map $\varphi \colon \{0,\ldots, n\} \to C$ as above, and similar restrictions of $\varphi$ also denoted $\varphi \colon \{0\} \cup I \to C$. We write
\begin{eqnarray*}
\overline{C}_{c_1, \ldots, c_n, c}[X] = \varphi_! \overline{C}_n[X] \\
\overline{C}_{c_I,c}[X] = \varphi_! \overline{C}_I[X] \\
\overline{C}_{c_1, \ldots, c_n, c}^\partial[X] = \varphi_! \overline{C}_n^\partial[X] 
\end{eqnarray*}
for the induced operads with $C$ as set of colours. (These operads are considerably larger when the colour $c$ occurs among the $c_i$ as well since there are generated composition of operations.) Then for a map $Q\to P$ in $\cOper_C$, the map (\ref{eq3.2}) has the RLP with respect to an inclusion $X\rightarrowtail Y$ of simplicial sets if and only if $Q\to P$ has the RLP with respect to 
\begin{equation}\label{eq3.3}
\overline{C}_{c_1, \ldots, c_n, c}[X] \cup_{\overline{C}_{c_1, \ldots, c_n, c}^\partial[X]} \overline{C}_{c_1, \ldots, c_n, c}^\partial[Y] \to \overline{C}_{c_1, \ldots, c_n, c}[Y].
\end{equation}
Thus, these maps of the form (\ref{eq3.3}), where $X\rightarrowtail Y$ is a generating (trivial) cofibrations in $\sSets$, act as generating (trivial) cofibrations for the Reedy model structure on $\cOper_C$. 

\begin{lemma} \label{lemma2.4}
	For a trivial cofibration $X\to Y$ of simplicial sets, the pushout of (\ref{eq3.3}) along a map $\overline{C}_{c_1, \ldots, c_n, c}[X] \cup_{\overline{C}_{c_1, \ldots, c_n, c}^\partial[X]} \overline{C}_{c_1, \ldots, c_n, c}^\partial[Y] \to P$ in $\cOper_C$ is a weak equivalence.
\end{lemma}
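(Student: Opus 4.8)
The plan is to prove this by the standard filtration argument for pushouts of free-type maps of operads, after making the pushout explicit. Write $\vec c=(c_1,\dots,c_n)$ and let $P'\in\cOper_C$ be the pushout in question; thus we are given a map $P\to P'$ together with a map $Y\to P'(\vec c;c)$ extending $X\to P(\vec c;c)\to P'(\vec c;c)$ whose restrictions, obtained by substituting the constants, to the proper sub-arities $(c_I;c)$ agree with the prescribed maps $Y\to P(c_I;c)\to P'(c_I;c)$. Unwinding the universal properties of $\overline{C}_n[-]$, of $\varphi_!$, and of the pushout, one finds that an operation of $P'(d_1,\dots,d_m;d)$ is represented by a tree $T$ with leaves coloured $d_1,\dots,d_m$ and root coloured $d$, together with a labelling of its vertices in which most vertices carry operations of $P$ and the remaining ``special'' vertices have arity $(\vec c;c)$ and carry points of $Y$; two such decorated trees represent the same operation precisely modulo (i) the operadic relations of $P$ (contracting an edge between two $P$-labelled vertices), (ii) replacing a special vertex whose $Y$-label lies in $X$ by the corresponding operation of $P$, and (iii) replacing a special vertex some of whose inputs are capped by constants, together with those caps, by the operation of $P$ prescribed by the map $v_I$ with $I$ the set of its uncapped inputs. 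Getting this right already takes some work, since as soon as the colour $c$ occurs among $c_1,\dots,c_n$ the special vertices may be grafted on top of one another, generating a wealth of new composites.

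Given this description, filter $P'$ by the least number of special vertices needed to represent an operation: set $F_0P'=P$ and, for $m\ge 1$, let $F_mP'\subseteq P'$ be the sub-collection consisting of the operations representable by a decorated tree with at most $m$ special vertices, so that $P'=\colim_m F_mP'$. The key point is that, on each of its simplicial sets of operations, the inclusion $F_{m-1}P'\hookrightarrow F_mP'$ is a pushout in $\sSets$ of a coproduct --- indexed by the isomorphism classes of decorated trees $T$ with exactly $m$ special vertices and the appropriate leaf- and root-colours --- of maps of the form $(\kappa_m\times L_T)/\Aut(T)$. Here $\kappa_m$ is the $m$-fold pushout--product of the monomorphism $X\rightarrowtail Y$ with itself; its codomain is $Y^{\times m}$ and its domain is the sub-simplicial-set of those tuples at least one of whose coordinates lies in $X$ --- which is exactly the part of $F_mP'$ already present in $F_{m-1}P'$, by relation (ii). Next, $L_T$ is the simplicial set of labellings of the non-special vertices of $T$ by operations of $P$ compatible with the grafting structure of $T$, formed subject to relation (i). Finally $\Aut(T)$ acts on $Y^{\times m}$ by permuting isomorphic special branches of $T$ and by permuting equal-coloured inputs of the special vertices, and on $L_T$ compatibly.

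Granting this, the conclusion uses only homotopy-theoretic properties of $\sSets$. Since $X\rightarrowtail Y$ is a trivial cofibration and $\sSets$ satisfies the pushout--product property, the iterated pushout--product $\kappa_m$ is again a trivial cofibration, and it stays one after $-\times L_T$; passing to the quotient by the finite group $\Aut(T)$ preserves monomorphisms and preserves weak equivalences, by the admissibility of symmetric powers in $\sSets$ --- the same input that underlies the transferred model structure on $\cOper_C$ (cf.\ \cite{bergermoerdijk2}). Hence each map $(\kappa_m\times L_T)/\Aut(T)$, and therefore each inclusion $F_{m-1}P'\hookrightarrow F_mP'$, is a trivial cofibration on every simplicial set of operations; consequently so is the sequential composition $P=F_0P'\hookrightarrow F_1P'\hookrightarrow\cdots$ with colimit $P'$. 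In particular $P\to P'$ is a weak equivalence.

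The step I expect to be the main obstacle is the first one: pinning down exactly which decorated trees represent a given operation of $P'$ --- relations (i)--(iii), and above all the identifications among composites that arise once $c$ is one of the input colours $c_1,\dots,c_n$ --- and then checking that the latching object of the filtration is the one described above, in particular the action of $\Aut(T)$ on the $Y$-labels. Once this combinatorics is in place, the homotopical content is no more than the pushout--product property of $\sSets$ together with the standard admissibility of its symmetric powers.
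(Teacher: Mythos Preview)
Your filtration-and-pushout-product argument is correct in outline and would go through, but the paper takes a shorter route that sidesteps both the filtration and the appeal to admissibility of symmetric powers. The key difference is in the choice of tree representatives. Rather than allowing arbitrary decorated trees and then filtering by the number of special vertices, the paper imposes from the start a canonical form on the trees: every internal edge must touch at least one white (special) vertex, the black vertices carry \emph{non-identity} operations of $P$, and --- crucially --- above each input edge of a white vertex there must be at least one leaf. The last condition is exactly what kills your relation (iii): if some inputs of a special vertex were capped by constants, that special vertex would already represent an operation of $P$ via the boundary map, so such trees are excluded.

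With these constraints the paper shows that, for each arity $(d_1,\dots,d_k;d)$, the map $P\to Q$ is a \emph{single} pushout
\[
\coprod_T P_T[X]\;\longrightarrow\;\coprod_T P_T[Y]
\]
over isomorphism classes of such canonical trees, where $P_T[Y]=\bigl(\prod_w Y\times\prod_b P(\cdots)\times L\bigr)/\Aut_0(T)$. The crucial observation is that because the leaves are numbered and every white vertex has a leaf above each of its inputs, any automorphism in $\Aut_0(T)$ fixes every white vertex; hence $\Aut_0(T)$ acts trivially on the $\prod_w Y$ factor. So $P_T[X]\to P_T[Y]$ is just $\prod_w X\to\prod_w Y$ crossed with a fixed simplicial set, which is a trivial cofibration directly --- no iterated pushout-product $\kappa_m$ and no quotient by a group acting on the $Y$-coordinates is needed.

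What your approach buys is generality: the filtration argument transfers to other base categories where one genuinely needs the pushout-product axiom and the symmetric-power admissibility. What the paper's approach buys is economy: in this specific situation (closed operads, leaves numbered) the combinatorics of canonical trees collapses the whole filtration to one step and removes the group action on the $Y$-labels entirely. Your acknowledged ``main obstacle'' --- pinning down the correct tree representatives and the $\Aut(T)$-action --- is exactly what the paper's conditions (e), (g), (i) on the trees resolve.
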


\begin{proof}[Proof (of lemma)] 
	Let us consider the category of closed operads with $C$ a set of colours in $\Sets$ rather than $\sSets$ for the moment. For an inclusion of \emph{sets} $X\subseteq Y$, consider the pushout 
	\begin{equation} \label{eq3.4}
	\xymatrix{
	\overline{C}_{\overrightarrow{c}}[X] \cup_{\overline{C}_{\overrightarrow{c}}^\partial[X]} \overline{C}_{\overrightarrow{c}}^\partial[Y] \ar[rr] \ar[d]^f && \overline{C}_{\overrightarrow{c}}[Y] \ar[d] \\ P  \ar[rr] && Q }
	\end{equation}
	where $\overrightarrow{c}$ stands for $c_1, \ldots, c_n, c$. So $Q$ is obtained from $P$ by adjoining for each element $y\in Y$ an operation $y\colon c_1, \ldots, c_n \to c$ to $P$, while identifying this operation with the one already existing in $P$ and given by the map $f$ if $y\in X$, and similarly identifying all the operations $y_I\colon c_I \to c$ given by substitution of constants of colours $c_j$ for $j\in \{1,\ldots, n\} - I $ for proper subsets $I$ of $\{1,\ldots, n\}$. For colours $d_1, \ldots, d_k, d$ in $C $, the operations in $Q(d_1, \ldots, d_k; d)$ can be described explicitly by equivalence classes of planar labelled trees of the following kind (similar to \cite{bergermoerdijk1}, Appendix):
	\begin{enumerate}
		\item[(a)] the edges of the tree are labelled by colours in $C$,
		\item[(b)] the leaves of the tree are numbered $1,\ldots, k$ and the leaf with number $i$ has colour $d_i$, 
		\item[(c)] the root is labelled $d$, 
		\item[(d)] the vertices of the tree are of two possible colours, black and white,
		\item[(e)] each internal edge of the tree is connected to at least one white vertex,
		\item[(f)] the white vertices have input leaves labelled $c_1, \ldots, c_n$ in the given planar order, and output edge $c$, 
		\item[(g)] above each of the $n$ input edges of each white vertex there is at least one leaf, 
		\item[(h)] the white vertices are labelled by elements of $Y$, 
		\item[(i)] the black vertices are labelled by non-identity operations in $P$, compatible with the colours labelling the edges. 
	\end{enumerate}
(See the example given after the proof.)

For such a planar tree $T$, let $\mathrm{Aut}_0(T)$ be the subgroup of (non-planar) automorphisms of $T$ which respect the labelling, the colours of the vertices and the numbering of the leaves (as given by (a), (b), (c), (d)) and which do \emph{not} change the planar order of the input edges of the white vertices. In other words, as a \emph{set} $\mathrm{Aut}_0(T)$ is a subset of the product over black vertices $v$ of symmetric groups $\Sigma_{\mathrm{In}(v)}$ on the set $\mathrm{In}(v)$ of input edges of $v$. (As a group, it is rather an iterated semidirect product of these, cf. \cite{bergermoerdijk1}.) Then $\mathrm{Aut}_0(T)$ acts on the labelling of the black vertices via the symmetries of the operad $P$, and we write $P_T[Y]$ for the quotient. This quotient is really a quotient of the form 
\begin{equation} \label{eq3.5}
P_T[Y] = \prod_w Y \times \left( \prod_b P(\mathrm{In}(b), \mathrm{Out}(b)) \times L \right) \big/ \mathrm{Aut}_0(T)
\end{equation}
where $L$ stands for the labelling of edges and numbering of leaves, $w$ runs through the white vertices and $b$ through the black ones. 
The map $P(d_1, \ldots, d_k; d) \to Q(d_1, \ldots, d_k; d)$ in the pushout (\ref{eq3.4}) is a pushout of sets of the form 
\begin{equation} \label{eq3.6}
\xymatrix{
\coprod_T P_T[X] \ar[r]^{\tilde{f}} \ar[d] &  P(d_1, \ldots, d_k; d) \ar[d]  \\ \coprod_T P_T[Y] \ar[r]  & Q(d_1, \ldots, d_k; d) }
\end{equation}
where the upper map $\tilde{f}$ is given by $f\colon \overline{C}_{\overrightarrow{c}}[X] \to P$ and composition of the operad $P$ ``along'' the trees $T$. (The sum is over (non-planar) isomorphism classes of trees.) Exactly the same description applies degreewise if $X$ and $Y$ are simplicial sets. But then clearly, if $X\rightarrowtail Y$ is a trivial cofibration then by the form (\ref{eq3.5}) so is the map on the left of (\ref{eq3.6}), and hence so is the map on the right. This shows that $P\to Q$ is a weak equivalence, and the lemma is proved.   
\end{proof}

\begin{example} For $n=2$, $y\in Y$ and $p\in P(a,b; c_2)$ the following two trees are related by an automorphism in $\mathrm{Aut}_0(T)$, and represent the same operation in $Q(b,a,c_1;c)$

\[
\begin{tikzpicture} 
[level distance=10mm, 
every node/.style={fill, circle, minimum size=.12cm, inner sep=0pt}, 
level 1/.style={sibling distance=20mm}, 
level 2/.style={sibling distance=20mm}, 
level 3/.style={sibling distance=10mm}]

\node(anchorR1)[style={color=white}] {} [grow'=up] 
child {node(vertexR1)[draw,fill=none] {} 
	child
	child{node {}
		child
		child
	}
};

\node[style={color=white}, right=5cm of anchorR1] {} [grow'=up] 
child {node(vertexR2)[draw,fill=none] {} 
	child
	child{node {}
		child
		child
	}
};

\tikzstyle{every node}=[]

\node at ($(vertexR1) + (.2,-.5cm)$) {$c$};
\node at ($(vertexR1) + (-.3,0cm)$) {$y$};
\node at ($(vertexR1) + (-1cm,.4cm)$) {$c_1$};
\node at ($(vertexR1) + (-1cm,1.3cm)$) {$3$};
\node at ($(vertexR1) + (1cm,.4cm)$) {$c_2$};
\node at ($(vertexR1) + (1.3cm,1cm)$) {$p$};
\node at ($(vertexR1) + (.5cm,1.5cm)$) {$a$};
\node at ($(vertexR1) + (.5cm,2.4cm)$) {$2$};
\node at ($(vertexR1) + (1.5cm,1.5cm)$) {$b$};
\node at ($(vertexR1) + (1.5cm,2.4cm)$) {$1$};
\node at ($(vertexR1) + (2.5cm,.5cm)$) {$\sim$};

\node at ($(vertexR2) + (.2,-.5cm)$) {$c$};
\node at ($(vertexR2) + (-.3,0cm)$) {$y$};
\node at ($(vertexR2) + (-1cm,.4cm)$) {$c_1$};
\node at ($(vertexR2) + (-1cm,1.3cm)$) {$3$};
\node at ($(vertexR2) + (1cm,.4cm)$) {$c_2$};
\node at ($(vertexR2) + (1.5cm,1cm)$) {$p\cdot \sigma$};
\node at ($(vertexR2) + (.5cm,1.5cm)$) {$b$};
\node at ($(vertexR2) + (.5cm,2.4cm)$) {$1$};
\node at ($(vertexR2) + (1.5cm,1.5cm)$) {$a$};
\node at ($(vertexR2) + (1.5cm,2.4cm)$) {$2$};
\end{tikzpicture} 
\]
This operation could be denoted $(y \circ_2 p) \cdot \tau$ for the appropriate permutation $\tau$ (as pictured in the left hand tree); or by using dummy variables $v_1$, $v_2$, $v_3$ as place holders, $y(v_3, p(v_2, v_1))$. The tree

\[
\begin{tikzpicture} 
[level distance=10mm, 
every node/.style={fill, circle, minimum size=.12cm, inner sep=0pt}, 
level 1/.style={sibling distance=20mm}, 
level 2/.style={sibling distance=20mm}, 
level 3/.style={sibling distance=10mm}]

\node(anchorR1)[style={color=white}] {} [grow'=up] 
child {node(vertexR1)[draw,fill=none] {} 
	child{node {}}
	child{node {}
		child
		child
	}
};

\tikzstyle{every node}=[]

\node at ($(vertexR1) + (.2,-.5cm)$) {$c$};
\node at ($(vertexR1) + (-.3,0cm)$) {$y$};
\node at ($(vertexR1) + (-1cm,.4cm)$) {$c_1$};
\node at ($(vertexR1) + (1cm,.4cm)$) {$c_2$};
\node at ($(vertexR1) + (1.3cm,1cm)$) {$p$};
\node at ($(vertexR1) + (.5cm,1.5cm)$) {$a$};
\node at ($(vertexR1) + (.5cm,2.4cm)$) {$2$};
\node at ($(vertexR1) + (1.5cm,1.5cm)$) {$b$};
\node at ($(vertexR1) + (1.5cm,2.4cm)$) {$1$};

\end{tikzpicture} 
\]
violates condition (f). Indeed, it represents an operation involving the substitution of the constant of colour $c_1$ into $y$, i.e. an operation in $\overline{C}_{\overrightarrow{c}}^\partial[Y]$, so the map $f$ in (\ref{eq3.4}) allows us to identify this operation with one already in $P$. 
\end{example}

\begin{proof}[Proof of Proposition \ref{prop3.2}]
	The proof of this proposition now follows one of several standard patterns. The cofibrations are by definition the maps having the LLP with respect to trivial fibrations, and one observes that the ``generating (trivial) cofibrations'' as described in (\ref{eq3.3}) are indeed (trivial) cofibrations. Referring to the formulation of the axioms as CM1-5 in \cite{quillenRHT}, axioms CM1-3 are now obvious. The factorization of a map into a cofibration followed by a trivial fibration (more precisely, by a map having the RLP with respect to all cofibrations), and the factorization into a trivial cofibration followed by a fibration, is achieved by the small object argument. (It is here that one uses the preceding Lemma.) Finally, one half of the lifting axiom CM4 holds by definition of the cofibrations, while the other half is proved by the standard retract argument. 
\end{proof}

\begin{remark}
	For a map of sets $f\colon D\to C$, the pullback functor 
	\[
	f^* \colon \cOper_C \to \cOper_D
	\]
	clearly preserves weak equivalences, and one easily checks that is also preserves Reedy fibrations. Thus, this functor together with its left adjoint forms a Quillen pair. 
\end{remark}

\section{Quillen model structure on closed operads} \label{sect4}

In this section, we will prove the existence of two Quillen model structures on the category $\cOper$, of all closed simplicial operads without fixing the colours. One of the structures is of projective or transfer type and one is of Reedy type, as for the case of fixed colours discussed above. 

To begin with, recall the Quillen model structure on the category $\Oper$ of all simplicial coloured operads introduce in \cite{cisinskimoerdijk3}. For two such operads $Q=(Q, D)$ and $P=(P,C)$, a morphism $\varphi \colon Q \to P$ is a \emph{fibration} in this model structure iff (i) for each sequence $d_1, \ldots, d_n, d$ of colours of $Q$ the map $Q(d_1, \ldots, d_n; d) \to P(\varphi d_1, \ldots, \varphi d_n; \varphi d)$ is a fibration of simplicial sets (i.e., $\varphi$ is a ``local fibration''); and (ii) the map $\pi_0 j^* Q \to \pi_0 j^* P$ is a fibration in the naive model structure on discrete categories. Here 
\[
j^*\colon \Oper \to \Cat 
\]
is the functor taking a simplicial operad to the simplicial category given by restricting the operad to its unary operations only, and $\pi_0$ takes a simplicial category to a discrete category by taking the set of connected components of each simplicial set of morphisms. A morphism $\varphi \colon Q \to P$ as above is \emph{weak equivalence} in this model structure iff each $Q(d_1, \ldots, d_n; d) \to P(\varphi d_1, \ldots, \varphi d_n; \varphi d)$ is a weak equivalence of simplicial sets (i.e., $\varphi$ is a ``local weak equivalence'', or $\varphi$ is ``fully faithful'') and $\pi_0 j^* Q \to \pi_0 j^* P$ is an essentially surjective functor between discrete categories. (One says that $\varphi\colon Q \to P$ is essentially surjective in this case.)

This model structure is cofibrantly generated: The generating cofibrations are the maps of the following two types 
\begin{itemize}
	\item[(C1)] $\emptyset \to \eta$
	\item[(C2)] $C_n[X] \to C_n[Y]$, for $X\rightarrowtail Y$ a generating cofibration of simplicial sets, and $n\geqslant 0$.
\end{itemize}
Here $\eta$ is the operad with one colour and an identity operation only. For a simplicial set $X$, the operad $C_n[X]$ has colours $0,1,\ldots, n$, a space $X$ of operations $1, \ldots, n \to~0$, and no other operations besides identities. The generating trivial cofibrations are the maps of the following two types: 
\begin{itemize}
	\item[(A1)] $\eta \to H$
	\item[(A2)] $C_n[X]\to C_n[Y]$, for $X\rightarrowtail Y$ a generating trivial cofibration of simplicial sets, and $n\geqslant 0$.
\end{itemize}
Here $H$ is a \emph{countable} simplicial category with two objects $0$ and $1$ which is cofibrant in the Bergner model structure on simplicial categories \cite{bergner}, and in which all the simplicial hom-sets are weakly contractible. (Of course, in (C2) and (A2) we can restrict ourselves to any small set of monomorphisms $X\rightarrowtail Y$ which generate the (trivial) cofibrations; for example, we can ask $X$ and $Y$ to be finite.)

This model structure on the category $\Oper$ restricts to one on the category $\oOper$ of open simplicial operads because the latter category is a slice of the former. Since categories are open operads, the sets of generating (trivial) cofibrations are still described by (C1,2) and (A1,2) except that one asks $n>0$ in (C2) and (A2). The description of fibrations and weak equivalences also applies verbatim. 

\begin{proposition} \label{prop4-1}
	The category $\cOper$ of closed simplicial operads carries a Quillen model structure, uniquely determined by the requirement that the restriction functor $g^* \colon \cOper\to \oOper$ preserves and detects fibrations and weak equivalences. (We refer to this model structure as the \emph{transferred} of \emph{projective} one.) 
\end{proposition}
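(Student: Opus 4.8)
The plan is to obtain this model structure by transfer along the adjunction $g_!\dashv g^*$ of (\ref{eq3}), invoking the standard lifting theorem for cofibrantly generated model categories (e.g.\ \cite{hirschhorn}; or the operadic version \cite{bergermoerdijk2} already used for Proposition \ref{prop3.2}). Concretely, one declares a map $\varphi$ of $\cOper$ to be a weak equivalence, resp.\ a fibration, exactly when $g^*\varphi$ is one in $\oOper$, and takes the images under $g_!$ of the generators (C1), (C2) (resp.\ (A1), (A2)) of the model structure on $\oOper$ as generating cofibrations (resp.\ generating trivial cofibrations) of $\cOper$. The set-theoretic hypotheses of the lifting theorem are met because $\cOper$ is locally presentable — for instance as a reflective subcategory of $\Oper$ — hence bicomplete with all objects small; and $g^*$, carrying the further right adjoint $g_*$, preserves all colimits, a fact used repeatedly below. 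Uniqueness of the model structure is then automatic: the stated condition pins down the fibrations and the weak equivalences, hence also the cofibrations (the maps with the left lifting property against trivial fibrations).

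Modulo the lifting theorem, the only substantial point is the acyclicity condition: $g^*$ must carry every relative $g_!(J)$-cell complex, with $J=\{(\mathrm A1),(\mathrm A2)\}$, to a weak equivalence of $\oOper$. Since $g^*$ preserves colimits and weak equivalences of $\oOper$ are closed under the transfinite composites of monomorphisms (on each space of operations) that occur in cell complexes, it suffices to show, for a single pushout of $g_!(j)$ with $j\colon A\to B$ of type (A1) or (A2) along an arbitrary attaching map $g_!(A)\to P$, that the resulting $g^*(P)\to g^*(Q)$ is a weak equivalence. In both cases $j$ is bijective on colours apart from the single new colour introduced by (A1); hence $g^*(P)\to g^*(Q)$ is injective on colours, the one new colour being isomorphic in $\pi_0 j^*$ to an existing one, so essential surjectivity is automatic and only a local weak equivalence need be checked.

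For $j$ of type (A1), namely $\eta\to H$, the argument is formal: from the colimit formula (\ref{eq5}) one reads off $g^*g_!(\eta)\cong\eta$ and $g^*g_!(H)\cong H$ with the adjunction units isomorphisms (on unary operations the unit picks out the terminal vertex $A=\emptyset$ of the colimit diagram, and $H$, being a category, has no operations of arity $\neq 1$); since $g^*$ preserves pushouts, $g^*(P)\to g^*(Q)$ is then the pushout of the generating trivial cofibration $\eta\to H$ along the map $\eta\to g^*(P)$ selecting the relevant colour, hence a trivial cofibration of $\oOper$. For $j$ of type (A2), namely $C_n[X]\to C_n[Y]$ with $X\rightarrowtail Y$ a generating trivial cofibration of simplicial sets and $n>0$, one reduces to the fixed-colour Lemma \ref{lemma2.4}. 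Identifying $g_!(C_n[X])$ with the operad $\overline{C}_n[X]$ of \S\ref{sect3} and letting $\overrightarrow{c}$ be the colour sequence determined by the attaching map, $P\to Q$ is, up to the evident change of colours, the pushout in $\cOper_{C}$ of $\overline{C}_{\overrightarrow{c}}[X]\to\overline{C}_{\overrightarrow{c}}[Y]$ along a map $\overline{C}_{\overrightarrow{c}}[X]\to P$. Filtering this map by the cardinality of the subset of inputs involved — adjoining first, in increasing order, the operations indexed by proper subsets $I\subsetneq\{1,\dots,n\}$, and finally the top cell indexed by $\{1,\dots,n\}$ — exhibits $P\to Q$ as a finite composite of pushouts of Reedy generating trivial cofibrations of the form (\ref{eq3.3}). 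Each such pushout is a weak equivalence by Lemma \ref{lemma2.4} and a monomorphism on operations by the descriptions (\ref{eq3.5})--(\ref{eq3.6}); hence so is the composite, and $g^*(P)\to g^*(Q)$ is a weak equivalence. This verifies the acyclicity condition, and the lifting theorem then produces the model structure, with $g^*$ preserving and detecting fibrations and weak equivalences by construction.

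The one place I expect genuine difficulty is case (A2); but, as the discussion shows, the real work has already been isolated in Lemma \ref{lemma2.4}, whose labelled-tree calculus is exactly what governs the ``generated'' operations — compositions with substituted constants — that appear when the output colour of the adjoined operation is repeated among its inputs, precisely the situation in which a naive count of operations would fail. Everything else — the reduction to a single pushout, the bookkeeping with colours, the small-object arguments, and the extraction of the model axioms from the lifting theorem — is routine.
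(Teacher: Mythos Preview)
Your overall strategy---transfer along $g_!\dashv g^*$, exploiting that $g^*$ has the further right adjoint $g_*$ and hence preserves colimits---is exactly the paper's. Your treatment of (A1) also agrees with the paper's: $g^*g_!$ is the identity on simplicial categories, so nothing happens.

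The difference is in (A2). You work inside $\cOper_C$, filter $\overline{C}_{\overrightarrow{c}}[X]\to\overline{C}_{\overrightarrow{c}}[Y]$ by the size of $I\subseteq\{1,\dots,n\}$, and invoke Lemma~\ref{lemma2.4} at each stage. This works, but it takes a detour: the filtration claim (that each stage is genuinely a pushout of a map of the form~(\ref{eq3.3}), and that the composite recovers $Q$) requires a small argument about how these particular pushouts of closed operads behave, which you do not supply. The paper instead stays on the open side and simply computes
\[
g^*g_!\,C_n[X]\;=\;\bigvee_{\emptyset\neq I\subseteq\{1,\dots,n\}} C_I[X]
\]
in $\oOper_{\{0,\dots,n\}}$, so that $g^*g_!(C_n[X]\to C_n[Y])$ is a coproduct of generating trivial cofibrations and hence trivially cofibrant. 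Since you already observed that $g^*$ preserves pushouts, this single computation immediately handles every attaching map---no filtration and no appeal to Lemma~\ref{lemma2.4} are needed. In effect you reduced (A1) to computing $g^*g_!$ on the generator but did not notice that the same reduction dispatches (A2) just as cleanly.
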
 

\begin{proof}
	As the formulation of the proposition already makes clear, this model structure is obtained by transfer along the adjunction
	\[
	\xymatrix{
	g_!: \oOper \ar@<.5ex>[r] & \cOper :g^*, \ar@<.5ex>[l]} 
	\]	
	 and we need to check that the conditions for transfer are met. Since $g^*$ also has a right adjoint (Section \ref{section2}, (\ref{eq3})), it suffices to prove that for any generating trivial cofibration $U\rightarrowtail V$ in $\oOper$ (of type (A1) or (A2)), the map 
	\[
	g^* g_! U\to g^*  g_! V
	\]
	is again a trivial cofibration in $\oOper$. The composite functor $g^*g_!$ is the identity on simplicial categories (i.e., simplicial operads with unary operations only), so this is clear for maps of type (A1). Let us consider a ``corolla'' $C_n[X]$ with $n>0$ as in the maps of type (A2). Then $g_!C_n[X]$ is the ``closed'' corolla $\overline{C}_n[X]$ generated by $\overline{C}_n[X](1,\ldots, n; 0) = X$, so has 
	\[
	\overline{C}_n[X](I; 0)=X
	\]
	for any non-empty sequence $I=(i_1, \ldots, i_k)$ of \emph{distinct} numbers $0\leq i_1, \ldots, i_k \leq n$. The only possible compositions in this operad are the substitutions of constants, which give the identity map $\overline{C}_n[X](I; 0) \to \overline{C}_n[X](J; 0)$ for $\emptyset\neq J \subseteq I$. Restricting back to the open part we find that 
	\[ \label{4.1}
	g^*g_! C_n[X] = \bigvee_I C_I[X].
	\]
	Here $C_I[X]\cong C_k[X]$ is the $k$-corolla with root edge $0$ and $I=\{i_1 < \ldots < i_k\}$ as set of leaves, and $\bigvee$ is the coproduct in $\oOper_{\{0,\ldots,n\}}$ ranging over all non-empty subsets $I\subseteq \{1,\ldots,n\}$. (So, strictly speaking, we view $C_I[X]$ in this coproduct as an operad with colours $\{0,\ldots,n\}$ by adding identites for all $j\in \{1,\ldots,n\}-I$.) In particular, the functor $g^*g_!$ maps a generating trivial cofibration $C_n[X]\to C_n[Y]$ to the coproduct 
	\begin{equation} \label{eq4-1}
	\bigvee_I C_I[X] \to \bigvee_I C_I[Y],
	\end{equation}
	which is again a trivial cofibration. (Notice that, for any fixed set $C$ of colours, the inclusion $\Oper_C \hookrightarrow \Oper$ obviously preserves trivial cofibrations.) This shows that the conditions for transfer are fulfilled, and proves Proposition \ref{prop4-1}.
\end{proof}

\begin{remark}
	As for any transferred model structure, the model structure of Proposition \ref{prop4-1} exhibits the adjoint pair $g_!, g^*$ as a Quillen pair; i.e. $g^*$ is a right Quillen functor. The proof above makes it clear that in fact, $g^*$ is also a left Quillen functor. 
\end{remark}

For the Reedy model structure, the situation is slightly more subtle. Let us call a map $\varphi\colon (Q,D) \to (P,C)$ in $\cOper$ a \emph{Reedy fibration} if it is a fibration in the projective model structure of Proposition \ref{prop4-1}, and if moreover $Q\to \varphi^* P$ is a Reedy fibration in $\cOper_D$. In other words, $\varphi\colon Q\to P$ is a Reedy fibration iff it has the RLP with respect to 
\begin{itemize}
	\item[(R1)] $\eta \to H$ (as in (A1))
	\item[(R2)] $\overline{C}_n[X] \cup_{\overline{C}^\partial_n[X]} \overline{C}^\partial_n[Y] \to \overline{C}_n[Y]$ for each $n \geq 0$ and each generating trivial cofibration $X\rightarrowtail Y$ in $\sSets$ (cf. Section \ref{sect3}, (\ref{eq3.3})).
\end{itemize}
Recall that the generating trivial cofibrations in the Reedy model structure on $\cOper_C$ are the maps of the form 
\[
f_!U \rightarrowtail f_!V
\]
for $U\rightarrowtail V$ a map of type (R2) above, and for any function $f\colon \{0,\ldots, n\}\to C$ of colours. As a preparation for the proof of Proposition \ref{prop4-2}, we also observe the following two properties of pushouts in $\cOper$, both easily verified. 

\begin{lemma}
	\begin{enumerate}
		\item Let $U\to V$ be a map in $\cOper_C$ and let $f\colon C\to D$ be a function of sets. Then the diagram 
		\[
		\xymatrix{ U \ar[r] \ar[d] & f_!(U) \ar[d]\\ 
					V \ar[r] & f_!(V)}
		\]
		is a pushout in $\cOper$. 
		\item The inclusion $\cOper_C \to \Oper$ preserves pushouts. 
	\end{enumerate}
\end{lemma}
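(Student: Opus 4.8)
The plan is to prove the two parts by rather different means: (1) is a formal statement about the functor $\cOper\to\Sets$ assigning to an operad its set of colours, while (2) reduces to one concrete combinatorial point, namely that a certain pushout of operads remains closed.

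For (1), I would first recall that restriction $f^{*}\colon\cOper_{D}\to\cOper_{C}$ along $f\colon C\to D$ has a left adjoint $f_{!}$ (used freely already), and observe the tautology that a morphism of operads $U\to P$ whose effect on colours factors as $C\xrightarrow{f}D\xrightarrow{g}E$ is the same datum as a morphism $U\to(gf)^{*}P=f^{*}g^{*}P$ in $\cOper_{C}$. Combining this with $f_{!}\dashv f^{*}$ shows that the canonical map $\iota_{U}\colon U\to f_{!}(U)$ over $f$ (the transpose of the unit $U\to f^{*}f_{!}(U)$) is \emph{cocartesian}: for every operad $P$ with colours $E$, every $g\colon D\to E$ and every morphism $U\to P$ over $gf$, there is a unique morphism $f_{!}(U)\to P$ over $g$ factoring it. In other words $\cOper\to\Sets$ is a Grothendieck opfibration whose reindexing functors are the $f_{!}$. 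I would then note that the square in (1) is exactly the naturality square of these cocartesian lifts along the fibre-morphism $U\to V$ (its right-hand edge is $f_{!}$ applied to $U\to V$, which lies over $\mathrm{id}_{D}$), and conclude with the standard one-paragraph argument that such a naturality square is a pushout in the total category: given a cocone $a\colon f_{!}(U)\to P$, $b\colon V\to P$, comparing colour functions in $a\circ\iota_{U}=b\circ(U\to V)$ forces the colour function of $b$ to factor through $f$, hence $b$ factors uniquely through $\iota_{V}$ by a morphism $c$ over the colour function of $a$, and cocartesianness of $\iota_{U}$ identifies $c\circ f_{!}(U\to V)$ with $a$.

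For (2), I would take a span $B\leftarrow A\to E$ in $\cOper_{C}$ (all maps the identity on colours) and let $F$ be its pushout formed in $\Oper$. The colour-set functor $\Oper\to\Sets$ has a right adjoint (a set $C$ goes to the terminal operad with colours $C$), so it preserves colimits; since the span of colour sets here is $C\xleftarrow{=}C\xrightarrow{=}C$ with pushout $C$, the operad $F$ has colour set $C$ and the cocone legs into it are colour-identities. I claim $F$ is closed; granting this, $F\in\cOper_{C}$, and because a cocone in $\cOper_{C}$ under the span with an arbitrary closed target is nothing but an $\Oper$-cocone of the same shape with colour-identity legs, $F$ is also the pushout of the span in $\cOper_{C}$ — which is precisely the statement that $\cOper_{C}\to\Oper$ preserves the pushout. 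To see $F$ is closed I would invoke the tree model for the operations of a pushout of operads already used in the proof of Lemma~\ref{lemma2.4} (cf.\ \cite{bergermoerdijk1}): an operation of $F$ of arity $(-;d)$ is represented by a finite tree \emph{without leaves}, root coloured $d$, whose vertices are labelled by non-identity operations of $B$ or of $E$, those coming from $A$ being identified. The topmost vertices of such a tree have no input edges, hence are labelled by constants of $B$ or $E$; these are unique since $B,E$ are closed, and they coincide in $F$ since both are the image of the unique constant of the closed operad $A$. A downward induction along the tree — at each vertex all inputs are already the unique constant of their colour, and composing any operation of $B$ or $E$ with constants in all inputs gives the unique constant of the output colour because $B,E$ are closed — then shows the whole tree represents the unique constant of colour $d$, so $F(-;d)$ is a point.

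The only step I expect to require genuine care is the closedness of the pushout in (2): one must be certain that the constants of $B$ and of $E$ really get identified in $F$ (they do, via $A$) and that no new nullary operations are produced, and it is exactly the ``a leafless finite tree is generated, from the top down, by constants'' argument that guarantees this. Everything in (1), and the reduction step in (2), is formal.
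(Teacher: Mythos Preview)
Your proposal is correct. The paper itself offers no proof beyond the remark ``both easily verified'', so there is no alternative approach to compare against; you have simply supplied the details. Your treatment of (1) via the observation that $\cOper\to\Sets$ is a Grothendieck opfibration with cocartesian lifts $U\to f_{!}U$ is clean and exactly the right level of generality, and your argument for (2)---reducing to the claim that the $\Oper$-pushout of a span of closed $C$-operads remains closed, then checking this by the leafless-tree reduction---is sound. The key point, that the constants of $B$ and $E$ coincide in $F$ because both are images of the unique constants of $A$, is precisely what makes the top-down induction go through.
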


\begin{proposition} \label{prop4-2}
	The category $\cOper$ of closed simplicial operads carries a model structure with the same weak equivalences as the one of Proposition \ref{prop4-1}, and with the Reedy fibrations just defined as fibrations. (We refer to this model structure as the Reedy model structure.)
\end{proposition}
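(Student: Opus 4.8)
The plan is to obtain this model structure from the recognition theorem for cofibrantly generated model categories (see e.g.\ \cite{hovey}, Theorem~2.1.19), feeding in the Reedy structures on $\cOper_C$ of Proposition~\ref{prop3.2} together with the transfer argument of Proposition~\ref{prop4-1}. For the class of weak equivalences we take the class $W$ of Proposition~\ref{prop4-1}, which satisfies two-out-of-three and is closed under retracts. For the generating trivial cofibrations we take the set $J$ consisting of the map (R1) --- that is, $\overline{\eta}\to\overline{H}:=g_!(\eta\to H)$ --- and the maps (R2) with $X\rightarrowtail Y$ running over a generating set of trivial cofibrations of $\sSets$; by the very definition of a Reedy fibration, a map of $\cOper$ lies in $\mathrm{inj}(J)$ if and only if it is a Reedy fibration. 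For the generating cofibrations we take $I=g_!(I_{\oOper})\cup\bigl\{\,\overline{C}_n[X]\cup_{\overline{C}_n^\partial[X]}\overline{C}_n^\partial[Y]\to\overline{C}_n[Y]\bigm| n\ge 1,\ X\rightarrowtail Y\text{ a generating cofibration of }\sSets\,\bigr\}$, where $I_{\oOper}$ is the set of generating cofibrations (C1), (C2) of $\oOper$. Since $\cOper$ is the category of algebras in $\sSets$ for a coloured operad, it is locally presentable, hence complete and cocomplete, all objects are small, and both $I$ and $J$ permit the small object argument. It then remains to verify: (i) $\mathrm{cell}(J)\subseteq W\cap\mathrm{cof}(I)$; (ii) $\mathrm{inj}(I)\subseteq W\cap\mathrm{inj}(J)$; and (iii) $W\cap\mathrm{inj}(J)\subseteq\mathrm{inj}(I)$.

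Towards (i), note first that $J\subseteq\mathrm{cof}(I)$, which also yields $\mathrm{inj}(I)\subseteq\mathrm{inj}(J)$. Indeed, the maps (R2) in $J$ are relative $I$-cell complexes: a generating trivial cofibration of $\sSets$ is a relative cell complex built from boundary inclusions, and the Leibniz-type construction $(X\rightarrowtail Y)\mapsto\bigl(\overline{C}_n[X]\cup_{\overline{C}_n^\partial[X]}\overline{C}_n^\partial[Y]\to\overline{C}_n[Y]\bigr)$ --- being assembled from the colimit-preserving functors $\overline{C}_n[-]$ and $\overline{C}_n^\partial[-]$ on $\sSets$ (the first is left adjoint to $P\mapsto P(1,\dots,n;0)$, the second a finite colimit of colimit-preserving functors) --- carries relative cell complexes to relative cell complexes; and $\overline{\eta}\to\overline{H}=g_!(\eta\to H)$ lies in $g_!\bigl(\mathrm{cof}(I_{\oOper})\bigr)\subseteq\mathrm{cof}(g_!I_{\oOper})\subseteq\mathrm{cof}(I)$, because $\eta\to H$ is a cofibration of $\oOper$ and $g_!$ is left Quillen for the projective structures. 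The substantive part of (i) is $\mathrm{cell}(J)\subseteq W$. For this we use that $g^*$, having the right adjoint $g_*$, preserves all colimits, and hence takes a relative $J$-cell complex to a relative $g^*(J)$-cell complex. Now $g^*(\overline{\eta}\to\overline{H})=(\eta\to H)$, and from the identity $g^*\overline{C}_n[Z]=\bigvee_I C_I[Z]$ ($I$ ranging over the non-empty subsets of $\{1,\dots,n\}$) established in the proof of Proposition~\ref{prop4-1}, together with $\overline{C}_n^\partial[Z]=\varinjlim_{I\subsetneq\{1,\dots,n\}}\overline{C}_I[Z]$, one computes that $g^*$ applied to a map (R2) is the coproduct in $\oOper_{\{0,\dots,n\}}$ of the generating trivial cofibration $C_n[X]\to C_n[Y]$ of $\oOper$ with an identity map, hence again a trivial cofibration of $\oOper$. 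Thus $g^*(J)$ consists of trivial cofibrations of $\oOper$; since these are stable under pushout, coproduct and transfinite composition, $g^*$ sends every relative $J$-cell complex to a trivial cofibration, in particular a weak equivalence, of $\oOper$. As $g^*$ detects weak equivalences (Proposition~\ref{prop4-1}), the original map lies in $W$. This proves (i).

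For (ii): if $\varphi\in\mathrm{inj}(I)$, then in particular $\varphi\in\mathrm{inj}(g_!I_{\oOper})$, so by adjunction $g^*\varphi$ has the right lifting property against $I_{\oOper}$, i.e.\ is a trivial fibration of $\oOper$; since $g^*$ detects fibrations and weak equivalences, $\varphi$ is a projective trivial fibration of $\cOper$ and hence lies in $W$, and it lies in $\mathrm{inj}(J)$ by the previous paragraph. For (iii), let $\varphi\colon Q\to P$ be a Reedy fibration lying in $W$. Being a Reedy fibration, $\varphi$ is a projective fibration, hence (being a weak equivalence) a projective trivial fibration; thus it has the right lifting property against $g_!I_{\oOper}$, and in particular (lifting against $\emptyset\to\overline{\eta}$) it is surjective on colours. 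By definition of a Reedy fibration, $Q\to\varphi^*P$ is a Reedy fibration in $\cOper_D$, where $D$ is the set of colours of $Q$; and since $\varphi$ is a local weak equivalence, $Q\to\varphi^*P$ is a weak equivalence of the Reedy model structure of Proposition~\ref{prop3.2}, hence a trivial fibration there, so it has the right lifting property against the generating Reedy cofibrations $f_!\bigl(\overline{C}_n[X]\cup_{\overline{C}_n^\partial[X]}\overline{C}_n^\partial[Y]\to\overline{C}_n[Y]\bigr)$ of $\cOper_D$ (for $f\colon\{0,\dots,n\}\to D$ and $X\rightarrowtail Y$ a boundary inclusion). Using the universal properties of $\overline{C}_n[-]$ and $\overline{C}_n^\partial[-]$ and the two properties of pushouts in $\cOper$ recorded in the Lemma above, a lifting problem for a map (R2) against $\varphi$ determines a colour map $\{0,\dots,n\}\to D$ and is equivalent to a lifting problem against $Q\to\varphi^*P$ of exactly the form just treated; hence $\varphi\in\mathrm{inj}(I)$. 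This proves (iii), and the recognition theorem now produces the asserted cofibrantly generated model structure, whose fibrations are $\mathrm{inj}(J)$, i.e.\ the Reedy fibrations, and whose weak equivalences are $W$.

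The step that demands genuine care is (iii): one must match the right lifting property against the maps (R2) of $\cOper$ --- in which the chosen colours $c_1,\dots,c_n,c$ need not be distinct and the latching operad $\overline{C}_n^\partial[-]$ runs over all proper sub-corollas --- with the fixed-colour Reedy fibration condition \eqref{eq3.2}. This is precisely where the two-part Lemma above and the inductive argument of the Remark following Proposition~\ref{prop3.2} (that a Reedy fibration is levelwise a Kan fibration) are needed. The remaining model-category axioms then follow by the standard pattern used for Proposition~\ref{prop3.2}: CM1--CM3 are immediate, the two factorizations come from the small object argument applied to $I$ and to $J$, one half of CM4 holds by definition of the cofibrations, and the other half is the retract argument.
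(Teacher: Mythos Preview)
Your proof is correct. The overall structure matches the paper's --- both reduce to the standard recognition-theorem pattern, with the one substantive verification being that pushouts (or more generally, relative cell complexes) of the generating trivial cofibrations (R1), (R2) are weak equivalences --- but you handle that key step differently. The paper decomposes a pushout of an (R2) map along a colour map $f\colon\{0,\dots,n\}\to C$ using the two-part Lemma on pushouts in $\cOper$, reducing to a pushout in $\cOper_C$ and then invoking the combinatorial Lemma~\ref{lemma2.4} directly. You instead exploit that $g^*$ has a \emph{right} adjoint $g_*$, hence preserves all colimits, so a relative $J$-cell complex is sent to a relative $g^*(J)$-cell complex in $\oOper$; computing $g^*$ on (R1) and (R2) (using the identification $g^*\overline{C}_n[Z]=\bigvee_I C_I[Z]$ from the proof of Proposition~\ref{prop4-1}) shows $g^*(J)$ consists of trivial cofibrations, and since $g^*$ detects weak equivalences you conclude. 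This is a clean alternative that sidesteps the explicit tree combinatorics of Lemma~\ref{lemma2.4} at this stage --- though you still need that lemma indirectly, via Proposition~\ref{prop3.2}, for your step~(iii). Your proof is also more explicit than the paper's about the generating set $I$ (which the paper only records in the subsequent Remark) and about the verification of (ii) and (iii); note that your $I$ is slightly redundant (the maps $g_!(C_n[X]\to C_n[Y])=\overline{C}_n[X]\to\overline{C}_n[Y]$ already lie in the cofibrant closure of the (R2)-type generators together with $\emptyset\to\overline{\eta}$), but this does no harm since you verify all three conditions for your chosen $I$.
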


\begin{proof}
	Define the ``Reedy'' cofibrations to be the maps having the LLP with respect to all the trivial Reedy fibrations; i.e. Reedy fibrations which are weak equivalences in the model structure of Proposition \ref{prop4-1}. Since these are in particular projective trivial fibrations, any cofibration in the projective model structure is a Reedy cofibration. So any map having the RLP with respect to the Reedy cofibrations is a weak equivalence. The existence of the model structure now follows by standard arguments, once we show that for a generating trivial cofibration $U\to V$ of type (R1) and (R2) and for any map $U\to P$ of closed operads, the pushout map $P\to V\cup_U P$ is a weak equivalence. For maps of type (R1) this was already verified in the proof of Proposition \ref{prop4-1}. For a map $U\to V$ of type (R2), let $f \colon V\to P$ and also write $f\colon \{0,\ldots, n\} \to C$ for the map on colours. Then we can decompose the pushout as 
	\begin{equation} \label{eq4-2}
	\xymatrix{ U \ar[r] \ar[d] & f_!(U) \ar[d] \ar[r] & P \ar[d] \\ 
		V \ar[r] & f_!(V) \ar[r] & V\cup_U P. }
	\end{equation}
	But $f_! U\to f_! V$ is a generating trivial cofibration in $\cOper_C$ (by definition), and these were verified to be weak equivalences in Section \ref{sect3}, Lemma \ref{lemma2.4}. The right hand square in (\ref{eq4-2}) is a pushout since the rectangle and the left hand square are. So it follows by the model structure on $\cOper_C$ that $P\to V\cup_U P$ is a weak equivalence here, hence a fortiori in $\cOper$. This completes the proof.
\end{proof}

\begin{remark}
	One easily checks that the map of type (C1), together with the maps like those in (R2), but where $X\rightarrowtail Y$ is just a generating cofibration, form a set of generating cofibrations for the Reedy model structure of Proposition \ref{prop4-2}. 
\end{remark}

\begin{remark} As any Reedy fibration is a projective fibration, the identity functor is a right Quillen equivalence from the Reedy model structure on $\cOper$ to the projective one. For the same reason, the functor $g^* \colon \cOper\to \oOper$ is still right Quillen for the Reedy model structure on $\cOper$. As for the projective model structure, it is in fact also left Quillen. Indeed, it maps a generating cofibration of the form 
	\[
	\overline{C}_n[X] \cup_{\overline{C}^\partial_n[X]} \overline{C}^\partial_n[Y] \to \overline{C}_n[Y]
	\]
	to the map 
	\[
	\bigvee_I C_I[Y] \cup C_n[X] \to \bigvee_I C_I[Y] \cup C_n[Y]
	\]
	where $I$ ranges over non-empty proper subsets of $\{1,\ldots,n\}$. This map is a cofibration in $\oOper$, in fact a pushout of $C_n[X]\rightarrowtail C_n[Y]$. 
\end{remark}	
	
\section{Open and closed dendroidal sets} \label{sect5}

In this section we review the definition of the categories of open and closed dendroidal sets, and introduce several adjoint pairs of functors between these categories. In the next sections, we will introduce a model structure on closed dendroidal sets and investigate to what extent these adjoint pairs are Quillen pairs. 

We refer to \cite{moerdijkweiss, HMbook} for a definition of the category $\Omega$ of trees indexing the category $\dSets$ of dendroidal sets, i.e. of presheaves on $\Omega$. A tree $T$ in $\Omega$ is called \emph{open} if it has no stumps (vertices without input edges, nullary vertices), and closed if it has no leaves (i.e. no external edges other than the root edge). 

\[
\begin{tikzpicture} 
[level distance=8mm, 
every node/.style={fill, circle, minimum size=.12cm, inner sep=0pt}, 
level 1/.style={sibling distance=10mm}, 
level 2/.style={sibling distance=10mm}, 
level 3/.style={sibling distance=10mm}]

\node(anchorR1)[style={color=white}] {} [grow'=up] 
child {node(vertexR1)[draw] {} 
	child{node {}
		child{node {}}
		child{node {}}
	}
	child{node {}}
	child{node {}}
};

\node[style={color=white}, right=5cm of anchorR1] {} [grow'=up] 
child {node(vertexR2)[draw] {} 
	child{node {}
	child
	child{node {}}
}
child{node {}}
child
};

\node[style={color=white}, right=10cm of anchorR1] {} [grow'=up] 
child {node(vertexR3)[draw] {} 
	child{node {}
		child
		child
	}
	child
	child
};

\tikzstyle{every node}=[]

\node at ($(vertexR1) + (0,-1.5cm)$) {closed tree};

\node at ($(vertexR2) + (0,-1.5cm)$) {arbitrary tree};

\node at ($(vertexR3) + (0,-1.5cm)$) {open tree};

\end{tikzpicture} 
\]

Notice that in the terminology of \cite{HMbook}, every elementary face $S\to T$ of a closed tree $T$ is an \emph{inner face}, with the possible exception of the root face. Moreover, a closed tree has a root face only if its root vertex is unary. We shall write $\Omega_{cl}$ for the full subcategory of $\Omega$ consisting of closed trees and $\Omega_o$ for the one consisting of open trees. Moreover, we shall denote the inclusion functors by $u$ and $o$ respectively: 
\[
\xymatrix{
\Omega_{cl} \ar[r]^{u} & \Omega & \Omega_o \ar[l]_o.
}
\]
The inclusion functor $u$ has a left adjoint $cl$, taking a tree $T$ to its \emph{closure}, denoted $cl(T)$ or $\overline{T}$: this is the tree obtained from $T$ by simply putting a stump on top of each leaf. Moreover, we will write $h=cl \circ o$. These functors all fit into a diagram 
\begin{equation} \label{eq5-1}
\xymatrix{
\Omega_o \ar[r]^o \ar[rd]_h & \Omega \ar@<0.5ex>[d]^{cl} \\ 
							& \Omega_{cl} \ar@<0.5ex>[u]^u
} \qquad h=cl\circ o
\end{equation}
These functors all induce adjoint pairs between the presheaf categories. In general, if $f\colon \mathbb{A}\to \mathbb{B}$ is a functor between small categories, restriction along $f$ defines a functor $f^*$ which has both a left adjoint $f_!$ and a right adjoint $f_*$, 
\begin{equation} \label{eq5-2} 
\xymatrix@C=40pt{
	\Sets^{\mathbb{A}^{op}} \ar@/^1pc/[r]^{f_!} \ar@/_1pc/[r]_{f_*} & \Sets^{\mathbb{B}^{op}} \ar[l]_{f^*} 
}
\end{equation}
The presheaf categories associated to the small categories in (\ref{eq5-1}) are the categories $\dSets$ of \emph{dendroidal sets}, $\odSets$ of \emph{open dendroidal sets} (presheaves on $\Omega_o$), and $\cdSets$ of \emph{closed dendroidal sets} (presheaves on $\Omega_{cl}$). Thus we obtain functors  
\begin{equation} \label{eq5-3} 
\xymatrix@C=40pt{
	\odSets \ar@/^1pc/[r]^{h_!} \ar@/_1pc/[r]_{h_*} & \cdSets \ar[l]_{h^*} 
}
\end{equation}
\begin{equation} \label{eq5-4} 
\xymatrix@C=40pt{
	\odSets \ar@/^1pc/[r]^{o_!} \ar@/_1pc/[r]_{o_*} & \dSets \ar[l]_{o^*} 
}
\end{equation}
\begin{equation} \label{eq5-5} 
\xymatrix@C=40pt{
	\cdSets \ar@/^1pc/[r]^{u_!} \ar@/_1pc/[r]_{u_*} & \dSets \ar[l]_{u^*} 
}
\end{equation}
\begin{equation} \label{eq5-6} 
\xymatrix@C=40pt{
	\dSets \ar@/^1pc/[r]^{{cl}_!} \ar@/_1pc/[r]_{{cl}_*} & \cdSets \ar[l]_{{cl}^*} 
}
\end{equation}
respectively. Moreover, since $cl$ is left adjoint to $u$, also 
\begin{equation} \label{5-7}
 cl^*=u_!, \quad cl*=u^*
\end{equation}
while by definition of $h$ as $cl \circ o$, also 
\begin{equation} \label{5-8}
h_!=cl_!o_!, \quad h^*=cl^*o^*, \quad h_*=cl_*o_*.
\end{equation}
For a closed tree $T$, viewed as a representable presheaf, we will also write 
\[
h^*(T) = \Int(T) = \varinjlim_{S\to T,\, S \text{ open}}S
\]
for the ``interior'' of $T$. (Warning: this interior is not just obtained simply by removing the stumps from $T$. For examples, for the closed $2$-corolla pictured on the left, 
\[
\begin{tikzpicture} 
[level distance=8mm, 
every node/.style={fill, circle, minimum size=.12cm, inner sep=0pt}, 
level 1/.style={sibling distance=10mm}, 
level 2/.style={sibling distance=10mm}, 
level 3/.style={sibling distance=10mm}]

\node(anchorR1)[style={color=white}] {} [grow'=up] 
child {node(vertexR1)[draw] {} 
	child{node {}}
	child{node {}}
};

\node[style={color=white}, right=5cm of anchorR1] {} [grow'=up] 
child {node(vertexR2)[draw] {} 
	child
	child
};

\node[style={color=white}, right=7cm of anchorR1] {} [grow'=up] 
child {node(vertexR3)[draw] {} 
	child
};

\node[style={color=white}, right=9cm of anchorR1] {} [grow'=up] 
child {node(vertexR4)[draw] {} 
	child
};

\tikzstyle{every node}=[]

\node at ($(vertexR1) + (-.5cm,.4cm)$) {$b$};
\node at ($(vertexR1) + (.5cm,.4cm)$) {$c$};
\node at ($(vertexR1) + (.2cm,-.5cm)$) {$a$};

\node at ($(vertexR2) + (1cm,0cm)$) {$\cup$};
\node at ($(vertexR2) + (-.5cm,.4cm)$) {$b$};
\node at ($(vertexR2) + (.5cm,.4cm)$) {$c$};
\node at ($(vertexR2) + (.2cm,-.5cm)$) {$a$};

\node at ($(vertexR3) + (1cm,0cm)$) {$\cup$};
\node at ($(vertexR3) + (.2cm,.4cm)$) {$b$};
\node at ($(vertexR3) + (.2cm,-.5cm)$) {$a$};

\node at ($(vertexR4) + (.2cm,.4cm)$) {$c$};
\node at ($(vertexR4) + (.2cm,-.5cm)$) {$a$};

\end{tikzpicture} 
\]
its interior is the union of the three representable open dendroidal sets on the right (where ``union'' means pushout identifying the edges)).

\begin{remark} \label{remark4.1}
	(1) Let $O$ be the dendroidal set defined by 
		\[
		O_T = \left\{ 
		\begin{array}{ll}\{*\}, & \text{if } T \text{ is open}, \\ \emptyset, & \text{otherwise}.
		\end{array}
		\right. 
		\]
		(This dendroidal set is well-defined because if $S\to T$ is a morphism in $\Omega$ and $T$ is open, then $S$ must be open as well.) Then $\odSets= \dSets / O$, and the functors in (\ref{eq5-4}) correspond to the usual adjoint functors associated with a slice category ($o_*$ corresponds to ``product with $O$'', etc.).
		
	(2) The functors $cl_!$ and $h_!$ behave rather poorly on (normal) monomorphisms, cf. \cite{HMbook}. For example, consider the open tree $S$, 
		\[
		\begin{tikzpicture} 
		[level distance=8mm, 
		every node/.style={fill, circle, minimum size=.12cm, inner sep=0pt}, 
		level 1/.style={sibling distance=10mm}, 
		level 2/.style={sibling distance=10mm}, 
		level 3/.style={sibling distance=10mm}]
		
		\node(anchorR1)[style={color=white}] {} [grow'=up] 
		child {node(vertexR1)[draw] {} 
			child{node {}
				child
			}
			child
		};

		\tikzstyle{every node}=[]
		
		\node at ($(vertexR1) + (-.5cm,.4cm)$) {$b$};
		\node at ($(vertexR1) + (-.7cm,1.2cm)$) {$c$};
		\node at ($(vertexR1) + (.5cm,.4cm)$) {$d$};
		\node at ($(vertexR1) + (.2cm,-.5cm)$) {$a$};
		\node at ($(vertexR1) + (-.2cm,.8cm)$) {$u$};
		\node at ($(vertexR1) + (-1cm,.4cm)$) {$S$:};
		
		\end{tikzpicture} 
		\]
		and $U=\partial_u(S) \cup \partial_b(S)$. Then $U\rightarrowtail S$ is a normal monomorphism. The intersection $V=\partial_u(S) \cap \partial_b(S)$ is a disjoint sum of copies of the unit tree $\eta$, one for $a$ and one for $d$. However, $\overline{\partial_u(S)} \cap \overline{\partial_b(S)}$ is the larger representable
		
			\[
		\begin{tikzpicture} 
		[level distance=8mm, 
		every node/.style={fill, circle, minimum size=.12cm, inner sep=0pt}, 
		level 1/.style={sibling distance=10mm}, 
		level 2/.style={sibling distance=10mm}, 
		level 3/.style={sibling distance=10mm}]
		
		\node(anchorR1)[style={color=white}] {} [grow'=up] 
		child {node(vertexR1)[draw] {} 
			child{node {}}
		};

		\tikzstyle{every node}=[]
		
		\node at ($(vertexR1) + (.2cm,.4cm)$) {$d$};
		\node at ($(vertexR1) + (.2cm,-.5cm)$) {$a$};

		\end{tikzpicture} 
		\]
		strictly larger than $\overline{V}$. Thus, $cl_!(U) \to cl_!(S) = \overline{S}$ is not mono. 
		
		(3) Since the inclusions $o\colon \Omega_o \to \Omega$ and $u\colon \Omega_{cl} \to \Omega$ are fully faithful, so are their left Kan extensions $o_! \colon \odSets\hookrightarrow \dSets$ and $u_! \colon\cdSets\hookrightarrow \dSets$. Therefore, we often identify $\odSets$ and $\cdSets$ with the corresponding full subcategories of $\dSets$, and delete $o_!$ and $u_!$ from the notation whenever this does not lead to confusion. 
\end{remark}

\section{Tensor products and normal monomorphisms} \label{sect6}

Recall that the category $\dSets$ of dendroidal sets carries a tensor product. This tensor product, denoted $X\otimes Y$, preserves colimits in each variable separately, so is determined by what it does on representables. If $X=S$ and $Y=T$ are representables given by trees $S$ and $T$, then 
\[
S \otimes T = \bigcup_{\text{shuffles}} A 
\]
is a union of representables, namely of all shuffles $A\subseteq S\otimes T$. The same is true for an $n$-fold tensor product $S_1\otimes \ldots \otimes S_n$, and there are associativity morphisms 
\[
S_1\ldots \otimes (S_i\otimes \ldots \otimes S_j) \otimes \ldots \otimes S_n \to S_1\otimes \ldots \otimes S_n. 
\]
We refer to \cite{HHM, HMbook} for a detailed discussion, including conditions under which these associativity morphisms are isomorphisms or weak equivalences. 

The closure $cl_!(X)$ of a dendroidal set $X$ can be defined in terms of this tensor product as 
\begin{equation} \label{eq6-1}
cl_! (X)=X\otimes \overline{\eta} 
\end{equation}
where $\overline{\eta}$ is the closed unit tree. 
	\[
\begin{tikzpicture} 
[level distance=8mm, 
every node/.style={fill, circle, minimum size=.12cm, inner sep=0pt}, 
level 1/.style={sibling distance=10mm}, 
level 2/.style={sibling distance=10mm}, 
level 3/.style={sibling distance=10mm}]

\node(anchorR1)[style={color=white}] {} [grow'=up] 
child {node(vertexR1) {} 
};

\tikzstyle{every node}=[]

\node at ($(vertexR1) + (-.5cm,-.4cm)$) {$\eta$:};

\end{tikzpicture} 
\]
(A more precise way of writing (\ref{eq6-1}) is as $u_!cl_! X= X\otimes \overline{\eta}$, cf. Remark \ref{remark4.1}, (3) above.) The full subcategories of open and of closed dendroidal sets are both closed under tensor products, and we will use the same notation $X\otimes Y$ for the tensor products on $\cdSets$ and $\odSets$. (So, more formally, $u_! X \otimes u_! Y = u_! cl_! (u_!X\otimes u_!Y)$, and we will write $cl_!(u_!X \otimes u_!Y)$ simply as $X\otimes Y$. Then $u_!(X\otimes Y)=u_!X \otimes u_!Y$.) Indeed, these properties are easily checked for representables and extend to colimits, so hold generally. 

It follows that the simplicial enrichment of dendroidal sets (see \cite{HHM}) extends to open and to closed dendroidal sets. Writing 
\[
i_! \colon \sSets \hookrightarrow \dSets
\]
for the inclusion as in \cite{moerdijkweiss}, one notices that $i$ factors through $\odSets\hookrightarrow \dSets$, and one can define the simplicial tensor on $\cdSets$ more formally as 
\[
M\otimes X = cl_! i _! (M) \otimes X 
\]
for $M$ a simplicial set and $X$ an object of $\cdSets$. Then the adjoint functors $u_! \colon \cdSets\hookrightarrow \dSets$ and $cl_! \colon \dSets\to \cdSets$ preserve the tensor up to canonical isomorphism: 
\[
u_! (M\otimes X) = M \otimes u_!X \quad \text{and} \quad cl_!(M\otimes Y) = M \otimes cl_!(Y), 
\]
for $M$ a simplicial set, $X$ in $\cdSets$ and $Y$ in $\dSets$. 

Next, we turn to the notion of a \emph{normal monomorphism}. Recall that a monomorphism $X\rightarrowtail Y$ of dendroidal sets is called \emph{normal} if for each tree $T$, the group $\Aut(T)$ acts freely on the complement of $X_T$ inside $Y_T$. The class of normal monomorphisms is saturated, and generated by boundary inclusions $\partial T \rightarrowtail T$ of representables. As a consequence, every $X$ has a ``normalization'' $\tilde{X}$, i.e. a factorization of $\emptyset \to X$ into a normal mono $\emptyset \to \tilde{X}$ (i.e. $\tilde{X}$ is a normal object) and a map $\tilde{X} \to X$ having the RLP with respect to all normal monomorphisms. 

All this extends to open dendroidal sets in the obvious way, because if $X$ is open, then for any map $Y\to X$, the object $Y$ must be open as well. So $\tilde{X}$ is open if $X$ is, and $\partial T$ is open if $T$ is. 

All this also extends to closed dendroidal sets, but some care is needed to distinguish the ``closed boundary'' of a closed tree from its boundary $\partial T$, so we will introduce a separate notation. 

\begin{definition}
	For a closed tree $T$, its \emph{closed boundary} is the largest proper subobject of $T$ in the category $\cdSets$. It is the union of all the inner faces of $T$, together with the root face if the root face exists. It is denoted by 
	\[
	\partial_{cl}(T) \subseteq T. 
	\]
\end{definition}

\begin{example}
	For the tree $T$ pictured below 
	\[
	\begin{tikzpicture} 
	[level distance=8mm, 
	every node/.style={fill, circle, minimum size=.12cm, inner sep=0pt}, 
	level 1/.style={sibling distance=10mm}, 
	level 2/.style={sibling distance=10mm}, 
	level 3/.style={sibling distance=10mm}]
	
	\node(anchorR1)[style={color=white}] {} [grow'=up] 
	child {node(vertexR1)[draw] {} 
		child{node {}
			child{node {}}
			child{node {}}
		}
	};

	\tikzstyle{every node}=[]
	
	\node at ($(vertexR1) + (.2cm,.4cm)$) {$a$};
	\node at ($(vertexR1) + (-.5cm,1.2cm)$) {$b$};
	\node at ($(vertexR1) + (.5cm,1.2cm)$) {$c$};
	\node at ($(vertexR1) + (.2cm,-.5cm)$) {$r$};
	\node at ($(vertexR1) + (-1cm,.4cm)$) {$T$:};
	
	\end{tikzpicture} 
	\] 
	the closed boundary $\partial_{cl} T$ is the union of the root face on the left and the three inner faces on the right: 
	\[
	\begin{tikzpicture} 
	[level distance=8mm, 
	every node/.style={fill, circle, minimum size=.12cm, inner sep=0pt}, 
	level 1/.style={sibling distance=10mm}, 
	level 2/.style={sibling distance=10mm}, 
	level 3/.style={sibling distance=10mm}]
	
	\node(anchorR1)[style={color=white}] {} [grow'=up] 
	child {node(vertexR1)[draw] {} 
		child{node {}}
		child{node {}}
	};
	
	\node[style={color=white}, right=3cm of anchorR1] {} [grow'=up] 
	child {node(vertexR2)[draw] {} 
		child{node {}}
		child{node {}}
	};
	
	\node[style={color=white}, right=6cm of anchorR1] {} [grow'=up] 
	child {node(vertexR3)[draw] {} 
		child{node {}
			child{node {}}
		}
	};
	
	\node[style={color=white}, right=9cm of anchorR1] {} [grow'=up] 
	child {node(vertexR4)[draw] {} 
		child{node {}
			child{node {}}
		}
	};
	
	\tikzstyle{every node}=[]
	
	\node at ($(vertexR1) + (-.5cm,.4cm)$) {$b$};
	\node at ($(vertexR1) + (.5cm,.4cm)$) {$c$};
	\node at ($(vertexR1) + (.2cm,-.5cm)$) {$a$};
	\node at ($(vertexR1) + (0,-1.5cm)$) {$\partial_r T$};
	
	\node at ($(vertexR2) + (-.5cm,.4cm)$) {$b$};
	\node at ($(vertexR2) + (.5cm,.4cm)$) {$c$};
	\node at ($(vertexR2) + (.2cm,-.5cm)$) {$r$};
	\node at ($(vertexR2) + (0,-1.5cm)$) {$\partial_a T$};
	
	\node at ($(vertexR3) + (.2cm,1.2cm)$) {$c$};
	\node at ($(vertexR3) + (.2cm,.4cm)$) {$a$};
	\node at ($(vertexR3) + (.2cm,-.5cm)$) {$r$};
	\node at ($(vertexR3) + (0,-1.5cm)$) {$\partial_b T$};
	
	\node at ($(vertexR4) + (.2cm,1.2cm)$) {$b$};
	\node at ($(vertexR4) + (.2cm,.4cm)$) {$a$};
	\node at ($(vertexR4) + (.2cm,-.5cm)$) {$r$};
	\node at ($(vertexR4) + (0,-1.5cm)$) {$\partial_c T$};
	
	\end{tikzpicture} 
	\]
	The boundary $\partial T$ of $T$ as a dendroidal set is much larger, and also contains
	\[
	\begin{tikzpicture} 
	[level distance=8mm, 
	every node/.style={fill, circle, minimum size=.12cm, inner sep=0pt}, 
	level 1/.style={sibling distance=10mm}, 
	level 2/.style={sibling distance=10mm}, 
	level 3/.style={sibling distance=10mm}]
	
	\node(anchorR1)[style={color=white}] {} [grow'=up] 
	child {node(vertexR1)[draw] {} 
		child{node {}
			child{node {}}
			child
		}
	};
	
	\node[style={color=white}, right=3cm of anchorR1] {} [grow'=up] 
	child {node(vertexR2)[draw] {}
			child{node {}
			child
			child{node {}}
		}
	};
	
	\tikzstyle{every node}=[]
	
	\node at ($(vertexR1) + (.2cm,.4cm)$) {$a$};
	\node at ($(vertexR1) + (-.5cm,1.2cm)$) {$b$};
	\node at ($(vertexR1) + (.5cm,1.2cm)$) {$c$};
	\node at ($(vertexR1) + (.2cm,-.5cm)$) {$r$};
	\node at ($(vertexR1) + (1.5cm,.4cm)$) {and};
	
	\node at ($(vertexR2) + (.2cm,.4cm)$) {$a$};
	\node at ($(vertexR2) + (-.5cm,1.2cm)$) {$b$};
	\node at ($(vertexR2) + (.5cm,1.2cm)$) {$c$};
	\node at ($(vertexR2) + (.2cm,-.5cm)$) {$r$};
	
	\end{tikzpicture} 
	\] 
	In other words, for a closed tree $T$, the inclusion $\partial_{cl} T \rightarrowtail \partial T$ is a proper monomorphism, which we could also write as 
	\[
	u_!(\partial_{cl}T) \rightarrowtail \partial(u_!T).
	\]
	Note also that the closure of the boundary is larger than the closed boundary: 
	\[\partial_{cl}(T) \rightarrowtail cl_!(\partial u_! T ) \xrightarrow{\sim}  T.
	\]
\end{example}

With these closed boundaries in place, one can define a monomorphism $X\to Y$ between closed dendroidal sets to be \emph{normal} in one of the following equivalent ways: 
\begin{enumerate}
	\item $u_!X \to u_!Y$ is a normal monomorphism in $\dSets$;
	\item For every closed tree $T$, the group $\Aut(T)$ acts freely on the complement of $X_T \hookrightarrow Y_T$;
	\item $X\rightarrowtail Y$ lies in the saturation of the set of closed boundary inclusions $\partial_{cl}(T)\rightarrowtail T$, for all objects $T$ in $\Omega_{cl}$.
\end{enumerate}

The following nice property is somewhat in contrast with the case of $\dSets$, cf. \cite{HHM}. 

\begin{lemma} \label{lemma5.3}
	Let $U\rightarrowtail X$ and $V \rightarrowtail Y$ be normal monomorphisms of closed dendroidal sets. Then the pushout-product map
	\[
	U \otimes Y \cup X \otimes V \rightarrowtail X\otimes Y 
	\]
	is again a normal monomorphism. (The union sign $\cup$ denotes the pushout under $U\otimes V$.)
\end{lemma}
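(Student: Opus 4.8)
The plan is to first reduce to the generating normal monomorphisms, and then to an injectivity/intersection statement about tensor products of closed trees which is the real combinatorial content. For the reduction, recall that by condition (3) in the definition of normality the class of normal monomorphisms of $\cdSets$ is saturated and is generated by the closed boundary inclusions $\partial_{cl}(T)\rightarrowtail T$, and that the tensor product on $\cdSets$ preserves colimits separately in each variable (inherited from $\dSets$ through the formula $A\otimes B=cl_!(u_!A\otimes u_!B)$, since $u_!$ and $cl_!$ are left adjoints and $\otimes$ on $\dSets$ is cocontinuous in each variable). Hence, for a fixed normal monomorphism, the pushout-product functor against it preserves pushouts, transfinite compositions and retracts, and the usual argument shows that the pushout-product of two normal monomorphisms lies in the saturation of the set of pushout-products of closed boundary inclusions. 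It therefore suffices to prove that for all closed trees $S$ and $T$ the map
\[
\partial_{cl}(S)\otimes T\;\cup_{\partial_{cl}(S)\otimes\partial_{cl}(T)}\;S\otimes\partial_{cl}(T)\;\longrightarrow\;S\otimes T
\]
is a normal monomorphism of closed dendroidal sets.

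Next I would dispose of the target. Since the full subcategory of closed dendroidal sets is closed under $\otimes$ and neither $S$ nor $T$ has leaves, $S\otimes T$ is the union of its shuffles, each of which is again a closed tree; moreover $S\otimes T$ is a \emph{normal} closed dendroidal set, because by condition (1) applied to $\emptyset\to S\otimes T$ this is equivalent to $u_!S\otimes u_!T$ being a normal dendroidal set, which is the standard fact that the tensor product of two representable dendroidal sets is normal \cite{HMbook}. Because the target is a normal object, $\Aut(R)$ acts freely on $(S\otimes T)_R$ for every closed tree $R$, hence on the complement of any sub-presheaf; so, by condition (2), the displayed map is a normal monomorphism \emph{as soon as it is a monomorphism}. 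Unwinding, this amounts to showing that $\partial_{cl}(S)\otimes T$ and $S\otimes\partial_{cl}(T)$ are genuine subobjects of $S\otimes T$ and that their intersection inside $S\otimes T$ is exactly $\partial_{cl}(S)\otimes\partial_{cl}(T)$; equivalently, one may instead exhibit the displayed map directly as a transfinite composite of pushouts of closed boundary inclusions by attaching the shuffles of $S\otimes T$ one at a time in a suitable order.

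This injectivity/intersection statement (or equivalently the shuffle filtration) is where the work is. One must decide, for each closed shuffle $A\subseteq S\otimes T$, which of its faces — inner faces, and the root face precisely when the root vertex of $A$ is unary — are detected by the proper closed faces of $S$, which by those of $T$, and which by neither, and then check compatibility along the gluings of shuffles. The hard part is exactly this bookkeeping, and it is genuinely subtle rather than formal: in $\dSets$ the corresponding statement is simply false, since there the pushout-product of normal monomorphisms need not even be a monomorphism — already $\partial C_0\otimes C_0\cup_{\partial C_0\otimes\partial C_0}C_0\otimes\partial C_0\cong C_0\cup_\eta C_0$ fails to inject into $C_0\otimes C_0\cong C_0$, because $\partial C_0=\eta$ but $\partial_{cl}(\overline\eta)=\emptyset$. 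Consequently there is no shortcut through the dendroidal theory; the whole argument has to be carried out internally to $\Omega_{cl}$ using \emph{closed} faces throughout, one must constantly distinguish the closed boundary $\partial_{cl}(T)$ from the strictly larger closure $cl_!(\partial u_!T)$ of the ordinary boundary, and one must handle outer faces of closed shuffles with care since a closed tree has a root face only when its root vertex is unary. Once the enumeration of shuffles and of their genuinely new closed faces is set up correctly, verifying the injectivity and the intersection identity is a long but routine inspection, and the lemma follows.
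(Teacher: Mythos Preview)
The paper's own ``proof'' is a one-line citation to \cite{HMbook}, 4.22(ii), so there is nothing to compare at the level of argument. Your outline is in fact the correct shape of the argument in that reference: reduce by saturation to the pushout-product of two closed boundary inclusions $\partial_{cl}(S)\rightarrowtail S$ and $\partial_{cl}(T)\rightarrowtail T$, observe that $S\otimes T$ is a normal object so that normality of the pushout-product map reduces to mere injectivity, and then analyse the shuffle decomposition of $S\otimes T$ using only closed faces. Your diagnosis of why the statement fails for general dendroidal sets (the nullary corolla $C_0$) and why closedness rescues it is also accurate.

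That said, as written your proposal is a proof sketch rather than a proof. The final paragraph asserts that the injectivity and intersection identities follow from ``a long but routine inspection'' without carrying it out, while simultaneously stressing that this is ``the real combinatorial content'' and ``genuinely subtle rather than formal''. These two claims sit uneasily together. Concretely, you have not shown that $\partial_{cl}(S)\otimes T\to S\otimes T$ is a monomorphism (you cannot invoke the lemma itself here), nor that the intersection of the images of $\partial_{cl}(S)\otimes T$ and $S\otimes\partial_{cl}(T)$ inside $S\otimes T$ is exactly the image of $\partial_{cl}(S)\otimes\partial_{cl}(T)$. Since the analogous statements fail in $\dSets$, a reader is owed the actual shuffle-by-shuffle verification, or at least the key observation about closed faces of shuffles that makes it go through. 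If you intend this as a self-contained proof rather than (like the paper) a pointer to the literature, that step needs to be written out.
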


\begin{proof}
	See \cite{HMbook}, 4.22(ii).
\end{proof}
\section{Via morphisms} \label{sect7}

With an eye towards the next section, we recall the ``operadic'' model structure \cite{cisinskimoerdijk1} on $\dSets$, for which the cofibrations are the normal monomorphisms and the fibrant objects are the dendroidal inner Kan complexes, also referred to as (dendroidal) $\infty$-operads. The fibrations between fibrant object in this model structure are the maps $Y\to X$ having the RLP with respect to two kinds of morphisms: the inclusions 
\[
\Lambda^e T \rightarrowtail T 
\]
of inner horns, for any tree $T$ and any inner edge $e$ in $T$; and the two inclusions 
\[
\eta = i_! (\Delta[0]) \to i_!(J)
\]
where $J$ is the nerve of the groupoid $0 \leftrightarrow 1$. 

\begin{remark} If $P$ is any operad in $\Sets$, its nerve $N(P)$ is a fibrant object in this model structure. However, even if $P$ is unital, $P$ need not have the RLP with respect to ``closed inner horns'', i.e. inclusions of the form $\Lambda_{cl}^e (T)\rightarrowtail T$ where $T$ is a closed tree and $\Lambda_{cl}^e(T) \subseteq \partial_{cl} (T)$ is the union of all the closed faces except the one contracting $e$. For example, for $T$ a closed $2$-corolla as pictured below, $\Lambda_{cl}^e (T)$ just encodes a single unary operation: 
	\[
\begin{tikzpicture} 
[level distance=8mm, 
every node/.style={fill, circle, minimum size=.12cm, inner sep=0pt}, 
level 1/.style={sibling distance=10mm}, 
level 2/.style={sibling distance=10mm}, 
level 3/.style={sibling distance=10mm}]

\node(anchorR1)[style={color=white}] {} [grow'=up] 
child {node(vertexR1)[draw] {} 
		child{node {}}
		child{node {}}
};

\node[style={color=white}, right=5cm of anchorR1] {} [grow'=up] 
child {node(vertexR2)[draw] {}
		child{node {}
	}
};

\tikzstyle{every node}=[]

\node at ($(vertexR1) + (-.5cm,.4cm)$) {$d$};
\node at ($(vertexR1) + (.5cm,.4cm)$) {$e$};
\node at ($(vertexR1) + (.2cm,-.5cm)$) {$r$};
\node at ($(vertexR1) + (-1cm,0cm)$) {$T$:};

\node at ($(vertexR2) + (.2cm,.4cm)$) {$e$};
\node at ($(vertexR2) + (.2cm,-.5cm)$) {$r$};
\node at ($(vertexR2) + (-1cm,0cm)$) {$\Lambda_{cl}^e(T)$:};

\end{tikzpicture} 
\] 
\end{remark}

Thus, we need to adapt the definition of ``inner horn'' in the context of closed dendroidal sets: 

\begin{definition}
	\begin{enumerate}
		\item[(a)] An edge $e$ in a closed tree $T$ is called \emph{very inner edge} if it is an inner edge which is not connected to a stump. 
		\item[(b)] The \emph{very inner horn} $\Lambda_{cl}^e T \subseteq T$ associated to a very inner edge $e$ in a closed tree $T$ is the union of all the closed faces (so $\Lambda^e T \subseteq \partial_{cl} T$) except the one contracting $e$. 
		\item[(c)] The saturation of the set of very inner horn inclusions, for all closed trees $T$, is called the class of \emph{very inner anodyne} (``via'') morphisms. 
	\end{enumerate}
\end{definition}	

Note that any via morphism is in particular a normal monomorphism. The following proposition shows that these via morphisms behave well with respect to tensor products of closed dendroidal sets, a fact which simplifies many things in comparison with the case of general dendroidal sets. 

\begin{proposition} \label{prop7-3}
	Let $U\rightarrowtail X$ and $V\rightarrowtail Y$ be normal monomorphisms between closed dendroidal sets. If at least one of these is a via morphism, then so is the pushout-product map 
	\[
	U \otimes Y \cup X \otimes V \rightarrowtail X\otimes Y. 
	\]
\end{proposition}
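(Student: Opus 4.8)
The plan is to reduce the statement to one essential case and then attack that case with an explicit ``shuffle-by-shuffle'' filling argument. For the reduction I would use that the class of via morphisms and the class of normal monomorphisms are both saturated, generated respectively by the very inner horn inclusions $\Lambda^e_{cl}(S)\rightarrowtail S$ and by the closed boundary inclusions $\partial_{cl}(T)\rightarrowtail T$. Since the pushout-product of two saturated classes is contained in the saturation of the pushout-products of their generating families, and since by the symmetry of $\otimes$ we may assume the via morphism is $U\rightarrowtail X$, the proposition reduces to proving: for all closed trees $S,T$ and every very inner edge $e$ of $S$, the map
\[
j\colon\ \Lambda^e_{cl}(S)\otimes T\ \cup\ S\otimes\partial_{cl}(T)\ \longrightarrow\ S\otimes T
\]
is very inner anodyne. (That $j$ is at least a normal monomorphism is already supplied by Lemma~\ref{lemma5.3}.)

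For this main case I would write $S\otimes T=\bigcup_A A$ as the union of its shuffles, each shuffle $A$ a closed representable, and identify the domain $K$ of $j$ as the subobject consisting of those nondegenerate dendrices whose image in $S$ factors through $\Lambda^e_{cl}(S)$ or whose image in $T$ factors through $\partial_{cl}(T)$. One then filters $S\otimes T$ over $K$ by the shuffles, taken in a suitable order, and inside each shuffle by the dimension of its closed faces, in the usual manner. The key structural observation I would establish is that the very inner edge $e$ of $S$ determines, in each shuffle $A$ and in each of the iterated closed faces of shuffles that occur, a distinguished \emph{inner} edge $\bar e$, and moreover that $\bar e$ is in fact a \emph{very} inner edge of $A$, i.e.\ no stump of $A$ sits immediately above it. Granting this, the closed face of $A$ contracting $\bar e$ is precisely the one not yet present at the stage where $A$ is attached, so that $A$ is glued in along the very inner horn $\Lambda^{\bar e}_{cl}(A)\rightarrowtail A$; running this over all the iterated faces and over the shuffles in the chosen order exhibits $S\otimes T$ as obtained from $K$ by a sequence of pushouts along coproducts of very inner horn inclusions, whence $j$ is via.

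I expect the real difficulty to be concentrated in two related combinatorial points. First, one must check that the chosen ordering of shuffles genuinely attaches each shuffle (and each relevant face) along the union of \emph{all but one} of its closed faces, the missing one being the $\bar e$-contraction; this is the analogue for closed trees of the classical ordering of shuffles, but the presence of stumps and the discrepancy between $\partial T$ and $\partial_{cl}(T)$ make the bookkeeping considerably more delicate, and some faces that are present in $\partial T$ must be seen to already lie in $K$. Second, and most importantly, one must control how stumps propagate into shuffles: a stump of $T$ could a priori produce, after shuffling, a stump sitting immediately above a copy of $e$, which would wreck the very-innerness of $\bar e$. Verifying that this never happens -- apart from easily dispatched degenerate situations such as $T=\overline{\eta}$, where $\partial_{cl}(T)=\emptyset$ and the claim is trivial -- is exactly where the hypothesis that $e$ is \emph{very} inner in $S$ is used, and it is precisely the sort of new combinatorial feature of closed trees advertised in the introduction; I would isolate this stump-propagation analysis as a separate lemma, in the spirit of Lemma~\ref{lem7-2}.
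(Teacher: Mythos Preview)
Your overall plan is the right one and matches the paper's proof: reduce by saturation to $\Lambda^e_{cl}(S)\otimes T\cup S\otimes\partial_{cl}(T)\rightarrowtail S\otimes T$, filter by shuffles in an order refining the natural partial order, and within each shuffle attach faces in increasing size. The gap is in your description of what happens inside a shuffle.

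There is no single distinguished edge $\bar e$ in a shuffle $R$. The edge $e$ of $S$ gives rise to many edges $(e,t)$ in $R$, one for each edge $t$ of $T$ occurring at the relevant height, and the paper singles out the \emph{set} $\Sigma$ of ``special'' edges, namely the highest such occurrences (those with an $S$-vertex immediately above). The faces one attaches are the $R^{[H]}$ obtained by contracting all inner edges of $R$ except those in $H\cup\Sigma$, for varying $H$ disjoint from $\Sigma$. When one goes to attach $R^{[H]}$, the missing closed faces are precisely those contracting some $(e,t)\in\Sigma$; there are several of them, not one, so the relevant inclusion is not a single very inner horn but a $\Lambda^E_{cl}$-type horn, and this is exactly why Lemma~\ref{lem7-2} (not merely Lemma~\ref{lem7-1}) is the tool that finishes the argument.

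Your ``stump-propagation'' worry is also slightly misdirected. In the full shuffle $R$ every special edge $(e,t)$ is very inner, because $e$ is very inner in $S$; no separate analysis is needed there. The subtlety is that in a contracted face $R^{[H]}$ a special edge $(e,t)$ can fail to be very inner (the $S$-vertex above it may have become a stump after contraction). The resolution is not to prevent this but to observe that when it happens the edge $t$ has disappeared from $R^{[H]}$ above $(e,t)$, so $\partial_{(e,t)}R^{[H]}$ already lies in $S\otimes\partial_{cl}(T)\subseteq A_0$. Thus the missing faces split into a nonempty set of genuinely very inner ones and a (possibly empty) set of merely inner ones whose contractions are already present; this is precisely the hypothesis of Lemma~\ref{lem7-2}, with $E$ the very inner special edges and $D$ the remaining special edges.
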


Before embarking on the proof of the proposition, we observe the following two lemmas. 

\begin{lemma} \label{lem7-1}
Let $E$ be a non-empty set of very inner edges in a closed tree $T$, and let $\Lambda_{cl}^E(T) \rightarrowtail \partial_{cl}(T)$ be the union of all the closed faces except the ones contracting an edge in $E$. Then $\Lambda_{cl}^E(T) \rightarrowtail T$ is a via morphism. 
\end{lemma}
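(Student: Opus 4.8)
The plan is to reduce the statement about $\Lambda_{cl}^E(T)\rightarrowtail T$ to a sequence of ordinary very inner horn inclusions by adjoining the missing faces one at a time, in a carefully chosen order. Write $E=\{e_1,\ldots,e_k\}$ for the very inner edges in question, and let $F$ be the set of all closed faces of $T$ that are \emph{not} of the form ``contract $e_i$'' — so $\Lambda_{cl}^E(T)=\bigcup_{\text{faces }\partial\in F}\partial(T)$ (together with, if it exists, the root face, which I will treat as one of the faces in $F$ since $F$ already excludes only the $\partial_{e_i}$). Since the class of via morphisms is saturated, it suffices to factor $\Lambda_{cl}^E(T)\rightarrowtail T$ as a finite composite of pushouts of very inner horn inclusions $\Lambda_{cl}^{e}(S)\rightarrowtail S$.

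The key step is the ordering. For each nonempty subset $A\subseteq E$, with $|A|\ge 2$, consider the closed face (or intersection of faces) obtained by contracting all the edges in $A$; more generally, for $A\subsetneq E$ nonempty, the face $\partial_A$ contracting the edges of $A$ is a closed subtree $T/A$ of $T$, and the remaining edges $E\setminus A$ are still very inner edges of $T/A$. I would enumerate the subsets $A\subsetneq E$ (including $A=\emptyset$, which just gives $T$ itself as the last stage) by \emph{decreasing} cardinality, and at the stage corresponding to $A$ I adjoin the face $\partial_A(T)$ to what has been built so far. The crucial combinatorial claim is that, at that stage, the intersection of $\partial_A(T)\cong T/A$ with the previously constructed subobject is exactly the very inner horn $\Lambda_{cl}^{E\setminus A}(T/A)$ of $T/A$: the faces of $T/A$ already present are precisely those coming from faces of $T$ in $F$ restricted to $T/A$, i.e. all closed faces of $T/A$ except the ones contracting an edge of $E\setminus A$. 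When $|A|=|E|-1$ this is a single very inner horn (missing one face); when $A$ is smaller it is a horn missing several faces, and one invokes the same lemma inductively on $|E|$, noting $|E\setminus A|<|E|$ — or one simply observes that the very inner horn inclusion for a proper subset of missing faces is itself a composite of pushouts of the ``missing one face'' horns, by the same argument one level down. Finally the stage $A=\emptyset$ glues in $T$ itself along $\Lambda_{cl}^E(T)$, but here the argument is slightly different: this last gluing is not a single horn but the total map $\Lambda_{cl}^E(T)\rightarrowtail T$, so one should instead organize the induction so that $T$ is reached by first reducing $E$ to a single edge.

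Concretely, I would run a double induction: outer induction on $|E|$, inner induction building up the faces. For $|E|=1$ there is nothing to prove — it is the definition of a very inner horn. For $|E|\ge 2$, pick $e=e_k\in E$, set $E'=E\setminus\{e\}$, and factor $\Lambda_{cl}^E(T)\rightarrowtail T$ through the intermediate object $Z=\Lambda_{cl}^E(T)\cup \partial_e(T)$. The inclusion $Z\rightarrowtail T$ is the very inner horn $\Lambda_{cl}^{e}(T)\rightarrowtail T$ once we check $Z=\Lambda_{cl}^{e}(T)$, i.e. that adjoining $\partial_e(T)$ to $\Lambda_{cl}^E(T)$ produces the union of \emph{all} closed faces of $T$ except $\partial_{e'}$ for the remaining $e'$ — wait, that is not right either, so more honestly: $\Lambda_{cl}^{e}(T)$ contains all faces except $\partial_e$, which is strictly more than $Z$. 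The right factorization is the other way: $\Lambda_{cl}^E(T)\rightarrowtail \Lambda_{cl}^{E'}(T)\cup(\text{stuff})$. I would therefore adjoin the faces $\partial_{e_1},\ldots$ one at a time to pass from $\Lambda_{cl}^E(T)$ up to $\Lambda_{cl}^{\{e_k\}}(T)$, each step being a pushout of a lower-dimensional very inner horn of an inner face (here the outer inductive hypothesis on $|E|$ applied to faces of $T$ enters), and then the final step $\Lambda_{cl}^{\{e_k\}}(T)\rightarrowtail T$ is a single very inner horn inclusion by definition.

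\textbf{Main obstacle.} The genuine difficulty is the combinatorial bookkeeping of the intersections: verifying that at each stage the newly adjoined face meets the already-constructed subobject in exactly the prescribed (lower) very inner horn, and in particular checking that no \emph{non-closed} face of $T$ sneaks in — i.e. that every closed face of an inner face of $T$ is again a closed face of $T$, and that contracting a very inner edge keeps the other edges of $E$ very inner (they could in principle become adjacent to a stump only if $T$ had stumps positioned badly, which the ``very inner'' hypothesis is designed to exclude). Getting the order of adjunction so that these intersections come out right — and organizing the induction so the step reaching $T$ itself is a single horn rather than a multi-face horn — is where the care is needed; the rest is formal manipulation with saturated classes.
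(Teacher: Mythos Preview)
Your final approach---induction on $|E|$, adjoining one missing face $\partial_{e_i}(T)$ at a time and identifying the intersection with $\Lambda_{cl}^{E\setminus\{e_i\}}(\partial_{e_i}T)$, which is via by the inductive hypothesis on the smaller set---is exactly the paper's proof. The paper writes it as a single pushout-and-pullback square
\[
\xymatrix{
\Lambda_{cl}^{E\cup\{d\}}(T) \ar@{>->}[r] & \Lambda_{cl}^{E}(T) \ar[r] & T\\
\Lambda_{cl}^{E}(\partial_d T) \ar[u] \ar@{>->}[r] & \partial_d(T) \ar[u] &
}
\]
and then applies the induction hypothesis twice: once to the bottom map (set $E$ in the smaller tree $\partial_d T$) and once to $\Lambda_{cl}^E(T)\rightarrowtail T$ (set $E$ in $T$). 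That is all---no separate ``inner induction'' and no enumeration of subsets $A\subseteq E$ is needed.

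Your ``main obstacle'' is overstated. Contracting a very inner edge $d$ cannot create new stumps (both vertices adjacent to $d$ have positive valence, and their merge still has the same leaves and inputs), so every $e\in E\setminus\{d\}$ remains very inner in $\partial_d T$; and the identification $\Lambda_{cl}^{E\cup\{d\}}(T)\cap\partial_d T=\Lambda_{cl}^{E}(\partial_d T)$ is immediate from the definition of the closed horn as a union of faces. The earlier attempts in your write-up (the filtration by subsets $A$, the mis-factorization through $Z$) are detours; you can drop them and keep only the last paragraph.
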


\begin{proof}
The case where $E$ has only one element holds by definition. And a larger such set can be written as $E\cup \{d\}$ where $E$ is non-empty. Then the diagram 
\[
\xymatrix{
\Lambda_{cl}^{E\cup \{d\}}(T) \ar@{>->}[r]  & \Lambda_{cl}^E(T) \ar[r] \ & T \\
\Lambda_{cl}^E(\partial_d (T)) \ar[u] \ar@{>->}[r] & \partial_{d}(T) \ar[u] & 
}
\]
shows that $\Lambda_{cl}^{E\cup\{d\}}(T) \to T$ is again a via morphism, as the square is a pullback and a pushout, while the lower morphism is via by induction. 
\end{proof}

\begin{lemma} \label{lem7-2}
Let $E\subset T$ be a non-empty set of very inner edges in a closed tree $T$, and let $D$ be a set of inner edges in $T$, disjoint from $E$ and such that no edge $d\in D$ is immediately above any $e\in E$ (or more generally, such that each $e\in E$ is still very inner in $\partial_{d_1} \ldots \partial_{d_n} T$ for any $d_1, \ldots, d_n$ in $D$). Then $\Lambda_{cl}^{E\cup D}(T) \rightarrowtail T$ is a via morphism. 
\end{lemma}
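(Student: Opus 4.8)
The plan is to argue by induction on the cardinality of $D$; the base case $D=\emptyset$ is exactly Lemma \ref{lem7-1}. For the inductive step I pick an edge $d\in D$, set $D'=D\setminus\{d\}$, and aim to realize $\Lambda_{cl}^{E\cup D}(T)\rightarrowtail T$ as the composite
\[
\Lambda_{cl}^{E\cup D}(T)\ \rightarrowtail\ \Lambda_{cl}^{E\cup D'}(T)\ \rightarrowtail\ T ,
\]
in which the second map is via by the inductive hypothesis applied to the triple $(T,E,D')$ (whose hypotheses hold a fortiori, since $D'\subseteq D$). As via morphisms form a saturated class, it then suffices to show the first map is via.

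For the first map, observe that $d\notin E\cup D'$, so the face $\partial_d(T)$ is one of the closed faces whose union defines $\Lambda_{cl}^{E\cup D'}(T)$; hence $\Lambda_{cl}^{E\cup D'}(T)=\Lambda_{cl}^{E\cup D}(T)\cup\partial_d(T)$, and consequently $\Lambda_{cl}^{E\cup D}(T)\rightarrowtail\Lambda_{cl}^{E\cup D'}(T)$ is the pushout of the inclusion $\Lambda_{cl}^{E\cup D}(T)\cap\partial_d(T)\rightarrowtail\partial_d(T)$. The goal is then to identify this intersection as an appropriate horn of the smaller tree $\partial_d T$, namely $\Lambda_{cl}^{E\cup D'}(\partial_d T)$, so that the inductive hypothesis applied to $(\partial_d T,E,D')$ finishes the argument. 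Transferring the hypotheses to $(\partial_d T,E,D')$ is routine: by the assumption on $(T,E,D)$ applied to the one-term sequence $d$, each $e\in E$ is still very inner in $\partial_d T$, and applied to sequences through $D$, each $e\in E$ remains very inner in $\partial_{d_1}\cdots\partial_{d_n}(\partial_d T)=\partial_{d_1}\cdots\partial_{d_n}\partial_d(T)$ for $d_1,\dots,d_n\in D'$; moreover $D'\subseteq D$ is a set of inner edges of $\partial_d T$ disjoint from $E$, since contracting an inner edge sends inner edges to inner edges.

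The crux — and where I expect the genuine difficulty to lie — is the combinatorial identification $\Lambda_{cl}^{E\cup D}(T)\cap\partial_d(T)=\Lambda_{cl}^{E\cup D'}(\partial_d T)$. It relies on two standard-looking facts about faces of closed trees: that for an inner edge $d$ and any other closed face $f$ of $T$ the subobjects $\partial_f(T)$ and $\partial_d(T)$ meet in the common face $\partial_f\partial_d(T)=\partial_d\partial_f(T)$, and that the closed faces of $\partial_d T$ are precisely the $\partial_g(\partial_d T)$ for $g$ a closed face of $T$ distinct from $d$. Here one must be careful: if the chosen $d\in D$ is not very inner — that is, it sits immediately below a stump — then contracting it can alter the arity of an adjacent vertex, and in particular $\partial_d T$ can acquire a root face that $T$ did not possess; such a face is not automatically visible inside $\Lambda_{cl}^{E\cup D}(T)\cap\partial_d(T)$, so the identity has to be established with an eye on exactly which faces of $\partial_d T$ survive. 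I therefore expect one must order the peeling of $D$ sensibly (for instance, removing first an edge of $D$ that does not lie below another edge of $D$, or one whose contraction does not disturb the root), and, where necessary, absorb any such extra faces into the inductive statement. Everything outside this combinatorial bookkeeping is formal: once the intersection is correctly described, the saturation properties of the class of via morphisms — closure under pushout and composition — close the induction.
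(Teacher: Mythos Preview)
Your approach is exactly the paper's: induct on $|D|$, write $D=D'\cup\{d\}$, and use the square
\[
\xymatrix{
\Lambda_{cl}^{E\cup D}(T) \ar@{>->}[r]  & \Lambda_{cl}^{E\cup D'}(T) \ar@{>->}[r] & T \\
\Lambda_{cl}^{E\cup D'}(\partial_d T) \ar@{>->}[u] \ar@{>->}[r] & \partial_{d}T \ar@{>->}[u] &
}
\]
which the paper declares to be both a pullback and a pushout; the bottom map is then via by the inductive hypothesis applied to $(\partial_d T,E,D')$, so the top-left map is via as a pushout of a via morphism, and composing with $\Lambda_{cl}^{E\cup D'}(T)\rightarrowtail T$ (via by induction on $T$ itself) finishes.

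Your caution about the identification $\Lambda_{cl}^{E\cup D}(T)\cap\partial_d T=\Lambda_{cl}^{E\cup D'}(\partial_d T)$ is well taken and in fact goes beyond what the paper makes explicit: the paper simply asserts the square is a pullback and does not discuss the possibility that $\partial_d T$ acquires a root face absent from $T$ (which can indeed occur when $d$ is an input of the root vertex lying below a stump). In other words, the ``combinatorial bookkeeping'' you anticipate is precisely the content the paper suppresses; your outline is otherwise identical to, and at least as careful as, the published argument.
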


\begin{proof}
We proceed by induction on the size of $D$. For $D=\emptyset$ this is the previous lemma. If $D=D'\cup \{d\}$ and  the lemma has been proved for $D'$ then the diagram, 
\[
\xymatrix{
	\Lambda_{cl}^{E\cup D}(T) \ar@{>->}[r]  & \Lambda_{cl}^{E\cup D'}(T) \ar@{>->}[r] \ & T \\
	\Lambda_{cl}^{E\cup D'}(\partial_d (T)) \ar@{>->}[u] \ar@{>->}[r] & \partial_{d}(T) \ar@{>->}[u] & 
}
\]in which the square is a pullback as well as a pushout, shows that $\Lambda_{cl}^{E\cup D}(T)\rightarrowtail T$ is via.
\end{proof}

Let us now turn to the proof of the proposition. 

\begin{proof}[Proof (of Proposition \ref{prop7-3})]
	The proof is a relatively straightforward modification of the one of \cite{HMbook}, 6.2.4. Observe first that the pushout-product map is a normal monomorphism by Lemma \ref{lemma5.3}. Next, a standard induction along saturated classes reduces the problem to the case of two closed boundary inclusions of representables. So let us suppose that $U\rightarrowtail X$ and $V\rightarrowtail Y$ are of the form 
	\[
	\Lambda_{cl}^e(S) \rightarrowtail S \quad \text{and} \quad \partial_{cl}(T) \rightarrowtail T
	\]
	respectively for closed trees $S$ and $T$ and a very inner edge $e$ in $S$. Now write $S\otimes T$ as a union of shuffles $R_i\subseteq S\otimes T$, and order these as 
	\[
	R_1, \ldots, R_N
	\]
	by a linear order which extends the natural partial order (in which ``copies of $S$ on top of $T$'' is the smallest shuffle and ``copies of $T$ on top of $S$'' is the largest). This defines a filtration of $S\otimes T$ as 
	\[
	A_0 \subseteq A_1 \subseteq A_2 \subseteq \ldots \subseteq A_N = S\otimes T, 
	\]
	where 
	\[A_0=\Lambda_{cl}^{e}(S)\otimes T \cup S\otimes \partial_{cl}(T), \]
	and \[A_i=A_{i-1}\cup R_i.\]
	One then shows that each $A_{i-1}\rightarrowtail A_i$ is a via map. To this end, consider the shuffle $R=R_i$. It has edges $(e,t)$ for the given edge $e$ in $S$ and various edges $t$ in $T$. Call the highest occurrences of these edges \emph{special}. These are edges with an $S$-vertex immediately above it in the shuffle $R$: 
	\[
	\begin{tikzpicture} 
	[level distance=8mm, 
	every node/.style={fill, circle, minimum size=.12cm, inner sep=0pt}, 
	level 1/.style={sibling distance=10mm}, 
	level 2/.style={sibling distance=10mm}, 
	level 3/.style={sibling distance=10mm}]
	
	\node(anchorR1)[style={color=white}] {} [grow'=up] 
	child {node(vertexR1)[draw, fill=none] {} 
		child
		child
		child
	};

	\tikzstyle{every node}=[]
	
	\node at ($(vertexR1) + (-1.2cm,1cm)$) {$(a,t)$};
	\node at ($(vertexR1) + (0cm,1cm)$) {$(b,t)$};
	\node at ($(vertexR1) + (1.2cm,1cm)$) {$(c,t)$};
	\node at ($(vertexR1) + (.5cm,-.5cm)$) {$(e,t)$};
	
	\end{tikzpicture} 
	\]   
	Let $\Sigma=\Sigma_R$ be the set of these special edges in $R$. Notice that these are all very inner in $R$ as $e$ is very inner in $S$. For a subset $H$ of inner edges disjoint from $\Sigma$, let $R^{[H]}$ be the face obtained by contracting all the inner edges in $R$ \emph{except} the ones in $H\cup \Sigma$. We then adjoin these $R^{[H]}$ to $A_{i-1}$ in some order extending the inclusion order of the $H$'s. So consider a specific $R^{[H]}$, and suppose all $R^{[H']}$ for strictly smaller $H'\subseteq H$ have already been adjoined, yielding an intermediate closed dendroidal set $A_{i-1} \subseteq B \subseteq A_i$. We wish to adjoin $R^{[H]}$ to $B$. If $R^{[H]}\subseteq B$ already, then there is nothing to do. This holds in particular if none of the special edges $(e, t)$ in $R^{[H]}$ is very inner in $R^{[H]}$ (because then the $S$-colours above $e$ have disappeared in $R^{[H]}$, so $R^{[H]}\subseteq A_0$).
	
	Now let us consider closed faces of $R^{[H]}$. The root face of $R^{[H]}$ (if it exists) must miss the root edge of $S$ or that of $T$, hence must be contained in $A_0$. (Note that $R^{[H]}$ may have a root face while $S$ and $T$ do not.) Any non-special inner face of $R^{[H]}$ is contained in $R^{[H']}$ for a smaller $H'$, hence in $B$. If $(e, t)$ is a special edge in $R^{[H]}$, on the other hand, then the face $\partial_{e\otimes t} R^{[H]}$ cannot belong to an earlier $R^{[H']}$. And it can not belong to an earlier shuffle $R_j$ ($j<i$) unless $R^{[H]}$ itself already does (in which case we are back in the earlier case where $R^{[H]}\subset B$ already). Similarly, if $\partial_{(e, t)}(R^{[H]})$ would be contained in $\Lambda_{cl}^e(S) \otimes T \subseteq A_0$, then so would $R^{[H]}$. The only remaining case in which $\partial_{(e, t)}(R^{[H]})$ is contained in $B$ is where the edge $t$ has disappeared entirely, so that $\partial_{(e, t)} (R^{[H]}) \subseteq S\otimes \partial_{cl} (T)\subseteq A_0$. This cannot happen if $(e, t)$ is \emph{very inner} in $R^{[H]}$ however, because then $t$ still occurs immediately above (in $(e, t)$). 
	
	The conclusion is that $B\cap R^{[H]}$  contains the root face of $R^{[H]}$ and all inner faces except the one contracting a very inner edge $(e, t)$, as well as some of those contracting just an inner edge $(e, t')$. The same applies to intersection of these (as they involve different $t$'s). It follows by Lemma \ref{lem7-2} that $B\cap R^{[H]} \rightarrowtail R^{[H]}$ is a via morphism, and hence so is its pushout $B\rightarrowtail B\cup R^{[H]}$. 
	
	This completes the induction step, and proves the proposition.
\end{proof}

It will be necessary to formulate the precise relation between what are called ``Segal cores'' in \cite{cisinskimoerdijk2, HHM} and ``spines'' in \cite{HMbook}, and via morphisms. The treatment is to a large extent analogous to the one in these two references, but does not seem to be a formal consequence of either (and, in fact, seems a bit easier for closed trees). 

If $T$ is a closed tree, each vertex $v$ in $T$ defines a map 
\[
c_v \colon \overline{C}_v \rightarrowtail T
\] 
from a closed corolla $\overline{C}_v$ to $T$, which maps the edges of $\overline{C}_v$ to the edges in $T$ attached to $v$ in a bijective fashion. (This map $\overline{C}_v$ is a composition of inner faces contracting all the edges not attached to $v$ and not on the path from $v$ down to the root, followed by a (possibly empty) composition of root faces.) The intersection of two such corollas $\overline{C}_v\rightarrowtail T$ and $\overline{C}_w \rightarrowtail T$ in $T$ is either empty, or if there is an edge connecting $v$ and $w$, is the map $\overline{\eta}\to T$ corresponding to this edge $e$. The \emph{closed spine} 
\[
\cSpine(T) \rightarrowtail T
\]
is by definition the union of all these corollas $\overline{C}_v\to T$ ranging over all vertices $v$ in $T$. (If $T\neq \overline{\eta}$, it is enough to consider inner vertices in $T$ only, since for a stump $v$ in $T$ the corolla $\overline{C}_v\cong \overline{\eta}$ is contained in the corolla $\overline{C}_w$ for the vertex $w$ immediately below $v$.) Thus, if $T$ is itself a corolla, then $\cSpine(T)\to T$ is an isomorphism. The following theorem is analogous to \cite{cisinskimoerdijk2}, Propositions 2.4 and 2.5.

\begin{theorem} \label{theorem7-6}
	\begin{enumerate}
		\item[(a)] Let $T$ be a closed tree. Then $\cSpine(T)\to T$ is a via morphism. 
		\item[(b)] The class of via morphisms is the smallest saturated class containing all the closed spine inclusions, and which moreover has the property that if a composition $A\rightarrowtail B\rightarrowtail C$ of normal monomorphisms as well as $A\rightarrowtail B$ belong to the class then so does $B\rightarrowtail C$. 
	\end{enumerate}
\end{theorem}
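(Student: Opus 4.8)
\textbf{Proof proposal for Theorem \ref{theorem7-6}.}

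The plan is to prove (a) and (b) in that order, since (b) will be deduced more or less formally from (a) once the right ``cancellation'' gymnastics with saturated classes is set up. For part (a), I would run an induction on the number of vertices of $T$ (equivalently, on the number of edges that can be contracted). If $T$ is a corolla, $\cSpine(T)\to T$ is an isomorphism and there is nothing to prove. In general, pick a ``top'' inner vertex $v$ of $T$, i.e. a vertex all of whose inputs are leaves or stumps; since $T$ is closed, contracting the edges below $v$ step by step and then cutting the leaves above $v$ with root faces expresses $\overline{C}_v\rightarrowtail T$ as a face. The idea is to build $T$ up from $\cSpine(T)$ in stages, first attaching the face $\partial'(T)$ obtained by deleting $v$ (contracting the edge immediately below $v$, or taking a root face if $v$ is the root vertex), and then filling in the remaining faces. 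The subtree $\partial'(T)$ is again closed and has fewer vertices, so by induction $\cSpine(\partial'(T))\rightarrowtail \partial'(T)$ is via; and $\cSpine(T) = \cSpine(\partial'(T))\cup \overline{C}_v$, with the intersection a copy of $\overline{\eta}$ (the edge joining $v$ to its neighbour). One then has to check that $\cSpine(\partial'(T))\cup \overline{C}_v \rightarrowtail T$ is via: this is where one invokes Lemma \ref{lem7-2}. Concretely, inside $T$ the subobject $\cSpine(\partial'(T))\cup \overline{C}_v$ should equal a horn $\Lambda_{cl}^{E\cup D}(T)$ for a suitable non-empty set $E$ of very inner edges and a set $D$ of inner edges with the ``nothing in $D$ immediately above anything in $E$'' property, so that Lemma \ref{lem7-2} applies directly; the faces that are \emph{missing} (hence being filled) are exactly the one contracting the edge immediately below $v$ together with the faces that would destroy $\overline{C}_v$. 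A cleaner organization may be to go the other way: first observe that $\cSpine(T)\rightarrowtail \cSpine(T)\cup(\text{various horns})\rightarrowtail T$ can be refined so that at each stage one attaches a single face whose intersection with what is already present is, by induction, a spine (or a horn covered by Lemma \ref{lem7-2}) of the smaller tree.

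For part (b), write $\mathcal{S}$ for the smallest saturated class containing the closed spine inclusions and closed under the stated cancellation property ($A\rightarrowtail B\rightarrowtail C$ and $A\rightarrowtail B$ in the class $\Rightarrow$ $B\rightarrowtail C$ in the class). By part (a) every closed spine inclusion is via, and via morphisms form a saturated class with the cancellation property (saturation is by definition; cancellation holds because, given $A\rightarrowtail B\rightarrowtail C$ via and $A\rightarrowtail B$ via, one factors and uses that the via morphisms are closed under pushout and retract — this is the standard argument that anodyne-type classes defined by saturation enjoy the ``right cancellation'' property once they contain enough, and here it is immediate from the lifting characterization: a map has the RLP against all very inner horns iff it is a trivial fibration in the relevant sense, and such maps are stable under the relevant operations). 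Hence $\mathcal{S}\subseteq \{\text{via morphisms}\}$. For the reverse inclusion it suffices to show every very inner horn inclusion $\Lambda_{cl}^e(T)\rightarrowtail T$ lies in $\mathcal{S}$. Here one uses the cancellation property: the closed spine $\cSpine(T)$ factors through $\Lambda_{cl}^e(T)$, since $\Lambda_{cl}^e(T)$ contains all the closed faces except one and in particular contains all the corollas $\overline{C}_v$; so we have $\cSpine(T)\rightarrowtail \Lambda_{cl}^e(T)\rightarrowtail T$ with the composite in $\mathcal{S}$ (it is a spine inclusion), and it remains to see that $\cSpine(T)\rightarrowtail \Lambda_{cl}^e(T)$ is in $\mathcal{S}$. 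This last step is handled by an induction on the size of $T$ entirely internal to the horn: $\Lambda_{cl}^e(T)$ is a union of proper closed faces of $T$, each of which is a closed tree smaller than $T$, the spine of each such face lies in $\mathcal{S}$ by induction, and assembling these along their (smaller) intersections — which are again handled by induction plus cancellation — exhibits $\cSpine(T)\rightarrowtail \Lambda_{cl}^e(T)$ as a composite of pushouts of maps in $\mathcal{S}$.

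The main obstacle I expect is the combinatorial bookkeeping in part (a): identifying, for each stage of the filtration of $T$ by spine-plus-faces, precisely which set of faces is being adjoined, and verifying that it matches a horn $\Lambda_{cl}^{E\cup D}(T')$ satisfying the hypotheses of Lemma \ref{lem7-2} (non-emptiness of $E$, disjointness of $D$ from $E$, and the ``no $d\in D$ immediately above any $e\in E$'' condition, which must survive passage to faces). The subtlety particular to closed trees — that faces $\partial'(T)$ can acquire a root face that $T$ did not have, and that a previously inner edge can cease to be very inner once a stump appears above it — means one must track very-innerness carefully through the induction; this is exactly the phenomenon Lemma \ref{lem7-2} was designed to absorb, so the work is in feeding it the correct input rather than in any new idea. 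Once (a) is in place, part (b) is a soft argument with saturated classes and the cancellation property, modulo one more small induction to reduce horns to spines.
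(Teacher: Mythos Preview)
Your part (a) contains a genuine gap. You write that inside $T$ the subobject $\cSpine(\partial'(T))\cup\overline C_v$ ``should equal a horn $\Lambda_{cl}^{E\cup D}(T)$''. But $\cSpine(\partial'(T))\cup\overline C_v$ is just $\cSpine(T)$ again, and for any closed tree with more than two inner vertices the spine is strictly smaller than every horn: a horn is the union of all closed faces but one, hence is codimension one in $T$, while the spine is a union of corollas. So Lemma~\ref{lem7-2} cannot be invoked directly, and the induction as you set it up does not close. Your ``cleaner organization'' points in the right direction but is not an argument.

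The paper's route is different from an induction on removing a top vertex. It lets $E$ be the set of \emph{all} very inner edges of $T$ and factors
\[
\cSpine(T)\;\rightarrowtail\;\Lambda_{cl}^E(T)\;\rightarrowtail\;T,
\]
the second map being via by Lemma~\ref{lem7-1}. For the first map one adjoins to $\cSpine(T)$ the faces $\partial_a T$ for edges $a\notin E$ (including the root if applicable) one at a time. The identity that makes this induction go is
\[
\cSpine(T)\cap\partial_a T=\cSpine(\partial_a T)\qquad(a\notin E),
\]
together with $\partial_{A'}(T)\cap\partial_a T=\partial_{A'}(\partial_a T)$; the point is that since $a$ is not very inner, contracting it does not merge two inner vertices, so the spine restricts correctly. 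Thus each step is a pushout of $\cSpine(\partial_a T)\cup\partial_{A'}(\partial_a T)\rightarrowtail\partial_a T$, which is via by induction on the smaller tree. This simultaneously proves (a) and shows that $\cSpine(T)\rightarrowtail\Lambda_{cl}^E(T)$ lies in the saturation of the spine inclusions.

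For (b) your plan is close to the paper's, but your justification that via morphisms enjoy right cancellation (``standard argument\dots immediate from the lifting characterization'') does not work as stated: saturations of sets of maps are not in general closed under right cancellation, and no RLP argument supplies it here. The paper does not need this. Having already shown that $\cSpine(T)\rightarrowtail\Lambda_{cl}^E(T)$ lies in the saturation of spine inclusions, cancellation in the abstract class $\mathcal S$ yields $\Lambda_{cl}^E(T)\rightarrowtail T\in\mathcal S$; then, using the pushout square of Lemma~\ref{lem7-1} applied to faces of $T$ (by induction on the number of very inner edges) together with one more application of cancellation, one peels off the edges of $E$ one by one to get each single-edge horn $\Lambda_{cl}^e(T)\rightarrowtail T$ in $\mathcal S$. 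Your ``induction internal to the horn'' to show $\cSpine(T)\rightarrowtail\Lambda_{cl}^e(T)\in\mathcal S$ is workable, but you still need the spine--face intersection identity above to control the intersections, which is the same missing ingredient as in (a).
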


\begin{proof}
	Let $T$ be a tree with at least one very inner edge, and let $E$ be the set of \emph{all} very inner edges in $T$. We will show that \begin{equation} \label{eq7-1}
	\cSpine(T) \to \Lambda^E (T)
	\end{equation}
	is a via morphism, \emph{and} that it belongs to the saturation of the class of closed spine inclusions. Since $\Lambda^E T \to T$ is a via morphism (cf. Lemma \ref{lem7-1}), this proves that the composition $\cSpine(T)\to T$ is a via morphism as well, thus proving Part (a).
	
	And by applying the cancellation condition in Part (b) of the theorem to the composition $\cSpine(T)\to \Lambda^E(T) \to T$, it proves that $\Lambda^E T \to T$ belongs to the class. This holds for all closed trees, in particular for all faces of $T$. Now write $E=\{e_1, \ldots, e_n\}$ and $E_i=\{e_1, \ldots, e_i\}$ for $i=1,\ldots, n$. Then the pushout in Lemma \ref{lem7-1} shows that each of the morphisms $\Lambda^{E_{i+1}} (T) \to \Lambda^{E_i}(T)$ belongs to the class (by induction on the size of the set of very inner edges in face of $T$). By the cancellation property, $\Lambda^{E_1}(T)\to T$ belongs to the class, proving Part (b).
	
	Consider a set $A$ of edges in $T$, disjoint from $E$. Write $\partial_a T$ for the face corresponding to an edge $a\in A$. (If $a$ is the root edge, we use this notation only if the root vertex is unary.) Also write 
	\[
	\partial_A T = \bigcup_{a\in A} \partial_a T.
	\]
	It now suffices to show for each such set $A$ that 
	\begin{equation} \label{eq7-2}
	\cSpine(T) \rightarrowtail \cSpine(T)\cup \partial_A(T)
	\end{equation}
	belongs to the saturation of the closed spine inclusions \emph{and} is a via morphism. Indeed, if $A$ is maximal then $\cSpine(T) \subseteq \partial_A(T)=\Lambda^E (T)$, so we conclude that $\cSpine(T) \rightarrowtail \Lambda^E(T)$ belongs to this saturation \emph{and} is a via morphism, as was to be shown. 
	
	We argue by induction on $T$ and $A$. The minimal case is where $T=\overline{[2]}$, a tree with just one very inner edge: 
	\[
	\begin{tikzpicture} 
	[level distance=8mm, 
	every node/.style={fill, circle, minimum size=.12cm, inner sep=0pt}, 
	level 1/.style={sibling distance=10mm}, 
	level 2/.style={sibling distance=10mm}, 
	level 3/.style={sibling distance=10mm}]
	
	\node(anchorR1)[style={color=white}] {} [grow'=up] 
	child {node(vertexR1)[draw] {} 
		child{node {}
			child{node {}}
		}
	};

	\tikzstyle{every node}=[]
	
	\node at ($(vertexR1) + (.2cm,1.2cm)$) {$b$};
	\node at ($(vertexR1) + (.2cm,.4cm)$) {$e$};
	\node at ($(vertexR1) + (.2cm,-.5cm)$) {$a$};
	\node at ($(vertexR1) + (-.3cm,0cm)$) {$v$};
	\node at ($(vertexR1) + (-.3cm,.8cm)$) {$u$};
	\node at ($(vertexR1) + (-1cm,.4cm)$) {$T$:};
	
	\end{tikzpicture} 
	\] 
	Then $\cSpine(T)\to \Lambda^E(T)$ is an isomorphism, as $\overline{C}_u=\partial_a T$ and $\overline{C}_v=\partial_b T$. For a larger tree $T$, suppose we have shown that (\ref{eq7-2}) is a via morphism for all smaller trees and smaller sets $A$, and write $A=A'\cup \{a\}$ for $a\not\in A'$. Then there is a pushout 
	\[
	\xymatrix{
	\cSpine(T) \cup \partial_{A'} (T) \ar@{>->}[r] & \cSpine(T) \cup \partial_A(T) \\
	(\cSpine(T) \cup \partial_{A'} (T)) \cap \partial_a(T) \ar[u] \ar@{>->}[r] & \partial_a(T) \ar[u] }
	\]
	and one easily checks that 
	\[
	\partial_{A'} (T) \cap \partial_a T = \partial_{A'}\partial_a T 
	\]
	and also 
	\[
	\cSpine(T) \cap \partial_a (T) = \cSpine(\partial_a T)
	\]
	(remember that $a\not\in E$, so $a$ does not connect two inner vertices). It follows by induction that the lower map in the diagram belongs to the class, and is via. But then the same is true for the upper one, and hence for the composition
	\[
	\cSpine(T) \rightarrowtail \cSpine(T) \cup \partial_{A'} (T) \rightarrowtail \cSpine(T) \cup \partial_A (T),
	\]
	which is the map (\ref{eq7-2}). This completes the induction, and the proof of the theorem. 
\end{proof}

\section{Unital $\infty$-operads} \label{sect8}

Recall from \cite{moerdijkweiss, HMbook} the inclusion $\Omega\to \Oper$, with an induced adjunction 
\[
\xymatrix@C=40pt{
	\tau : \dSets \ar@<.5ex>[r] & \Oper_{\Sets} : N \ar@<.5ex>[l] } 
\]
between dendroidal sets and operads in $\Sets$, i.e. discrete operads. For such an operad $P$, the dendroidal set $N(P)$ is called its \emph{dendroidal nerve}. This adjunction restricts to an adjunction, again denoted 
\[
\xymatrix@C=40pt{
	\tau : \cdSets \ar@<.5ex>[r] & \cOper_{\Sets} : N \ar@<.5ex>[l] } 
\]
between closed dendroidal sets and discrete closed (or unital) operads. For a general operad $P$, its nerve $N(P)$ is an $\infty$-operad in the sense of having the RLP with respect to all inner horn inclusions between arbitrary trees. For a unital operad $P$, the closed dendroidal set $N(P)$ similarly has the RLP with respect to all very horn inclusions of closed trees. (It clearly need not have the RLP with respect to all inner horn inclusions of closed trees. For example, the inner horn $\Lambda^1(\overline{C}_2)$ of the closed $2$-corolla 
\[
\begin{tikzpicture} 
[level distance=8mm, 
every node/.style={fill, circle, minimum size=.12cm, inner sep=0pt}, 
level 1/.style={sibling distance=10mm}, 
level 2/.style={sibling distance=10mm}, 
level 3/.style={sibling distance=10mm}]

\node(anchorR1)[style={color=white}] {} [grow'=up] 
child {node(vertexR1)[draw] {} 
	child{node {}}
	child{node {}}
};

\tikzstyle{every node}=[]

\node at ($(vertexR1) + (-.5cm,.4cm)$) {$1$};
\node at ($(vertexR1) + (.5cm,.4cm)$) {$2$};
\node at ($(vertexR1) + (.2cm,-.5cm)$) {$0$};
\end{tikzpicture} 
\]
is a copy of the closed $1$-simplex
\[
\begin{tikzpicture} 
[level distance=8mm, 
every node/.style={fill, circle, minimum size=.12cm, inner sep=0pt}, 
level 1/.style={sibling distance=10mm}, 
level 2/.style={sibling distance=10mm}, 
level 3/.style={sibling distance=10mm}]

\node(anchorR1)[style={color=white}] {} [grow'=up] 
child {node(vertexR1)[draw] {} 
	child{node {}}
};

\tikzstyle{every node}=[]

\node at ($(vertexR1) + (.2cm,.4cm)$) {$1$};
\node at ($(vertexR1) + (.2cm,-.5cm)$) {$0$};
\end{tikzpicture} 
\]
and the RLP of $N(P)$ against $\Lambda^1(\overline{C}_2)\rightarrowtail \overline{C}_2$ would require any unary operation to extend to a binary one in $P$.)

\begin{definition}
	A closed dendroidal set $E$ is called a \emph{unital $\infty$-operad} if it has the RLP with respect to very inner horn inclusions $\Lambda^e T \rightarrowtail T$, for every very inner edge $e$ in a closed tree $T$; or equivalently, if it has the RLP with respect to every via morphism. 
\end{definition}

Thus, for every discrete unital operad $P$ its ``closed'' nerve $N(P)$ is such a unital $\infty$-operad. Later, we will see that a suitable \emph{homotopy coherent} nerve of a unital simplicial operad is again a unital $\infty$-operad. 

We will prove in the next section that these unital $\infty$-operads are the fibrant objects in a model category structure on closed dendroidal sets, cf. Theorem \ref{teo9-1} The main technical result that we need for this is the following theorem, Theorem \ref{theo8-1} below. To state this theorem, let us denote for two closed dendroidal sets $X$ and $Y$ the simplicial hom-set by $\hhom(X,Y)$. Thus, for any $n\geq 0$, 
\[
\hhom(X,Y)_n= \Hom_\cdSets(\Delta[n] \otimes X, Y).
\]
(Recall that in terms of the tensor product on the category $\cdSets$, $\Delta[n]\otimes X$ is defined as $cl_! i_!(\Delta[n]) \otimes X$, cf. Section \ref{sect6}) With this notation, Proposition \ref{prop7-3} has as a consequence for closed dendroidal sets $A$ and $E$ that if $A$ is normal and $E$ is a unital $\infty$-operad, then $\hhom(A,E)$ is an $\infty$-category. More generally, we have the following result.

\begin{theorem} \label{theo8-1}
	Let $E$ be a unital $\infty$-operad. Then for every normal monomorphism $A\rightarrowtail B$ in $\cdSets$, the restriction map 
	\[
	\hhom(B, E) \to \hhom(A, E) 
	\]
	is a fibration between $\infty$-categories. 
\end{theorem}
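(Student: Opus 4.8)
The plan is to prove the statement by the usual adjunction/lifting argument for enriched hom-objects, combined with the pushout-product property for via morphisms established in Proposition \ref{prop7-3}. Recall that a map of simplicial sets is a fibration between $\infty$-categories precisely when it has the right lifting property with respect to the inner horn inclusions $\Lambda^k[n]\rightarrowtail \Delta[n]$ ($0<k<n$) and with respect to the two maps $\{0\}\rightarrowtail J$ and $\{1\}\rightarrowtail J$; this is the characterization of ``isofibrations'' (or of fibrations between fibrant objects in the Joyal model structure). So it suffices to check that $\hhom(B,E)\to \hhom(A,E)$ has the RLP with respect to these generators. Since $\hhom(-,E)$ turns colimits in the first variable into limits and since $E$ is a simplicial presheaf, the adjunction $\Hom_{\sSets}(K,\hhom(B,E))\cong\Hom_{\cdSets}(K\otimes B, E)$ (using the simplicial tensoring of Section \ref{sect6}, where $K\otimes B = cl_!i_!(K)\otimes B$) shows that a lifting problem of $\hhom(B,E)\to\hhom(A,E)$ against $K\rightarrowtail L$ is the same datum as a lifting problem of $E\to *$ against the pushout-product map
\[
L\otimes A \cup_{K\otimes A} K\otimes B \rightarrowtail L\otimes B
\]
in $\cdSets$.

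Thus the whole statement reduces to showing that for each generating map $K\rightarrowtail L$ in the Joyal-fibration generating set, the pushout-product $L\otimes A\cup K\otimes B\rightarrowtail L\otimes B$ is a via morphism; for then $E$, being a unital $\infty$-operad, has the RLP against it by definition. First I would treat the inner horn case $K\rightarrowtail L$ equal to $\Lambda^k[n]\rightarrowtail\Delta[n]$. Viewed inside $\cdSets$, this is the closed dendroidal set $cl_!i_!(\Lambda^k[n])\rightarrowtail cl_!i_!(\Delta[n])$; one checks that $i_!(\Lambda^k[n])\rightarrowtail i_!(\Delta[n])$ is inner anodyne in $\dSets$, and I claim that its closure under $cl_!$ — or rather the relevant map between closed trees/linear closed trees — is a via morphism in $\cdSets$. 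Concretely, $cl_!i_!(\Delta[n])$ is the closed linear tree $\overline{[n]}$, and $\Lambda^k[n]$ maps to the very inner horn $\Lambda_{cl}^{e_k}(\overline{[n]})$ (the edge $e_k$ being inner and not attached to the stump on top, hence very inner, provided $k<n$); so this map is literally a very inner horn inclusion, hence via by definition. Then Proposition \ref{prop7-3} — with $U\rightarrowtail X$ the via morphism $cl_!i_!(\Lambda^k[n])\rightarrowtail cl_!i_!(\Delta[n])$ and $V\rightarrowtail Y$ the normal monomorphism $A\rightarrowtail B$ — gives at once that $L\otimes A\cup K\otimes B\rightarrowtail L\otimes B$ is a via morphism. (Here one uses that the tensoring $K\otimes(-)$ on $\cdSets$ agrees with $cl_!i_!(K)\otimes(-)$ in the internal tensor product, as recorded in Section \ref{sect6}, so that the ``external'' pushout-product of simplicial sets against closed dendroidal sets is a genuine instance of the pushout-product of Proposition \ref{prop7-3}.)

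For the two maps $\{\varepsilon\}\rightarrowtail J$ ($\varepsilon=0,1$) the same strategy applies, but I must first record that $cl_!i_!(\{\varepsilon\})\rightarrowtail cl_!i_!(J)$, i.e. $\overline{\eta}\rightarrowtail cl_!i_!(J)$, is a via morphism in $\cdSets$; this is the closed analogue of the statement that $\eta\rightarrowtail i_!(J)$ is anodyne for the operadic model structure, and it should follow from the fact that $cl_!i_!(J)$ is a unital $\infty$-operad and the inclusion is a normal monomorphism which is a weak equivalence — or, more self-containedly at this point in the paper, by exhibiting $\overline{\eta}\rightarrowtail cl_!i_!(J)$ as a retract of, or as built from, very inner horn inclusions of closed trees together with the spine argument of Theorem \ref{theorem7-6}. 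Granting this, Proposition \ref{prop7-3} again yields that $J\otimes A\cup\{\varepsilon\}\otimes B\rightarrowtail J\otimes B$ is via, and we are done. Assembling: $\hhom(B,E)\to\hhom(A,E)$ has the RLP with respect to all inner horn inclusions and both maps $\{\varepsilon\}\rightarrowtail J$; in particular both its source and target (take $A=\emptyset$) are $\infty$-categories by Proposition \ref{prop7-3}, and the map between them is an isofibration, which is exactly a fibration between $\infty$-categories.

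\textbf{Main obstacle.} The genuinely delicate point is the claim that $\overline{\eta}\rightarrowtail cl_!i_!(J)$ is a via morphism — the inner-horn case is an immediate instance of Proposition \ref{prop7-3} once one identifies the relevant closed trees, but the ``$J$-case'' requires knowing that the closed nerve of the free-standing isomorphism is, up to via morphisms, trivial over $\overline{\eta}$. I expect this to need either an explicit combinatorial filtration of $cl_!i_!(J)$ by closed trees (in the spirit of the proof of Theorem \ref{theorem7-6} and Lemma \ref{lem7-2}), or a forward reference to the model structure of Theorem \ref{teo9-1} together with the characterization of via morphisms as the normal trivial cofibrations; since the latter is only constructed in the next section, a careful write-up should give the direct combinatorial argument here, which is where the real work lies.
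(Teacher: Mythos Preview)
Your treatment of the inner-horn case is correct and essentially identical to the paper's: one identifies $cl_!i_!(\Lambda^k[n])\rightarrowtail cl_!i_!(\Delta[n])$ with a very inner horn inclusion of the closed linear tree $\overline{[n]}$ and then invokes Proposition~\ref{prop7-3}.

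The gap is in the $J$-case, and it is fatal for the strategy as stated. The inclusion $\overline{\eta}\rightarrowtail cl_!i_!(J)$ is \emph{not} a via morphism, and no combinatorial filtration will produce one. Here is an obstruction: every very inner horn inclusion $\Lambda_{cl}^e(T)\rightarrowtail T$ is bijective on $\overline{\eta}$-points (every edge of $T$ lies in some closed face other than $\partial_e T$), and this property is stable under pushouts, transfinite composition and retracts; hence every via morphism is bijective on colours. But $\overline{\eta}\rightarrowtail cl_!i_!(J)$ passes from one colour to two. This is the closed analogue of the familiar fact that $\{0\}\rightarrowtail J$ is a Joyal trivial cofibration but not inner anodyne; it is precisely why Theorem~\ref{teo9-2}(d) has to list $\overline{\eta}\rightarrowtail cl_!(J)$ separately alongside the via morphisms. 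So one cannot hope to solve the $J$-lifting problem by exhibiting the relevant pushout-product as via.

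The paper's proof takes a genuinely different route for this half. Using the Joyal--Lurie principle that for maps into $\infty$-categories the lifting against $\{1\}\rightarrowtail J$ may be replaced by lifting against $\{1\}\rightarrowtail\Delta[1]$ \emph{provided the image of the edge is an equivalence}, one is reduced (after the standard induction on $A\rightarrowtail B$) to extending along
\[
\{1\}\otimes S \cup \Delta[1]\otimes\partial_{cl}(S)\ \rightarrowtail\ \Delta[1]\otimes S
\]
for a closed tree $S$, under the hypothesis that each $\Delta[1]\otimes s$ lands on an equivalence in $E$. Two further lemmas then do the work: a shuffle filtration shows this map is a composite of via morphisms followed by a single pushout of a \emph{unary root horn} $\Lambda_{cl}^{r}(T)\rightarrowtail T$ (for the tree $T$ obtained by grafting $S$ on top of $\Delta[1]$); and a join/slice argument shows that a unital $\infty$-operad has the extension property against such a root horn whenever the unary root vertex is sent to an equivalence. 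Neither step is a via-morphism statement, and the equivalence hypothesis is essential---which is exactly what your approach tried to avoid.
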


\begin{proof}
	For a simplex $\Delta[n]$, any inner face $\partial_i\colon \Delta[n-1]\rightarrowtail \Delta[n]$ (i.e., $0 < i < n$) becomes a very inner face upon closure. So for any inner horn $\Lambda^i[n] \rightarrowtail \Delta[n]$, the map $cl_!(\Lambda^i[n]) \rightarrowtail cl_!(\Delta[n])$ is a via morphism. It then follows from Proposition \ref{prop7-3} that $\hhom(B, E)$ and $\hhom(A, E)$ are $\infty$-categories, and that the map $\hhom(B,E)\to \hhom(A,E)$ is an inner fibration. It thus remains to be shown that this map has the RLP with respect to the inclusion $\{1\}\rightarrowtail J$ of one of the endpoints into the ``interval'' $J$, the nerve of the groupoid $0\leftrightarrow 1$ viewed as a dendroidal set. In other words, using an induction on normal monomorphisms, we have to show that for any closed tree $S$, every unital $\infty$-operad $E$ has the RLP with respect to the map 
	\[
	\{1\}\otimes S \cup J \otimes \partial_{cl}(S) \rightarrowtail J\otimes S.
	\]
	By basic properties of maps into $\infty$-categories (\cite{joyal}, Corollary 1.6 and \cite{lurie}, Section 1.2.5) it in fact suffices to prove that every unital $\infty$-operad $E$ has the extension property with respect to 
	\begin{equation} \label{eq8-1}
	\{1\}\otimes S \cup \Delta[1]\otimes \partial_{cl}(S) \rightarrowtail \Delta[1]\otimes S.
	\end{equation}
	for maps which send each copy $\Delta[1]\otimes s$ (for $s$ an edge in $S$) of $\Delta[1]$ to an equivalence in $E$. Notice that for $S=\overline{\eta}$ this map (\ref{eq8-1}) is a retract, so there is nothing to prove. For larger $S$, this extension property follows from the following two lemmas, which then complete the proof of the theorem. 
\end{proof}

The first of these two lemmas is an analogue of \cite{cisinskimoerdijk1, HMbook} although the proof for closed trees is a bit easier: 

\begin{lemma}
	Let $S$ be a closed tree with at least two vertices. Then the inclusion (\ref{eq8-1}),
	\[
	\{1\}\otimes S \cup \Delta[1]\otimes \partial_{cl}(S) \rightarrowtail \Delta[1]\otimes S.
	\]
	is a composition of a finite number of via morphisms followed by the pushout of a unary root horn into a closed tree with at least three vertices (with unary root vertex corresponding to the vertex of $\Delta[1]\otimes r$ for the root edge $r$ in $S$). 
\end{lemma}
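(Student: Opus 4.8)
I would run the shuffle-filtration argument used for the corresponding statement on general dendroidal sets (\cite{cisinskimoerdijk1}; see also \cite{HMbook}, Section~6.3), substituting very inner anodyne for inner anodyne and $\partial_{cl}$ for $\partial$ throughout. First I would record that $\Delta[1]\otimes S = cl_!i_!(\Delta[1])\otimes S$, and since $cl_!i_!(\Delta[1])$ is the linear closed tree $\overline{[1]}$ (one unary vertex with a stump on top) and both $\overline{[1]}$ and $S$ are closed trees, this tensor product is the union of its shuffles, each again a closed tree. Among them there is a distinguished one: writing $v_0$ for the unary vertex of the linear subtree $\Delta[1]\otimes r$ of $\Delta[1]\otimes S$ lying over the root edge $r$ of $S$, let $R^{\sharp}$ be the shuffle in which $v_0$ is the root vertex, with a copy of $S$ grafted onto the edge immediately above it. Then $R^{\sharp}$ has $1+(\text{number of vertices of }S)\geq 3$ vertices, its root vertex is the unary vertex $v_0$ of $\Delta[1]\otimes r$, and its root face is the ``other endpoint'' $\{0\}\otimes S$, which does \emph{not} lie in $A_0 := \{1\}\otimes S\cup\Delta[1]\otimes\partial_{cl}(S)$. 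I would then list the shuffles $R_1,\dots,R_N$ in a linear order refining the natural partial order on shuffles, arranged so that $R^{\sharp}$ is treated last, and build the filtration $A_0\subseteq A_1\subseteq\cdots\subseteq A_N=\Delta[1]\otimes S$ with $A_i := A_{i-1}\cup R_i$, refined inside each shuffle $R$ by adjoining the faces $R^{[H]}$ (obtained by contracting all inner edges of $R$ except those in $H$ together with the ``special'' very inner edges, those sitting immediately below an $\overline{[1]}$-vertex in the shuffle) in an order extending the inclusion order of the sets $H$, exactly as in the proof of Proposition~\ref{prop7-3}.

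The core of the argument is then to analyse each refined step $B\hookrightarrow B\cup R^{[H]}$ by identifying $B\cap R^{[H]}$ inside the closed boundary $\partial_{cl}(R^{[H]})$. The root face of $R^{[H]}$ (when it exists) and all of its non-special inner faces lie in $A_0$ or in an earlier stage of the filtration, as does every face in which a copy of an $S$-colour lying over $r$, or an $\overline{[1]}$-colour, has disappeared; hence $B\cap R^{[H]}$ is a horn of the form $\Lambda^{E\cup D}_{cl}(R^{[H]})$ with $E$ a non-empty set of very inner edges and $D$ a set of inner edges such that each $e\in E$ is still very inner after contracting the edges of $D$. By Lemma~\ref{lem7-2} every such step is a via morphism. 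The only shuffle for which this breaks down is $R^{\sharp}$: its root face $\{0\}\otimes S$ is not available until the very last step, so at that point the intersection of the accumulated subobject with $R^{\sharp}$ is the full closed boundary of $R^{\sharp}$ minus its root face, and the final map $B\hookrightarrow B\cup R^{\sharp}=\Delta[1]\otimes S$ is exactly the pushout of the unary root horn into $R^{\sharp}$ (the union of all closed faces of $R^{\sharp}$ except its root face), where $R^{\sharp}$ has at least three vertices and its unary root vertex is the vertex of $\Delta[1]\otimes r$. This is precisely the claim.

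I expect the combinatorial bookkeeping of the second step to be the main obstacle: one must choose the ordering of the shuffles, and within each shuffle of the faces $R^{[H]}$, so that at every step other than the last the faces of $R^{[H]}$ still missing are exactly those contracting very inner edges (so that Lemma~\ref{lem7-2} genuinely applies, via also Lemma~\ref{lem7-1}), and one must check that the last step really is the pushout of the root horn on $R^{\sharp}$ missing precisely the root face, not of a strictly larger subobject. As in the proof of Proposition~\ref{prop7-3}, the features demanding the most care are the stumps of $S$ and the additional root faces that can appear in shuffles of $\Delta[1]\otimes S$ even though $S$ and $\overline{[1]}$ themselves may have none; in particular the hypothesis that $S$ has at least two vertices is what guarantees simultaneously that $R^{\sharp}$ has at least three vertices and that the copy of $S$ grafted above $v_0$ contributes a genuinely non-trivial root horn (for $S=\overline{\eta}$ the inclusion (\ref{eq8-1}) is a retract, as already noted).
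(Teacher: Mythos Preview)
Your proposal is correct and takes essentially the same route as the paper: order the shuffles with $R^{\sharp}$ (``$S$ on top of $\Delta[1]$'') last, show each earlier inclusion is via, and identify the final step as the pushout of the unary root horn of $R^{\sharp}$. The only difference is in the sub-filtration within each earlier shuffle: the paper uses a slightly coarser filtration by ``special faces'' obtained from $R$ by contracting subsets of the $(0,a)$-edges only (keeping all $(1,a)$-edges), whereas you follow the full $R^{[H]}$-scheme of Proposition~\ref{prop7-3}; the extra steps this introduces all lie in $A_0$ already, so the non-trivial content is the same.
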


	(The tree with at least three vertices occurring in the lemma is the tree obtained by grafting $S$ on top of $\Delta[1]$.)

\begin{proof}
	Let $A=\{1\}\otimes S \cup \Delta[1]\otimes \partial_{cl}(S)$, the domain of the map in the lemma. Let us also write $\Delta[1]\otimes S$ as a union of shuffles $R_1, \ldots, R_n$, where the first shuffle is ``copies of $\Delta[1]$ on top of $S$'' and the last one is ``a copy of $S$ on top of $\Delta[1]$''. For example, if $S$ is the closed $2$-corolla as pictured on the left, there are only two shuffles as pictured on the right: 
	\[
	\begin{tikzpicture} 
	[level distance=8mm, 
	every node/.style={fill, circle, minimum size=.12cm, inner sep=0pt}, 
	level 1/.style={sibling distance=10mm}, 
	level 2/.style={sibling distance=10mm}, 
	level 3/.style={sibling distance=10mm}]
	
	\node(anchorR1)[style={color=white}] {} [grow'=up] 
	child {node(vertexR1)[draw] {} 
		child{node {}}
		child{node {}}
	};
	
	\node[style={color=white}, right=5cm of anchorR1] {} [grow'=up] 
	child {node(vertexR2)[draw] {} 
		child{node[draw, fill=none] {}
			child{node {}}}
		child{node[draw, fill=none] {}
			child{node {}}}
	};
	
	\node[style={color=white}, right=8cm of anchorR1] {} [grow'=up] 
	child {node(vertexR3)[draw, fill=none] {} 
		child{node {}
			child{node {}}
			child{node {}}
		}
	};

	\tikzstyle{every node}=[]
	
	\node at ($(vertexR1) + (-.5cm,.4cm)$) {$a$};
	\node at ($(vertexR1) + (.5cm,.4cm)$) {$b$};
	\node at ($(vertexR1) + (.2cm,-.5cm)$) {$r$};

	\node at ($(vertexR2) + (-.5cm,.4cm)$) {$1a$};
	\node at ($(vertexR2) + (.5cm,.4cm)$) {$1b$};
	\node at ($(vertexR2) + (-.9cm,1.2cm)$) {$0a$};
	\node at ($(vertexR2) + (.9cm,1.2cm)$) {$0b$};
	\node at ($(vertexR2) + (.3cm,-.5cm)$) {$1r$};

	\node at ($(vertexR3) + (-.6cm,1.2cm)$) {$0a$};
	\node at ($(vertexR3) + (.6cm,1.2cm)$) {$0b$};
	\node at ($(vertexR3) + (.3cm,.4cm)$) {$0r$};
	\node at ($(vertexR3) + (.3cm,-.5cm)$) {$1r$};
	\end{tikzpicture} 
	\] 
	Consider the filtration of $\Delta[1]\otimes S$, 
	\[
	A=B_0 \subseteq B_1 \subseteq \ldots \subseteq B_n = \Delta[1]\otimes S, 
	\]
	defined by $B_i = A\cup R_1 \cup \ldots \cup R_i$. Then one easily sees that the last inclusion $B_{n-1}\hookrightarrow B_n$ is a pushout of a unary root horn. Indeed, for the last shuffle $R_n$, the face contracting the edge $(0,r)$ just above the unary root belongs to $R_{n-1}$, and the higher faces belong to $A$, while the face chopping of the root vertex and root edge $(1,r)$ is the only one that is missing. 
	
	We claim that for each $i<n$, the inclusion $B_{i-1}\hookrightarrow B_i$ is a via morphism. To see this, consider the shuffle $R=R_i$. Now consider ``special'' faces $F\subseteq R$ obtained by successively contracting edges with colour $(0,a)$, for an edge in $S$. These special faces are ordered by inclusion, and we can adjoin them successively to $B_{i-1}$. Suppose for a given such special face $F$, all smaller special faces have already been adjoined, while $F$ is not contained in $A$. Write $C$ for the union of $B_{i-1}$ and these smaller special faces. Then each face of $F$ contracting an edge $(0,a)$ belongs to $C$. There must be at least one such face, because otherwise $F$ would already be contained in $A$. This implies in particular that at least one of the highest occurrences of an edge coloured $(1,a)$ for some $a$ is very inner. Contracting any such highest very inner edge $(1,a)$ results in a face of $F$ which cannot belong to $A$, nor to an earlier shuffle. On the other hand, contracting an edge $(1,a)$ without an edge $(0,a)$ above it in $F$ yields a face belonging to $A$. This shows that $F\cap C\rightarrowtail F$ is a via morphism (cf. Lemma \ref{lem7-2}), and hence so is its pushout $C\to C\cup F$. This completes the induction step, and the proof of the lemma.
\end{proof}

\begin{lemma}
	Let $T$ be a close tree with at least three vertices, and a unary root vertex. Then the inclusion $\Lambda_{cl}^r(T)\rightarrowtail T$ of the root horn has the extension property for maps into a unital $\infty$-operad $E$ which sends the unary root vertex to an equivalence. 
\end{lemma}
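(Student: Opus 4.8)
The plan is to induct on the number of vertices of $T$. Write $v_r$ for the unary root vertex of $T$, let $r$ be the root edge and $e_0$ the edge immediately above $v_r$, and put $S=\partial_r(T)$, the root face, a closed tree with root edge $e_0$. Since $T$ has at least three vertices, the vertex of $S$ above $e_0$ is not a stump, so $e_0$ is a \emph{very inner} edge of $T$ and $S$ has at least two vertices. I would first record three combinatorial facts. (i) Because faces commute, every closed face of $S=\partial_r(T)$ already occurs in $\Lambda_{cl}^r(T)$: an inner face $\partial_x(S)$ equals $\partial_r(\partial_x(T))$ for an inner edge $x\neq r$ of $T$, and the root face of $S$ (present exactly when the vertex above $e_0$ is unary) equals $\partial_r(\partial_{e_0}(T))$; hence $\partial_{cl}(T)=\Lambda_{cl}^r(T)\cup S$ and $\Lambda_{cl}^r(T)\cap S=\partial_{cl}(S)$, so the only subobject of $T$ strictly between $\Lambda_{cl}^r(T)$ and $T$ is $\partial_{cl}(T)$. (ii) The closed corolla $\overline{C}_{v_r}$ on the edges $r$ and $e_0$ (the closed $1$-simplex), obtained from $T$ by contracting all the inner edges of $S$, is contained in $\Lambda_{cl}^r(T)$. (iii) The pushout $Q:=\overline{C}_{v_r}\cup S$ over the copy of $\overline{\eta}$ on $e_0$ satisfies $\cSpine(T)\rightarrowtail Q\rightarrowtail T$, with $\cSpine(T)\rightarrowtail Q$ a pushout of $\cSpine(S)\rightarrowtail S$; by Theorem \ref{theorem7-6} both $\cSpine(S)\rightarrowtail S$ and $\cSpine(T)\rightarrowtail T$ are via, so the cancellation property of via morphisms gives that $Q\rightarrowtail T$ is via, and $E$ fills it.

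The base case $T=\overline{[2]}$ — and more generally the case in which $T=\overline{[n]}$ is linear — reduces to an $\infty$-categorical statement. The full subcategory of $\Omega_{cl}$ spanned by the linear closed trees is (a copy of) $\Delta$, and the restriction of $E$ to it is an $\infty$-category, because $E$ has the RLP with respect to the very inner horns of the $\overline{[m]}$, which under this identification are exactly the inner horns of the $\Delta[m]$. Under the same identification $\Lambda_{cl}^r(\overline{[n]})\rightarrowtail\overline{[n]}$ becomes the special outer horn $\Lambda^0[n]\rightarrowtail\Delta[n]$, and the hypothesis that the root vertex maps to an equivalence becomes the hypothesis that the edge $\Delta^{\{0,1\}}$ does; the extension property is then Joyal's theorem on special outer horns (\cite{joyal}, Corollary 1.6; \cite{lurie}, Section 1.2.5), already invoked in the proof of Theorem \ref{theo8-1}.

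For the inductive step I would use the via morphism $Q\rightarrowtail T$ to reduce the problem to extending the given map $f\colon\Lambda_{cl}^r(T)\to E$ first over $Q=\overline{C}_{v_r}\cup S$. Since $\overline{C}_{v_r}\subseteq\Lambda_{cl}^r(T)$ and $f|_{\overline{C}_{v_r}}$ is by hypothesis an equivalence, it extends to a map of the closed interval into $E$, and its homotopy inverse is used to recover the ``$S$-coordinate'' of the filler. Concretely: restrict $f$ to $Q^{\partial}:=\overline{C}_{v_r}\cup\partial_{cl}(S)$ and extend it over $Q$ by adjoining the faces of $S$ in order of increasing dimension; because the $\overline{C}_{v_r}$-coordinate is an equivalence, each of these sub-extension problems becomes — after contracting the very inner edge $e_0$ if necessary — either a very inner horn of a strictly smaller closed tree, which $E$ fills directly, or a special root horn of a strictly smaller closed tree whose root vertex again maps to an equivalence, handled by the inductive hypothesis (together with the extension properties of $E$ for trees with fewer vertices supplied by Theorem \ref{theo8-1}). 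Having extended $f$ over $Q$, fill the via morphism $Q\rightarrowtail T$ to obtain $g\colon T\to E$, and finally verify that $g$ restricts to $f$ on $\Lambda_{cl}^r(T)=Q^{\partial}\cup\Lambda_{cl}^r(T)$, using that fillers of equivalences are unique up to a contractible space of choices.

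The main obstacle is precisely this inductive step: choosing the order in which the faces of $S$, and then the remaining cells of $T$, are adjoined so that each stage is genuinely a very inner horn or a strictly smaller special root horn, and checking that the equivalence hypothesis survives each contraction of $e_0$ — a ``special edge'' bookkeeping in the spirit of Lemma \ref{lem7-2} and of the preceding lemma. By comparison the linear base case (Joyal's theorem), the reduction through $Q\rightarrowtail T$, and the formal saturation and pushout manipulations (using Proposition \ref{prop7-3} and Theorem \ref{theorem7-6}) are routine.
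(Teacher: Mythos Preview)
Your inductive strategy has a genuine gap in the final step. You restrict the given $f\colon\Lambda_{cl}^r(T)\to E$ to the small subobject $Q^{\partial}=\overline{C}_{v_r}\cup\partial_{cl}(S)$, extend over $Q$, and then fill $Q\rightarrowtail T$ to obtain $g\colon T\to E$. But $\Lambda_{cl}^r(T)$ is strictly larger than $Q^{\partial}$: it contains all inner faces $\partial_x(T)$ for inner edges $x$ of $T$ (in particular $\partial_{e_0}(T)$), and these faces have root $r$, so they lie neither in $\overline{C}_{v_r}$ nor in $S$, hence not in $Q$. Your construction gives no control over $g$ on these faces, so there is no reason for $g|_{\Lambda_{cl}^r(T)}$ to equal $f$. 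Invoking ``uniqueness up to a contractible space of choices'' does not repair this: the extension problem asks for strict agreement on $\Lambda_{cl}^r(T)$, not agreement up to homotopy. A secondary issue is your appeal to Theorem~\ref{theorem7-6}(b) for the cancellation property of via morphisms: that theorem characterizes via as the \emph{smallest} saturated class with cancellation containing the spine inclusions, which does not by itself assert that via morphisms satisfy cancellation.

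The paper's argument avoids this difficulty entirely by passing to slices: writing $T=F\star\Delta[1]$ for the forest $F$ above the lower two vertices, the extension problem is transposed across the join--slice adjunction into a lifting problem for $\{1\}\rightarrowtail\Delta[1]$ against $F/E\to\partial_{cl}(F)/E$. One checks (using Proposition~\ref{prop7-3}) that this map is a left fibration, and since the image of $\Delta[1]$ is an equivalence, the lift exists by the standard fact that left fibrations lift equivalences. This approach keeps track of the full boundary data automatically, which is exactly what your induction loses.
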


\begin{proof}
	The proof is completely analogous to the one in \cite{cisinskimoerdijk1} or \cite{HMbook}, and we only give a sketch. (The main differences lie in distinguishing the closed boundary from the full  dendroidal boundary, and the observation to be made below that joins of trees give very anodyne maps.)
	
	For a tree $T$ in the lemma, one can write $T$ as the join $T=F\star\Delta[1]$, where $F$ is the forest obtained by chopping off the lower two vertices in $T$: 
	\[
	\begin{tikzpicture} 
	[level distance=10mm, 
	every node/.style={fill, circle, minimum size=.12cm, inner sep=0pt}, 
	level 1/.style={sibling distance=10mm}, 
	level 2/.style={sibling distance=10mm}, 
	level 3/.style={sibling distance=10mm}]
	
	\node(anchorR1)[style={color=white}] {} [grow'=up] 
	child {node(vertexR1)[draw] {} 
		child{node {}
		child
		child
		child
	}
	};

	\draw (-1.3,4) -- (-1,3) -- (-.7,4) -- cycle;
	\draw (-.3,4) -- (0,3) -- (.3,4) -- cycle;
	\draw (.7,4) -- (1,3) -- (1.3,4) -- cycle;
	
	\node[style={color=white}, right=5cm of anchorR1] {} [grow'=up] 
	child {
	};

	\draw (4.83,2) -- (5.13,1) -- (5.43,2) -- cycle;
	
	\node[style={color=white}, right=7cm of anchorR1] {} [grow'=up] 
	child {
	};
	
	\draw (6.83,2) -- (7.13,1) -- (7.43,2) -- cycle;
	
	\node[style={color=white}, right=9cm of anchorR1] {} [grow'=up] 
	child {
	};
	
	\draw (8.83,2) -- (9.13,1) -- (9.43,2) -- cycle;

	\tikzstyle{every node}=[]
	
	\node at ($(vertexR1) + (-1.3cm,.4cm)$) {$T$:};
	\node at ($(vertexR1) + (.3cm,.4cm)$) {$0$};
	\node at ($(vertexR1) + (.3cm,-.5cm)$) {$1$};	
	\node at ($(vertexR1) + (-.3cm,0cm)$) {$r$};	
	
	\node at ($(vertexR2) + (-1.3cm,.4cm)$) {$F$:};
	\end{tikzpicture} 
	\]

	One next translates an extension problem as in the diagram on the left into one as on the right: 
	\[
	\xymatrix{\Lambda^r T \ar[d] \ar[r] & E \\ T \ar@{.>}[ru]}
	\qquad \qquad \qquad
	\xymatrix{\{1\} \ar[d] \ar[r] & F/E \ar[d] \\ \Delta[1] \ar[r] \ar@{.>}[ru] & \partial_{cl}(F)/E}
	\]
	where the slice denotes the adjoint to the join, as in \cite{cisinskimoerdijk1}, and $\partial_{cl}(F)$ is the closed forest boundary of $F$. One next observes that $F/E \to \partial_{cl}(F)/E$ is a left fibration. Indeed, for the inclusion $\partial_{cl}(F)\rightarrowtail F$ into a non-empty forest, the map 
	\[
	F\star \Lambda^i[n] \cup \partial_{cl}(F) \star \Delta[n] \rightarrowtail F\star \Delta[n]
	\]
	is a via morphism for $0< i < n$ (cf. \cite{HMbook}, Lemma 6.4.3). One then finishes the proof exactly as in loc. cit., observing that $\Delta[1]\to \partial_{cl}(F)/E$ is an equivalence in $\partial_{cl}(F)/E$ and using that left fibrations have the RLP with respect to such equivalences. 
\end{proof}

\section{The unital operadic model structure for closed dendroidal sets} \label{sect9}

In this section we present a model structure on the category of $\cdSets$ of closed dendroidal sets, suitable for a comparison with unital operads in Section \ref{sect13} below. The following two theorems summarize the main aspects of this model structure. 

\begin{theorem} \label{teo9-1}
	The category $\cdSets$ of closed dendroidal sets carries a Quillen model structure in which the cofibrations are the normal monomorphism while the fibrant objects are precisely the unital $\infty$-operads. 
\end{theorem}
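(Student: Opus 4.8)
The plan is to build this model structure as a left Bousfield localization, or rather to produce it directly via the standard recognition principle for cofibrantly generated model structures (Smith's theorem / the Cisinski machinery), using the class of normal monomorphisms as generating cofibrations and the very inner anodyne morphisms together with the endpoint inclusion into $J$ as generating trivial cofibrations. Concretely, I would let the cofibrations be the normal monomorphisms (which are generated by the set of closed boundary inclusions $\partial_{cl}(T)\rightarrowtail T$), and I would declare a map to be a trivial fibration if it has the right lifting property against all normal monomorphisms. The weak equivalences should be defined as the maps $f\colon X\to Y$ such that for every unital $\infty$-operad $E$, the induced map $\hhom(\tilde Y, E)\to\hhom(\tilde X, E)$ is an equivalence of $\infty$-categories (with $\tilde{(-)}$ a normalization); equivalently one can phrase this using the fact that the fibrant objects are to be the unital $\infty$-operads.

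First I would verify the easy structural axioms: the weak equivalences as defined satisfy two-out-of-three and are closed under retracts (this is formal once phrased via mapping into fibrant objects), and the classes of cofibrations and trivial cofibrations are saturated with small generating sets. Then I would establish the crucial lifting and factorization statements. The generating trivial cofibrations should be taken to be the very inner horn inclusions $\Lambda_{cl}^e(T)\rightarrowtail T$ together with the two inclusions $\overline\eta = cl_!i_!(\Delta[0]) \rightarrowtail cl_!i_!(J)$; call the saturation of this set the class of \emph{anodyne} maps. One must show: (i) anodyne maps are weak equivalences and cofibrations; (ii) a map with the RLP against all anodyne maps and whose source and target are fibrant is a fibration between fibrant objects, and such maps that are also weak equivalences are exactly the maps with the RLP against \emph{all} cofibrations that are weak equivalences; (iii) the fibrant objects for the resulting structure are precisely the unital $\infty$-operads. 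Step (iii) is where Theorem \ref{theo8-1} and Proposition \ref{prop7-3} do the real work: Theorem \ref{theo8-1} gives that mapping out of a normal object into a unital $\infty$-operad sends normal monos to fibrations of $\infty$-categories, which is exactly the input needed to run the standard argument (as in \cite{cisinskimoerdijk1, HMbook}) identifying the fibrant objects and showing that anodyne extensions suffice.

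For the factorization axioms I would use the small object argument with the generating trivial cofibrations to factor any map as an anodyne map followed by a map with the RLP against all anodyne maps; the point is then to see that this second map, \emph{when the codomain is fibrant}, is a fibration and that it is a weak equivalence iff it is a trivial fibration. This is the heart of the matter and is handled by the enriched-lifting / "partial fibrant replacement" technique: one shows that for $E$ a unital $\infty$-operad, $\hhom(-,E)$ carries cofibrations to fibrations and trivial cofibrations to trivial fibrations of $\infty$-categories (again via Theorem \ref{theo8-1} and the results quoted there on $\infty$-categories from \cite{joyal, lurie}), and one deduces from this that the lifting problems of cofibrations against fibrations between fibrant objects have solutions. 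The remaining half of CM4 and the retract part of CM5 are then formal.

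The main obstacle I anticipate is \textbf{not} the formal model-categorical bookkeeping but rather assembling, from the combinatorial inputs already proved (Proposition \ref{prop7-3}, Theorem \ref{theorem7-6}, Theorem \ref{theo8-1}, and the two lemmas of Section \ref{sect8}), the precise statement that every map with the RLP against all very inner anodyne maps and both endpoint-into-$J$ inclusions, and with fibrant codomain, is actually a fibration of a genuine model structure — i.e. that this right class is ``large enough''. Equivalently, one must verify that the anodyne maps generate \emph{all} trivial cofibrations, not merely a cofinal subclass; this is the usual subtle point (the ``$E_2$'' of Cisinski's theory, or Joyal's argument that inner anodyne plus $J$-anodyne detects trivial fibrations among inner fibrations). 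I expect to handle it exactly as in \cite{cisinskimoerdijk1}: show that a fibration between fibrant objects which is a weak equivalence has the RLP against all normal monomorphisms, using that $\hhom(-,E)$ is a right Quillen functor to the Joyal model structure and the characterization of equivalences of $\infty$-categories there. Once that is in place, Theorem \ref{teo9-1} follows by the standard recognition theorem.
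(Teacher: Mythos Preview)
Your proposal is close in spirit to the paper's proof, and you correctly identify the definition of weak equivalences (via $\hhom(-,E)$ for unital $\infty$-operads $E$, after normalization) and the role of Theorem~\ref{theo8-1}. But there is one substantive divergence that leaves a gap.

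You propose to take the very inner horn inclusions together with the inclusions $\overline{\eta}\rightarrowtail cl_!(J)$ as \emph{the} generating trivial cofibrations, and to run the small object argument with these. The paper does not do this, and in fact cannot: these maps do not generate all trivial cofibrations. They only detect fibrations \emph{between fibrant objects} (this is exactly the content of Theorem~\ref{teo9-2}(d), established as a consequence of the model structure rather than an input to it). Your own factorization paragraph acknowledges this (``when the codomain is fibrant''), but then the factorization axiom is not established for maps with arbitrary codomain, and the sentence ``the remaining half of CM4 and the retract part of CM5 are then formal'' is not justified.

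The paper's argument closes this gap differently. After defining weak equivalences as you do, it shows (i) that trivial cofibrations are stable under pushout, by covering a pushout square by one between normalizations and observing that $\hhom(-,E)$ takes this to a pullback square of $\infty$-categories; and (ii) that the class of trivial cofibrations is generated by a \emph{set}, namely the trivial cofibrations between countable normal objects (a cardinality argument as in \cite{HHM}, \cite{moerdijklectures}). With this genuine generating set in hand, the small object argument yields the (trivial cofibration, fibration)-factorization for arbitrary maps, and the fibrations are then \emph{defined} as the maps with the RLP against all trivial cofibrations. The identification of fibrant objects with unital $\infty$-operads, and of fibrations between them via the via maps and $\overline{\eta}\rightarrowtail cl_!(J)$, is deduced afterward from the construction. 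So the ingredient you are missing is not more combinatorics about horns, but the countability/pushout-stability argument that produces an honest generating set of trivial cofibrations.
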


We shall refer to this model structure as the \emph{unital operadic model structure} on $\cdSets$. 

\begin{proof}
	The proof of the model structure follows the same pattern as the one in \cite{HMbook}. The cofibrations are generated by the closed boundary inclusions $\partial_{cl}(T)\rightarrowtail T$, for all closed trees $T$. The small object argument then gives a factorization of any map $X\to Y$ into a cofibration $X\to Z$ followed by a map $Z\to Y$ having the RLP with respect to all the cofibrations. In particular, for $X=\emptyset$, this yields a \emph{normalization} $\tilde{Y} \twoheadrightarrow Y$ of any object $Y$. One then defines a map $f\colon A\to B$ to be a weak equivalence iff for some map $\tilde{f}$ between normalizations which covers $f$, as in 
	\[
	\xymatrix{\tilde{A} \ar@{->>}[d] \ar[r]^{\tilde{f}}  & \tilde{B} \ar@{->>}[d]	 \\
			A \ar[r]^{f} & B }
	\] 
	the induced morphism $\hhom(\tilde{B}, E)\to \hhom(\tilde{A}, E)$ is a weak equivalence of $\infty$-categories, for \emph{any} unital operad $E$. This definition is independent of the choice of the normalization $\tilde{f}\colon \tilde{A} \to \tilde{B}$ of $f\colon A \to B$. 
	
	Next, one shows that a map $Y\to X$ having the RLP with respect to all the normal monomorphisms (i.e., all the cofibrations) is a weak equivalence. Indeed, for a normalization $\tilde{X}\to X$ the pullback $\tilde{Y} = \tilde{X} \times_{X} Y \to Y$ is a normalization of $Y$ and $\tilde{Y} \to \tilde{X}$ still has the RLP with respect to all normal monomorphisms. It follows that this map has a section, making $\tilde{X}$ a $J$-deformation retract of $\tilde{f}$. This makes $\hhom(\tilde{X}, E) \to \hhom(\tilde{Y}, E)$ a deformation retraction for any unital $\infty$-operad $E$, proving that $Y\to X$ is a weak equivalence. 
	
	The next thing to prove is that any map can be factored as a trivial cofibration followed by a map having the RLP with respect to all the trivial cofibrations. For this, one again uses the small object argument and shows first that the trivial cofibrations are generated by trivial cofibration between countable (and normal) objects, exactly as in \cite{HHM} and \cite{moerdijklectures}, and that they are stable under pushout. The latter follows readily from the fact that a pushout square can be covered by another pushout square between normalizations, hence maps to a pullback suqare of $\infty$-categories after applying $\hhom(-, E)$ for a unital $\infty$-operad $E$, exactly as in \cite{HHM}, Lemma 3.7.13. 
	
	Finally, one \emph{defines} the fibrations as the maps having the RLP with respect to all trivial cofibrations. The proof can then be completed in the standard way, using the retract argument for the verification of one of the lifting axioms. 
	\end{proof}

It will be useful to explicitly state some properties of this model structure: 

\begin{theorem} \label{teo9-2}
	The unital operadic model structure on $\cdSets$ has the following properties: 
	\begin{enumerate}
		\item[(a)] A map $A\to B$ between normal objects is a weak equivalence iff for every unital $\infty$-operad $E$, the map $\hhom(B, E)\to \hhom(A, E)$ is a weak equivalence of $\infty$-categories. 
		\item[(b)] The model structure is cofibrantly generated. 
		\item[(c)] The model structure is left proper.
		\item[(d)]\label{the9-2d} A morphism $Y\to X$ between unital $\infty$-operads is a fibration iff it has the RLP with respect to all via morphisms as well as with respect to the two inclusions $\overline{\eta}\to cl_!(J)$ (where $J$ is the nerve of the gruopoid $0\leftrightarrow 1$ viewed as a dendroidal set).
		\item[(e)] The pushout-product property holds: if $U\rightarrowtail X$ and $V\rightarrowtail Y$ are cofibrations the so is $U\otimes Y\cup X\otimes V \rightarrowtail X\otimes Y$; and in addition, the latter is a trivial cofibration if at least one of $U\rightarrowtail X$ or $V\rightarrowtail Y$ is.
	\end{enumerate}
\end{theorem}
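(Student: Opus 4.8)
The plan is to derive (a)--(e) from the construction of the model structure carried out in the proof of Theorem~\ref{teo9-1}, together with Theorem~\ref{theo8-1}, Proposition~\ref{prop7-3} and Lemma~\ref{lemma5.3}. Parts (a) and (b) are bookkeeping: a normal object is its own normalization, so in (a) one may take $\tilde A=A$, $\tilde B=B$, $\tilde f=f$ in the definition of weak equivalence recalled in the proof of Theorem~\ref{teo9-1}, whose independence of the chosen normalization then yields the criterion; and (b) is clear since the cofibrations are generated by the closed boundary inclusions $\partial_{cl}(T)\rightarrowtail T$ and the trivial cofibrations by trivial cofibrations between countable normal objects. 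For (c) I would run the argument of \cite{HHM}, Lemma~3.7.13: a pushout square with a cofibration $A\rightarrowtail B$ and a weak equivalence $A\to C$ is covered by a pushout square of normal objects whose two legs out of $\tilde A$ are cofibrations and whose vertical maps are normalizations, $\tilde A\to\tilde C$ being still a weak equivalence by two-out-of-three; applying $\hhom(-,E)$ for a unital $\infty$-operad $E$ gives a pullback square of $\infty$-categories in which $\hhom(\tilde C,E)\to\hhom(\tilde A,E)$ is a Joyal fibration by Theorem~\ref{theo8-1} and a Joyal equivalence by (a), hence a trivial fibration; trivial fibrations are stable under pullback, so $\hhom(\tilde D,E)\to\hhom(\tilde B,E)$ is a trivial fibration for every $E$, whence $\tilde B\to\tilde D$ and therefore $B\to D$ is a weak equivalence. (Only stability of trivial fibrations under pullback is used, not right properness of the Joyal structure.)

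For (d), one direction is formal: via morphisms are trivial cofibrations -- normal monomorphisms by definition, and weak equivalences because Theorem~\ref{theo8-1} and Proposition~\ref{prop7-3} make $\hhom(B,E)\to\hhom(A,E)$ a trivial fibration of simplicial sets for each unital $\infty$-operad $E$ -- the inclusions $\overline\eta\to cl_!(J)$ are trivial cofibrations (as in the set-up of the model structure, $J$ being Joyal-contractible), and fibrations have the RLP against all trivial cofibrations. For the converse one must show that a map $p\colon Y\to X$ between unital $\infty$-operads with the RLP against all via morphisms and against $\overline\eta\to cl_!(J)$ has the RLP against \emph{every} trivial cofibration; I would do this by adapting Joyal's characterization of fibrations between $\infty$-categories (\cite{joyal}, \cite{cisinskimoerdijk1}). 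Having the RLP against via morphisms makes $p$ behave as a ``mid fibration'' (here Theorem~\ref{theo8-1} and Proposition~\ref{prop7-3} are the inputs), and the RLP against $\overline\eta\to cl_!(J)$ upgrades it to a genuine fibration; the combinatorial content of this last step is exactly the two lemmas at the end of Section~\ref{sect8}, which present $\{1\}\otimes S\cup\Delta[1]\otimes\partial_{cl}(S)\rightarrowtail\Delta[1]\otimes S$ as a composite of via morphisms followed by the pushout of a root horn, together with the fact that every edge of $J$ is an equivalence.

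For (e), the assertion that a pushout--product of two cofibrations is a cofibration is precisely Lemma~\ref{lemma5.3}. For the trivial half, let $i\colon U\rightarrowtail X$ be a cofibration and $j\colon V\rightarrowtail Y$ a trivial cofibration and write $i\mathbin{\hat\otimes}j\colon U\otimes Y\cup X\otimes V\rightarrowtail X\otimes Y$. By weak saturation in each variable -- the classes $\{j'\mid i\mathbin{\hat\otimes}j'\text{ is a trivial cofibration}\}$ and $\{i'\mid i'\mathbin{\hat\otimes}j\text{ is a trivial cofibration}\}$ are weakly saturated, using that $\otimes$ preserves colimits and that trivial cofibrations are stable under pushout by the proof of Theorem~\ref{teo9-1} -- it suffices to treat $i$ a generating cofibration $\partial_{cl}(T)\rightarrowtail T$ and $j$ a generating trivial cofibration, so that $U,X,V,Y$ may be taken normal. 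The key point is then that for a normal object $Z$ the functor $(-)\otimes Z$ preserves trivial cofibrations: its right adjoint $R_Z$ sends a unital $\infty$-operad $E$ to a closed dendroidal set which is again a unital $\infty$-operad by Proposition~\ref{prop7-3}, and by compatibility of the tensor product with the simplicial structure (Section~\ref{sect6}) the map $\hhom(Y\otimes Z,E)\to\hhom(V\otimes Z,E)$ is identified with $\hhom(Y,R_ZE)\to\hhom(V,R_ZE)$, a Joyal equivalence by part~(a) applied to $j$; a second application of part~(a), now to the normal monomorphism $j\otimes Z$, shows it is a trivial cofibration. Granting this, $X\otimes V\rightarrowtail X\otimes Y$ and $U\otimes V\rightarrowtail U\otimes Y$ are trivial cofibrations, so $X\otimes V\rightarrowtail U\otimes Y\cup X\otimes V$, being a pushout of the latter, is a trivial cofibration; since the composite $X\otimes V\rightarrowtail U\otimes Y\cup X\otimes V\xrightarrow{\,i\mathbin{\hat\otimes}j\,}X\otimes Y$ equals $X\otimes V\rightarrowtail X\otimes Y$, two-out-of-three makes $i\mathbin{\hat\otimes}j$ a weak equivalence, hence (being a normal monomorphism by Lemma~\ref{lemma5.3}) a trivial cofibration.

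I expect (d) to be the main obstacle: the formal machinery behind (a)--(c) and (e) is routine once Proposition~\ref{prop7-3}, Lemma~\ref{lemma5.3} and Theorem~\ref{theo8-1} are available, whereas (d) requires carrying Joyal's somewhat delicate analysis of isofibrations between $\infty$-categories over to unital $\infty$-operads. The genuinely combinatorial work this rests on is, however, already contained in Proposition~\ref{prop7-3} and in the lemmas of Section~\ref{sect8}, and one should make sure to establish (d) using only those lemmas rather than the pushout--product property, so that no circularity with (e) arises.
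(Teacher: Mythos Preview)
Your treatment of (a)--(d) agrees with the paper's: both regard these as immediate consequences of how the model structure was built in the proof of Theorem~\ref{teo9-1}, and your extra detail on (c) and (d) is harmless.

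For (e), however, you take a different route from the paper, and your route has a gap. The paper proves (e) \emph{using} (d): a map is a trivial cofibration iff it has the LLP against fibrations between fibrant objects; by adjunction and (d) this reduces the pushout-product $(U\rightarrowtail X)\,\hat\otimes\,(\partial_{cl}T\rightarrowtail T)$ to the two cases where $U\rightarrowtail X$ is a very inner horn (handled by Proposition~\ref{prop7-3}) or $\overline\eta\rightarrowtail cl_!(J)$ (handled in Section~\ref{sect8}). Your worry about circularity between (d) and (e) is thus misplaced: (d) is established independently and then feeds into (e).

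Your alternative argument, via the right adjoint $R_Z=\HHom(Z,-)$ and part~(a), hinges on the identification
\[
\hhom(Y\otimes Z,E)\;\cong\;\hhom(Y,R_ZE),
\]
which at level $n$ reads $\Hom\bigl(\Delta[n]\otimes(Y\otimes Z),E\bigr)\cong\Hom\bigl((\Delta[n]\otimes Y)\otimes Z,E\bigr)$. This is exactly associativity of the tensor product, and on closed dendroidal sets the tensor is \emph{not} strictly associative---only up to a via morphism, as worked out in Remark~\ref{remark9-3}. Section~\ref{sect6}, which you cite, does not provide this; it only records compatibilities such as $u_!(M\otimes X)\cong M\otimes u_!X$. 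Your argument can be repaired by invoking the associativity-up-to-via result (which does not depend on (e)), replacing ``identified with'' by ``weakly equivalent to'', and then running part~(a); but as written the step is unjustified, and once you allow yourself the machinery needed to fix it you are doing more work than the paper's direct use of (d).
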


\begin{proof}
	The additional properties (a)-(d) are immediate from the way the model structure was established, again exactly as in \cite{HHM, moerdijklectures}. Part (e) now follows form these properties together with Proposition \ref{prop7-3}. Indeed, to prove that $U\otimes Y\cup X\otimes V \rightarrowtail X\otimes Y$ is a trivial cofibration if, say, $U\rightarrowtail X$ is, we use the general fact that a map is a trivial cofibration iff it has the RLP with respect to fibration \emph{between fibrant objects}. Moreover, using an induction on $V\rightarrowtail Y$, it suffices to consider the case where $V\to Y$ is a closed boundary inclusion, of the form $\partial_{cl} T \rightarrowtail T$ for a closed tree $T$. It then suffices to show that if $E\to B$ is a fibration between fibrant objects, so is 
	\[
	\HHom(T, E)\to \HHom(\partial_{cl} T, E) \times_{\HHom(\partial_{cl} T, B)} \HHom(T, B) 
	\]
	(where $\HHom$ is the internal $\Hom$ adjoint to $\otimes$). Using Part (d) and the $\Hom$-tensor adjunction again, it then suffices to show that 
	\[
	U\otimes T\cup X\otimes \partial_{cl} T \rightarrowtail X\otimes T
	\]
	is a trivial cofibration in the two cases where (i) $U\rightarrowtail X$ is a very inner horn inclusion of the form $\Lambda_{cl}^{d} S \rightarrowtail S$, say; or (ii) $U\rightarrowtail X$ is of the form $\overline{\eta} \to cl_!(J)$. Case (i) follows form Proposition \ref{prop7-3}. Case (ii) follows since $\overline{\eta}\otimes T \cup cl_!(J) \otimes \partial_{cl} T \rightarrowtail cl_!(J) \otimes T$ has been shown to be a trivial cofibration in Section \ref{sect8}.
\end{proof}

\begin{remark} \label{remark9-3}
	One important advantage of the category of \emph{closed} dendroidal sets over that of all dendroidal sets is that for all practical purposes, the former behaves like a \emph{monoidal model category}, the only defect being that the tensor product is only associative up to a trivial cofibration. More precisely, if $X$, $Y$ and $Z$ are normal objects, then the canonical map (cf. \cite{HHM}, Section 6.3) 
	\[
	(X\otimes Y)\otimes Z \to X\otimes Y \otimes Z
	\]
	is a trivial cofibration. Indeed, by a standard induction on cofibrant objects, one can reduce the problem to the case where $X$, $Y$ and $Z$ are representable. In other words, we claim that for closed trees $R$, $S$ and $T$, the map $(R\otimes S)\otimes T \rightarrowtail R\otimes S \otimes T$ is a trivial cofibration. (Recall from \cite{HHM} that $R\otimes S\otimes T$ is the union of all threefold shuffles of $R$, $S$ and $T$, while $(R\otimes S)\otimes T$ is the union over the smaller set of shuffles given by shuffling $T$ with a single shuffle of $R\otimes S$ at the time; cf also the example below.) Write $R'\rightarrowtail R$ for the closed spine of $R$ and similarly for $S$ and $T$. Then there is a diagram 
	\[
	\xymatrix{
	(R'\otimes S') \otimes T' \ar@{>->}[r] \ar@{>->}[d] & R'\otimes S'\otimes T' \ar@{>->}[d] \\
	(R\otimes S) \otimes T \ar@{>->}[r] & R\otimes S\otimes T
 	} \]	
 where the vertical maps are via morphisms, by Proposition \ref{prop7-3} and Theorem \ref{theorem7-6}. Thus, to prove the claim, it suffices to prove that the upper map is a trivial cofibration. Using the fact that all objects involved in this map are unions of closed corollas, this now reduces the problem to showing that for three closed corollas $\overline{C}_k$, $\overline{C}_l$ and $\overline{C}_m$, the map 
 \[
 	(\overline{C}_k\otimes \overline{C}_l)\otimes \overline{C}_m \rightarrowtail\overline{C}_k \otimes \overline{C}_l \otimes \overline{C}_m
 \]
 is a trivial cofibration. In fact, it is a via morphism. Let us prove this for $k=l=1$ and $m=2$. It will be clear that the argument is exactly the same (but notationally more involved) for general $k$, $l$ and $m$. So, write the three corollas as 
\[
\begin{tikzpicture} 
[level distance=8mm, 
every node/.style={fill, circle, minimum size=.2cm, inner sep=0pt}, 
level 1/.style={sibling distance=10mm}, 
level 2/.style={sibling distance=10mm}, 
level 3/.style={sibling distance=10mm}]

\node(anchorR1)[style={color=white}] {} [grow'=up] 
child {node(vertexR1)[draw, fill=none] {} 
	child{node {}}
};

\node[style={color=white}, right=3cm of anchorR1] {} [grow'=up] 
child {node(vertexR2)[draw] {} 
	child{node {}}
};

\node[style={color=white}, right=6cm of anchorR1] {} [grow'=up] 
child {node(vertexR3)[draw, fill=none]{-}  
	child{node {}}
	child{node {}}
};

\tikzstyle{every node}=[]

\node at ($(vertexR1) + (.2cm,.4cm)$) {$0$};
\node at ($(vertexR1) + (.2cm,-.5cm)$) {$1$};
\node at ($(vertexR1) + (0,-1.5cm)$) {$\overline{C}_1$};

\node at ($(vertexR2) + (.2cm,.4cm)$) {$a$};
\node at ($(vertexR2) + (.2cm,-.5cm)$) {$b$};
\node at ($(vertexR2) + (0,-1.5cm)$) {$\overline{C}_1$};

\node at ($(vertexR3) + (-.5cm,.4cm)$) {$y$};
\node at ($(vertexR3) + (.5cm,.4cm)$) {$z$};
\node at ($(vertexR3) + (.2cm,-.5cm)$) {$x$};
\node at ($(vertexR3) + (0,-1.5cm)$) {$\overline{C}_2$};

\end{tikzpicture} 
\]
Then $\overline{C}_1\otimes \overline{C}_1$ is a union of two shuffles 
\[
\begin{tikzpicture} 
[level distance=8mm, 
every node/.style={fill, circle, minimum size=.2cm, inner sep=0pt}, 
level 1/.style={sibling distance=10mm}, 
level 2/.style={sibling distance=10mm}, 
level 3/.style={sibling distance=10mm}]

\node(anchorR1)[style={color=white}] {} [grow'=up] 
child {node(vertexR1)[draw] {} 
child{node[draw, fill=none] {}
	child{node {}}
}
};

\node[style={color=white}, right=5cm of anchorR1] {} [grow'=up] 
child {node(vertexR2)[draw, fill=none] {} 
child{node {}
	child{node {}}
}
};
	
\tikzstyle{every node}=[]

\node at ($(vertexR1) + (.3cm,1.2cm)$) {$0a$};
\node at ($(vertexR1) + (.3cm,.4cm)$) {$1a$};
\node at ($(vertexR1) + (.3cm,-.5cm)$) {$1b$};

\node at ($(vertexR2) + (.3cm,1.2cm)$) {$0a$};
\node at ($(vertexR2) + (.3cm,.4cm)$) {$0b$};
\node at ($(vertexR2) + (.3cm,-.5cm)$) {$1b$};

\end{tikzpicture} 
\]
 So $(\overline{C}_1\otimes\overline{C}_1 )\otimes \overline{C}_2$ is the union of the following two: 
 \[
 \begin{tikzpicture} 
 [level distance=8mm, 
 every node/.style={fill, circle, minimum size=.2cm, inner sep=0pt}, 
 level 1/.style={sibling distance=10mm}, 
 level 2/.style={sibling distance=10mm}, 
 level 3/.style={sibling distance=10mm}]
 
 \node(anchorR1)[style={color=white}] {} [grow'=up] 
 child {node(vertexR1)[draw, fill=none] {-} 
 	child{node {}
 		child{node[draw, fill=none] {}
 				child{node {}}}}
 child{node {}
 	child{node[draw, fill=none] {}
 		child{node {}}}}	
 };
 
 \node[style={color=white}, right=5cm of anchorR1] {} [grow'=up] 
 child {node(vertexR2)[draw, fill=none] {-}
	child{node[draw, fill=none] {}
		child{node {}
			child{node {}}}}
	child{node[draw, fill=none] {}
		child{node {}
			child{node {}}}}	
};
 
 \tikzstyle{every node}=[]
 
 \node at ($(vertexR1) + (-.9cm,2cm)$) {$0ay$};
 \node at ($(vertexR1) + (.9cm,2cm)$) {$0az$};
 \node at ($(vertexR1) + (-.9cm,1.2cm)$) {$1ay$};
 \node at ($(vertexR1) + (.9cm,1.2cm)$) {$1az$};
 \node at ($(vertexR1) + (-.6cm,.4cm)$) {$1by$}; 
 \node at ($(vertexR1) + (.6cm,.4cm)$) {$1bz$};
 \node at ($(vertexR1) + (.4cm,-.5cm)$) {$1bx$};
 \node at ($(vertexR1) + (-1.7cm,1cm)$) {$A$:};
 
\node at ($(vertexR2) + (-.9cm,2cm)$) {$0ay$};
\node at ($(vertexR2) + (.9cm,2cm)$) {$0az$};
\node at ($(vertexR2) + (-.9cm,1.2cm)$) {$0by$};
\node at ($(vertexR2) + (.9cm,1.2cm)$) {$0bz$};
\node at ($(vertexR2) + (-.6cm,.4cm)$) {$1by$}; 
\node at ($(vertexR2) + (.6cm,.4cm)$) {$1bz$};
\node at ($(vertexR2) + (.4cm,-.5cm)$) {$1bx$};
\node at ($(vertexR2) + (-1.7cm,1cm)$) {$B$:};
 
 \end{tikzpicture} 
 \] 
 together with the shuffles obtained by percolating through the binary vertex: 
 \[
 \begin{tikzpicture} 
 [level distance=8mm, 
 every node/.style={fill, circle, minimum size=.2cm, inner sep=0pt}, 
 level 1/.style={sibling distance=10mm}, 
 level 2/.style={sibling distance=10mm}, 
 level 3/.style={sibling distance=10mm}]
 
 \node(anchorR1)[style={color=white}] {} [grow'=up] 
 child {node(vertexR1)[draw] {} 
 	child{node[draw, fill=none] {-}
 	child{node[draw, fill=none] {}
 		child{node {}}}
 	child{node[draw, fill=none] {}
 		child{node {}}}
 	}
 };
 
 \node[style={color=white}, right=3cm of anchorR1] {} [grow'=up] 
 child {node(vertexR2)[draw] {} 
 child{node[draw, fill=none] {}
 	child{node[draw, fill=none] {}
 		child{node {}}
 		child{node {}}
 	}}
 };
 
 \node[style={color=white}, right=6cm of anchorR1] {} [grow'=up] 
 child {node(vertexR3)[draw, fill=none] {} 
 	child{node[draw, fill=none] {-}
 		child{node {}
 			child{node {}}}
 		child{node {}
 			child{node {}}} 		
 	}
 };
 
 \node[style={color=white}, right=9cm of anchorR1] {} [grow'=up] 
 child {node(vertexR4) {} 
 	child{node[draw, fill=none] {}
 		child{node[draw, fill=none] {-}
 			child{node {}}
 			child{node {}} 
 	}}
 };
 
 \tikzstyle{every node}=[]
 
 \node at ($(vertexR1) + (0,-1.5cm)$) {$A'$};
 
 \node at ($(vertexR2) + (0,-1.5cm)$) {$A''$};
 
 \node at ($(vertexR3) + (0,-1.5cm)$) {$B'$};
 
 \node at ($(vertexR4) + (0,-1.5cm)$) {$B''$};
 
 \end{tikzpicture} 
 \]
 In $\overline{C}_1 \otimes \overline{C}_1 \otimes \overline{C}_2$, there are two more shuffles which mix the two parts in $\overline{C}_1\otimes \overline{C}_2$: 
\[
\begin{tikzpicture} 
[level distance=8mm, 
every node/.style={fill, circle, minimum size=.2cm, inner sep=0pt}, 
level 1/.style={sibling distance=10mm}, 
level 2/.style={sibling distance=10mm}, 
level 3/.style={sibling distance=10mm}]

\node(anchorR1)[style={color=white}] {} [grow'=up] 
child {node(vertexR1)[draw, fill=none] {-} 
	child{node[draw, fill=none] {}
		child{node {}
			child{node {}}}}
	child{node {}
		child{node[draw, fill=none] {}
			child{node {}}}}	
};

\node[style={color=white}, right=5cm of anchorR1] {} [grow'=up] 
child {node(vertexR2)[draw, fill=none] {-}
	child{node {}
		child{node[draw, fill=none] {}
			child{node {}}}}
	child{node[draw, fill=none] {}
		child{node {}
			child{node {}}}}	
};

\tikzstyle{every node}=[]

\node at ($(vertexR1) + (-.9cm,2cm)$) {$0ay$};
\node at ($(vertexR1) + (.9cm,2cm)$) {$0az$};
\node at ($(vertexR1) + (-.9cm,1.2cm)$) {$0by$};
\node at ($(vertexR1) + (.9cm,1.2cm)$) {$1az$};
\node at ($(vertexR1) + (-.6cm,.4cm)$) {$1by$}; 
\node at ($(vertexR1) + (.6cm,.4cm)$) {$1bz$};
\node at ($(vertexR1) + (.4cm,-.5cm)$) {$1bx$};
\node at ($(vertexR1) + (-1.7cm,1cm)$) {$D$:};

\node at ($(vertexR2) + (-.9cm,2cm)$) {$0ay$};
\node at ($(vertexR2) + (.9cm,2cm)$) {$0az$};
\node at ($(vertexR2) + (-.9cm,1.2cm)$) {$1ay$};
\node at ($(vertexR2) + (.9cm,1.2cm)$) {$0bz$};
\node at ($(vertexR2) + (-.6cm,.4cm)$) {$1by$}; 
\node at ($(vertexR2) + (.6cm,.4cm)$) {$1bz$};
\node at ($(vertexR2) + (.4cm,-.5cm)$) {$1bx$};
\node at ($(vertexR2) + (-1.7cm,1cm)$) {$E$:};

\end{tikzpicture} 
\] 
So we need to see how to adjoin these. More precisely, \[\overline{C}_1 \otimes \overline{C}_1 \otimes \overline{C}_2=((\overline{C}_1 \otimes \overline{C}_1) \otimes \overline{C}_2)\cup D \cup E.\] For the shuffle $D$, the upper very inner faces (contracting $0by$, respectively $1az$) are contained in $A\cap B$ hence in $(\overline{C}_1 \otimes \overline{C}_1) \otimes \overline{C}_2$. But neither the very inner face contracting $1by$ nor the one contracting $1bz$ (nor their intersection) is. And similarly, neither $\partial_{0ay}$ nor $\partial_{0az}$ nor their intersection is. By Lemma \ref{lem7-2} we see that \[(\overline{C}_1 \otimes \overline{C}_1 )\otimes \overline{C}_2\cap D \rightarrowtail D\] is a via morphism, and hence so is its pushout $(\overline{C}_1 \otimes \overline{C}_1 )\otimes \overline{C}_2\to ((\overline{C}_1 \otimes \overline{C}_1 )\otimes \overline{C}_2)\cup D$.
In the same way, one shows that $(((\overline{C}_1 \otimes \overline{C}_1 )\otimes \overline{C}_2)\cup D) \cap E \rightarrowtail E$ is a via morphism, and hence so is its pushout
$((\overline{C}_1 \otimes \overline{C}_1 )\otimes \overline{C}_2)\cup D \to \overline{C}_1 \otimes \overline{C}_1 \otimes \overline{C}_2$.
\end{remark}

\begin{corollary} \label{cor9-4}
	The tensor product on $\cdSets$ induces a symmetric monoidal structure on the homotopy category associated to the unital operadic model structure of Theorem \ref{teo9-1}. 
\end{corollary}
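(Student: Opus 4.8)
The plan is to obtain the symmetric monoidal structure on $\mathrm{Ho}(\cdSets)$ by deriving the point-set tensor product. By the pushout-product property of Theorem \ref{teo9-2}(e), the bifunctor $\otimes\colon \cdSets\times\cdSets\to\cdSets$ is a left Quillen bifunctor, so setting $X\otimes^{L}Y:=\tilde X\otimes\tilde Y$ (tensor of normalizations, which are cofibrant replacements) defines a bifunctor on $\mathrm{Ho}(\cdSets)$ agreeing with the total left derived functor $L\otimes$, well-defined up to canonical isomorphism. All the coherence data below will be constructed on normal (hence cofibrant) objects and then transported along normalization; I will take the unit object to be the class of the closed unit tree $\overline{\eta}$, which is representable and hence normal.

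The unit and symmetry constraints are essentially formal. One has $\overline{\eta}\otimes X=X=X\otimes\overline{\eta}$ already in $\cdSets$, by the very definition of the tensor product (Section \ref{sect6}), so the corresponding maps in $\mathrm{Ho}(\cdSets)$ are identities and the triangle axiom will be automatic. The symmetry $\sigma_{X,Y}\colon X\otimes Y\xrightarrow{\ \cong\ }Y\otimes X$ exists already in $\cdSets$, because a shuffle of $S\otimes T$ is precisely the same datum as a shuffle of $T\otimes S$; it is natural and satisfies $\sigma^{2}=\mathrm{id}$ on the point-set level, hence also in $\mathrm{Ho}(\cdSets)$.

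The real content is the associator, and here I would use the $n$-fold tensor products together with Remark \ref{remark9-3}. For normal $X,Y,Z$, that remark says the canonical comparison $(X\otimes Y)\otimes Z\to X\otimes Y\otimes Z$ is a trivial cofibration; applying $\sigma$ (or repeating the same argument with the roles permuted) shows the comparison $X\otimes(Y\otimes Z)\to X\otimes Y\otimes Z$ is a trivial cofibration too. Both therefore become isomorphisms in $\mathrm{Ho}(\cdSets)$, and I define $\alpha_{X,Y,Z}$ as the first composed with the inverse of the second. Naturality of $\alpha$ follows from naturality of the comparison maps in each variable.

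The main obstacle is coherence — the pentagon and hexagon axioms — since this is \emph{not} a monoidal model category in the strict sense (the tensor is only associative up to a trivial cofibration), so the usual transfer theorem does not literally apply. The strategy is the one of \cite{HHM}, Section \ref{sect6} loc.\ cit.: for normal $X_{1},\dots,X_{n}$ every parenthesization of $X_{1}\otimes\cdots\otimes X_{n}$ maps to the fully $n$-ary tensor $X_{1}\otimes\cdots\otimes X_{n}$ by a trivial cofibration (iterate Remark \ref{remark9-3}, reducing as there to closed corollas and closed shuffles), and all these comparison maps are \emph{monomorphisms} which are mutually compatible, in that every triangle of comparison maps between parenthesizations and the $n$-ary product commutes already in $\cdSets$ — on representables this is visible directly, both composites being the evident inclusion of unions of shuffles. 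Consequently each composite occurring in the pentagon (using fourfold tensors) and in the hexagon (using threefold tensors) becomes, after postcomposition with the relevant comparison map, literally the same morphism; since that comparison map is monic, the diagrams commute in $\mathrm{Ho}(\cdSets)$. Combined with the strict unit and strict symmetry, this exhibits $(\mathrm{Ho}(\cdSets),\otimes^{L},\overline{\eta})$ as a symmetric monoidal category, and I expect this compatibility bookkeeping for closed trees to be the only nontrivial part of the argument, the rest following \cite{HHM} verbatim.
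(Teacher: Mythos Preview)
Your proposal is correct and follows exactly the route the paper intends: the corollary is stated without proof immediately after Remark \ref{remark9-3}, and is meant to be read as a consequence of the pushout-product property (Theorem \ref{teo9-2}(e)) together with the associativity-up-to-trivial-cofibration of that remark, with the coherence bookkeeping handled via the $n$-fold tensors as in \cite{HHM}. One small comment: in your pentagon/hexagon argument the word ``monic'' is a bit of a distraction---what you really use is that the comparison maps to the $n$-ary tensor become isomorphisms in $\mathrm{Ho}(\cdSets)$, so equality after postcomposition already yields equality there.
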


This corollary is one of the main differences between this model structure on $\cdSets$ and the operadic one on $\dSets$. 

\section{Some Quillen functors} \label{sect10}

Recall that we have introduced the following functors between small categories: 
\begin{equation*}
\xymatrix{
	\Omega_o \ar[r]^o \ar[rd]_h & \Omega \ar[d]^{cl} & \\ 
	& \Omega_{cl} \ar[r]^u & \Omega
} \qquad h=cl\circ o, \quad cl \dashv u
\end{equation*}

These induce restriction functors between presheaf categories 
\begin{equation*}
\xymatrix{
	\odSets  & \dSets\ar[l]_{o^*}   & \\ 
	& \cdSets  \ar[lu]^{h^*} \ar[u]^{cl^*} & \dSets \ar[l]_{u^*}
} 
\end{equation*}
which each have a left adjoint $(-)_!$ and a right adjoint $(-)_*$. Moreover, these categories carry model structures: the operadic one on $\dSets$ which induces a model structure on its slice $\odSets=\dSets/O$, and the unital operadic one on $\cdSets$ discussed in the previous section. In this section, we will investigate to what extent these different adjoint pairs form Quillen pairs. For completeness, we begin by stating a trivial observation.

\begin{proposition}
	The adjoint pair $\xymatrix{
		o_! : \odSets \ar@<.5ex>[r] & \dSets : o^* \ar@<.5ex>[l]  } $ is a Quillen pair.
\end{proposition}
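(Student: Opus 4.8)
The plan is to recognise this as the standard fact that the forgetful functor out of a slice model category is left Quillen. First I would recall from Remark \ref{remark4.1}(1) that $\odSets$ is the slice category $\dSets/O$, and that under this identification $o_!\colon\odSets\to\dSets$ is precisely the forgetful functor, sending an object $X\to O$ of $\dSets/O$ to $X$. This is immediate from the description of $o_!$ as a left Kan extension: since any morphism $S\to T$ of $\Omega$ with $T$ open has open domain, $o_!$ simply extends a presheaf on $\Omega_o$ by the empty set on the remaining (non-open) trees, and the resulting dendroidal set carries a unique map to $O$. Its right adjoint is the restriction functor $o^*$, as in \eqref{eq5-4}.

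Next I would invoke the definition of the model structure on $\odSets$ recalled at the start of Section \ref{sect10}: it is the slice model structure induced from the operadic model structure on $\dSets$, in which a morphism of $\dSets/O$ is a cofibration (resp. fibration, weak equivalence) precisely when its image under the forgetful functor is a cofibration (resp. fibration, weak equivalence) in $\dSets$. In particular the forgetful functor --- that is, $o_!$ --- sends cofibrations to cofibrations and weak equivalences to weak equivalences, hence trivial cofibrations to trivial cofibrations. Therefore $o_!$ is left Quillen, and $(o_!,o^*)$ is a Quillen pair.

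As the label ``trivial observation'' indicates, there is no genuine obstacle here. The only point deserving a moment of care is checking that the abstract equivalence $\odSets\simeq\dSets/O$ matches $o_!$ --- rather than $o_*$ --- with the forgetful functor; but this is settled by the explicit description of $o_!$ on representables given above, after which the statement is a direct unwinding of the definition of the slice model structure.
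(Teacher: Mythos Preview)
Your proof is correct and follows exactly the same approach as the paper: identify $\odSets$ with the slice $\dSets/O$ via Remark~\ref{remark4.1}(1), so that $o_!$ is the forgetful functor, and observe that the slice model structure makes this forgetful functor left Quillen by definition. The paper states this in a single sentence as a special case of the general fact about $\mathcal{E}/X\rightleftarrows\mathcal{E}$, while you spell out a bit more detail, but the content is the same.
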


\begin{proof}
	This is simply a special case of the well-known general property of an adjunction of the form $\xymatrix{\mathcal{E}/X \ar@<.5ex>[r] & \mathcal{E} \ar@<.5ex>[l] }$ associated to an arbitrary object $X$ in a model category $\mathcal{E}$. 
\end{proof}

\begin{remark} \label{rem10-2}
	The functor $o^*$ has a right adjoint, and preserves normal monomorphisms, i.e. cofibrations. Nonetheless, it is not a left Quillen functor. For example, for the tree $T$ pictured below, $o_!o^*$ maps the inner horn $\Lambda^b(T)\rightarrowtail T$ to the embedding of $T^\circ$ into $A$, as in 
	\[
	\begin{tikzpicture} 
	[level distance=8mm, 
	every node/.style={fill, circle, minimum size=.12cm, inner sep=0pt}, 
	level 1/.style={sibling distance=10mm}, 
	level 2/.style={sibling distance=10mm}, 
	level 3/.style={sibling distance=10mm}]
	
	\node(anchorR1)[style={color=white}] {} [grow'=up] 
	child {node(vertexR1)[draw] {} 
		child{node {}}
		child
	};
	
	\node[style={color=white}, right=3cm of anchorR1] {} [grow'=up] 
	child {node(vertexR2)[draw] {} 
		child
		child
	};
	
	\node[style={color=white}, right=6cm of anchorR1] {} [grow'=up] 
	child {node(vertexR3)[draw] {} 
		child
		child
	};
	
	\node[style={color=white}, right=8cm of anchorR1] {} [grow'=up] 
	child {node(vertexR4)[draw] {} 
		child
	};
	
	\tikzstyle{every node}=[]
	
	\node at ($(vertexR1) + (-.5cm,.4cm)$) {$b$};
	\node at ($(vertexR1) + (.5cm,.4cm)$) {$c$};
	\node at ($(vertexR1) + (.2cm,-.5cm)$) {$a$};
	\node at ($(vertexR1) + (0,-1.5cm)$) {$T$};
	
	\node at ($(vertexR2) + (-.5cm,.4cm)$) {$b$};
	\node at ($(vertexR2) + (.5cm,.4cm)$) {$c$};
	\node at ($(vertexR2) + (.2cm,-.5cm)$) {$a$};
	\node at ($(vertexR2) + (0,-1.5cm)$) {$T^\circ$};
	
	\node at ($(vertexR3) + (-.5cm,.4cm)$) {$b$};
	\node at ($(vertexR3) + (.5cm,.4cm)$) {$c$};		
	\node at ($(vertexR3) + (.2cm,-.5cm)$) {$a$};
	\node at ($(vertexR3) + (1,-1.5cm)$) {$A$};
	\node at ($(vertexR3) + (1.2,0cm)$) {$\cup$};
	
	\node at ($(vertexR4) + (.2cm,.4cm)$) {$c$};
	\node at ($(vertexR4) + (.2cm,-.5cm)$) {$a$};

	\end{tikzpicture} 
	\]
	where $\cup$ denotes the pushout over $a$ and $c$. This embedding $T^\circ \rightarrowtail A$ cannot be a trivial cofibration, because that would make $T^\circ$ a retract of $A$, which is clearly impossible. 
\end{remark}

\begin{remark} \label{rem10-3}
	The functor $cl_!$ does not preserve (normal) monomorphisms as we have seen, so cannot be a left Quillen functor. 
\end{remark}

Next we turn to the adjoint functors induced by $u\colon \Omega_{cl} \to \Omega$.

\begin{proposition} \label{prop10-4}
	The adjoint pair $\xymatrix{
		u_! : \cdSets \ar@<.5ex>[r] & \dSets : u^* \ar@<.5ex>[l]  } $ is a Quillen pair.
\end{proposition}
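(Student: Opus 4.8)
The plan is to show that $u_!$ is a left Quillen functor. That $u_!$ preserves cofibrations is immediate: by the very definition of normal monomorphism in $\cdSets$ (characterization (1) in Section~\ref{sect6}), a map $X\rightarrowtail Y$ is a cofibration in $\cdSets$ precisely when $u_!X\rightarrowtail u_!Y$ is a normal monomorphism in $\dSets$. It therefore remains to check that $u_!$ preserves trivial cofibrations.

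The key point, which I would isolate as a lemma, is that $u^*$ takes $\infty$-operads to unital $\infty$-operads. By the adjunction $u_!\dashv u^*$ (together with the fact that $u$ is fully faithful, so $u_!T=T$ for a representable $T$), an $\infty$-operad $Z$ has the property that $u^*Z$ lifts against the very inner horn inclusion $\Lambda^e_{cl}(T)\rightarrowtail T$ if and only if $Z$ lifts against $u_!(\Lambda^e_{cl}(T))\rightarrowtail T$ in $\dSets$; so it suffices to prove the latter map is inner anodyne. Now $u_!(\Lambda^e_{cl}(T))$ is the sub-dendroidal set of $T$ given by the union of all closed faces of $T$ except the one contracting $e$; since $T$ is closed, its closed faces are exactly its inner faces together with its root face (when the root vertex is unary), and the only remaining faces of $T$ are those chopping off a stump. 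Hence $u_!(\Lambda^e_{cl}(T))\subseteq\Lambda^e(T)\subseteq T$, where $\Lambda^e(T)$ is the ordinary inner horn and $\Lambda^e(T)\rightarrowtail T$ is inner anodyne. I would then adjoin the stump-chopping faces $\partial_v(T)$ to $u_!(\Lambda^e_{cl}(T))$ one at a time in an arbitrary order, checking at each intermediate stage $B$ that $B\cap\partial_v(T)\rightarrowtail\partial_v(T)$ is inner anodyne (the intersection already contains every inner face of $\partial_v(T)$ other than the $e$-contraction, together with the outer faces), so that each step, hence the whole map $u_!(\Lambda^e_{cl}(T))\rightarrowtail\Lambda^e(T)$, is inner anodyne. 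This is entirely parallel to the filtration arguments in \cite{HMbook}; cf.\ also Lemma~\ref{lem7-2}.

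With this lemma in hand I would finish formally. Since the unital operadic model structure on $\cdSets$ is cofibrantly generated and, by the proof of Theorem~\ref{teo9-1}, its trivial cofibrations may be generated by trivial cofibrations between (countable) normal objects, it is enough to show that for such a trivial cofibration $j\colon A\rightarrowtail B$ the map $u_!j\colon u_!A\rightarrowtail u_!B$ is a trivial cofibration in $\dSets$. It is a cofibration by the first paragraph, and $u_!A$, $u_!B$ are normal, hence cofibrant. By the compatibility of $u_!$ with the simplicial tensor recorded in Section~\ref{sect6} (namely $u_!(\Delta[n]\otimes X)=\Delta[n]\otimes u_!X$) and the adjunction $u_!\dashv u^*$, there is a natural isomorphism $\hhom_{\dSets}(u_!X,Z)\cong\hhom_{\cdSets}(X,u^*Z)$. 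Taking $Z$ to be an arbitrary $\infty$-operad, the restriction map $\hhom_{\dSets}(u_!B,Z)\to\hhom_{\dSets}(u_!A,Z)$ is thereby identified with $\hhom_{\cdSets}(B,u^*Z)\to\hhom_{\cdSets}(A,u^*Z)$; since $u^*Z$ is a unital $\infty$-operad by the lemma and $j$ is a weak equivalence between normal objects, Theorem~\ref{teo9-2}(a) shows this is a categorical equivalence. As the weak equivalences between normal objects of $\dSets$ are exactly the maps detected in this way by all $\infty$-operads (the analogue of Theorem~\ref{teo9-2}(a) for the operadic model structure, cf.\ \cite{HHM, HMbook}), $u_!j$ is a weak equivalence, hence a trivial cofibration. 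This proves $u_!$ is left Quillen, i.e.\ $(u_!,u^*)$ is a Quillen pair.

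The main obstacle is the combinatorial claim inside the lemma, that $u_!(\Lambda^e_{cl}(T))\rightarrowtail T$ is inner anodyne: one must keep careful track of which faces of each stump-chopping face $\partial_v(T)$ have already been attached, distinguishing closed from non-closed faces throughout, which is exactly the sort of bookkeeping that makes the arguments of Sections~\ref{sect7}--\ref{sect8} delicate. Everything else is formal.
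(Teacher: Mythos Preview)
Your proof is correct and shares with the paper's argument its combinatorial core: both need that $u_!(\Lambda^e_{cl}(T))\rightarrowtail T$ is inner anodyne in $\dSets$, which is exactly the content of Lemma~\ref{lemma10-5} (and your sketch of adjoining the stump-chopping faces one at a time is precisely how that lemma is proved). Where you diverge is in how you pass from this lemma to the conclusion that $u_!$ preserves trivial cofibrations. The paper argues via Theorem~\ref{teo9-2}(d): it suffices that $u^*$ preserve fibrations between fibrant objects, which in turn reduces to showing that $u_!$ sends \emph{both} kinds of quasi-generating trivial cofibrations to trivial cofibrations --- the very inner horns \emph{and} the inclusion $\overline{\eta}\rightarrowtail\overline{J}$. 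The latter requires a separate explicit filtration argument. You instead use Theorem~\ref{teo9-2}(a) together with the enriched adjunction $\hhom_{\dSets}(u_!X,Z)\cong\hhom_{\cdSets}(X,u^*Z)$ (valid by the identity $u_!(M\otimes X)=M\otimes u_!X$ of Section~\ref{sect6}): once you know $u^*Z$ is a unital $\infty$-operad, the weak-equivalence characterisation of Theorem~\ref{teo9-2}(a) and its $\dSets$ analogue finish the job for generating trivial cofibrations between normal objects, and saturation does the rest. Your route is more economical in that it avoids the $\overline{J}$ computation entirely; the paper's route has the virtue of establishing along the way that $u_!(\overline{\eta})\rightarrowtail u_!(\overline{J})$ is inner anodyne, a fact used again later (e.g.\ in Section~\ref{sect12}).
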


\begin{proof}
	If $T$ is a closed tree, its closed boundary $\partial_{cl} T\to T$ consists of the union of inner faces of $T$, together with the root face in case the root vertex of $T$ is unary. Since the intersection of any two such faces is again a closed tree, it follows that $\partial_{cl}(T) \to T$ is a monomorphism in $\dSets$, necessarily normal since $T$ is normal. In other words, $u_!(\partial_{cl} T)\to u_!(T)$ is a normal monomorphism. Since these closed boundary inclusions generate the cofibrations in $\cdSets$, this shows that $u_!$ preserves cofibrations. 
	
	To verify that $u_!$ preserves \emph{trivial} cofibrations as well, it suffices to check that $u^*$ preserves fibrations between fibrant objects. Thus, by Theorem \ref{the9-2d} (d), it suffices to check that $u_!$ sends the morphisms of the following two kinds to weak equivalences: the inclusion $\overline{\eta}=cl_!\{0\} \rightarrowtail cl_!(J)$ of one of its endpoints into the ``closed interval'', and all inclusions of very inner horns $\Lambda^e T \to T$ into closed trees $T$. 
	
	For the first type of inclusion, observe that $\{0\}\to J$ is a trivial cofibration in $\dSets$, and hence so is its pushout 
	\[
	\overline { \{ 0 \} }=u_! cl_!\{ 0 \} \rightarrowtail J \cup u_! cl_!\{ 0 \} = J \cup \overline{ \{ 0 \} }.
	\]
	We claim that $J\cup \overline{\{0\}}\rightarrowtail u_! cl_!(J)=: \overline{J}$ is inner anodyne. Observe that the non-degenerate dendrices of $\overline{J}$ are all of the form $\overline{L}_n:=u_! cl_! [n]$: 
	\[
	\begin{tikzpicture} 
	[level distance=8mm, 
	every node/.style={fill, circle, minimum size=.2cm, inner sep=0pt}, 
	level 1/.style={sibling distance=10mm}, 
	level 2/.style={sibling distance=10mm}, 
	level 3/.style={sibling distance=10mm}]
	
	\node(anchorR1)[style={color=white}] {} [grow'=up] 
	child {node(vertexR1)[draw] {} 
		child{edge from parent [draw=none] 
			child{node {}
			child{node {}
			child{node {}}}}
		}
	};
	
	\tikzstyle{every node}=[]
	
	\node at ($(vertexR1) + (.3cm,2.8cm)$) {$i_0$};
	\node at ($(vertexR1) + (.3cm,2cm)$) {$i_1$};
	\node at ($(vertexR1) + (.3cm,.4cm)$) {$\vdots$};
	\node at ($(vertexR1) + (.3cm,-.5cm)$) {$i_n$};
	\node at ($(vertexR1) + (-1cm,.4cm)$) {$\overline{L}_n$:};
	\end{tikzpicture} 
	\]
	where $i_0, \ldots, i_n$ is an alternating sequence of zeroes and ones, and hence all these are faces of ones of the form where $i_0=0$ and we use $\overline{L}_n$ only for the one with $i_0$ from now on. We can adjoin these dendrices to $J\cup\overline{ \{ 0 \} }=J\cup \overline{L}_0$ by induction. For $n=0$, the dendrex is already there. For $n=1$, the dendrex $\overline{L}_1$ has its top face in $J$ and its root face is $\overline{L}_0=\overline{ \{ 0 \} }$, so it misses only its inner face. Thus $(J\cup \overline{L}_0)\cap \overline{L}_1 = \Lambda^0 \overline{L}_1 \rightarrowtail \overline{L}_1$ is inner anodyne, and hence so is $J\cup \overline{L}_0 \to J\cup \overline{L}_0 \cup \overline{L}_1 =J \cup \overline{L}_1$. Similarly, $(J\cup \overline{L}_{n-1})\cap \overline{L}_n = \Lambda^0 \overline{L}_n \rightarrowtail \overline{L}_n$ only misses its top inner face. Indeed, the top outer face of $\overline{L}_n$ belongs to $J$ and the other inner faces are faces of $\overline{L}_{n-1}$. Thus $J\cup \overline{L}_{n-1} \to J\cup \overline{L}_n$ is again inner anodyne. Taking the colimit, we conclude that $J\cup \overline{L}_0 \rightarrowtail \bigcup_n \overline{L}_n = \overline{J}$ is inner anodyne. 
	
	Next, let us turn to the inclusions of the form $u_!(\Lambda_{cl}^e T) \rightarrowtail u_!(T)$ for a closed tree $T\in \Omega_{cl}$ and a very inner edge $e$ in $T$. Notice that $u_! T$ is simply $T$ as an object of $\Omega$, or as a representable object in $\dSets$, while 
	\[
	u_!(\Lambda_{cl}^e T) \rightarrowtail \Lambda^e T \rightarrowtail T.
	\]
	The first inclusion is not an isomorphism, because the non-closed faces chopping off stumps from $T$ belong to $\Lambda^e T$ but not to $u_!(\Lambda_{cl}^e T)$, the latter being the union of \emph{closed} trees. The fact that $u_!(\Lambda_{cl}^e T)\rightarrowtail T$ is a trivial cofibration is thus a special case of the following slightly more general lemma. 
\end{proof}

\begin{lemma} \label{lemma10-5}
	Let $S$ be an arbitrary tree (not necessarily closed) and let $e$ be an inner edge in $S$. Let $A\subseteq \Lambda^e S$ the union of some faces of $S$, namely the root face (if it exists), all the inner faces except the one contracting $e$, and all the top faces \emph{except} a number of top faces chopping off stumps. Suppose that none of these stumps is immediately above $e$. Then $A\rightarrowtail S$ is inner anodyne. 
\end{lemma}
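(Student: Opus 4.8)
The plan is to argue by induction on the number $k$ of top faces of $S$ that are omitted from $A$ (all of these chop off a stump, and none of those stumps is immediately above $e$). If $k=0$, then $A$ is the union of the root face of $S$ (if it exists), all inner faces of $S$ except $\partial_e S$, and \emph{all} top faces; that is, $A=\Lambda^e S$. Since $e$ is an inner edge, $\Lambda^e S\rightarrowtail S$ is an inner horn inclusion, hence inner anodyne by definition, and we are done.

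For the inductive step, assume $k\geq 1$ and choose one of the omitted top faces $\partial_v S$, where $v$ is a stump of $S$ with outgoing edge $s$. Since $S$ has an inner edge, $v$ is not the root vertex, so $s$ is an inner edge of $S$; and $s\neq e$ because $v$ is not immediately above $e$, so in particular $e$ remains an inner edge of the strictly smaller tree $S_v:=\partial_v S$. Put $A'=A\cup\partial_v S\subseteq\Lambda^e S$. Then $A'$ has exactly the shape required by the lemma for the same $S$ and $e$, but now with only $k-1$ omitted top faces, so $A'\rightarrowtail S$ is inner anodyne by the induction hypothesis. Because $A'=A\cup_{A\cap\partial_v S}\partial_v S$ is a pushout and the inner anodyne maps are stable under pushout, it now suffices to prove that $A\cap\partial_v S\rightarrowtail\partial_v S$ is inner anodyne.

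The heart of the matter is to identify $A\cap S_v$ inside $S_v$ and to see that it is again of the shape covered by the lemma, for the tree $S_v$, the edge $e$, and the $k-1$ omitted stumps of $S$ other than $v$. I would check this face by face, using the standard description of the faces of a tree. The root face of $S_v$ (if any) and each inner face $\partial_a(S_v)=\partial_a\partial_v S$ with $a\neq e$ factor through $\partial_a S$, which is present in $A$, so they lie in $A\cap S_v$. The inner face $\partial_e(S_v)$ is contained in no codimension-one face of $S$ other than $\partial_e S$ and $\partial_v S$, and similarly $\partial_w(S_v)$ for an omitted stump $w\neq v$ is contained in no codimension-one face of $S$ other than $\partial_w S$ and $\partial_v S$; as all of $\partial_e S,\partial_w S,\partial_v S$ are absent from $A$, these are precisely the faces omitted from $A\cap S_v$. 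Every remaining top face of $S_v$ coming from a top vertex of $S$ that is not an omitted stump clearly lies in $A\cap S_v$. The only genuinely new phenomenon is a top vertex of $S_v$ that was not one of $S$: this can only be the vertex $w$ immediately below $s$, and only when $s$ was its sole non-leaf input, in which case $\partial_w(S_v)$ factors through the inner face $\partial_s S$ of $S$ — contracting $s$ merges $v$ into $w$, producing a top vertex of $\partial_s S$ which one then chops off — and $\partial_s S$ is present in $A$ because $s\neq e$. Hence $A\cap S_v$ is exactly the union of the ``present'' codimension-one faces of $S_v$, i.e.\ of the form demanded by the lemma with $k-1$ omitted stumps (which are still not immediately above $e$ in $S_v$), so $A\cap S_v\rightarrowtail S_v$ is inner anodyne by the induction hypothesis. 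Composing $A\rightarrowtail A'\rightarrowtail S$ finishes the proof.

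The step I expect to be the main obstacle is the bookkeeping in the previous paragraph: verifying carefully which faces of $S_v$ still lie in $A$ after deleting the stump $v$, and in particular that the new top faces created by this deletion are already accounted for. This is exactly where the hypothesis that the omitted stumps are not immediately above $e$ is used, twice: it gives $s\neq e$, so $\partial_s S$ is a bona fide inner face of $S$ distinct from the omitted $\partial_e S$, and it ensures that $e$ persists as an inner edge of $S_v$ so that the lemma can be re-applied to the smaller tree.
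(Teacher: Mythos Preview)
Your overall strategy coincides with the paper's: induct on the number $k$ of omitted stump-faces, add one back to form $A'=A\cup\partial_v S$, and reduce to showing that $A\cap\partial_v S\rightarrowtail\partial_v S$ again has the form described in the lemma (with one fewer omitted stump). The paper organises this as a reduction to the case $k=1$ and then verifies $A\cap\partial_v S=\Lambda^e(\partial_v S)$; your version keeps $k$ general, which is fine.

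The gap is in the identification of $A\cap S_v$. Your argument establishes the inclusion $B\subseteq A\cap S_v$ correctly (including the subtle ``new top vertex'' $w$), but the converse does \emph{not} follow from the fact that the omitted faces $\partial_e(S_v)$ and $\partial_w(S_v)$ are not contained in $A$. Knowing that these particular dendrices are absent from $A$ does not by itself give $A\cap S_v\subseteq B$: one must show that for \emph{every} codimension-one face $F$ of $S$ lying in $A$, the intersection $F\cap S_v$ lands in $B$. For inner faces $\partial_a S$ with $a\neq e,s$, for top faces $\partial_u S$ with $u\neq v$, and for the root face, this intersection is indeed a single codimension-one face of $S_v$ and your argument covers it. The case you have not addressed is $F=\partial_s S$, the inner face contracting the edge $s$ immediately below $v$. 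Since $s$ is a \emph{leaf} of $S_v$, the intersection $\partial_s S\cap S_v$ is \emph{not} a codimension-one face of $S_v$; it is the disconnected union $T_{a_1}\cup\cdots\cup T_{a_n}\cup T_r$ of the subtrees of $S$ sitting above the other inputs $a_1,\dots,a_n$ of the vertex $w$ below $s$, together with the subtree $T_r$ below $w$. Placing each of these pieces inside some face of $S_v$ belonging to $B$ requires a genuine case analysis (and is exactly the content of the paper's case~(iv), the most delicate part of its proof). Without this step your claimed equality $A\cap S_v=B$ is unjustified.

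In short: the architecture is right and matches the paper, but the heart of the argument---handling $\partial_s S\cap\partial_v S$---is missing. Once you supply that case, the induction goes through exactly as you outlined.
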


\begin{proof}
	List the stumps $v$ for which $\partial_v S$ is missing from $A$ as $v_1$, \ldots, $v_k$. Then $A\cup \partial_{v_1} S\cup \ldots \partial_{v_k}S \rightarrowtail S$ is the inner horn $\Lambda^e S \rightarrowtail S$, so it suffices to prove that each $A\cup \partial_{v_1} S\cup \ldots \partial_{v_i}S \rightarrowtail A\cup \partial_{v_1} S\cup \ldots \partial_{v_{i+1}}S $ is inner anodyne. Now $(A\cup \partial_{v_1} S\cup \ldots \cup \partial_{v_i}S)\cap \partial_{v_{i+1}} S \rightarrowtail \partial_{v_{i+1}} S$ is an inclusion of the form stated in the lemma, but missing only $k-(i+1)$ top faces chopping of stumps. So, proceeding by induction on $k$, it suffices to prove the case $k=1$. Write $v=v_1$ for the one stump for which $\partial_v S$ is missing from $A$. Thus $A\cup \partial_v S =\Lambda^e S$, and it suffices to show that $A\cap \partial_v S=\Lambda^e(\partial_v S)$. We show both inclusions in turn.
	
	($\supseteq$) The faces of $\partial_v S$ are faces of $S$, with the possible exception of the top face $\partial_w \partial_v S$ for the vertex $w$ immediately below $v$. This top face of $\partial_v S$ exists if $w$ is a top vertex of $\partial_v S$. But then $\partial_w \partial_v S$ is a face of $\partial_b S\subseteq A$, where $b\neq e$ is the edge between $v$ and $w$. This proves that $\Lambda^e \partial_v(S) \subseteq A\cap \partial_v S$.
	
	($\subseteq$) The object $A$ is the union of three or four types of faces of $S$: 
	\begin{enumerate}
		\item[(i)] inner faces $\partial_a S$ where $a\neq e$ and $a$ is not attached to $v$, 
		\item[(ii)] top faces $\partial_w S$ for $w$ a top vertex of $S$, $w\neq v$, 
		\item[(iii)] possibly the root face $\partial_{\text{root}} S$, 
		\item[(iv)] the face $\partial_b S$ where $b$ is the edge below $v$. 
	\end{enumerate}
	We will show that for each of these faces, the intersection with $\partial_v S$ is contained in $\Lambda^e \partial_v S$. 
	\begin{enumerate}
		\item[(i)] For such a face $\partial_a S$, the intersection $\partial_a S \cap \partial_v S$ is an inner face of $\partial_v S$, hence contained in $\Lambda^e \partial_v S$.
		\item [(ii)] Similarly, as $w\neq v$, for such a face $\partial_w S$ the intersection $\partial_w S \cap \partial_v S$ is a top face of $\partial_v S$, hence contained in $\Lambda^e \partial_v S$. 
		\item[(iii)] If the root face $\partial_{root} S$ exists, then $\partial_{root} S \cap \partial_v S = \partial_{root} \partial_v S$ because $S$ contains $e$ hence is not a corolla, and this intersection is contained in $\Lambda^e \partial_v S$. 
		\item[(iv)] The only non-trivial case is that of $\partial_b S\cap \partial_v S$ for the edge $b$ immediately below $v$. Write $w$ for the vertex below $b$, and $a_1, \ldots, a_n$ ($n\geq 0$) for the input edges of $w$ other than $b$. Write $T_{a_i}$ for the subtree of $S$ with root $a_i$, and $T_r$ for the subtree below $w$: 
			\[
		\begin{tikzpicture} 
		[level distance=10mm, 
		every node/.style={fill, circle, minimum size=.12cm, inner sep=0pt}, 
		level 1/.style={sibling distance=10mm}, 
		level 2/.style={sibling distance=10mm}, 
		level 3/.style={sibling distance=10mm}]
		
		\node(anchorR1)[style={color=white}] {} [grow'=up] 
		child {node(vertexR1)[draw] {} 
			child{ edge from parent [draw=none]
			child{node {}
				child
				child{node {}}
				child
			}}
		};
		
		\draw (-1.5,5) -- (-1,4) -- (-.5,5) -- cycle;
		\draw (-.5,2) -- (0,1) -- (.5,2) -- cycle;
		\draw (.5,5) -- (1,4) -- (1.5,5) -- cycle;
		
		\tikzstyle{every node}=[]
		
		\node at ($(vertexR1) + (0cm,.6cm)$) {$T_r$};
		\node at ($(vertexR1) + (-.3cm,1.9cm)$) {$w$};
		\node at ($(vertexR1) + (-.8cm,2.5cm)$) {$a_1$};
		\node at ($(vertexR1) + (.2cm,2.5cm)$) {$b$};
		\node at ($(vertexR1) + (1cm,2.5cm)$) {$a_n$};
		\node at ($(vertexR1) + (-.3cm,3.1cm)$) {$v$};	
		\node at ($(vertexR1) + (1cm,3.6cm)$) {$T_{a_n}$};
		\node at ($(vertexR1) + (-1cm,3.6cm)$) {$T_{a_1}$};
		\node at ($(vertexR1) + (-.3cm,0cm)$) {$r$};	
		
		\end{tikzpicture} 
		\] 
		Then $\partial_b S\cap \partial_v S=T_{a_1}\cup \ldots \cup T_{a_n}\cup T_r$, and we show each of these parts is contained in $\Lambda^e\partial_v S$. Notice that $T_r \subseteq \partial_{a_i} \partial_v S$ which belongs to $\Lambda^e\partial_v S$ as soon as $a_i$ is an inner edge of $\partial_v S$. Similarly, $T_{a_i}\subseteq \partial_c \partial_v S \subseteq \Lambda^e \partial_v S$ if we can find an inner edge $c$ in $\partial_v S$ which does not belong to $T_{a_i}$, or if $T_{a_i}$ is contained in the root face of $S$. This proves that $\partial_b S \cap \partial_v S \subseteq \Lambda^e \partial_v S$, except possibly in the case where each $a_i$ is a leaf of $S$, or $n=0$. But in that case $\partial_w \partial_v S = \partial_{wv} \partial_b S \subseteq \Lambda^e S$ (where $wv$ is the new vertex resulting from contracting $b$) is a face of $\partial_v S$ containing $T_r$, while each $a_i$ is contained in $\partial_c \partial_v S$ for the edge $c$ immediately below $w$ (which must be inner as $S$ contains the edge $e$ not immediately below $v$). 
		\end{enumerate}
This proves that $\partial_b S \cap \partial_v S \subseteq \Lambda^e \partial_v S$, and completes the proof of the inclusion $\subseteq$, and hence that of the lemma.  	
\end{proof}

Finally, we consider one of the adjoint pairs induced by the composition $h=cl \circ o\colon \Omega_o \to \Omega_{cl}$.

\begin{theorem} \label{teo10-6}
	The adjoint pair $\xymatrix{
		h_! : \cdSets \ar@<.5ex>[r] & \odSets : h^* \ar@<.5ex>[l]  } $ is a Quillen pair. 
\end{theorem}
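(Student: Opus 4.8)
We must show that $h^{*}\colon\cdSets\to\odSets$ is a left Quillen functor. (Here it is the restriction functor $h^{*}$, rather than its left adjoint $h_{!}=cl_{!}o_{!}$, that is left Quillen: by Remark~\ref{remark4.1}(2) the latter does not even preserve the boundary inclusions of open trees.) That $h^{*}$ preserves cofibrations is immediate: a restriction functor preserves all colimits and all monomorphisms, and since the closure $\overline{S}$ of an open tree is obtained by attaching rigid stumps to the leaves one has $\Aut_{\Omega_{o}}(S)=\Aut_{\Omega_{cl}}(\overline{S})$, so $h^{*}$ carries normal monomorphisms to normal monomorphisms; in particular it sends the generating cofibrations $\partial_{cl}(T)\rightarrowtail T$ of $\cdSets$ to cofibrations of $\odSets$.

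It remains to see that $h^{*}$ preserves trivial cofibrations, and since $h^{*}$ already preserves cofibrations, by the criterion used in the proof of Proposition~\ref{prop10-4} it is enough to show that the right adjoint $h_{*}$ carries fibrations between fibrant objects to fibrations. So let $p\colon E\to B$ be a fibration between open $\infty$-operads. By the adjunction $h^{*}\dashv h_{*}$ together with the description of fibrations in Theorem~\ref{teo9-2}(d), it suffices to check that $E$, $B$ and $p$ all have the right lifting property against $h^{*}$ of the two inclusions $\overline{\eta}\rightarrowtail cl_{!}(J)$ and against $h^{*}$ of every via morphism; this yields simultaneously that $h_{*}(E)$ and $h_{*}(B)$ are unital $\infty$-operads and that $h_{*}(p)$ is a fibration. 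A direct computation from the definitions shows that $h^{*}$ sends the two inclusions $\overline{\eta}\rightarrowtail cl_{!}(J)$ to the endpoint inclusions $\eta\rightarrowtail i_{!}(J)$, which are generating trivial cofibrations of $\odSets$, so $E$, $B$ and $p$ lift against them. Since $h^{*}$ preserves colimits, the via morphisms reduce to a single very inner horn inclusion $\Lambda^{e}_{cl}(T)\rightarrowtail T$, and the whole argument comes down to the claim that for every closed tree $T$ and every very inner edge $e$ the map $h^{*}(\Lambda^{e}_{cl}(T))\rightarrowtail h^{*}(T)$, i.e.\ $\Int(\Lambda^{e}_{cl}(T))\rightarrowtail\Int(T)$, is inner anodyne in $\odSets$. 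Granting this, open $\infty$-operads and the inner fibration $p$ all have the right lifting property against $h^{*}$ of every via morphism, which completes the proof.

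The claim is the only substantial point, and I expect it to be the main obstacle. One proves it by filtering $\Int(T)$ by the open subtrees $S\rightarrowtail T$ occurring in it, adjoining them to $\Int(\Lambda^{e}_{cl}(T))$ one at a time in an order refining the inclusion order. An open subtree $S$ not already present is exactly one whose classifying map does not factor through any closed face of $T$ other than the one contracting $e$; the hypothesis that $e$ is \emph{very} inner --- so that contracting $e$ merges two non-nullary vertices and $e$ remains an inner edge of every such $S$ --- guarantees that for such an $S$ the spine of $S$ (the union of the corollas of its vertices), together with all subtrees adjoined earlier, is already contained in the stage reached so far. Hence each step is a pushout of an inclusion which, by the open-tree version of Theorem~\ref{theorem7-6} (the inner anodyne Segal core / spine inclusions, cf.\ \cite{cisinskimoerdijk2}), is inner anodyne, and the transfinite composite is then inner anodyne as well. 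Carrying out this combinatorial verification carefully, in the spirit of Lemma~\ref{lemma10-5} and Theorem~\ref{theorem7-6}, is the crux of the matter; everything else is formal.
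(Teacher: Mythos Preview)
Your overall strategy coincides with the paper's: show that $h^{*}$ (not $h_{!}$) is left Quillen by first observing it preserves normal monomorphisms, and then checking that it sends the two ``quasi-generating'' trivial cofibrations $\overline{\eta}\rightarrowtail cl_{!}(J)$ and $\Lambda^{e}_{cl}(T)\rightarrowtail T$ to trivial cofibrations in $\odSets$. The treatment of $\overline{\eta}\rightarrowtail cl_{!}(J)$ is the same in both.

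The divergence is entirely in the combinatorial claim that $h^{*}(\Lambda^{e}_{cl}(T))\rightarrowtail h^{*}(T)$ is inner anodyne. Your proposed filtration by \emph{all} open subtrees $S\rightarrowtail T$, adjoining them one by one and invoking the spine at each stage, is more elaborate than what is needed and, as written, has a gap: knowing that the spine of $S$ together with ``all subtrees adjoined earlier'' lies in the current stage $B$ does not by itself make $B\cap S\rightarrowtail S$ inner anodyne. The cancellation property in the open analogue of Theorem~\ref{theorem7-6}(b) would require you to first know that $\mathrm{Spine}(S)\rightarrowtail B\cap S$ is inner anodyne, and you have not argued this.

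The paper's argument is both simpler and sharper. One writes $h^{*}(T)=\bigcup_{A}T[A]^{\circ}$, the union over upwards closed sets $A$ of edges of $T$, where $T[A]$ is the closed tree obtained by contracting all edges in $A$ and $(-)^{\circ}$ removes the stumps. For every \emph{nonempty} such $A$ the tree $T[A]$ is a closed face of $T$ different from $\partial_{e}T$ (here one uses that $e$ is very inner, so $\{e\}$ is not upwards closed), hence $T[A]^{\circ}\subseteq h^{*}(\Lambda^{e}_{cl}(T))$. Thus only $T^{\circ}=T[\emptyset]^{\circ}$ is missing, and one checks directly that $T^{\circ}\cap h^{*}(\Lambda^{e}_{cl}(T))=\Lambda^{e}(T^{\circ})$. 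So $h^{*}(\Lambda^{e}_{cl}(T))\rightarrowtail h^{*}(T)$ is a single pushout of the inner horn $\Lambda^{e}(T^{\circ})\rightarrowtail T^{\circ}$. Replacing your filtration sketch by this identification would complete your proof.
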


\begin{proof}
	The functor $h^*$ is the composition $h^*=o^* cl^* = o^* u_!$, so it clearly preserves normal monomorphisms as both $o^*$ and $u_!$ do (cf. Remark \ref{rem10-2} and Proposition \ref{prop10-4} above). It thus suffices to show that $h^*$ sends the two types of ``quasi-generating'' (in the sense of detecting the fibrations between fibrant objects) trivial cofibrations to weak equivalences. For the trivial cofibration $\overline{ \{ 0 \} }\to \overline{J}$ featuring in the previous proof also, this is clear because $h^*$ maps it back to $\{ 0 \} \to J$. 
	
	Let us consider a very inner horn inclusion $\Lambda^e T\rightarrowtail T$ for a closed tree $T$. If $S$ is an arbitrary closed tree, let us write $S^\circ \subseteq S$ for the open subtree obtained by chopping off all the stumps of $S$. For an upwards closed set $A$ of edges in $T$ (i.e., $f\geq g \in A \Rightarrow f\in A$), write $T[A]$ for the closed tree obtained by contracting all the edges in $A$, and $T[A]^\circ \subseteq T[A] \subseteq T$ for the resulting open face of $T$. Here is an example: 
		\[
	\begin{tikzpicture} 
	[level distance=8mm, 
	every node/.style={fill, circle, minimum size=.12cm, inner sep=0pt}, 
	level 1/.style={sibling distance=10mm}, 
	level 2/.style={sibling distance=10mm}, 
	level 3/.style={sibling distance=10mm}]
	
	\node(anchorR1)[style={color=white}] {} [grow'=up] 
	child {node(vertexR1)[draw] {} 
		child{node {}}
		child{node {}}
		child{node {}
			child{node {}}
			child{node {}}}
	};
	
	\node[style={color=white}, right=8cm of anchorR1] {} [grow'=up] 
	child {node(vertexR2)[draw] {} 
		child
		};

	\tikzstyle{every node}=[]
	
	\node at ($(vertexR1) + (-1.5cm,.4cm)$) {$T$:};
	\node at ($(vertexR1) + (.5cm,1.2cm)$) {$c$};
	\node at ($(vertexR1) + (1.5cm,1.2cm)$) {$d$};
	\node at ($(vertexR1) + (-.8cm,.4cm)$) {$a$};
	\node at ($(vertexR1) + (.8cm,.4cm)$) {$b$};
	\node at ($(vertexR1) + (.2cm,.4cm)$) {$y$};
	\node at ($(vertexR1) + (.2cm,-.5cm)$) {$x$};

	\node at ($(vertexR2) + (.3cm,.5cm)$) {$y$};
	\node at ($(vertexR2) + (.3cm,-.5cm)$) {$x$};
	\node at ($(vertexR2) + (-4cm,.5cm)$) {$A=\{a,b,c,d\}$};
	\node at ($(vertexR2) + (-1cm,.5cm)$) {$T[A]^\circ$:};

	\end{tikzpicture} 
	\] 
	Then $h^*(T)= \bigcup_A T[A]^\circ$, where the union is as subobject of $T$, and ranges over all such upwards closed subsets $A$. Now all these trees $T[A]^\circ$ are contained in $h^*(\Lambda^e[T])$, except for the case $A=\emptyset$ when $T[\emptyset]^\circ=T^\circ$. (This uses that $e$ is very inner so $\{e\}$ is not itself upwards closed.) But $T^\circ\cap h^*(\Lambda^e[T])=\Lambda^e[T^\circ]$. So $T^\circ\cap h^*(\Lambda^e[T])\rightarrowtail T^\circ$ is inner anodyne, and hence so is its pushout $h^*(\Lambda^e[T]) \to T^\circ\cup h^*(\Lambda^e[T]) = h^*(T)$. This completes the proof of the theorem. 
\end{proof}

\section{The Boardman-Vogt resolution} \label{sect11}

The Boardman-Vogt resolution for simplicial operads associated to each such operad $P$ is a map $W(P)\to P$ which forms a cofibrant resolution in case $P$ is $\Sigma$-free (cf. \cite{bergermoerdijkBVresolution}). In particular, it associates to each tree $T$ (viewed as operad) a simplicial operad $W(T)$. This operad $W(T)$ is simple to describe explicitly: the colours of $W(T)$ are the edges of $T$. For edges $e_1, \ldots, e_n$ and $e$ in $T$, the space $W(T)(e_1, \ldots, e_n; e)$ is empty, unless there is a (unique) subtree of $T$ with leaves $e_1, \ldots, e_n$ and root $e$, in which case 
\[
W(T)(e_1, \ldots, e_n; e) =\prod_{d\in I(e_1, \ldots, e_n; e)} \Delta[1], 
\]
where $I(e_1, \ldots, e_n; e)$ is the set of \emph{inner} edges in this subtree. Composition in the operad $W(T)$ is defined by assigning ``length $1$'' to the newly arising inner edges. This construction defines a functor $W\colon \Omega\to \Oper$ into the category of simplicial operads, which induces a pair of adjoint functors
\[
\xymatrix{
	w_! : \dSets \ar@<.5ex>[r] & \Oper : w^* \ar@<.5ex>[l]  } 
\]
by Kan extension. The following theorem is one of the main results of \cite{cisinskimoerdijk1, cisinskimoerdijk2, cisinskimoerdijk3}:

\begin{theorem}
	The pair $\xymatrix{
		w_! : \dSets \ar@<.5ex>[r] & \Oper : w^* \ar@<.5ex>[l]  } $ is a Quillen equivalence for the operadic model structure on the category $\dSets$ of dendroidal sets and the projective model structure on the category $\Oper$ of simplicial operads. 
\end{theorem}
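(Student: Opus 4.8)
The plan is to prove the two assertions in turn: that $(w_!, w^*)$ is a Quillen pair, and that it is a Quillen equivalence.

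\emph{The Quillen pair.} Since both model structures are cofibrantly generated, it is enough to check $w_!$ on generators. On a representable one has $w_!\Omega[T] = W(T)$, and because every subtree of a tree has pairwise distinct leaves the operad $W(T)$ is $\Sigma$-free; hence the boundary inclusion $w_!(\partial\Omega[T]) \rightarrowtail W(T)$ is a normal monomorphism of operads with $\Sigma$-free target, so it lies in the saturation of the generating cofibrations (C1), (C2) of Section \ref{sect4}. As $w_!$ preserves colimits and the free $\Sigma$-actions witnessing normality, it carries all normal monomorphisms to cofibrations. Granting this, to see that $w_!$ is left Quillen it remains to check that it sends a set of generating trivial cofibrations to weak equivalences; for these one may take suitable countable versions of the inner horn inclusions $\Lambda^e[T]\rightarrowtail\Omega[T]$ together with the endpoint inclusion $\eta\rightarrowtail i_!J$. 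For the first I would use the explicit description of $W(T)$: view it as glued from cubes indexed by the lengths of its inner edges, filter it by the number of not-yet-contracted inner edges, and observe that each stage attaches a cell of the form $C_n[\text{cube horn}]$ — a generating trivial cofibration of type (A2), the missing piece corresponding to the length-zero direction at $e$ — so that $w_!(\Lambda^e[T]\rightarrowtail\Omega[T])$ is a trivial cofibration. For the second, $W(J)$ is a cofibrant simplicial category on two objects with weakly contractible hom-spaces, so $w_!(\eta\rightarrowtail i_!J)$ is, up to the choice of the category $H$ in (A1), a generating trivial cofibration of type (A1).

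\emph{The Quillen equivalence.} Every object of $\dSets$ is cofibrant, so it suffices to show that the derived unit $X\rightarrow Rw^*(w_!X)$ is a weak equivalence for every normal dendroidal set $X$. First I would treat representables. For a tree $T$ the Boardman--Vogt augmentation $\varepsilon_T\colon W(T)\rightarrow T$ is a local weak equivalence which is bijective on colours, hence a weak equivalence in the projective model structure on $\Oper$; since $w^*$ is right Quillen and $W(T)$, $T$ are fibrant, $w^*(\varepsilon_T)\colon w^*W(T)\rightarrow w^*T = N(T) \cong \Omega[T]$ is a weak equivalence. Since $\varepsilon_T$ realizes $\pi_0 W(T)\cong T$ and $\Omega\hookrightarrow\Oper$ is full, the composite of the unit $\Omega[T]\rightarrow w^*W(T)$ with $w^*(\varepsilon_T)$ is the identity of $\Omega[T]$, so by two-out-of-three the unit at $\Omega[T]$ is a weak equivalence. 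To pass to an arbitrary normal $X$, I would argue by transfinite cellular induction over the boundary inclusions $\partial\Omega[T]\rightarrowtail\Omega[T]$ that build $X$: the class of $X$ with derived unit a weak equivalence contains $\emptyset$ and the representables and is closed under coproducts, filtered colimits, and pushout along a boundary inclusion.

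\emph{The main obstacle.} The delicate point is exactly this last induction. Two things must be controlled: first, that $w_!$ sends pushout squares of normal monomorphisms to homotopy pushout squares of simplicial operads (using left properness of both sides and the fact that $w_!$ preserves cofibrations); and second — more subtly — that $w^* = \mathrm{hcN}$ behaves homotopically correctly here even though $w_!X$ need not be locally fibrant, i.e.\ that computing $Rw^*$ commutes with the relevant homotopy colimits. It is precisely because $w^*$ is not a priori homotopical on non-fibrant operads that the cleanest route (that of \cite{cisinskimoerdijk3}) factors the comparison through the dendroidal (complete) Segal space model structures, where this bookkeeping becomes manageable; alternatively one shows directly that $w^*$ is homotopically conservative on locally fibrant operads and that the derived counit $Lw_!Rw^*P\rightarrow P$ is a weak equivalence for fibrant $P$, which together with the left Quillen property again yields the equivalence.
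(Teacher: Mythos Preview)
The paper does not prove this theorem. It is stated as one of the main results of \cite{cisinskimoerdijk1, cisinskimoerdijk2, cisinskimoerdijk3} and cited without proof; the paper then builds on it (via its open variant, the Corollary immediately following) in the proof of the new Theorem~\ref{teo13-1}. So there is no ``paper's own proof'' to compare against.

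That said, your sketch is a fair outline of the strategy of \cite{cisinskimoerdijk3}, and you correctly flag the genuine difficulty: the derived right adjoint $w^*$ is not homotopical on non-fibrant operads, and the induction over skeleta does not obviously commute with fibrant replacement. You are also right that the actual argument in \cite{cisinskimoerdijk3} circumvents this by passing through the intermediate model categories of (pre)Segal operads and dendroidal complete Segal spaces (cf.\ the diagram (\ref{eq14-2}) recalled later in the present paper), rather than by a direct cellular induction on the unit.

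One slip to fix: the sentence ``Every object of $\dSets$ is cofibrant'' is false. The cofibrant objects in the operadic model structure are exactly the normal dendroidal sets, and not every dendroidal set is normal. Your very next clause already restricts to normal $X$, so this is just a wording error, but it matters for how you set up the derived unit argument (you must first take a normalization).
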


Since $W$ maps open trees to open operads, $w_!$ maps open dendroidal sets to open operads, and the theorem has the following immediate corollary. 

\begin{corollary}
	This pair restricts to a Quillen equivalence 
	\[
	\xymatrix{ {\accentset{\circ}{w_!}} : \odSets \ar@<.5ex>[r] & \oOper :  {\accentset{\circ}{w}^*}  \ar@<.5ex>[l]  } 
	\]
	between open dendroidal sets and open operads.
\end{corollary}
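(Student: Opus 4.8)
The plan is to derive the corollary from the theorem by the general principle that a Quillen equivalence restricts to one between slice categories over a fibrant object. Recall from Remark~\ref{remark4.1}(1) that $\odSets = \dSets/O$, and similarly $\oOper \cong \Oper/E$, where $E$ is the open operad with a single colour $\bullet$ having $E(\bullet,\dots,\bullet;\bullet)=\Delta[0]$ in each arity $n\geq 1$ and $E(-;\bullet)=\emptyset$; every open operad admits a unique map to $E$, so this slice is again a full subcategory of $\Oper$. The object $E$ is \emph{fibrant} in $\Oper$: its spaces of operations are points, hence Kan, and $j^*E$ is the terminal category, so the condition on $\pi_0 j^*$ in the definition of fibration of $\Oper$ is automatic, any functor to the terminal category being an isofibration.

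First I would compute $w^*(E)=O$. For a tree $T$, a morphism of operads $W(T)\to E$ must send every edge of $T$ to $\bullet$ and every operation of positive arity to the unique available point, while a nullary operation of $W(T)$ would have to map into $E(-;\bullet)=\emptyset$; since $W(T)$ has a nullary operation exactly when $T$ has a stump (the single-vertex subtree at a stump has no leaves), we get $\Hom_\Oper(W(T),E)=\Delta[0]$ if $T$ is open and $\emptyset$ otherwise, i.e. $w^*(E)\cong O$. Hence $w^*$ restricts to a functor $\Oper/E\to\dSets/O$, which is $\accentset{\circ}{w}^*$, and its left adjoint $\accentset{\circ}{w_!}$ sends $Z$ to $w_!Z\to w_!w^*(E)\xrightarrow{\,\epsilon\,}E$; since $W$ carries open trees to open operads (as noted in the text) $w_!Z$ is open, so this is simply its unique structure map and $\accentset{\circ}{w_!}=w_!|_{\odSets}$.

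It then suffices to prove the following general statement: if $F\colon\mathcal C\rightleftarrows\mathcal D\colon G$ is a Quillen equivalence and $A\in\mathcal D$ is fibrant, then, giving $\mathcal C/GA$ and $\mathcal D/A$ the over-category model structures (cofibrations, weak equivalences and fibrations all created by the forgetful functors), the induced adjunction $\mathcal C/GA\rightleftarrows\mathcal D/A$ is a Quillen equivalence. That it is a Quillen pair is immediate, since $G$ takes a (trivial) fibration over $A$ to a (trivial) fibration over $GA$. For the equivalence I would use the criterion that a Quillen pair is a Quillen equivalence iff for every cofibrant $X$ and fibrant $Y$ a map $FX\to Y$ is a weak equivalence precisely when its adjunct $X\to GY$ is. A cofibrant object of $\mathcal C/GA$ is a map $Z\to GA$ with $Z$ cofibrant in $\mathcal C$; a fibrant object of $\mathcal D/A$ is a fibration $Y\to A$, whose total object $Y$ is then fibrant in $\mathcal D$ because $A$ is. A morphism between such objects over $A$, and its adjunct over $GA$, have the same underlying morphism and adjunct in $\mathcal D$ and $\mathcal C$ respectively (a one-line check using the triangle identities), and are weak equivalences in the slices iff they are in the ambient categories; so the slice criterion reduces to the one for $F\dashv G$. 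Applying this with $F=w_!$, $G=w^*$, $A=E$, $GA=w^*E=O$ gives the corollary.

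The argument is formal once one knows the model structures on $\odSets$ and $\oOper$ are slice model structures, so the only delicate point — and where a careless proof would fail — is the description of fibrant objects after slicing: a fibrant object of $\oOper$ is a \emph{fibration} $Q\to E$, not merely an open operad with fibrant operation spaces, so one genuinely needs $E$ to be fibrant (equivalently, that the unique map from a fibrant open operad to $E$ is automatically a fibration, which holds since functors into the terminal category are isofibrations). The identification $w^*(E)=O$ is the only other input.
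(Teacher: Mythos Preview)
Your proof is correct and follows exactly the approach the paper intends: the paper states (just before Lemma~\ref{lemma13-2}) that the corollary is obtained ``by slicing over suitable objects (the dendroidal set $O$ and the operad for which $O$ is the value under $w^*$, i.e.\ the non-unital commutative operad $Comm^+$)'', and gives no further details. Your operad $E$ is precisely $Comm^+$, and you have simply spelled out the slicing argument (verification that $E$ is fibrant, that $w^*(E)=O$, and the general lemma on Quillen equivalences over a fibrant base) that the paper leaves implicit.
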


For closed operads, the situation is slightly different, as the following example shows. 

\begin{example}
	Let $P$ be a closed operad, with set of colours $C$, all of whose spaces of operations are contractible Kan complexes. Then the map $P\to 1$ into the terminal object is a trivial fibration. Consider the closed tree $T$, 
	\[
	\begin{tikzpicture} 
	[level distance=8mm, 
	every node/.style={fill, circle, minimum size=.12cm, inner sep=0pt}, 
	level 1/.style={sibling distance=10mm}, 
	level 2/.style={sibling distance=10mm}, 
	level 3/.style={sibling distance=10mm}]
	
	\node(anchorR1)[style={color=white}] {} [grow'=up] 
	child {node(vertexR1)[draw] {} 
		child{node {}}
		child{node {}}
	};
	
	\tikzstyle{every node}=[]
	
	\node at ($(vertexR1) + (-.5cm,.4cm)$) {$a$};
	\node at ($(vertexR1) + (.5cm,.4cm)$) {$b$};
	\node at ($(vertexR1) + (.2cm,-.5cm)$) {$r$};
	\node at ($(vertexR1) + (-1cm,0cm)$) {$T$:};
	\end{tikzpicture} 
	\]
	and its closed boundary $\partial_{cl}(T)$, and a commutative square of the form 
	\[
	\xymatrix{
	& w_!(\partial_{cl}(T)) \ar[r] \ar[d] & P \ar[d] \\  T \ar[r]^u  & W(T) \ar@{.>}[ru] \ar[r] & 1,  }
	\]
	together with the map of operads $T\to W(T)$ which assigns length $1$ to any edge. Write $a$, $b$ and $r$ also for the colours of $P$ which are images of the corresponding edges of $T$ under $\partial_{cl}(T) \to P$. Then a diagonal lift as dotted in the diagram would imply that given unary operation $\alpha\in P(a; r)$ and $\beta\in P(b; r)$, there exists a binary operation $\gamma\in P(a,b;r)$ with $\gamma\circ_a c_a = \beta$ and $\gamma \circ_b c_b=\alpha$, where $c_a$ and $c_b$ are the unique constants of colours $a$ and $b$, respectively. For such a lift to exist, it is necessary that the map 
	\[
	P(a,b; r)\to P(a; r) \times P(b; r)
	\]
	is a trivial fibration. In other words, $P$ needs to be Reedy fibrant rather than just projectively fibrant. 
	\end{example}

	Thus, for the Boardman-Vogt resolution for closed operads, we shall have to work with the Reedy model structure. On the other hand, for closed trees we can get by with a somewhat smaller resolution, putting lengths only on some of the \emph{inner} edges. This gives rise to a functor
	\[
	\overline{W} \colon \Omega_{cl}\to \cOper
	\]
	which assigns to each closed tree $T$ a closed simplicial operad $\overline{W}(T)$. Its colours are again the edges of $T$. And for a sequence of edges $e_1, \ldots, e_n, e$ of $T$ , the space of operations $\overline{W}(T)(e_1, \ldots, e_n;e)$ is again empty unless these edges span a subtree of $T$ with leaves $e_1, \ldots, e_n$ and root $e$. In that case
	\begin{equation} \label{eq11-1}
	\overline{W}(T)(e_1, \ldots, e_n; e)=\prod_{d\in D(e_1, \ldots e_n; e)} \Delta[1]
	\end{equation}
	where $d$ ranges over the set $D(e_1, \ldots, e_n;e)$ of those inner edges of the tree \[T(e_1, \ldots, e_n; e)\] which have at least one of the leaves $e_1, \ldots, e_n$ above them. (We informally refer to the product over these edges $d$ as lengths assigned to these edges, as in \cite{bergermoerdijkBVresolution}.)
	
	For example, for the closed tree $T$ pictured on the left 
	\[
	\begin{tikzpicture} 
	[level distance=8mm, 
	every node/.style={fill, circle, minimum size=.12cm, inner sep=0pt}, 
	level 1/.style={sibling distance=10mm}, 
	level 2/.style={sibling distance=10mm}, 
	level 3/.style={sibling distance=10mm}]
	
	\node(anchorR1)[style={color=white}] {} [grow'=up] 
	child {node(vertexR1)[draw] {} 
		child{node {}}
		child{node {}
			child{node {}}
			child{node {}}}
	};
	
	\node[style={color=white}, right=4cm of anchorR1] {} [grow'=up] 
	child {node(vertexR2)[draw] {} 
			child{node {}}
			child{node {}
				child
				child{node {}}}
	};
	
	\node[style={color=white}, right=8cm of anchorR1] {} [grow'=up] 
	child {node(vertexR3)[draw, fill=none] {} 
			child
	child{node {}
		child{node {}}
		child{node {}}}
	};
	
	\tikzstyle{every node}=[]
	
	\node at ($(vertexR1) + (-1.5cm,.4cm)$) {$T$:};
	\node at ($(vertexR1) + (0cm,1.2cm)$) {$d$};
	\node at ($(vertexR1) + (1cm,1.2cm)$) {$e$};
	\node at ($(vertexR1) + (-.6cm,.4cm)$) {$b$};
	\node at ($(vertexR1) + (.6cm,.4cm)$) {$c$};
	\node at ($(vertexR1) + (.2cm,-.5cm)$) {$a$};

	\node at ($(vertexR2) + (-1.5cm,.4cm)$) {$S$:};
	\node at ($(vertexR2) + (0cm,1.2cm)$) {$d$};
	\node at ($(vertexR2) + (1cm,1.2cm)$) {$e$};
	\node at ($(vertexR2) + (-.6cm,.4cm)$) {$b$};
	\node at ($(vertexR2) + (.6cm,.4cm)$) {$c$};
	\node at ($(vertexR2) + (.2cm,-.5cm)$) {$a$};

	\node at ($(vertexR3) + (-1.5cm,.4cm)$) {$R$:};
	\node at ($(vertexR3) + (0cm,1.2cm)$) {$d$};
	\node at ($(vertexR3) + (1cm,1.2cm)$) {$e$};
	\node at ($(vertexR3) + (-.6cm,.4cm)$) {$b$};
	\node at ($(vertexR3) + (.6cm,.4cm)$) {$c$};
	\node at ($(vertexR3) + (.2cm,-.5cm)$) {$a$};
	\end{tikzpicture} 
	\] 
	the space $\overline{W}(T)(d;a)$ corresponding to the subtree $S=T(d;a)$ is a copy of $\Delta[1]$, while the space $\overline{W}(T)(b; a)$ corresponding to the subtree $R$ is just a point. 
	
	Composition in this operad $\overline{W}(T)$ is again defined by grafting subtrees, assigning length $1$ to newly arising inner edges which require a length, and erasing lengths on edges that no longer have leaves above them in the grafted tree. 
	
	The construction of the operad $\overline{W}(T)$ is obviously functorial in $T$, and induces adjoint functors
	\[
	\xymatrix{
		\overline{w}_! : \cdSets \ar@<.5ex>[r] & \cOper : \overline{w}^* \ar@<.5ex>[l]  } 
	\]
	
	\begin{theorem} \label{teo11-4}
		This adjoint pair is a Quillen pair for the unital operadic model structure on the category $\cdSets$ of closed dendroidal sets and the Reedy model structure on the category $\cOper$ of closed (or unital) simplicial operads. 
	\end{theorem}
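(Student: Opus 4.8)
The plan is to apply the same recognition criterion used in the proof of Theorem~\ref{teo10-6}. Since the unital operadic model structure on $\cdSets$ is cofibrantly generated and left proper (Theorem~\ref{teo9-2}(b),(c)) and its fibrations between fibrant objects are detected by the very inner horn inclusions together with the two inclusions $\overline{\eta}\rightarrowtail cl_!(J)$ (Theorem~\ref{teo9-2}(d)), it suffices to prove that $\overline{w}_!$ (i) preserves cofibrations, and (ii) sends every via morphism and each of the two maps $\overline{\eta}\rightarrowtail cl_!(J)$ to a weak equivalence in $\cOper$ with the Reedy model structure. (Recall from Proposition~\ref{prop4-2} that the Reedy and projective model structures on $\cOper$ share the same weak equivalences.)

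For (i), I would argue by a skeletal induction on the closed tree $T$, parallel to the corresponding statement for the ordinary Boardman--Vogt resolution in \cite{cisinskimoerdijk1}. Writing $\overline{W}^{\partial}(T):=\overline{w}_!(\partial_{cl}T)$, one shows that $\overline{W}^{\partial}(T)\rightarrowtail \overline{W}(T)$ is a transfinite composite of pushouts of maps of type (C1) and of generating Reedy cofibrations $\overline{C}_{\overrightarrow{c}}[\partial\Delta[n]]\cup_{\overline{C}^{\partial}_{\overrightarrow{c}}[\partial\Delta[n]]}\overline{C}^{\partial}_{\overrightarrow{c}}[\Delta[n]]\rightarrowtail \overline{C}_{\overrightarrow{c}}[\Delta[n]]$: the two operads share the edges of $T$ as set of colours, and passing from $\overline{W}^{\partial}(T)$ to $\overline{W}(T)$ amounts to freely adjoining the operations of $\overline{W}(T)$ that are not already supported by a closed face of $T$ — these carry, for the relevant sequences of edges, the interval factors $\Delta[1]$ attached to the inner edges of $T$ that have a leaf above them — which is exactly the data controlled by those Reedy generating cofibrations. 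It is here that the Reedy, rather than the projective, structure is essential: the Example after the statement shows why the projective one fails. Since the closed boundary inclusions generate the cofibrations of $\cdSets$, this gives (i).

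The substance of the proof is (ii). For via morphisms I would use Theorem~\ref{theorem7-6}(b). The class $\mathcal{C}$ of maps $f$ in $\cdSets$ with $\overline{w}_!(f)$ a trivial cofibration in $(\cOper,\text{Reedy})$ is saturated (as $\overline{w}_!$ preserves colimits and trivial cofibrations form a saturated class), and it satisfies the cancellation property of Theorem~\ref{theorem7-6}(b): given normal monomorphisms $A\rightarrowtail B\rightarrowtail C$ with $A\rightarrowtail C$ and $A\rightarrowtail B$ in $\mathcal{C}$, the map $\overline{w}_!(B\rightarrowtail C)$ is a cofibration by (i) and a weak equivalence by two-out-of-three, hence lies in $\mathcal{C}$. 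So by Theorem~\ref{theorem7-6}(b) it is enough to put the closed spine inclusions $\cSpine(T)\rightarrowtail T$ into $\mathcal{C}$. The key point is that $\cSpine(T)$ is a union of closed corollas, and $\overline{W}$ takes corollas to discrete operads (a corolla has no inner edges), so $\overline{w}_!(\cSpine(T))=\varinjlim_{v}\overline{W}(\overline{C}_v)=\tau(\cSpine(T))$, which by the Segal property of the dendroidal nerve is $\tau(T)$, the operad of the tree. Moreover the natural ``collapse all lengths'' morphism $q_T\colon\overline{W}(T)\to\tau(T)$, collapsing each factor $\Delta[1]$ to a point, is a weak equivalence in $\cOper$; by naturality $q_T\circ\overline{w}_!(\cSpine(T)\rightarrowtail T)$ is the isomorphism $\tau(\cSpine(T))\xrightarrow{\sim}\tau(T)$, so $\overline{w}_!(\cSpine(T)\rightarrowtail T)$ is a weak equivalence by two-out-of-three, and a cofibration by (i); hence it lies in $\mathcal{C}$. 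The same comparison handles $\overline{\eta}\rightarrowtail cl_!(J)$: one has $\tau(cl_!(J))=g_!(\{0\leftrightarrow 1\})$, the collapse $\overline{w}_!(cl_!(J))\to \tau(cl_!(J))$ is a weak equivalence (all operation spaces are contractible, being built from products of intervals), and the composite $\overline{w}_!(\overline{\eta})\rightarrowtail\overline{w}_!(cl_!(J))\to\tau(cl_!(J))$ is the inclusion $\overline{\eta}\hookrightarrow g_!(\{0\leftrightarrow 1\})$ of a colour, which is fully faithful and essentially surjective, hence a weak equivalence; two-out-of-three gives the claim.

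I expect the main obstacle to be (i) — the explicit combinatorial organisation of $\overline{W}^{\partial}(T)\rightarrowtail\overline{W}(T)$ as a cell attachment in the Reedy structure on $\cOper$ — together with the interlocking requirement, felt throughout (ii), that $\overline{W}$ and the collapse maps $q_T$ be compatible with this Reedy cellularity; once (i) is in place, step (ii) is essentially formal, resting on Theorems~\ref{theorem7-6} and~\ref{teo9-2} and on the natural comparison $\overline{w}_!\Rightarrow\tau$.
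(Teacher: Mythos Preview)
Your overall strategy is sound and, for the via morphisms, genuinely different from the paper's. The paper proves both (i) and (ii) by direct lifting arguments: for $\overline{w}_!(\partial_{cl}T)\rightarrowtail\overline{w}_!(T)$ it analyses the one ``new'' space of operations $\overline{W}(T)(e_1,\ldots,e_n;e)$ (with the $e_i$ the maximal edges and $e$ the root) and reduces the lifting problem against a trivial Reedy fibration to lifting $\partial(\Delta[1]^D)\rightarrowtail\Delta[1]^D$; for a very inner horn it reduces similarly to lifting $\Lambda^a(\Delta[1]^D)\rightarrowtail\Delta[1]^D$. Your route via Theorem~\ref{theorem7-6}(b), spines, and the collapse map $q_T\colon\overline{W}(T)\to\tau(T)$ is correct and rather elegant for the horn case: $\overline{W}$ is indeed discrete on closed corollas (the set $D$ is empty there), so $\overline{w}_!(\cSpine(T))\cong\tau(T)$, and each $q_T$ is a weak equivalence since every space of operations of $\overline{W}(T)$ is a cube.

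There is, however, a real gap in your treatment of $\overline{\eta}\rightarrowtail\overline{J}$. You assert that the collapse $\overline{w}_!(\overline{J})\to\tau(\overline{J})$ is a weak equivalence because ``all operation spaces are contractible, being built from products of intervals''. But $\overline{w}_!(\overline{J})$ is not $\overline{W}$ of a single tree: it is a \emph{colimit} over the filtration $\overline{J}=\bigcup_n\overline{L}_n$, and its spaces of operations are colimits of cubes along nontrivial gluing maps (involving maxima of coordinates, as the paper makes explicit). Colimits of contractible spaces need not be contractible, so your parenthetical is not a proof. This is exactly the point where the paper does real work: it exhibits $\overline{w}_!(\overline{J})(0,0)$ as $\varinjlim_n A^{(n)}$ with each $A^{(n)}\rightarrowtail A^{(n+1)}$ a pushout of a trivial cofibration $Z_n\rightarrowtail\Delta[1]^{2n-1}$. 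You will need an argument of this kind; the collapse-map shortcut does not bypass it.

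A smaller point: your sketch for (i) via cell attachments is plausible but unwritten, and the combinatorics of identifying which Reedy cells are attached (and that the relevant ``latching'' data is exactly $\partial(\Delta[1]^D)$ together with the $Q^-$-part) is precisely what the paper's direct lifting argument encodes. You may find it quicker to imitate the paper here rather than to set up a full cellular filtration.
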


	\begin{proof}
		Let $T$ be a closed tree, and let $\partial_{cl}(T)$ be its closed boundary, as before. To show that the inclusion $\overline{w}_!(\partial_{cl} T) \rightarrowtail \overline{w}_! (T)$ is a Reedy cofibration, consider a lifting problem of the form 
		\[
		\xymatrix{
			\overline{w}_!(\partial_{cl} T) \ar[r]^\psi \ar[d] & Q \ar[d]^f \\   W(T) \ar@{.>}[ru]^\chi \ar[r]^\rho & P  }
		\]
		where $f$ is a trivial Reedy fibration between unital operads $P$ and $Q$. To define a diagonal lift $\chi$, consider a non-empty space of operations $\overline{W}(T)(e_1, \ldots, e_n;e)$ for edges $e_1, \ldots, e_n$ and $e$ in $T$. Now first notice that unless $e_1, \ldots, e_n$ enumerate all the maximal edges in $T$ (the ones immediately below the stumps) and $e$ is the root edge of $T$, the tree $T(e_1, \ldots, e_n; e)$ is a subtree of a closed face of $T$, so the space of operations $\overline{W}(T)(e_1, \ldots, e_n;e)$ is already contained in $\overline{w}_!(\partial_{cl} T)$. So assume $e_1, \ldots, e_n$ are all the maximal edges and $e$ is the root; in other words, $T(e_1, \ldots, e_n; e)=T^\circ$ in the notation used earlier. The map $\chi\colon \overline{W}(T)(e_1, \ldots, e_n; e) \to Q$ we are looking for is already prescribed by $\psi$ on the subspace of the product $\prod_d \Delta[1]$ for which one of the coordinates is $1$, because then the operation is a composition of operations occurring in $\overline{w}_!(\partial_{cl} T)$. It is also prescribed by $\psi$ if one of the coordinates, say the one for the edge $d$, is zero, because then it is an operation in the image of $\overline{w}_!(\partial_{d} T) \to \overline{w}_!(\partial_{cl} T) \to \overline{w}_!(T)$. Finally, the value of $\chi$ is provided by $\psi$ if we compose with one of the (unique) constants of colour $e_i$, because then the operation lies in $\overline{W}(\partial_{e_i} T)(e_1, \ldots, \hat{e}_i, \ldots, e_n; e)$. Thus, the required map $\chi$ is a solution to a lifting problem of the form 
		\[\xymatrix{
		\partial(\Delta[1]^D)  \ar[r] \ar@{>->}[d] & Q(\psi e_1, \ldots, \psi e_n; \psi e) \ar[d]\\ 
		\Delta[1]^D \ar[r] & P(\varphi e_1, \ldots, \varphi e_n; \varphi e)	 \times_{P^-(\varphi e_1, \ldots, \varphi e_n; \varphi e)} Q^-(\psi e_1, \ldots, \psi e_n; \psi e)
		}\]
		where $D=D(e_1, \ldots, e_n; e)$ as in (\ref{eq11-1}), and $P^-$ and $Q^-$ are as defined in Section \ref{sect3}. Such a solution indeed exists since $Q\to P$ is assumed to be a trivial Reedy fibration. 
		
		Next, we prove that $\overline{w}_!$ also preserves trivial cofibrations. For this, it suffices to show that it sends the two kinds of trivial cofibrations which detect fibrations between fibrant objects (called ``quasi-generating'' before) to trivial Reedy fibrations. 
		
		Consider first the inclusion of a very inner horn $\Lambda_{cl}^a(T)\rightarrowtail T$. Here the argument for the existence of a lift in a diagram of the form 
		\[
		\xymatrix{
			\overline{w}_!(\Lambda_{cl}^a(T)) \ar[r]^{\quad \psi} \ar[d] & Q \ar[d] \\   \overline{w}_!(T) \ar@{.>}[ru] \ar[r]^\rho & P  }
		\]
		where $Q\to P$ is a Reedy fibration is entirely similar to the previous argument, and reduces to the problem of finding a lift in  
		\[\xymatrix{
			\Lambda^a(\Delta[1]^D)  \ar[r] \ar@{>->}[d] & Q(\psi e_1, \ldots, \psi e_n; \psi e) \ar[d]\\ 
			\Delta[1]^D \ar[r] & P(\varphi e_1, \ldots, \varphi e_n; \varphi e)	 \times_{P^-(\varphi e_1, \ldots, \varphi e_n; \varphi e)} Q^-(\psi e_1, \ldots, \psi e_n; \psi e)
		}\]
		where $\Lambda^a(\Delta[1]^D)\subseteq \partial(\Delta[1]^D)$ is that part of the boundary consisting of those coordinates which either have length $1$ at $a$, or length $0$ or $1$ at some $d\neq a$. Such a lift exists because $\Lambda^a(\Delta[1]^D) \rightarrowtail \Delta[1]^D$ is a trivial cofibration of simplicial sets. 
		
		Next, and finally, consider an inclusion $\overline{ \{ 0 \} } \rightarrowtail \overline{J}$. We claim that $\overline{w}_! \overline{ \{ 0 \} } \to \overline{w}_! (\overline{J})$ is a trivial Reedy cofibration. Indeed, it is a Reedy cofibration since we already know that $\overline{w}_!$ preserves cofibrations, so it suffices to prove it is a weak equivalence. In other words, we need to prove that each of the spaces of operations 
		\[
		\overline{w}_!(\overline{J})(i,j), \quad i,j=0 \text{ or } 1,
		\]		
		is weakly contractible. For example, let us consider the case $i=0=j$. (The other cases are similar.) Then $\overline{w}_!(\overline{J})(0,0)$ is a colimit of cubes coming from lengths on edges in the tree with $2n+1$ edges: 
		\begin{equation} \label{eq11-2}
		\begin{tikzpicture} 
		[level distance=8mm, 
		every node/.style={fill, circle, minimum size=.2cm, inner sep=0pt}, 
		level 1/.style={sibling distance=10mm}, 
		level 2/.style={sibling distance=10mm}, 
		level 3/.style={sibling distance=10mm}]
		
		\node(anchorR1)[style={color=white}] {} [grow'=up] 
		child {node(vertexR1)[draw] {} 
			child{
				child{ edge from parent [draw=none]
					child{node {}
						child{node {}
							child{node {}
								child{node {}}}}}}
			}
		};
		
		\tikzstyle{every node}=[]
		
		\node at ($(vertexR1) + (-.3cm,4.4cm)$) {$0$};
		\node at ($(vertexR1) + (-.3cm,3.6cm)$) {$1$};
		\node at ($(vertexR1) + (.6cm,3.6cm)$) {$t_{1}$};
		\node at ($(vertexR1) + (-.3cm,2.8cm)$) {$0$};
		\node at ($(vertexR1) + (.6cm,2.8cm)$) {$t_{2}$};
		\node at ($(vertexR1) + (-.3cm,2cm)$) {$1$};
		\node at ($(vertexR1) + (.3cm,1.2cm)$) {$\vdots$};
		\node at ($(vertexR1) + (-.3cm,.4cm)$) {$1$};
		\node at ($(vertexR1) + (.6cm,.4cm)$) {$t_{2n-1}$};
		\node at ($(vertexR1) + (-.3cm,-.5cm)$) {$0$};
		\end{tikzpicture} 
		\end{equation}
		Such a tree has $2n-1$ very inner edges, so it is a cube $\Delta[1]^{2n-1}$. Thus 
		\[
		\overline{w}_!(\overline{J})(0,0) =\varinjlim A^{(n)}
		\]
		is a colimit of a sequence 
		\[
		A^{(1)} \to A^{(2)}\to A^{(3)} \to \ldots 
		\]
		where $A^{(n+1)}=A^{(n)} \cup \Delta[1]^{2n-1}$ and $A^{(1)}=pt$. These cubes are glued together as follows. Write $Z_n\subseteq \Delta[1]^{2n-1}$ for the simplicial subset of those coordinates $(t_1, \ldots, t_{2n-1})$ where at least one of the $t_i=0$. Define
		\[
		\alpha_n\colon Z_n \to A^{(n)}
		\]
		by mapping to a lower dimensional cube as follows: 
		\begin{itemize}
			\item[-] if $t_1=0$, (contract the upper two edges in (\ref{eq11-2}) and erase $t_2$, i.e.) map $(t_1, \ldots, t_{2n-1})$ to $(t_3, \ldots, t_{2n-1})$.
			\item[-] if $t_i=0$, $i>0$, (contract the ($i-1$)th and $i$th edge, and) map $(t_1, \ldots, t_{2n-1})$ to $(t_1, \ldots, t_{i-1}\vee t_{i+1}, \ldots, t_{2n-1})$.
			\item if $t_{2n-1}=0$, map $(t_1, \ldots, t_{2n-1})$ to $(t_1, \ldots, t_{2n-3})$.
		\end{itemize}
	Then $A^{(n+1)}$ is defined as the pushout 
	\[\xymatrix{
		Z_n  \ar[r]^{\alpha_n} \ar@{>->}[d] & A^{(n)} \ar@{>->}[d]\\ 
		\Delta[1]^{2n-1} \ar[r] & A^{(n+1)},
	}\]
	in particular $A^{(n)}\to A^{(n+1)}$ is a weak equivalence. 
	
	This shows that $\overline{w}_!(\overline{J})$ has a weakly contractible space of operations $0\to 0$. The case of the other spaces of operations being similar, this proves that $\overline{w}_! \overline{ \{ 0 \} } \to \overline{w}_! (\overline{J})$ is a weak equivalence. 
	
	This completes the proof of the theorem. 
	\end{proof}

\section{Open and closed dendroidal spaces} \label{sect12}

Let us write 
\[
\dSpaces = \sSets^{\Omega^{op}}
\]
for the category of dendroidal spaces. It carries an evident simplicial structure which we denote by $\underline{\times}$. Thus, for a dendroidal space $X$ and a simplicial set $M$, 
\[
(X\underline{\times}M) (T) = X(T)\times M, 
\]
by definition. This category of dendroidal spaces carries two equivalent model structures, the projective and the Reedy one. These model structures and the left Quillen equivalence between these are denoted 
\[
(\dSpaces)_P \xrightarrow{\sim} (\dSpaces)_R.
\]
The second one can be localized by the Reedy cofibration 
\begin{equation} \label{eq12-1}
\Lambda^e(T) \underline{\times} \Delta[n] \cup T\underline{\times} \partial\Delta[n] \rightarrowtail T\underline{\times} \Delta[n]
\end{equation}
for each inner edge $e$ in a tree $T$ and each $n\geqslant 0$, as well as 
\begin{equation} \label{eq12-2}
\{0\} \underline{\times} \Delta[n] \cup J \underline{\times} \partial\Delta[n] \rightarrowtail J\underline{\times} \Delta[n]
\end{equation}
for each $n\geqslant 0$. This results in a ``complete Segal'' model structure $(\dSpaces)_{RSC}$, whose fibrant objects are the Reedy fibrant objects satisfying the \emph{Segal condition} (meaning that they have the RLP with respect to the maps of type (\ref{eq12-1})) and the \emph{completeness} condition (RLP with respect to (\ref{eq12-2})). A similar localization of the projective model structure by (cofibrant replacements of) the maps in (\ref{eq12-1}) and (\ref{eq12-2}) results in a model category $(\dSpaces) _{PSC}$ and a left Quillen equivalence 
\[
(\dSpaces)_{PSC} \xrightarrow{\sim} (\dSpaces)_{RSC}.
\]
The adjoint functors $\xymatrix{d_! : \dSets \ar@<.5ex>[r] & \dSpaces : d^* \ar@<.5ex>[l]}$ sending a dendroidal set to the corresponding discrete dendroidal space and its right adjoint sending a dendroidal space to its dendroidal set of vertices, define a Quillen equivalence 
\begin{equation} \label{eq12-3}
\xymatrix{ d_! : \dSets \ar@<.5ex>[r] & (\dSpaces)_{RSC} : d^*, \ar@<.5ex>[l]  } 
\end{equation}
see \cite{cisinskimoerdijk2} for details. 

By slicing over the dendroidal set $O$ (or over the dendroidal space $d_!(O)$, respectively) we obtain similar model structures and Quillen equivalences for the category $\odSpaces=\sSets^{\Omega_o^{op}}$ of \emph{open dendroidal spaces},
\begin{equation} \label{eq12-4}
\xymatrix{  (\odSpaces)_{PSC} \ar@<.5ex>[r] & (\odSpaces)_{RSC}  \ar@<.5ex>[l]  \ar@<.5ex>[r] & \odSets \ar@<.5ex>[l]} 
\end{equation}
(left Quillen functors on top).

Exactly the same constructions and arguments apply to the category 
\[
\cdSpaces = \sSets^{\Omega_{cl}^{op}}
\]
of \emph{closed dendroidal spaces}, and the localizations by the closed analogues of (\ref{eq12-1}) and (\ref{eq12-2}); namely, maps of type (\ref{eq12-1}) for every very inner edge $e$ in a closed tree $T$, and for (\ref{eq12-2}) the closure $\overline{ \{ 0 \} } \rightarrowtail \overline{J}$ (i.e. $cl_!(\{ 0 \} ) \rightarrowtail cl_!(J)$) instead of $\{0 \} \rightarrowtail J$ (cf. Theorem \ref{teo9-2} (d)). We again refer to the localizations as the projective and Reedy complete Segal model structures, now on the category of closed dendroidal spaces. Analogous to (\ref{eq12-4}), there are Quillen equivalences
\begin{equation} \label{eq12-5}
\xymatrix{  (\cdSpaces)_{PSC} \ar@<.5ex>[r] & (\cdSpaces)_{RSC}  \ar@<.5ex>[l]  \ar@<.5ex>[r] & \cdSets, \ar@<.5ex>[l]} 
\end{equation}
where the latter is equipped with the unital operadic model structure of Theorem~\ref{teo9-1}.

The arguments for Proposition \ref{prop10-4} of Section \ref{sect10} show that there are Quillen pairs induced by the inclusion $u\colon \Omega_{cl} \hookrightarrow \Omega$ as in the upper two rows of the following commutative diagram (the lower row is the Quillen pair of Proposition \ref{prop10-4} and the columns are (\ref{eq12-4}) and (\ref{eq12-5})): 
\begin{equation} \label{diagram11-1}
\xymatrix{ u_! : (\cdSpaces)_{PSC} \ar@<.5ex>[r]  \ar@<.5ex>[d]  & (\odSpaces)_{PSC} : u^* \ar@<.5ex>[l]   \ar@<.5ex>[d] \\
u_! : (\cdSpaces)_{RSC} \ar@<.5ex>[r]  \ar@<.5ex>[u]^\sim  \ar@<.5ex>[d]   & (\odSpaces)_{RSC} : u^* \ar@<.5ex>[l]  \ar@<.5ex>[d]  \ar@<.5ex>[u]^\sim  \\
u_! : \cdSets \ar@<.5ex>[r]  \ar@<.5ex>[u]^\sim  & \odSets : u^* \ar@<.5ex>[l]  \ar@<.5ex>[u]^\sim 
} 
\end{equation}

The functor $cl_!$ does not preserve normal monomorphisms (cf. Remark \ref{rem10-3}), so does not induce a Quillen pair $\xymatrix{\dSpaces_R \ar@<.5ex>[r] & \cdSpaces_R  \ar@<.5ex>[l]}$, but it does so for the projective structure, 
\[
\xymatrix{cl_! : \dSpaces_P \ar@<.5ex>[r] & \cdSpaces_P : cl^*. \ar@<.5ex>[l]}
\]

\begin{lemma} \label{lemma12-1}
	This Quillen pair restricts to a Quillen pair 
	\[
	\xymatrix{cl_! : \dSpaces_{PSC} \ar@<.5ex>[r] & \cdSpaces_{PSC} : cl^*. \ar@<.5ex>[l]}
	\]
\end{lemma}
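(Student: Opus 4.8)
The plan is to apply the standard criterion (\cite{hirschhorn}) for a Quillen pair to descend to left Bousfield localizations. Since $cl_!\dashv cl^*$ is already a Quillen pair for the projective structures and the localized structures have the same cofibrations, the composite $cl_!\colon \dSpaces_P\to\cdSpaces_P\to\cdSpaces_{PSC}$ is left Quillen; hence $cl_!\dashv cl^*$ is a Quillen pair $\dSpaces_{PSC}\rightleftarrows\cdSpaces_{PSC}$ precisely when $cl_!$ carries each localizing map of $\dSpaces_{PSC}$ --- the maps (\ref{eq12-1}) for inner edges $e$ in arbitrary trees $T$, and the maps (\ref{eq12-2}) --- to a weak equivalence in $\cdSpaces_{PSC}$. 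Since $cl^*$ is right Quillen for the projective structures, $cl^*W$ is projectively fibrant whenever $W$ is fibrant in $\cdSpaces_{PSC}$, and the adjunction $cl_!\dashv cl^*$ turns this requirement into the dual assertion which I would actually prove: for every closed complete Segal space $W$, the dendroidal space $cl^*W$, given by $(cl^*W)(T)=W(\overline T)$, is again a complete Segal dendroidal space, i.e.\ satisfies the Segal and completeness conditions.

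\textbf{Key input.} The essential computational step is the identity
\[
cl_!(\cSpine(T))=\cSpine(\overline T)\qquad\text{as subobjects of }\overline T=cl_!(T),
\]
valid for every tree $T$. Indeed $cl_!$ preserves colimits and sends each corolla $C_v$ of $T$ to the closed corolla $\overline{C_v}$ and each $\eta$ to $\overline\eta$, while closure adjoins above each leaf of $T$ a stump whose corolla $\overline\eta$ is already absorbed into the closed corolla immediately below it and which creates no new adjacency among the vertices of $T$; so the colimit presenting $cl_!(\cSpine(T))$ is exactly the closed spine of $\overline T$. (One must check that this colimit is genuinely realised as a union inside $\overline T$, despite the poor behaviour of $cl_!$ on monomorphisms, cf.\ Remark \ref{remark4.1}(2); this goes through because the gluings all occur along the monomorphisms $\overline\eta\hookrightarrow\overline{C_v}$.) Applying $\operatorname{map}(-,W)$ and the adjunction $cl_!\dashv cl^*$ yields, naturally in $T$,
\[
\operatorname{map}(\cSpine(T),cl^*W)=\operatorname{map}(\cSpine(\overline T),W),\qquad \operatorname{map}(T,cl^*W)=\operatorname{map}(\overline T,W).
\]

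\textbf{Segal and completeness conditions.} For the Segal condition I would use that $\dSpaces_{PSC}$ is equally the localization of $\dSpaces_P$ at the Segal core (spine) inclusions $\cSpine(T)\rightarrowtail T$ --- the single inner-horn inclusions (\ref{eq12-1}) are inner anodyne, hence lie in the saturation of the Segal core inclusions, and conversely, cf.\ \cite{cisinskimoerdijk2} --- so that a projectively fibrant $X$ is local precisely when, for every tree $T$, the restriction $\operatorname{map}(T,X)\to\operatorname{map}(\cSpine(T),X)$ is a trivial Kan fibration (and $X$ is complete). By the displayed identities this restriction, for $X=cl^*W$, is $\operatorname{map}(\overline T,W)\to\operatorname{map}(\cSpine(\overline T),W)$, restriction along the closed spine inclusion, which is a via morphism by Theorem \ref{theorem7-6}(a). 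Since $W$ is fibrant in $\cdSpaces_{PSC}$ it has the right lifting property against the pushout--products of via morphisms with the boundary inclusions $\partial\Delta[n]\rightarrowtail\Delta[n]$ --- the space-level form of Theorem \ref{teo9-2}(d)--(e), which holds by Proposition \ref{prop7-3} and the ``same constructions apply'' principle of Section \ref{sect12}, via morphisms being $\cdSpaces_{PSC}$-trivial cofibrations --- so this map is a trivial Kan fibration. Completeness is immediate: $cl_!(\{0\})=\overline{\{0\}}$ and $cl_!(J)=\overline J$, so $\operatorname{map}(J,cl^*W)\to\operatorname{map}(\{0\},cl^*W)$ equals $\operatorname{map}(\overline J,W)\to\operatorname{map}(\overline{\{0\}},W)$, a trivial Kan fibration because $W$ satisfies the closed completeness condition. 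Hence $cl^*W$ is a complete Segal dendroidal space, and the lemma follows.

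\textbf{Main obstacle.} The one step that requires genuine checking, rather than formal invocation, is the identity $cl_!(\cSpine(T))=\cSpine(\overline T)$: precisely because $cl_!$ fails to preserve monomorphisms in general, one has to verify that the defining colimit embeds in $\overline T$ and matches the closed spine, and the bookkeeping with the two kinds of stumps of $\overline T$ (those produced from leaves of $T$, and those already present in $T$) must be done carefully. Everything else is a formal consequence of the localization machinery together with transcriptions to dendroidal spaces of results already established in the paper (Theorems \ref{theorem7-6} and \ref{teo9-2}, Proposition \ref{prop7-3}).
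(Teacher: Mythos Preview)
Your approach is the same as the paper's: both reduce the lemma to the fact that closure carries the Segal core of a tree $T$ to the closed spine of $\overline T$, and both treat completeness by the analogous observation for the cover $J=\bigcup L_n$. The paper phrases this on the left-adjoint side (showing $Lcl_!$ sends localizing maps to weak equivalences) while you phrase it dually (showing $cl^*$ preserves local objects); these are formally equivalent.

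There is one genuine gap in your final step. You assert that a fibrant $W$ in $\cdSpaces_{PSC}$ has the RLP against pushout--products of via morphisms with $\partial\Delta[n]\hookrightarrow\Delta[n]$, citing a ``space-level form of Theorem \ref{teo9-2}(d)--(e)''. But via morphisms are normal monomorphisms, hence \emph{Reedy} cofibrations, not projective ones; those pushout--products are therefore not trivial cofibrations in the \emph{projective} PSC structure, and a merely projectively fibrant $W$ need not lift against them. Equivalently, neither $Sc(T)$ nor $\cSpine(\overline T)$ is projectively cofibrant, so the underived $\operatorname{map}(\cSpine(\overline T),W)$ is not a priori the derived mapping space, and your conclusion that the restriction is a trivial Kan fibration is unjustified as stated.

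The paper addresses exactly this by passing to the \v Cech nerve of $Sc(T)$, which \emph{is} a projective cofibrant resolution, and observing that the intersections $C_{v_0}\cap\cdots\cap C_{v_n}$ appearing in it (empty, copies of $\eta$, or single corollas) are preserved by $cl_!$; your identity $cl_!(Sc(T))=\cSpine(\overline T)$ is the shadow of the stronger fact that $cl_!$ carries this resolution to the \v Cech resolution of $\cSpine(\overline T)$. Alternatively you can repair your argument directly: both $\operatorname{Rmap}(Sc(T),cl^*W)$ and $\operatorname{Rmap}(\cSpine(\overline T),W)$ compute the \emph{same} homotopy limit $\operatorname{holim}_v W(\overline{C_v})$ over the spine-shaped diagram, so the Segal condition for $cl^*W$ at $T$ coincides with that for $W$ at $\overline T$ without any RLP claim.
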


\begin{proof}
	We need to show that the localizing maps defining the passage from \linebreak $(\dSpaces)_P$~to $(\dSpaces)_{PSC}$ are sent to weak equivalences in $(\cdSpaces)_{PSC}$ by the functor $cl_!$. For the Segal condition, we observe that instead of localizing by inner horns, we might equally well localize by ``spines'' or ``Segal cores'', cf. \cite{cisinskimoerdijk2}. A cofibrant resolution of the Segal core $Sc(T)$ of a tree $T$ in the projective model structure $(\dSpaces)_P$ is its ``\v Cech nerve''
	\begin{equation} \label{eq12-6}
	\xymatrix{ Sc(T) & \ar@{->>}[l] \coprod_{v_0} C_{v_0}  & \coprod_{v_0, v_1}  C_{v_0} \cap C_{v_1} \ar@<.5ex>[l]\ar@<-.5ex>[l]  & \ar@<1ex>[l]\ar[l]\ar@<-1ex>[l] }
	\end{equation}
	where the $v_i$ range over the vertices of $T$ and $C_{v_i}\subseteq T$ denotes the corresponding corolla. The intersections $C_{v_0}\cap \ldots \cap C_{v_n}$ occurring here are either empty or copies of the unit tree $\eta$ (or corollas in case $v_0, \ldots, v_n$ are all the same). These types of intersections are preserved by the closure functor $cl_!$. So the image of this cofibrant resolution (\ref{eq12-6}) under the functor $cl_!$ is precisely the \v Cech type resolution of the closed spine $\cSpine(\overline{T}) \rightarrowtail \overline{T}$, which is a localizing map for $\cdSpaces_{PSC}$, cf. Theorem \ref{theorem7-6}. A similar argument applies to the localizing map $\{0\}\rightarrowtail J$. Indeed, $J=\bigcup L_n$ and $\overline{J} = \bigcup \overline{L}_n$, as in the proof of Proposition \ref{prop10-4}. The intersections $L_{n_1} \cap \ldots \cap L_{n_p}$ of this cover of $J$ are again of the form $L_m$ for a smaller $m$, and these intersections are preserved by the closure operator $cl_!$. This proves the lemma.
\end{proof}

Next, let us turn to the composite functor $h=cl \circ o\colon \Omega_o \to \Omega_{cl}$. First of all, the arguments of Theorem  \ref{teo10-6} show the following. 

\begin{lemma}
	The functor $h\colon \Omega_o \to \Omega_{cl}$ induces a Quillen pair rendering the following diagram commutative:
	\[ 
	\xymatrix{ 
		h^* : (\cdSpaces)_{RSC} \ar@<0.5ex>[r]   \ar@<0.5ex>[d]   & (\odSpaces)_{RSC} : h_* \ar@<0.5ex>[l]  \ar@<0.5ex>[d]    \\
		h^± : \cdSets \ar@<0.5ex>[r]  \ar@<0.5ex>[u]^\sim  & \odSets : h_* \ar@<0.5ex>[l]  \ar@<0.5ex>[u]^\sim 
	} \]
(left Quillen functors on the left and on top).
\end{lemma}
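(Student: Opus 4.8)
The plan is to imitate the proof of Theorem~\ref{teo10-6}, so that the only new work is to carry the normal-monomorphism and inner-anodyne facts established there through the extra simplicial variable. First I would record the structural features of $h^{*}\colon\cdSpaces\to\odSpaces$: it is restriction along $h$, hence cocontinuous with right adjoint the right Kan extension $h_{*}$; it is computed degreewise as $h^{*}=o^{*}u_{!}$; and it commutes with the external simplicial tensoring and with $d_{!}$, in the sense that $h^{*}(X\,\underline{\times}\,M)=(h^{*}X)\,\underline{\times}\,M$ for $M\in\sSets$, while $h^{*}d_{!}X=d_{!}h^{*}X$ for a closed dendroidal set $X$ (both being the discrete open dendroidal space $T\mapsto X(hT)$). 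I would then argue in two steps: first that $(h^{*},h_{*})$ is a Quillen pair for the unlocalized generalized Reedy structures, then that it descends to the complete Segal localizations; commutativity of the square will be automatic.

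For the first step, since the weak equivalences in $(\cdSpaces)_{R}$ and $(\odSpaces)_{R}$ are the levelwise ones and $h^{*}$ is levelwise restriction, it suffices to see that $h^{*}$ preserves Reedy cofibrations. These are generated by the pushout-products
\[
\partial_{cl}(T)\,\underline{\times}\,\Delta[n]\ \cup\ T\,\underline{\times}\,\partial\Delta[n]\ \rightarrowtail\ T\,\underline{\times}\,\Delta[n]
\]
of a closed boundary inclusion with a simplicial boundary inclusion, i.e.\ the pushout-products of $d_{!}(\partial_{cl}T\rightarrowtail T)$ with $\partial\Delta[n]\rightarrowtail\Delta[n]$. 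Applying the cocontinuous functor $h^{*}$, which commutes with $d_{!}$ and with $\underline{\times}$, turns such a map into the pushout-product of $d_{!}\bigl(h^{*}(\partial_{cl}T)\rightarrowtail h^{*}(T)\bigr)$ with $\partial\Delta[n]\rightarrowtail\Delta[n]$; and, exactly as in the proof of Theorem~\ref{teo10-6}, $h^{*}=o^{*}u_{!}$ sends the closed boundary inclusion $\partial_{cl}T\rightarrowtail T$ to a normal monomorphism of open dendroidal sets, so its image under $d_{!}$ is a Reedy cofibration and so is the pushout-product. Hence $h^{*}$ preserves the generating Reedy cofibrations, and therefore all Reedy cofibrations.

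For the second step I would use the standard criterion (\cite{hirschhorn}) that a Quillen pair descends to given left Bousfield localizations precisely when the left adjoint sends the localizing maps of the source to weak equivalences of the localized target. There are two families. The closed completeness map $\overline{\{0\}}\,\underline{\times}\,\Delta[n]\cup\overline{J}\,\underline{\times}\,\partial\Delta[n]\rightarrowtail\overline{J}\,\underline{\times}\,\Delta[n]$ is carried by $h^{*}$ to the corresponding map built from $h^{*}(\overline{\{0\}}\rightarrowtail\overline{J})=(\{0\}\rightarrowtail J)$ (as already observed in the proof of Theorem~\ref{teo10-6}), which is exactly the localizing map (\ref{eq12-2}) of $(\odSpaces)_{RSC}$, hence a weak equivalence. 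A closed Segal map $\Lambda^{e}(T)\,\underline{\times}\,\Delta[n]\cup T\,\underline{\times}\,\partial\Delta[n]\rightarrowtail T\,\underline{\times}\,\Delta[n]$, for $e$ a very inner edge of a closed tree $T$, is carried to the pushout-product of $h^{*}(\Lambda^{e}T)\rightarrowtail h^{*}(T)$ with $\partial\Delta[n]\rightarrowtail\Delta[n]$; by the key computation in the proof of Theorem~\ref{teo10-6}, namely $T^{\circ}\cap h^{*}(\Lambda^{e}T)=\Lambda^{e}(T^{\circ})$, the map $h^{*}(\Lambda^{e}T)\rightarrowtail h^{*}(T)$ is inner anodyne in $\odSets$, so $d_{!}$ of it is a trivial cofibration in $(\odSpaces)_{RSC}$ (as $d_{!}\colon\odSets\to(\odSpaces)_{RSC}$ is left Quillen and inner anodyne maps are trivial cofibrations of the operadic model structure on $\odSets$), and since that model category is simplicial its pushout-product with the cofibration $\partial\Delta[n]\rightarrowtail\Delta[n]$ is again a trivial cofibration. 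This gives the top Quillen pair; the bottom one is Theorem~\ref{teo10-6} and the two vertical ones are (\ref{eq12-4}) and (\ref{eq12-5}). Commutativity of the square is then formal: $d_{!}h^{*}=h^{*}d_{!}$ as functors on $\cdSets$ by the identification above, so by uniqueness of adjoints $d^{*}h_{*}=h_{*}d^{*}$ as well.

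The only genuinely nonformal input is the behaviour of $h^{*}$ on closed boundaries and on very inner horns --- that it produces normal monomorphisms and inner anodyne maps, respectively --- and this is already in hand from the proof of Theorem~\ref{teo10-6}; so the main (and mild) difficulty is only to make sure the two-step localization argument and the pushout-product bookkeeping in the simplicial direction go through cleanly, with no surprises about the Reedy cofibrancy of the objects $T\,\underline{\times}\,\Delta[n]$ appearing in the localizing maps.
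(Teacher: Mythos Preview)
Your proposal is correct and takes essentially the same approach as the paper, which proves the lemma in a single sentence by saying that ``the arguments of Theorem~\ref{teo10-6} show the following.'' You have simply unpacked what those arguments amount to when transported to the simplicial direction: preservation of Reedy cofibrations via the pushout-product description, and preservation of the localizing maps using exactly the two facts established in the proof of Theorem~\ref{teo10-6} (that $h^{*}$ sends $\overline{\{0\}}\rightarrowtail\overline{J}$ back to $\{0\}\rightarrowtail J$, and sends very inner horn inclusions to inner anodyne maps).
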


The functor $h^*$ is of course not left Quillen for the projective model structure. However, the functor $h_! \colon (\odSpaces)_P\to (\cdSpaces)_P$ is, and has the following property. 

\begin{lemma}
	The Quillen pair $\xymatrix{h_! : (\odSpaces)_P \ar@<.5ex>[r] & (\cdSpaces)_P : h^* \ar@<.5ex>[l]}$ restricts to a Quillen pair \[
	\xymatrix{h_! : (\odSpaces)_{PSC} \ar@<.5ex>[r] & (\cdSpaces)_{PSC} : h^* \ar@<.5ex>[l]}
	\]
\end{lemma}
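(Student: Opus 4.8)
The plan is to write $h_!$ as a composite of two left Quillen functors that are already in hand. Since $h = cl\circ o$ we have $h_! = cl_!\circ o_!$ by (\ref{5-8}), so it suffices to check that
\[
o_!\colon (\odSpaces)_{PSC}\to(\dSpaces)_{PSC}\qquad\text{and}\qquad cl_!\colon(\dSpaces)_{PSC}\to(\cdSpaces)_{PSC}
\]
are both left Quillen; the second of these is Lemma \ref{lemma12-1}, and a composite of left Quillen functors is left Quillen.

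For the first, I would use that $(\odSpaces)_{PSC}$ is, by construction, the slice model category $\bigl((\dSpaces)_{PSC}\bigr)/d_!(O)$: the localizing maps of $(\dSpaces)_{PSC}$ that lie over $d_!(O)$ are exactly those of type (\ref{eq12-1}) for which $T$ is open, together with those of type (\ref{eq12-2}), and these are precisely the maps used to define the complete Segal localization on $\odSpaces$. Under the open analogue of the identification $\odSets\simeq\dSets/O$ of Remark \ref{remark4.1}(1), the functor $o_!$ corresponds to the domain functor of this slice, whose right adjoint is $H\mapsto (H\times d_!(O)\to d_!(O))$, i.e.\ $o^*$. The domain functor of a slice of any model category is always left Quillen, so $o_!\colon(\odSpaces)_{PSC}\to(\dSpaces)_{PSC}$ is left Quillen, and composing with $cl_!$ from Lemma \ref{lemma12-1} completes the proof.

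Since this route is essentially formal, no real obstacle arises; the only point that must be stated carefully is the identification of $(\odSpaces)_{PSC}$ with a slice of $(\dSpaces)_{PSC}$. If one instead wanted a self-contained argument paralleling the proof of Lemma \ref{lemma12-1}, the plan would be: by the universal property of Bousfield localization, reduce to showing that $h_!$ sends each localizing map of $(\odSpaces)_{PSC}$ — a cofibrant replacement of an open Segal core inclusion $Sc(T)\hookrightarrow T$, or of $\{0\}\hookrightarrow J$ — to a weak equivalence in $(\cdSpaces)_{PSC}$; resolve $Sc(T)$ by the \v Cech nerve of its cover by the corollas $C_v$, whose higher intersections are corollas, the unit tree $\eta$, or the empty presheaf, all preserved by $h_! = cl_!o_!$, so that $h_!$ carries this resolution to the \v Cech resolution of $\cSpine(\overline{T})\hookrightarrow\overline{T}$, which is a weak equivalence in $(\cdSpaces)_{PSC}$ because $\cSpine(\overline{T})\to\overline{T}$ is a via morphism (Theorem \ref{theorem7-6}); and likewise, writing $J=\bigcup_n L_n$ with all intersections again linear trees $L_m$, conclude that $h_!(\{0\}\hookrightarrow J)$ resolves to the localizing map $\overline{\{0\}}\hookrightarrow\overline{J}$ for $(\cdSpaces)_{PSC}$. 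In this second approach the only nonformal step is the compatibility of $h_!$ with these covers — that it sends the \v Cech nerve of $Sc(T)$ to that of $\cSpine(\overline{T})$ — which is exactly the combinatorial observation already made in Lemma \ref{lemma12-1}, now composed with the fully faithful inclusion $o_!$ (which leaves corollas, $\eta$ and the empty presheaf untouched). Either way, no serious difficulty remains.
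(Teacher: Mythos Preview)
Your proposal is correct and takes essentially the same approach as the paper: the paper's proof consists of the single sentence ``This follows immediately from the definition of $h$ as the composition $h = cl \circ o$, together with Lemma \ref{lemma12-1},'' which is exactly your decomposition $h_! = cl_! \circ o_!$. You have been more careful than the paper in spelling out why $o_!$ remains left Quillen after the PSC localization (via the slice identification), and your alternative self-contained argument is a faithful elaboration of what Lemma \ref{lemma12-1} already does for $cl_!$.
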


\begin{proof}
	This follows immediately from the definition of $h$ as the composition $h=cl \circ o$, together with Lemma \ref{lemma12-1}.
\end{proof}

Let us observe some consequences of these lemmas. 

\begin{proposition}
	The functor $Lh^* \colon Ho(\cdSets) \to Ho(\odSets)$ induced by the Quillen pair of Theorem \ref{teo10-6} has both a left and a right adjoint.
\end{proposition}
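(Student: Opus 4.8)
The plan is to handle the two adjoints separately: the right adjoint is immediate from Theorem~\ref{teo10-6}, whereas the left adjoint is obtained by passing to the complete Segal space models of Section~\ref{sect12}.

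\emph{The right adjoint.} The functor $Lh^{*}$ is the left derived functor of the left Quillen functor $h^{*}\colon\cdSets\to\odSets$ of Theorem~\ref{teo10-6}, and the $1$-categorical right adjoint of $h^{*}$ is $h_{*}$, from the adjoint triple $h_{!}\dashv h^{*}\dashv h_{*}$ induced by \eqref{eq5-3}. Since $h^{*}$ is left Quillen, $h_{*}$ is right Quillen, and $Lh^{*}$ admits the right adjoint $Rh_{*}$ by the usual formalism of derived adjunctions.

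\emph{Reduction for the left adjoint.} I would combine the two lemmas preceding this proposition with the Quillen equivalences \eqref{eq12-4}--\eqref{eq12-5} and those of Section~\ref{sect12} relating the projective and Reedy complete Segal structures. On the one hand, $h^{*}$ is left Quillen from $(\cdSpaces)_{RSC}$ to $(\odSpaces)_{RSC}$, and the commuting square of Quillen pairs in that lemma (whose bottom row is the Quillen pair of Theorem~\ref{teo10-6}) identifies the left derived functor of the upper $h^{*}$ with $Lh^{*}\colon Ho(\cdSets)\to Ho(\odSets)$ along the vertical Quillen equivalences. On the other hand, $h_{!}$ is left Quillen from $(\odSpaces)_{PSC}$ to $(\cdSpaces)_{PSC}$, with right Quillen adjoint $h^{*}\colon(\cdSpaces)_{PSC}\to(\odSpaces)_{PSC}$, so $Lh_{!}$ is a left adjoint of the right derived functor $Rh^{*}$. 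The homotopy categories $Ho((\cdSpaces)_{PSC})$ and $Ho((\cdSpaces)_{RSC})$ are canonically identified, and likewise for open dendroidal spaces; so it will suffice to show that under this identification $Rh^{*}$ agrees with $Lh^{*}$, after which $Lh_{!}$, transported along $Ho(\odSets)\simeq Ho((\odSpaces)_{PSC})$ and $Ho(\cdSets)\simeq Ho((\cdSpaces)_{PSC})$, is the left adjoint we want.

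\emph{The main point.} The comparison $Lh^{*}\simeq Rh^{*}$ is the step I expect to be the real obstacle, and I would deduce it from the claim that $h^{*}\colon\cdSpaces\to\odSpaces$ preserves \emph{all} complete Segal weak equivalences. Indeed, the projective and Reedy complete Segal structures have the same weak equivalences, every $PSC$-cofibration is an $RSC$-cofibration (hence every $RSC$-trivial fibration is a $PSC$-trivial fibration), and $h^{*}$ is right Quillen for the $PSC$-structures, so it preserves trivial fibrations. Given a complete Segal weak equivalence $f\colon X\to Y$, functorially factor $\emptyset\to X$ and $\emptyset\to Y$ in $(\cdSpaces)_{RSC}$ to obtain $RSC$-cofibrant replacements $p\colon X'\xrightarrow{\ \sim\ }X$ and $q\colon Y'\xrightarrow{\ \sim\ }Y$ that are trivial fibrations, and lift $f$ through $q$ to $f'\colon X'\to Y'$ with $qf'=fp$. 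Then $f'$ is a weak equivalence between $RSC$-cofibrant objects, so $h^{*}(f')$ is a weak equivalence by Ken Brown's lemma ($h^{*}$ being left Quillen for the $RSC$-structures), while $h^{*}(p)$ and $h^{*}(q)$ are trivial fibrations; two-out-of-three applied to $h^{*}(q)\,h^{*}(f')=h^{*}(f)\,h^{*}(p)$ then shows $h^{*}(f)$ is a weak equivalence. Hence $h^{*}$ descends to a functor on the common complete Segal homotopy category, and since any (co)fibrant replacement is sent to a weak equivalence, this descended functor is canonically isomorphic both to $Lh^{*}$ and to $Rh^{*}$. Combined with the preceding paragraph this produces the left adjoint $Lh_{!}$ of $Lh^{*}$, and together with the right adjoint $Rh_{*}$ the proposition follows.
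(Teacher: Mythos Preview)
Your proposal is correct and follows essentially the same strategy as the paper: obtain the right adjoint directly from Theorem~\ref{teo10-6}, and obtain the left adjoint by passing to the complete Segal space models of Section~\ref{sect12}, where $h^{*}$ is simultaneously left Quillen for the Reedy structure and right Quillen for the projective structure, so that $Lh^{*}$ and $Rh^{*}$ coincide on the common homotopy category. The only difference is in the execution of this last identification: the paper simply observes that on an object which is both Reedy cofibrant and Reedy fibrant (hence also projectively fibrant) neither derived functor requires any replacement, so both equal $h^{*}$ on the nose; you instead give a slightly longer argument showing that $h^{*}$ preserves \emph{all} complete Segal weak equivalences via cofibrant replacement and Ken Brown. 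Both arguments are valid and yield the same conclusion.
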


\begin{proof}
	This follows from the commutativity of the square 
	\[
	\xymatrix{
		\cdSets \ar[r]^{d_! \quad } \ar[d]^{h^*} & (\cdSpaces)_{RSC} \ar[d]^{h^*}	&  (\cdSpaces)_{PSC}\ar[d]^{h^*} \ar[l]_{id}\\
		\odSets \ar[r]^{d_! \quad} & (\odSpaces)_{RSC} & (\odSpaces)_{PSC} \ar[l]_{id}
	} \]
	in which the horizontal functors are left Quillen equivalences. The functor $h^*$ in the middle is left Quillen, while the same functor on the right is right Quillen. These two functors act in the same way on objects which are both Reedy cofibrant and Reedy fibrant (and hence also projectively fibrant). 
\end{proof}

\begin{proposition} \label{prop12-5}
	The functor $Lh^* \colon Ho(\cdSets) \to Ho(\odSets)$ detects isomorphisms.
\end{proposition}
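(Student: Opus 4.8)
The plan is to transport the assertion across the Quillen equivalences with closed and open dendroidal \emph{spaces}, where it becomes an elementary observation about the closure functor combined with the Segal condition. Recall from the proof of the preceding proposition the commutative diagram
\[
\xymatrix{
\cdSets \ar[r]^{d_!} \ar[d]_{h^*} & (\cdSpaces)_{RSC} \ar[d]^{h^*} & (\cdSpaces)_{PSC} \ar[l]_{\mathrm{id}} \ar[d]^{h^*} \\
\odSets \ar[r]^{d_!} & (\odSpaces)_{RSC} & (\odSpaces)_{PSC} \ar[l]_{\mathrm{id}}
}
\]
in which the horizontal functors are Quillen equivalences. Under these, $Lh^*\colon Ho(\cdSets)\to Ho(\odSets)$ is identified with the derived functor of $h^*\colon(\cdSpaces)_{PSC}\to(\odSpaces)_{PSC}$.

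By the last lemma above, this latter $h^*$ is \emph{right} Quillen, its left adjoint $h_!$ having been shown to restrict to a left Quillen functor $(\odSpaces)_{PSC}\to(\cdSpaces)_{PSC}$. Hence its derived functor is computed by applying $h^*$ to $(\cdSpaces)_{PSC}$-fibrant objects, and $h^*$ sends such objects to $(\odSpaces)_{PSC}$-fibrant ones. Since both model categories here are left Bousfield localizations of \emph{projective} model structures, a map between fibrant objects of either is a weak equivalence precisely when it is an objectwise weak equivalence of simplicial sets. So, representing a map $f$ with $Lh^*(f)$ an isomorphism by a map $\hat f\colon \hat X\to\hat Y$ between $(\cdSpaces)_{PSC}$-fibrant objects, the proposition reduces to: if $h^*(\hat f)$ is an objectwise weak equivalence, then so is $\hat f$.

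The key point is combinatorial and trivial: for an open tree $S$ one has $h^*(\hat X)(S)=\hat X(cl\,S)$, and $cl$ carries $\eta$ to $\overline{\eta}$ and the open $n$-corolla $C_n$ (for $n\geq 1$) to the closed corolla $\overline{C}_n$; since $\overline{\eta}=\overline{C}_0$, the hypothesis already gives that $\hat f$ induces a weak equivalence on $\hat X(\overline{C}_n)$ for \emph{every} $n\geq 0$ --- passing to interiors loses no closed corolla. I would then propagate this to all closed trees by induction on the number of vertices of a closed tree $T$, using the Segal condition. The closed spine $\cSpine(T)\rightarrowtail T$ of Section \ref{sect7} is an iterated pushout of the monomorphisms $\overline{\eta}\rightarrowtail\overline{C}_v$ (inclusion of the root edge of a corolla) along monomorphisms $\overline{\eta}\rightarrowtail\cSpine(T')$ with $T'$ a closed tree of fewer vertices. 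Applying $\mathrm{Map}(-,\hat X)$ turns these pushouts into pullbacks, which are homotopy pullbacks since $\hat X$ is objectwise Kan and the gluing maps are normal monomorphisms, so $\mathrm{Map}(\cSpine(T),\hat X)$ is an iterated homotopy pullback of the spaces $\hat X(\overline{C}_v)$ over copies of $\hat X(\overline{\eta})$; by the key point and the inductive hypothesis, $\hat f$ induces a weak equivalence on it. Finally $\hat X,\hat Y$ are $(\cdSpaces)_{PSC}$-fibrant, hence objectwise Kan and Segal --- the latter because the very inner horn maps defining the localization generate the same class as the closed spine inclusions, by Theorem \ref{theorem7-6} --- so $\hat X(T)\to\mathrm{Map}(\cSpine(T),\hat X)$ and its analogue for $\hat Y$ are weak equivalences, whence $\hat f(T)$ is a weak equivalence. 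This completes the induction and the proof.

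I expect the only delicate part to be organisational rather than mathematical: keeping straight which of the model structures $\cdSets$, $(\cdSpaces)_{RSC}$, $(\cdSpaces)_{PSC}$ one is in, and recording that the incarnation of $h^*$ relevant here is \emph{right} Quillen, so that its derived functor is visibly evaluated on fibrant objects and those fibrant objects are objectwise Kan and Segal --- which is exactly what makes the spine induction go through.
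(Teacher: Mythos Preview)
Your argument is correct, and its overall architecture---transport to the projective complete Segal model structures on dendroidal spaces and use that $h^*$ is right Quillen there---matches the paper's. The difference is in the final step. You observe that $cl$ carries open corollas to closed corollas and then run an induction over the closed spine decomposition, invoking the Segal condition for the fibrant objects $\hat X$ and $\hat Y$ to pass from corollas to arbitrary closed trees. The paper instead notes the stronger combinatorial fact that $h\colon\Omega_o\to\Omega_{cl}$ is \emph{surjective on objects}: every closed tree $T$ is the closure of the open tree $T^\circ$ obtained by deleting its stumps, so $h^*(\hat X)(T^\circ)=\hat X(T)$ for \emph{every} closed $T$, not just corollas. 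Hence $h^*$ already detects objectwise weak equivalences at the level of the unlocalized projective model structures $(\cdSpaces)_P\to(\odSpaces)_P$, and the Segal condition is never needed. Your route works and is a nice illustration of how Segal-local objects are controlled by corollas, but the paper's route is a one-line observation once one sees that closure is surjective on closed trees.
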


\begin{proof}
	First, observe that the right Quillen functor \[
	h^*\colon (\cdSpaces)_{PSC}\to (\odSpaces)_{PSC}
	\]
	evidently detects weak equivalences between fibrant objects. Indeed, for such an object $X$, the value $h^*(X)$ of the functor is defined on an open tree $S$ by 
	\[
	h^*(X)_S = X_{\overline{S}},
	\]
	so $h^*$ already detects weak equivalences as a functor $(\cdSpaces)_P\to (\odSpaces)_P$. But the weak equivalences between fibrant objects in the localized model structure remain the same, showing that $h^*$ also detects weak equivalences between fibrant objects as a functor $(\cdSpaces)_{PSC}\to (\odSpaces)_{PSC}$. 
	
	Next, note that the statement in the proposition is equivalent to the assertion that 
	\[
	Lh^*\colon Ho(\cdSpaces_{RSC}) \to Ho(\odSpaces_{RSC})
	\]
	detects weak equivalences. But on fibrant and cofibrant objects in $(\cdSpaces)_{RSC}$, $Lh^*$ is represented by the same functor $h^*$ as \[Rh^* \colon Ho(\cdSpaces_{PSC}) \to Ho(\odSpaces_{PSC})\] is. This functor detects isomorphisms, as observed at the start of the proof. 
\end{proof}

\section{Closed dendroidal sets are equivalent to unital operads} \label{sect13}

The goal of this section is to prove the following ``rectification'' theorem. 

\begin{theorem} \label{teo13-1}
	The Quillen pair of Theorem \ref{teo11-4} \[
	\xymatrix{\overline{w}_! : \cdSets \ar@<.5ex>[r] & \cOper : \overline{w}^* \ar@<.5ex>[l]}
	\]
	is a Quillen equivalence.
\end{theorem}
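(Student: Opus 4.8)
The plan is to deduce the statement from the corresponding one for \emph{open} dendroidal sets and open operads (the Corollary of Section~\ref{sect11}, that $\accentset{\circ}{w}_!\dashv\accentset{\circ}{w}^*$ is a Quillen equivalence), by comparing the modified Boardman--Vogt pair $\overline{w}_!\dashv\overline{w}^*$ with the open one through the adjunctions $h_!\dashv h^*\dashv h_*$ of Theorem~\ref{teo10-6} and $g_!\dashv g^*\dashv g_*$ of Section~\ref{section2}. A first, purely formal, observation is that $g^*\colon\cOper\to\oOper$ \emph{preserves and reflects weak equivalences}: for a closed operad all spaces of constants are a single point, so a morphism of closed operads is a local weak equivalence (respectively essentially surjective) if and only if its image under $g^*$ is one. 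Consequently $g^*$ induces a conservative functor $Ho(\cOper)\to Ho(\oOper)$, whose left and right derived functors coincide with $g^*$.

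The key input --- and the step I expect to be the main obstacle --- is a \emph{compatibility of the two Boardman--Vogt constructions with closure}. I claim there are natural isomorphisms of functors
\[
\accentset{\circ}{w}_!\circ h^*\ \cong\ g^*\circ\overline{w}_!\colon\ \cdSets\longrightarrow\oOper,
\qquad
h^*\circ\overline{w}^*\ \cong\ \accentset{\circ}{w}^*\circ g^*\colon\ \cOper\longrightarrow\odSets,
\]
each built from the canonical projection of operads $W(T)\to\overline{W}(T)$ (erasing the lengths on the inner edges of $T$ that carry no leaf above them) together with the adjunction $g_!\dashv g^*$. It suffices to verify these on representables. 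The first reduces to the assertion that for every closed tree $T$ the comparison map
\[
\accentset{\circ}{w}_!\bigl(\Int(T)\bigr)\ =\ \varinjlim_{S\rightarrowtail T,\ S\text{ open}}W(S)\ \longrightarrow\ g^*\bigl(\overline{W}(T)\bigr)
\]
is an isomorphism --- i.e.\ that the colimit of the Boardman--Vogt resolutions of the open faces of $T$ recovers $\overline{W}(T)$ after forgetting its constants --- and the second to the identity $\overline{W}(cl(S))\cong g_!\bigl(W(S)\bigr)$ for open trees $S$. Since all of $\overline{w}_!$, $g^*$, $h^*$, $\accentset{\circ}{w}_!$ preserve colimits, these isomorphisms extend from representables to arbitrary (closed) dendroidal sets.

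Granting this, and using that $\overline{w}_!$ is left Quillen for the Reedy structure on $\cOper$ (Theorem~\ref{teo11-4}), that $\accentset{\circ}{w}_!$ is left Quillen, that $g^*$ is both left and right Quillen (Section~\ref{sect4}) and that $h^*$ is left Quillen (proof of Theorem~\ref{teo10-6}), one obtains on homotopy categories a commuting square
\[
\xymatrix@C=42pt{
Ho(\cdSets)\ar[r]^{L\overline{w}_!}\ar[d]_{Lh^*} & Ho(\cOper)\ar[d]^{g^*}\\
Ho(\odSets)\ar[r]^{L\accentset{\circ}{w}_!}_{\sim} & Ho(\oOper)
}
\]
together with the analogous identification $Lh^*\circ R\overline{w}^*\cong R\accentset{\circ}{w}^*\circ g^*$ for the right derived functors (from the second isomorphism). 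I would then conclude by showing that the derived unit $X\to R\overline{w}^*L\overline{w}_!X$ and the derived counit $L\overline{w}_!R\overline{w}^*Q\to Q$ are isomorphisms. Applying $Lh^*$ to the derived unit and using the two comparison isomorphisms identifies it, naturally in $X$, with the derived unit of $\accentset{\circ}{w}_!\dashv\accentset{\circ}{w}^*$ at $Lh^*X$, which is an isomorphism; since $Lh^*$ detects isomorphisms (Proposition~\ref{prop12-5}), the derived unit of $\overline{w}_!\dashv\overline{w}^*$ is an isomorphism. Symmetrically, applying the conservative functor $g^*$ to the derived counit identifies it with the derived counit of $\accentset{\circ}{w}_!\dashv\accentset{\circ}{w}^*$ at $g^*Q$, again an isomorphism, so the derived counit is an isomorphism as well. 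Hence $L\overline{w}_!\dashv R\overline{w}^*$ is an adjoint equivalence, that is, $\overline{w}_!\dashv\overline{w}^*$ is a Quillen equivalence. Beyond the two combinatorial identifications on representables, the remaining point requiring care is routine homotopical bookkeeping: checking that on fibrant--cofibrant representatives the point-set comparison isomorphisms compute the derived ones, so that the units and counits genuinely correspond under them.
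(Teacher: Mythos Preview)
Your overall strategy coincides with the paper's: reduce to the open case via the square involving $h^*$ and $g^*$, use that $Lh^*$ and $g^*$ are conservative, and identify the derived unit (and counit) of $\overline{w}_!\dashv\overline{w}^*$ with that of $\accentset{\circ}{w}_!\dashv\accentset{\circ}{w}^*$. Your first comparison $\accentset{\circ}{w}_!\,h^*\cong g^*\,\overline{w}_!$ is correct and is exactly Lemma~\ref{lemma13-3}.

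The gap is your second claimed isomorphism $h^*\,\overline{w}^*\cong\accentset{\circ}{w}^*\,g^*$, which on representables unwinds to $\overline{W}(\overline{S})\cong g_!W(S)$ for open $S$. This is \emph{false} as an isomorphism: take $S$ the open tree with a binary root vertex $u$ (inputs $a,e$) and a binary vertex $v$ on $e$ (inputs $b,c$). Then $\overline{W}(\overline{S})(a;r)$ is a point, since the only inner edge $e$ of the relevant subtree has no leaf $a$ above it, while $(g_!W(S))(a;r)$ is computed as a colimit whose only nontrivial terms are $W(S)(a,e;r)=\mathrm{pt}$ and $W(S)(a,b,c;r)=\Delta[1]$, glued along the map $\mathrm{pt}\mapsto 1\in\Delta[1]$; the colimit is $\Delta[1]$, not a point. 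The paper accordingly only asserts a natural \emph{weak equivalence} $g_!W(S)\to\overline{W}(\overline{S})$, and this does not yield a weak equivalence of the set-valued presheaves $h^*\overline{w}^*P$ and $\accentset{\circ}{w}^*g^*P$.

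What you actually need is the derived statement $Lh^*\,R\overline{w}^*\simeq R\accentset{\circ}{w}^*\,Lg^*$, and here the ``routine homotopical bookkeeping'' is the crux, not a triviality: $h^*$ is \emph{left} Quillen while $\overline{w}^*$ is \emph{right} Quillen, so the composite $Lh^*R\overline{w}^*$ mixes cofibrant and fibrant replacements and cannot be read off from a point-set comparison of $\Hom$-sets. The paper resolves this by passing to the complete Segal model structures on dendroidal spaces (Section~\ref{sect12}), where the fibrant replacement of $d_!\overline{w}^*P$ is given explicitly by $T\mapsto\mathrm{Map}(\overline{W}(T),P)$; applying $h^*$ then yields $S\mapsto\mathrm{Map}(\overline{W}(\overline{S}),P)$, and the weak equivalence $g_!W(S)\xrightarrow{\sim}\overline{W}(\overline{S})$ now does induce a levelwise weak equivalence of \emph{spaces}, which is what is needed. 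Without this passage through mapping spaces your argument does not go through.

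A minor point: once the derived unit is shown to be an isomorphism, the counit follows from the triangle identities together with the fact (which you note) that $R\overline{w}^*$ is conservative; the separate verification for the counit via $g^*$ is unnecessary.
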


Recall from Section \ref{sect11} that the model structures involved are the unital operadic model structure on the category $\cdSets$ of closed dendroidal sets and the Reedy model structure on the category $\cOper$ of closed or unital simplicial operads. 

One of the main results of \cite{cisinskimoerdijk3} is a similar Quillen equivalence 
\begin{equation} \label{eq13-1}
	\xymatrix{ w_! : \dSets \ar@<.5ex>[r] & \Oper : w^* \ar@<.5ex>[l]}
\end{equation}
between the operadic model structure on all dendroidal sets and the transferred or projective model structure on simplicial operads. By slicing over suitable objects (the dendroidal set $O$ and the operad for which $O$ is the value under $w^*$, i.e. the non-unital commutative operad $Comm^+$), this Quillen equivalence induces a similar Quillen equivalence which we state explicitly as a lemma for easy reference. 

\begin{lemma} \label{lemma13-2}
	The Quillen equivalence (\ref{eq13-1}) restricts to a Quillen equivalence 
	\[
		\xymatrix{{\accentset{\circ}{w}_!} : \odSets \ar@<.5ex>[r] & \oOper : {\accentset{\circ}{w}^*} \ar@<.5ex>[l]}
	\]
	between open dendroidal sets and open simplicial operads. 
\end{lemma}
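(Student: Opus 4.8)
The plan is to deduce this from the standard principle that slicing a Quillen equivalence over a fibrant object again yields a Quillen equivalence. Recall (Remark \ref{remark4.1}(1) and Section \ref{sect4}) that $\odSets = \dSets/O$ and that $\oOper$ is the slice $\Oper/Comm^+$, in each case with the slice model structure inherited from the ambient category; here $Comm^+$ is the non-unital commutative operad, which has one colour, no nullary operations and a single operation in each positive arity, and an operad is open precisely when it admits a (necessarily unique) morphism to $Comm^+$. First I would record the two facts needed. On the one hand, $Comm^+$ is fibrant in the projective model structure on $\Oper$: it is discrete, all its spaces of operations are Kan complexes (a point or empty), and $\pi_0 j^*(Comm^+)$ is the terminal category, hence fibrant. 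On the other hand, $w^*(Comm^+) = O$: for a tree $T$ a morphism of operads $W(T)\to Comm^+$ exists if and only if $W(T)$ — equivalently $T$ — has no nullary operations, i.e.\ if and only if $T$ is open, and in that case it is unique; so $w^*(Comm^+)_T$ is a point for $T$ open and empty otherwise, which is exactly $O$.

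Given these, the Quillen equivalence $w_!\dashv w^*$ of (\ref{eq13-1}) induces an adjunction on slice categories
\[
w_!/Comm^+ : \dSets/O = \dSets/w^*(Comm^+) \;\rightleftarrows\; \Oper/Comm^+ = \oOper : w^*/Comm^+ ,
\]
which is a Quillen pair for the slice model structures, since cofibrations, fibrations and weak equivalences in slices are detected by the forgetful functors, so this is immediate from $w_!\dashv w^*$ being a Quillen pair. Because $w_!\dashv w^*$ is moreover a Quillen equivalence and the base object $Comm^+$ is fibrant, this slice adjunction is again a Quillen equivalence: a cofibrant object of $\dSets/O$ is a map $X\to O$ with $X$ normal, a fibrant object of $\oOper$ is a fibration $E\to Comm^+$ (so $E$ is fibrant, $Comm^+$ being fibrant), and for such $X$ and $E$ a map $w_!X\to E$ over $Comm^+$ is a weak equivalence if and only if its adjunct $X\to w^*E$ over $O$ is, by naturality of the adjunction together with the fact that weak equivalences in a slice are created by the forgetful functor and the corresponding property of the Quillen equivalence $w_!\dashv w^*$.

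Finally I would identify this slice adjunction with $({\accentset{\circ}{w}_!},{\accentset{\circ}{w}^*})$. The right adjoint sends a fibration $E\to Comm^+$, i.e.\ an open operad $E$, to $w^*E\to w^*(Comm^+)=O$, i.e.\ to the open dendroidal set ${\accentset{\circ}{w}^*}(E)$, which is the definition of ${\accentset{\circ}{w}^*}$. The left adjoint $w_!/Comm^+$ sends $X\to O$ to $w_!X$ equipped with the composite $w_!X\to w_!(w^*Comm^+)\to Comm^+$; on a representable open tree $S\to O$ this is $w_!(S)=W(S)$ with its unique map to $Comm^+$, which is ${\accentset{\circ}{w}_!}(S)$, and since both $w_!/Comm^+$ and ${\accentset{\circ}{w}_!}$ preserve colimits they agree everywhere (using also, as noted just before the lemma, that $w_!$ carries open dendroidal sets to open operads). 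There is essentially no hard step here; the only points deserving care are the (standard) lemma that a slice of a Quillen equivalence over a fibrant object is a Quillen equivalence, and the bookkeeping to match the slice functors with the ``open'' functors ${\accentset{\circ}{w}_!}$ and ${\accentset{\circ}{w}^*}$.
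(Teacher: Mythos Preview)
Your proposal is correct and follows essentially the same approach as the paper: the paper's proof is a one-line reference to the slicing argument already indicated just before the lemma (over $O$ and $Comm^+$ with $w^*(Comm^+)=O$), and you have simply spelled out the details of that argument, including the fibrancy of $Comm^+$ and the identification of the sliced adjunction with $({\accentset{\circ}{w}_!},{\accentset{\circ}{w}^*})$.
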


\begin{proof}
	As said, this follows immediately from the Quillen equivalence (\ref{eq13-1}) established in \cite{cisinskimoerdijk3}.
\end{proof}

Now consider the following diagram of model categories and Quillen adjunctions: 
\begin{equation} \label{eq13-2}
\xymatrix{ 
	 \cdSets \ar@<0.5ex>[r]^{\overline{w}_!}   \ar@<0.5ex>[d]^{h^*}   & \cOper \ar@<0.5ex>[l]^{\overline{w}^*}  \ar@<0.5ex>[d]^{g^*}    \\
	 \odSets \ar@<0.5ex>[r]^{{\accentset{\circ}{w}_!}}  \ar@<0.5ex>[u]^{h_*} & \oOper. \ar@<0.5ex>[l]^{{\accentset{\circ}{w}^*}}  \ar@<0.5ex>[u]^{g_*} 
} \end{equation}

\begin{lemma} \label{lemma13-3}
	The diagram commutes up to isomorphism. 
\end{lemma}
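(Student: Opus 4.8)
The claim is that the square \eqref{eq13-2} commutes up to natural isomorphism, i.e.\ that $g^* \circ \overline{w}_! \cong {\accentset{\circ}{w}_!} \circ h^*$ as functors $\cdSets \to \oOper$, and dually $\overline{w}^* \circ g_* \cong h_* \circ {\accentset{\circ}{w}^*}$ on the right. Since all four functors $\overline{w}_!$, $h^*$, ${\accentset{\circ}{w}_!}$ and $g^*$ are cocontinuous (the first three are left adjoints, and $g^*$ is cocontinuous by the Remark following Proposition~\ref{prop4-1}, or one can simply observe it preserves colimits since colimits of operads with fixed colour-behaviour are computed pointwise on operations), it suffices to check that the two composites agree on representables, i.e.\ on closed trees $T \in \Omega_{cl}$, and that this agreement is natural in $T$. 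Concretely, I would establish a natural isomorphism of open simplicial operads
\[
g^*\,\overline{W}(T) \;\cong\; {\accentset{\circ}{w}_!}\bigl(h^*(T)\bigr) \qquad (T \in \Omega_{cl}),
\]
and then extend it to all closed dendroidal sets by the usual density/colimit argument, using that every closed dendroidal set is canonically a colimit of representables and that all functors in sight preserve such colimits.

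\textbf{Key steps.} First I would unwind the left-hand side $g^*\,\overline{W}(T)$: by definition $\overline{W}(T)$ is the closed operad whose colours are the edges of $T$ and whose space $\overline{W}(T)(e_1,\dots,e_n;e) = \prod_{d \in D(e_1,\dots,e_n;e)} \Delta[1]$ is the product of intervals over the inner edges of the subtree $T(e_1,\dots,e_n;e)$ which have a leaf above them, this space being empty unless such a subtree exists (and equal to $\mathrm{pt}$ when $n=0$). Applying $g^*$ simply discards the nullary operations, so $g^*\,\overline{W}(T)$ is the open operad with the same colours and the same spaces of operations for $n \geq 1$. Second, I would compute the right-hand side: by \eqref{5-8} and \eqref{5-7}, $h^* = o^* u_!$, so $h^*(T)$ is the open dendroidal set obtained (as in the proof of Theorem~\ref{teo10-6}) as the union $\bigcup_A T[A]^\circ$ of open faces of $T$, where $A$ ranges over upward-closed sets of edges; equivalently $h^*(T) = \Int(T)$, the interior of $T$, a colimit of representable open trees $S \to T$. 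Since ${\accentset{\circ}{w}_!}$ is cocontinuous, ${\accentset{\circ}{w}_!}(h^*(T)) = \varinjlim_{S \to T,\ S\ \mathrm{open}} W(S)$. Third, I would identify this colimit of the open Boardman--Vogt operads $W(S)$, indexed over open faces of $T$, with $g^*\overline{W}(T)$: a colour of the colimit is an edge of $T$, and an operation $e_1,\dots,e_n \to e$ is a compatible family of operations in the various $W(S)$ containing the spanned open subtree, which one computes to be exactly $\prod_{d} \Delta[1]$ over the inner edges $d$ of $T(e_1,\dots,e_n;e)$ lying below a leaf --- the key combinatorial point being that passing from a tree $S$ to its open faces $S[A]^\circ$ contracts precisely the inner edges with no leaf above them, which matches the condition defining $D(e_1,\dots,e_n;e)$ in \eqref{eq11-1}, and that the Boardman--Vogt composition (grafting subtrees, assigning length $1$ to new inner edges, erasing lengths on edges no longer below a leaf) is exactly the restriction of the composition in $\overline{W}(T)$ described after \eqref{eq11-1}. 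Fourth, naturality in $T \in \Omega_{cl}$ is routine: a map of closed trees $T \to T'$ induces compatible maps on colours (edges) and on the indexing categories of open faces, and the isomorphism was built out of these. Finally, the right-hand square ($\overline{w}^*\,g_* \cong h_*\,{\accentset{\circ}{w}^*}$) then follows formally by passing to right adjoints, using uniqueness of adjoints.

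\textbf{Main obstacle.} The delicate point is the third step: matching the colimit $\varinjlim_{S} W(S)$ over open faces of $T$ with the explicitly described operad $g^*\overline{W}(T)$, both at the level of the underlying spaces of operations \emph{and} compatibly with the operadic composition. One has to check that the colimit is computed ``pointwise'' on operations --- i.e.\ that $\varinjlim_S$ of the presheaf-of-spaces $(e_1,\dots,e_n;e) \mapsto W(S)(\dots)$ really is the presheaf $(e_1,\dots,e_n;e) \mapsto g^*\overline{W}(T)(\dots)$ --- which requires understanding the shape of the indexing category of open faces of a fixed closed tree $T$ (it has a terminal object only after one restricts to faces containing a given spanned subtree, and one must see that over that subcategory the $W(S)(e_1,\dots,e_n;e)$ are isomorphic via the face maps, so the colimit collapses). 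This is precisely the kind of combinatorial bookkeeping about closed trees and their open faces that the introduction warns about; once it is in place, the identification of composition operations is a mechanical check. I expect the cleanest route is to prove the isomorphism first for $T$ a single closed corolla $\overline{C}_n$ (where $h^*(\overline{C}_n)$ is the interior described in Section~\ref{sect5} and both sides are small enough to inspect directly), then for general $T$ by building $\overline{W}(T)$ and $h^*(T)$ up from corollas along inner-face and grafting maps, using functoriality established in step four.
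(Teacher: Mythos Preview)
Your proposal is correct and follows essentially the same route as the paper: reduce to representables by cocontinuity, write $h^*(T)=\bigcup_A T[A]^\circ$, and identify the colimit $\varinjlim_A W(T[A]^\circ)$ with $g^*\overline{W}(T)$. The paper dispatches your ``main obstacle'' in one line by observing that for a fixed sequence $e_1,\dots,e_n,e$ the relevant indexing diagram has a canonical vertex---namely the maximal $A$ consisting of all edges not in $T(e_1,\dots,e_n;e)$ nor on the path from $e$ to the root---at which the operation is already represented; this is exactly the ``terminal object after restriction'' you anticipated. Your alternative corolla-induction at the end also appears in the paper, as a Remark showing that a mere natural weak equivalence (rather than isomorphism) would suffice.
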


\begin{remark}
	For what follows it is actually enough to know that there is a natural weak equivalence ${\accentset{\circ}{w}_!} h^* X \xrightarrow{\sim} g^*\overline{w}_! X$ for each cofibrant object $X$. Since the category $\cdSets$ is left proper, a standard induction over cofibrant objects reduces the problem again to representables. And since the functors involved are left Quillen, the trivial cofibrations $\cSpine(T) \rightarrowtail T$ of Theorem \ref{theorem7-6} then reduce the problem to showing that the map $\theta_{\overline{C_n}} \colon {{\accentset{\circ}{w}_!}} h^* \overline{C_n} \to g^*\overline{w}_!\overline{C_n}$ is a weak equivalence for each closed corolla $\overline{C_n}$, which is obvious. 
\end{remark}

\begin{proof}[Proof (of Lemma \ref{lemma13-3})]
	We will exhibit a natural isomorphism $\theta_X \colon {{\accentset{\circ}{w}_!}} h^* X \to g^*\overline{w}_!X$ for each closed dendroidal set $X$. Since the left adjoint functors involved preserve colimits, it suffices to define such an isomorphism for representable $X$, i.e. 
	\[
	\theta_T \colon {{\accentset{\circ}{w}_!}} h^* T \to g^*\overline{w}_!T
	\]
	for each closed tree $T$. Recall that 
	\[
	h^*(T) = \bigcup_A T[A]^\circ
	\]
	where $A$ ranges over upwards closed subforests of the tree $T$, and $T[A]$ is the tree obtained by contracting the edges in $A$ while $(-)^\circ$ is the operation of chopping off the stumps from a closed tree. For each such $A$, there is an evident map of operads 
	\[
	W(T[A]^\circ) \to \overline{W}(T)
	\]
	and together these are easily seen to induce the desired isomorphism. Indeed, suppose $e_1, \ldots, e_n$ and $e$ are edges in $T$ spanning a subtree $T(e_1, \ldots, e_n; e)$ with leaves $e_1,\ldots, e_n$ and root $e$. Then operations in $\overline{W}(T)(e_1, \ldots, e_n; e)$ can be represented as operations in $W(T[A]^\circ)$ for a maximal and canonical such $A$, viz. the forest consisting of all the edges in $T$ which are not in the tree $T(e_1, \ldots, e_n; e)$, nor on the path from $e$ down to the root of $T$. 
\end{proof}

The next lemma states another commutation property of the diagram (\ref{eq13-2}). 

\begin{lemma}[``Projection formula'']
	There is a natural isomorphism of derived functors \[
	Lh^*R\overline{w}^* \simeq R{\accentset{\circ}{w}^*} Lg^*. 
	\]
\end{lemma}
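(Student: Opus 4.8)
The statement asserts a natural isomorphism between two composite derived functors; since these are all right-derived functors (of $\overline{w}^*$, ${\accentset{\circ}{w}^*}$) composed with $Lh^*$ and $Lg^*$ respectively, the cleanest route is to exhibit the isomorphism on the level of underived functors between suitably nice objects, namely on the fibrant objects of $\cOper$ in the Reedy model structure (where $\overline{w}^*$ computes the homotopy right adjoint) and then transport along the Quillen equivalences (\ref{eq12-5}). The plan is therefore: first, pass to the ``total mate'' interpretation; second, reduce to a statement about underived functors; third, check that statement directly from the explicit formulas.

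First I would recall that, by standard mate calculus, the square (\ref{eq13-2}) of Quillen adjunctions has two associated squares of derived functors: one for the left adjoints and one for the right adjoints, and these are mates of one another. Lemma \ref{lemma13-3} says the square of left adjoints commutes up to natural isomorphism, i.e. $L{\accentset{\circ}{w}_!}\,Lh^* \simeq Lg^*\,L\overline{w}_!$ (using that $h^*$ and $g^*$ are left Quillen by Theorem \ref{teo10-6} and Proposition \ref{prop4-1}/\ref{prop4-2}, and $\overline{w}_!$, ${\accentset{\circ}{w}_!}$ are left Quillen by Theorem \ref{teo11-4} and Lemma \ref{lemma13-2}). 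The projection formula is precisely the assertion that the mate of this natural isomorphism — a natural transformation $Lh^*\,R\overline{w}^* \to R{\accentset{\circ}{w}^*}\,Lg^*$ obtained by the usual unit-counit manipulation — is itself a natural isomorphism. So the content to prove is that this particular mate is invertible.

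The key step is to evaluate both sides on a fibrant $Q \in \cOper$ and show the comparison map is a weak equivalence. On the right, $R{\accentset{\circ}{w}^*}\,Lg^*(Q)$: since $g^*$ is left Quillen but also preserves all weak equivalences (it is a restriction functor, hence detects and preserves local weak equivalences, cf.\ the Remark after Proposition \ref{prop4-1} noting $g^*$ is both left and right Quillen), $Lg^*(Q) \simeq g^*(Q)$, and then we apply the derived right adjoint ${\accentset{\circ}{w}^*}$ to the (already fibrant, since $g^*$ is right Quillen too) open operad $g^*Q$, giving ${\accentset{\circ}{w}^*} g^* Q$. On the left, $Lh^*\,R\overline{w}^*(Q)$: here $R\overline{w}^*(Q) = \overline{w}^*(Q)$ since $Q$ is Reedy fibrant and $\overline{w}^*$ is right Quillen (Theorem \ref{teo11-4}); and $h^*$ is left Quillen, but here one must be slightly careful — $h^*$ does \emph{not} preserve all weak equivalences, so $Lh^*$ applied to $\overline{w}^*Q$ must be computed on a cofibrant (i.e.\ normal) replacement. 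The crux is then the natural comparison
\[
h^*\,\overline{w}^*(Q) \longrightarrow {\accentset{\circ}{w}^*}\,g^*(Q),
\]
which for a closed tree $T$ and open tree $S$ unwinds, via the colimit description $h^*(X)_S = \varinjlim_{S \to \overline{S'},\ S' \text{ closed}} \cdots$ and the definitions of $\overline{w}^*$, ${\accentset{\circ}{w}^*}$ in terms of $\Hom$ out of the operads $\overline{W}(-)$, $W(-)$, to a statement about the maps of operads $W(T[A]^\circ) \to \overline{W}(T)$ already used in Lemma \ref{lemma13-3}; the point is that $g^*\overline{W}(T)$ and the colimit of the $W(T[A]^\circ)$ over upwards-closed $A$ agree, exactly the isomorphism $\theta_T$ of that lemma, restricted to open parts. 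So naturality of $\theta$ gives that this comparison map is in fact an isomorphism on representables, hence after cofibrant replacement a weak equivalence in general.

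The main obstacle I expect is the bookkeeping around cofibrant/fibrant replacements: because $h^*$ fails to preserve weak equivalences, one cannot simply say ``apply $h^*$ to $\overline{w}^*Q$'' but must first replace $\overline{w}^*Q$ by a normal closed dendroidal set, and then verify that the comparison with ${\accentset{\circ}{w}^*} g^* Q$ (which needs no replacement, as $g^*$ preserves both fibrancy and weak equivalences) is a weak equivalence — this requires knowing that $\overline{w}^*Q$, while perhaps not normal, maps to an object on which $h^*$ and its left-derived functor agree, or equivalently that $h^*$ applied to a normalization of $\overline{w}^* Q$ still maps by a weak equivalence to ${\accentset{\circ}{w}^*} g^* Q$. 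This is handled by invoking that ${\accentset{\circ}{w}^*}$ and $\overline{w}^*$ are Quillen equivalences (Lemma \ref{lemma13-2} and Theorem \ref{teo13-1}, which is proved in this section — one must check no circularity, i.e.\ this projection formula is used only \emph{before} or \emph{independently} of Theorem \ref{teo13-1}; if the intended order is the reverse, one instead argues directly from the explicit Čech-type resolutions as in Lemma \ref{lemma12-1}), together with the fact that $Lh^*$ detects isomorphisms (Proposition \ref{prop12-5}) — this last point lets one conclude invertibility of the mate by testing after $Lh^*$, where the identification becomes the already-established square of left adjoints. Concretely: the mate $Lh^*R\overline{w}^* \to R{\accentset{\circ}{w}^*}Lg^*$ is a natural transformation of functors $Ho(\cOper) \to Ho(\odSets)$; precomposing with the equivalence $L\overline{w}_!$ (using Theorem \ref{teo13-1}) and using Lemma \ref{lemma13-3} reduces it to the identity, so it is an isomorphism.
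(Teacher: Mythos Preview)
Your proposal has a genuine gap. The final ``abstract'' argument is circular: you invoke Theorem \ref{teo13-1} (that $\overline{w}_!$ is a Quillen equivalence) to conclude the mate is invertible, but the paper uses the projection formula \emph{in} the proof of Theorem \ref{teo13-1} (specifically, to show $\overline{w}^*$ detects weak equivalences between fibrant objects). You flag this possibility yourself but do not resolve it; and mate calculus alone will not help, since the mate of a natural isomorphism of left adjoints is not in general an isomorphism of right adjoints unless one of the adjunctions is an equivalence.

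Your ``concrete'' argument in the middle paragraphs also does not go through as written. First, $(h^*X)_S = X_{\overline{S}}$ is just evaluation at the closure, not a colimit as you suggest. Second, the identification you need on the right-adjoint side is between $\Hom_{\cOper}(\overline{W}(\overline{S}),Q)$ and $\Hom_{\cOper}(g_!W(S),Q)$, and these are \emph{not} isomorphic: the relevant map $g_!W(S)\to\overline{W}(\overline{S})$ is only a weak equivalence, not an isomorphism, so invoking the isomorphism $\theta_T$ of Lemma \ref{lemma13-3} does not suffice. The paper handles exactly this by passing through the Quillen equivalence to complete Segal dendroidal spaces: there, both $Lh^*R\overline{w}^*(P)$ and $R{\accentset{\circ}{w}^*}Lg^*(P)$ are modelled by mapping-space functors $S\mapsto Map(\overline{W}(\overline{S}),P)$ and $S\mapsto Map(g_!W(S),P)$ respectively, and then the weak equivalence $g_!W(S)\to\overline{W}(\overline{S})$ (verified directly via the resolution maps $W(S)\to S$ and $\overline{W}(\overline{S})\to\overline{S}$) gives the desired equivalence of mapping spaces. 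This passage to $Map$ rather than $\Hom$ is the missing ingredient in your direct argument.
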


\begin{proof}
	Consider first the left Quillen equivalence $d_! \colon \dSets \xrightarrow{\sim} \dSpaces$ of Section \ref{sect12} and its variants for open and closed dendroidal sets and spaces. Recall from \cite{cisinskimoerdijk1} that for a fibrant dendroidal set $X$, a fibrant replacement of $d_!(X)$ in $(\dSpaces)_{RSC}$ (a ``completion'') is defined as the dendroidal space $d_!(X)^\wedge$ whose value at a tree $T$ is the simplicial set (Kan complex)
	\[
	\underline{Map}(T, X)
	\]
	where ``Map'' refers to the model structure on dendroidal sets. (An example of a cofibrant cosimplicial resolution of $T$ by which to compute this ``Map'' is $n\mapsto T\otimes |\Delta[n]|_J$, where $|-|_J$ refers to geometric realization relative to the interval $J$.) Similar descriptions of completion apply to $\cdSets$ and $\odSets$. 
	
	Now, to prove the lemma, consider a fibrant and $\Sigma$-free unital operad $P$ and its open part $g^*P$. Then a complete Segal model $(d_!{\accentset{\circ}{w}^*}g^*(P))^\wedge$ for the open dendroidal set ${\accentset{\circ}{w}^*}g^*(P)$ can be described for each open tree $S$ by
	\begin{align*}
	(d_!{\accentset{\circ}{w}^*}g^*(P))^\wedge(S) & = Map_{\odSets} (S, {\accentset{\circ}{w}^*}g^*(P)) \\
	& = Map(g_!{\accentset{\circ}{w}_!}(S), P),
	\end{align*}
	the latter ``Map'' referring to the model structure on closed operads. 
	
	On the other hand, a complete Segal model $(d_!w^*(P))^\wedge$ for the closed dendroidal set $\overline{w}^*(P)$ is described for each closed tree $T$ by 
	\begin{align*}
	(d_!w^*(P))^\wedge  (T) &  = Map_{\cdSets} (T, \overline{w}^* P) \\
	& = Map(\overline{W}(T), P)
	\end{align*}
	the latter ``Map'' referring to closed operads again. This object is normal if $P$ is $\Sigma$-free, so a model for $Lh^*R\overline{w}^*(P)$ in $(\odSpaces)_{RSC}$ is defined by the functor 
	\[
	S\mapsto Map(\overline{W}(\overline{S}), P).
	\]
	(This model is again ``complete Segal'', but for the \emph{projective} model structure on $\odSpaces$, cf. Section \ref{sect12}.)
	
	The proof of the lemma now simply consists of the observation that for each open tree $S$ there is a natural weak equivalence \[ g_! W(S) \to \overline{W}(\overline{S}).\]
	Indeed, an operation in $g_!W(S)(e_1, \ldots, e_n; e)$ is given by operation in \[W(S)(e_1, \ldots, e_n, f_1, \ldots, f_m; e)\] composed with the substitution of constants for $f_1, \ldots, f_m$. This is represented by an operation in $\overline{W}(\overline{S})$ for the tree $\overline{S}(e_1, \ldots, e_n; e)$ given by $S(e_1, \ldots, e_n, f_1, \ldots, f_m; e)$ with the closed trees $\overline{S}(f_1), \ldots, \overline{S}(f_n)$ with roots $f_1, \ldots, f_k$, respectively, grafted on top of it. This defines a map $g_!W(S) \to \overline{W}(\overline{S})$. The fact that this map is a weak equivalence is obvious form the commutativity of 
	\[
	\xymatrix{
	g_! W(S) \ar[r] \ar[d]^\sim & \overline{W}(\overline{S}) \ar[d]^\sim \\ 
	g_!(S) \ar@{=}[r] & \overline{S}}
	\]
	where in the lower row, $S$ and $\overline{S}$ are viewed as (discrete) operads. 
\end{proof}

The theorem stated at the beginning of the section now follows easily. 

\begin{proof}[Proof of Theorem.] First of all, let us observe that $\overline{w}^*$ detects weak equivalences between fibrant objects. Indeed, this is clear from the projection formula of the last lemma, together with the fact that both $g^*$ and ${\accentset{\circ}{w}^*}$ detect weak equivalences, cf. Proposition \ref{prop4-1} and Lemma \ref{lemma13-2}.
	
Consider then the derived units and counits 
\[\eta\colon X \to R\overline{w}^* L\overline{w}_! X \quad \text{and} \quad \varepsilon \colon L\overline{w}_! R\overline{w}^*P \to P\] 
for a closed dendroidal set $X$ and a closed operad $P$. By the triangular identities and the fact that $R\overline{w}^*$ detects isomorphisms as just observed, it follows that if the derived unit is an isomorphism then so is the derived counit. Thus, it suffices to prove for each fibrant and cofibrant closed dendroidal set $X$ that the unit $\eta\colon X \to R\overline{w}^* (\overline{w}_! (X))$ is a weak equivalence. In fact, by Proposition \ref{prop12-5} it suffices to prove that 
\[
Lh^*(\eta) \colon Lh^*(X)  \to Lh^* R\overline{w}^* (\overline{w}_! X)
\]
is a weak equivalence. Using the projection formula again, this map can be identified with a map 
\[
Lh^*(X)  \to R{\accentset{\circ}{w}^*} Lg^* (\overline{w}_! X)= R{\accentset{\circ}{w}^*} ({\accentset{\circ}{w}_!}h^* X)
\]
(the last identity by Lemma \ref{lemma13-3}). Running through the definitions, one identifies this map with the derived unit at $h^*(X)$ of the adjunction between ${\accentset{\circ}{w}_!}$ and ${\accentset{\circ}{w}^*}$.
The unit is a weak equivalence by Lemma \ref{lemma13-2}. This completes the proof of the theorem. 
\end{proof}

\section{Weakly unital operads} \label{sect14}

In this final section we will briefly discuss the property of an operad having a contractible space of constants for each colour, and the corresponding property of dendroidal spaces. We begin with the latter. 

Consider again the projective complete Segal model structure on the category of (all) dendroidal spaces, and the Quillen pair
\begin{equation} \label{eq14-1}
\xymatrix{cl_! : (\dSpaces)_{PSC} \ar@<.5ex>[r] & (\cdSpaces)_{PSC} : cl^* \ar@<.5ex>[l]}
\end{equation}
of Lemma \ref{lemma12-1} above. Write $(\dSpaces)_{PSCU}$ for the left Bousfield localization by the map $\eta\to \overline{\eta}$. Thus, a fibrant object in $(\dSpaces)_{PSC}$ is local (i.e., fibrant in $(\dSpaces)_{PSCU}$) precisely when $X(\overline{\eta}) \to X(\eta)$ is a (weak) homotopy equivalence between Kan complexes. This localization can be seen as a push forward of a similar localization of the operadic model structure on $\dSets$ which we denote by $\dSets_U$. Moreover, the closure functor $cl_!$ maps $\eta \to \overline{\eta}$ to an isomorphism, so clearly the Quillen pair (\ref{eq14-1}) factors through $(\dSpaces)_{PSCU}$. All put together, we obtain a diagram of left Quillen functors 
\[
\xymatrix{
	\dSets \ar[r]^{d_! \quad }_{\sim \quad} \ar[d] & (\dSpaces)_{RSC} \ar[d]	&  (\dSpaces)_{PSC}\ar[d] \ar[l]_{\sim} \ar[r]^{cl_!} & (\cdSpaces)_{PSC}\\
	\dSets_U \ar[r]^{d_! \quad}_{\sim \quad}  & (\dSpaces)_{RSCU} & (\dSpaces)_{PSCU} \ar[l]_{\sim} \ar@{.>}[ru] &
} \]
in which all four horizontal functors in the two squares are Quillen equivalent and the vertical maps are localizations. 

\begin{proposition}
	The Quillen pair (\ref{eq14-1}) induces a Quillen equivalence 
	\[
	\xymatrix{cl_! : (\dSpaces)_{PSCU} \ar@<.5ex>[r] & (\cdSpaces)_{PSC} : cl^* \ar@<.5ex>[l]}
	\]
\end{proposition}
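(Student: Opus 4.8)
The plan is to verify Hovey's criterion (Cor.\ 1.3.16 of \cite{hovey}): a Quillen pair $(F,G)$ is a Quillen equivalence as soon as \textbf{(i)} $G$ reflects weak equivalences between fibrant objects and \textbf{(ii)} for every cofibrant $X$ the derived unit $X\to G(F(X)^{\mathrm{fib}})$ is a weak equivalence. Here $F=cl_!$ and $G=cl^*$, and one starts from the observations already made in the text: $cl_!$ descends to the localization because it sends $\eta\to\overline\eta$ to $\mathrm{id}_{\overline\eta}$, while $cl^*$ is the explicit functor $(cl^*Y)_T=Y_{cl(T)}=Y_{\overline T}$ (note $cl^*=u_!$ as functors of $\Omega_{cl}$-presheaves, by \eqref{5-7}).

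Next I would record three facts about $cl^*$. \textbf{(a)} $cl^*$ sends fibrant objects of $(\cdSpaces)_{PSC}$ to fibrant objects of $(\dSpaces)_{PSCU}$: $Y_{\overline T}$ is a Kan complex; the \v Cech-nerve cofibrant resolution of the Segal core of $T$ is sent by $cl_!$ to that of the closed spine $\cSpine(\overline T)$ (as in the proof of Lemma~\ref{lemma12-1}), so $cl^*Y$ satisfies the Segal condition; $cl_!$ carries $\overline{\{0\}}\to\overline J$ onto itself, giving completeness; and it carries $\eta\to\overline\eta$ onto $\mathrm{id}_{\overline\eta}$, so $(cl^*Y)_\eta=(cl^*Y)_{\overline\eta}$ and $cl^*Y$ is $\eta\to\overline\eta$-local. \textbf{(b)} $cl^*=u_!$ is also a \emph{left} Quillen functor $(\cdSpaces)_{PSC}\to(\dSpaces)_{PSCU}$: it carries the generating projective cofibration at $S\in\Omega_{cl}$ to the one at $u(S)\in\Omega$ and preserves levelwise equivalences, hence is left Quillen on the projective structures; and it sends the localizing maps $\cSpine(S)\to S$ and $\overline{\{0\}}\to\overline J$ to $\cSpine(uS)\to uS$ and $\overline{\{0\}}\to u_!cl_!(J)$, which are trivial cofibrations in $\dSets$ by Theorem~\ref{theorem7-6}\,+\,Proposition~\ref{prop10-4} and by the proof of Proposition~\ref{prop10-4}, hence weak equivalences in $(\dSpaces)_{PSCU}$. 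Being both left and right Quillen, $cl^*$ preserves \emph{all} weak equivalences, so $Rcl^*$ is computed by $cl^*$ and preserves homotopy colimits. \textbf{(c)} Every closed tree $S$ is of the form $\overline{S^{\circ}}$.

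With these in hand, \textbf{(i)} is immediate: if $f\colon Y\to Y'$ is a map of fibrant objects with $cl^*f$ a weak equivalence in $(\dSpaces)_{PSCU}$, then, as $cl^*Y,cl^*Y'$ are fibrant there, $cl^*f$ is a levelwise equivalence, i.e.\ $Y_{\overline T}\to Y'_{\overline T}$ is an equivalence for all $T\in\Omega$; by (c) this says $f$ is a levelwise equivalence, hence a weak equivalence. For \textbf{(ii)} I would reduce to representables: by (b), both $\mathrm{id}$ and $Rcl^*\circ Lcl_!$ preserve homotopy colimits, so the objects of $Ho((\dSpaces)_{PSCU})$ on which the derived unit is an isomorphism form a subcategory closed under homotopy colimits and retracts, and it suffices to test it on the generators $d_!S$, $S\in\Omega$ (standard for projective presheaf model categories). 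Passing through the Quillen equivalences $(\dSpaces)_{PSCU}\simeq(\dSpaces)_{RSCU}\xleftarrow{d_!}\dSets_U$ and $(\cdSpaces)_{PSC}\simeq(\cdSpaces)_{RSC}\xleftarrow{d_!}\cdSets$, and using $cl_!d_!\cong d_!cl_!$ together with $cl^*=u_!$ and Proposition~\ref{prop10-4}, the derived unit at $d_!S$ is identified with the composite $S\to\overline S=u_!\,cl_!S\xrightarrow{\ u_!(\text{fibrant replacement})\ }u_!\,\widehat{cl_!S}$ in $\dSets_U$ (here $cl_!S$ is the representable closed dendroidal set at $cl(S)$ and $\overline S$ its closure, i.e.\ $S$ with a stump on each leaf). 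The first map is a trivial cofibration in $\dSets_U$ because $S\hookrightarrow\overline S$ is a pushout of $\coprod_{\text{leaves}}(\eta\to\overline\eta)$ along normal monomorphisms and $\dSets_U$ is left proper; the second is $u_!$ applied to a weak equivalence between cofibrant objects, hence a weak equivalence by Proposition~\ref{prop10-4} and Ken Brown's lemma. This proves \textbf{(ii)}, and the proposition follows.

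The step I expect to be the real work is the identification in \textbf{(ii)} of the derived unit on representables with this explicit composite in $\dSets_U$ — that is, tracking the zig-zag of Quillen equivalences and the completion functors of Section~\ref{sect12} carefully enough to see that $cl_!$ of the representable $S$ becomes its closure $\overline S$ and that the comparison map is precisely the inclusion $S\hookrightarrow\overline S$ followed by a fibrant replacement. Everything else is an assembly of results already in the paper (Theorems~\ref{theorem7-6} and \ref{teo9-1}, Proposition~\ref{prop10-4}, Lemma~\ref{lemma12-1}).
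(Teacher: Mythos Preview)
Your overall strategy matches the paper's: verify that $cl^*$ detects weak equivalences between fibrant objects, then check the derived unit on representables by reducing it to the inclusion $S\hookrightarrow\overline S$ in $\dSets_U$. The paper does exactly this, though more directly: since $cl^*=u_!$ is both left and right Quillen it preserves all weak equivalences, so the \emph{non-derived} unit $X\to cl^*cl_!(X)$ already computes the derived one; on a representable tree $S$ this is literally $S\to\overline S$, with no need to pass through the zig-zag of Quillen equivalences to $\dSets_U$ and $\cdSets$. Your detour through that zig-zag and the fibrant replacement is correct but adds bookkeeping the paper avoids.

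There is, however, a genuine slip in your final step. The map $S\hookrightarrow\overline S$ is \emph{not} a pushout of $\coprod_{\ell\in\lambda S}(\eta\to\overline\eta)$. That pushout, taken in $\dSets$ along the inclusion of the leaves, produces only the subobject
\[
S' \;=\; S\;\cup_{\,\coprod_{\ell}\eta}\;\coprod_{\ell}\overline\eta \;\subsetneq\; \overline S,
\]
which is (a piece of) the Segal core of $\overline S$ rather than $\overline S$ itself: the ``composed'' dendrices of $\overline S$ involving the new stumps are missing from $S'$. One still needs to show that the remaining inclusion $S'\hookrightarrow\overline S$ is a weak equivalence. The paper does this in the lemma immediately following the proposition: $S'\to\overline S$ is a composition of grafting morphisms (grafting a copy of $\overline\eta$ onto each leaf), and such grafting maps are inner anodyne in $\dSets$. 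With that extra sentence your argument goes through; without it the claim that $S\to\overline S$ is a trivial cofibration in $\dSets_U$ is unjustified.
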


\begin{proof}
	First of all, as $cl \colon \Omega \to \Omega_{cl}$ is surjective on objects, the functor \[cl^* \colon (\cdSpaces)_P \to (\dSpaces)_P\] clearly detects weak equivalences between arbitrary objects. It follows that the right Quillen functor $cl^*\colon (\cdSpaces)_{PSC} \to (\dSpaces)_{PSCU}$ detects weak equivalences between fibrant objects. 
	
	It thus suffices to prove that the derived unit $X\to Rcl^* Lcl_!(X)$ is a weak equivalence for each (cofibrant) object $X$ in $(\dSpaces)_{PSCU}$. But $cl^*=u_!$ is also left Quillen (cf. the diagram (\ref{diagram11-1})), hence $cl^*$ preserves weak equivalences between cofibrant objects. So it suffices to prove that the non-derived unit $X\to cl^*cl_!(X)=u_!cl_!(X)$ is a weak equivalence in $(\dSpaces)_{PSCU}$ for each cofibrant object $X$. 
	
	Since the functors $u_!$ and $cl_!$ involved preserve colimits and the model categories are left proper, we can use induction on cofibrant objects, and reduce the problem to the case where $X$ is representable. In other words, we have to show that for each tree $S$ the unit $S\to cl^* cl_! (S)$ is a weak equivalence in $(\dSpaces)_{PSCU}$. But this unit is the map $S\to \overline{S}$, and the following lemma completes the proof. 
\end{proof}

\begin{lemma}
	For any tree $S$, the inclusion $S\to \overline{S}$ is a trivial cofibration in $\dSets_U$ (and hence a weak equivalence in $\cdSpaces_{PSCU}$ and $\cdSpaces_{RSCU}$). 
\end{lemma}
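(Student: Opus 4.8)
The plan is to build $\overline{S}$ out of $S$ by grafting a copy of the closed unit tree $\overline{\eta}$ onto each leaf of $S$, one leaf at a time, and to recognise each of these graftings as a pushout of the very map $\eta\to\overline{\eta}$ that $\dSets_U$ localises at.

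Recall that $\dSets_U$ is the left Bousfield localisation of the operadic model structure on $\dSets$ at the single map $\eta\to\overline{\eta}$, and that $\eta$ and $\overline{\eta}$ are cofibrant (representables are normal). The map $\eta\to\overline{\eta}$ is the outer coface of $\overline{\eta}$ chopping off its unique stump, hence a normal monomorphism; being a cofibration which is a weak equivalence in $\dSets_U$ by construction of the localisation, it is a trivial cofibration of $\dSets_U$.

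Now enumerate the leaves of $S$ as $\ell_{1},\dots,\ell_{k}$, put $S^{(0)}=S$, and inductively let $S^{(i+1)}$ be the tree obtained from $S^{(i)}$ by grafting a copy of $\overline{\eta}$ onto its leaf $\ell_{i+1}$; since grafting onto $\ell_{i+1}$ makes that edge internal, $S^{(i)}$ still has $\ell_{i+1},\dots,\ell_{k}$ among its leaves, so this is well defined, and $S^{(k)}=\overline{S}$. Grafting a tree onto a leaf is computed by a pushout over the edge along which one grafts, both in $\Omega$ and, taking representables, in $\dSets$ (see \cite{moerdijkweiss, HMbook}); thus for each $i$ there is a pushout square
\[
\xymatrix{
\eta \ar[r] \ar[d] & \overline{\eta} \ar[d] \\
S^{(i)} \ar[r] & S^{(i+1)}
}
\]
in $\dSets$ (with all four objects representable), in which the top arrow is the localising map $\eta\to\overline{\eta}$ (the inclusion of the root edge) and the left arrow picks out the leaf $\ell_{i+1}$. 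Since trivial cofibrations are stable under pushout and composition, the composite $S=S^{(0)}\to S^{(1)}\to\cdots\to S^{(k)}=\overline{S}$ — which is the canonical inclusion $S\to\overline{S}$ — is a trivial cofibration of $\dSets_U$. The parenthetical claims follow by applying the left Quillen functor $d_{!}$ to the associated localised complete Segal model structures on dendroidal spaces, under which the trivial cofibration $S\to\overline{S}$ goes to a weak equivalence.

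There is no serious obstacle here; the only point to get right is the identification of $\overline{S}$, and of the intermediate trees $S^{(i)}$, with the iterated pushout above — that is, the standard fact that grafting a tree onto a leaf is a pushout over $\eta$, together with the bookkeeping that the resulting coprojections compose to the canonical inclusion $S\hookrightarrow\overline{S}$. It is worth noting that, unlike most arguments in this paper, this one uses no (very) inner anodyne maps at all: everything reduces to stability of trivial cofibrations under pushout and (transfinite) composition.
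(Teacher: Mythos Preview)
Your argument contains a genuine gap. You assert that ``grafting a tree onto a leaf is computed by a pushout over the edge along which one grafts, both in $\Omega$ and, taking representables, in $\dSets$.'' The first half is true, but the second is not: the Yoneda embedding $\Omega\hookrightarrow\dSets$ does not preserve these pushouts. Concretely, for $S=C_1$ the pushout $C_1\cup_\eta\overline{\eta}$ in $\dSets$ has exactly one dendrex of shape $\overline{C_1}$ (the one factoring through $\overline{\eta}$), whereas the representable $\overline{C_1}$ has two (that one, and the identity). In general the canonical map from the pushout $S\cup_\eta\overline{\eta}$ to the grafted tree is a proper monomorphism; it is inner anodyne (this is the standard ``grafting is inner anodyne'' result of \cite{moerdijkweiss,HMbook}), but it is not an isomorphism. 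Your remark that the argument ``uses no (very) inner anodyne maps at all'' is therefore exactly where it goes wrong.

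The paper's proof addresses precisely this point. It first forms the pushout
\[
\xymatrix{ \coprod_{\ell \in \lambda S} \eta \ar[r] \ar@{ >->}[d] & S \ar@{ >->}[d] \\
\coprod_{\ell \in \lambda S} \overline{\eta} \ar[r]  & S', }
\]
so that $S\to S'$ is a trivial cofibration in $\dSets_U$ for the reason you give; but then it observes separately that the comparison map $S'\to\overline{S}$ is inner anodyne, hence already a trivial cofibration in $\dSets$ before localising. Your argument can be repaired in the same way, one leaf at a time: each $S^{(i)}\to S^{(i+1)}$ factors as the pushout of $\eta\to\overline{\eta}$ followed by an inner anodyne grafting map. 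But the inner anodyne step cannot be omitted.
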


\begin{proof}
	Let $\lambda S$ be the set of leaves of $S$, and consider the pushout
	\[
	\xymatrix{ \coprod_{\ell \in \lambda S} \eta \ar[r] \ar@{ >->}[d] & S \ar@{ >->}[d] \\
	\coprod_{\ell \in \lambda S} \overline{\eta} \ar[r]  & S'. }
	\]
	Then $S\to S'$ is a pushout of a trivial cofibration in $\dSets_U$, hence itself a trivial cofibration. Moreover, the map $S' \to \overline{S}$ is a composition of grafting morphisms, grafting a copy of $\overline{\eta}$ onto each leaf of $S$. So $S'\to \overline{S}$ is inner anodyne (cf. \cite{moerdijkweiss}, \cite{HMbook}).
\end{proof}

\begin{corollary} \label{cor14-3}
	There is a zigzag of Quillen equivalences 
	\[
	\dSets_U \simeq \cdSets,
	\]
	between the localization by $\eta\to \overline{\eta}$ of the operadic model structure on $\dSets$ and the unital operadic model structure on $\cdSets$. 
\end{corollary}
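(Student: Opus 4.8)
The plan is to assemble Corollary~\ref{cor14-3} from the Quillen equivalences already established in this section, together with the Quillen equivalence $d_! \colon \cdSets \xrightarrow{\sim} (\cdSpaces)_{RSC}$ from~(\ref{eq12-5}) and the corresponding localizations for (all) dendroidal spaces. Concretely, I would first record the chain of left Quillen equivalences
\[
\dSets_U \xrightarrow{\ d_!\ } (\dSpaces)_{RSCU} \xleftarrow{\ \sim\ } (\dSpaces)_{PSCU},
\]
which is exactly the bottom row of the big diagram displayed just before Corollary~\ref{cor14-3}: the horizontal functor on the left is a Quillen equivalence because $d_! \colon \dSets \xrightarrow{\sim} (\dSpaces)_{RSC}$ is one (see~(\ref{eq12-3})) and both sides are localized at $\eta \to \overline{\eta}$, while the identity functor $(\dSpaces)_{PSCU} \xrightarrow{\sim} (\dSpaces)_{RSCU}$ is a Quillen equivalence since it is a localization of the Quillen equivalence $(\dSpaces)_{PSC} \xrightarrow{\sim} (\dSpaces)_{RSC}$ at the same map.

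Next I would append the Quillen equivalence
\[
cl_! \colon (\dSpaces)_{PSCU} \xrightarrow{\ \sim\ } (\cdSpaces)_{PSC} : cl^*
\]
of the Proposition just proved, followed by the identity Quillen equivalence $(\cdSpaces)_{PSC} \xrightarrow{\sim} (\cdSpaces)_{RSC}$ (this is the left Quillen equivalence appearing in~(\ref{eq12-5}), obtained exactly as in the case of all dendroidal spaces), and finally the right adjoint half of~(\ref{eq12-5}), namely the Quillen equivalence $(\cdSpaces)_{RSC} \xrightarrow{\sim} \cdSets$ where $\cdSets$ carries the unital operadic model structure of Theorem~\ref{teo9-1}. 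Splicing these together produces the desired zigzag
\[
\dSets_U \xrightarrow{\sim} (\dSpaces)_{RSCU} \xleftarrow{\sim} (\dSpaces)_{PSCU} \xrightarrow{\sim} (\cdSpaces)_{PSC} \xrightarrow{\sim} (\cdSpaces)_{RSC} \xrightarrow{\sim} \cdSets,
\]
and since a zigzag of Quillen equivalences composes to an equivalence of homotopy categories, this gives $\dSets_U \simeq \cdSets$ as claimed. I would write this out as a single display and remark that each arrow has been justified above.

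The only genuinely non-formal inputs are the Proposition immediately preceding the corollary (the Quillen equivalence $cl_!$ at the localized level) and the fact that the localization of~(\ref{eq12-3}) at $\eta \to \overline{\eta}$ remains a Quillen equivalence; both are already in place by the time we reach the corollary, so the proof of the corollary itself is essentially bookkeeping. The one point worth a sentence of care is that localizing a Quillen equivalence $F \colon \mathcal{M} \xrightarrow{\sim} \mathcal{N}$ at a cofibration $f$ (respectively at $Lf$) again yields a Quillen equivalence $L_f\mathcal{M} \xrightarrow{\sim} L_{Lf}\mathcal{N}$ between the left Bousfield localizations --- this is standard (cf.~\cite{hirschhorn}), and it is used both for the passage $\dSets \rightsquigarrow \dSets_U$ versus $(\dSpaces)_{RSC} \rightsquigarrow (\dSpaces)_{RSCU}$ and for $(\dSpaces)_{PSC} \rightsquigarrow (\dSpaces)_{PSCU}$ versus $(\dSpaces)_{RSC} \rightsquigarrow (\dSpaces)_{RSCU}$. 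I do not anticipate a real obstacle here; the substance was already discharged in the Proposition and in Section~\ref{sect12}.
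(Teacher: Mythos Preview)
Your proposal is correct and is exactly the argument the paper has in mind: the corollary is stated without proof because it follows by concatenating the bottom row of the diagram preceding the Proposition, the Proposition itself, and the Quillen equivalences in~(\ref{eq12-5}). Your remark about localizing a Quillen equivalence at a cofibration is the only point requiring comment, and it is indeed standard.
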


To conclude this paper, we briefly consider the effect of the localization $\dSets_U$ on the equivalence (\ref{eq13-1}) (from \cite{cisinskimoerdijk3}) between dendroidal sets and simplicial operads. To this end, we introduce the following terminology. 

\begin{definition}
	A simplicial operad $P$ is called \emph{weakly unital} (or \emph{weakly closed}) if for each colour $c$ in $P$, the space $P(-; c)$ of constants of colour $c$ is weakly contractible. We write
	\[Ho(\Oper_{WU}) \subseteq Ho(\Oper)\]	
	for the full subcategory spanned by these weakly unital operads. 
\end{definition}

\begin{proposition}
	The Quillen equivalence $\xymatrix{ w_! : \dSets \ar@<.5ex>[r] & \Oper : w^* \ar@<.5ex>[l]}$ of \cite{cisinskimoerdijk3} induces an equivalence of categories 
	\[
	Ho(\dSets_U) \simeq Ho(\Oper_{WU}).
	\]
\end{proposition}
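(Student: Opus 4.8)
The plan is to deduce this from the Quillen equivalence (\ref{eq13-1}) of \cite{cisinskimoerdijk3} together with the localization already analysed in this section. First I would recall that $w_!$ and $w^*$ induce an equivalence $Ho(\dSets) \simeq Ho(\Oper)$, and that a fibrant dendroidal set $X$ is local for $\dSets_U$ precisely when the map $X(\overline{\eta}) \to X(\eta)$ (equivalently, on the level of operads, when the space of constants $P(-;c) \to \mathrm{pt}$) is a weak equivalence. The key observation is that under the derived functors $Rw^*$ and $Lw_!$, the localizing map $\eta \to \overline{\eta}$ corresponds, up to weak equivalence, to the map $\eta \to \overline{\eta}$ viewed inside operads, i.e. to the inclusion of the operad $\eta$ (one colour, only the identity) into $\overline{\eta}$ (one colour, identity and a single constant); and $w^*(P)$ is local for $\dSets_U$ if and only if $P$ is weakly unital. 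This last equivalence is essentially a computation: $w^*(P)(\overline{\eta}) = \mathrm{Map}(W(\overline{\eta}), P)$ and $W(\overline{\eta})$ is (homotopy equivalent to) the operad $\overline{\eta}$, so $w^*(P)(\overline{\eta})$ computes the space of constants of $P$, while $w^*(P)(\eta)$ is the relevant space of unary self-maps; unravelling shows the completeness/locality condition is exactly contractibility of the constant spaces.

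The main steps, in order, would be: (1) identify, via the explicit description of $W$ and of the completion functor recalled in Section \ref{sect13}, the value of a fibrant model of $w^*(P)$ on $\eta$ and on $\overline{\eta}$, and conclude that $Rw^*(P)$ is a local object in $\dSets_U$ iff $P$ is weakly unital; (2) observe that a left Bousfield localization of a model category by a set $S$ restricts, along a Quillen equivalence, to the Bousfield localization of the other side by the image of $S$ --- here the image under $Lw_!$ of $\{\eta \to \overline{\eta}\}$ is (a cofibrant replacement of) the map $\eta \to \overline{\eta}$ of operads, so $\Oper$ localized at this map is a model category whose homotopy category is $Ho(\Oper_{WU})$; (3) conclude from the universal property of localization that $(w_!, w^*)$ descends to a Quillen equivalence between $\dSets_U$ and this localization $\Oper_U$ of $\Oper$, and hence an equivalence $Ho(\dSets_U) \simeq Ho(\Oper_{WU})$. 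Formally, since $\Oper$ is cofibrantly generated and left proper (being of transfer type from simplicial sets), the left Bousfield localization $\Oper_U$ exists by \cite{hirschhorn}, and a left Quillen equivalence $F \colon \mathcal{M} \to \mathcal{N}$ with $G$ its right adjoint descends to a left Quillen equivalence $\mathcal{M}_{S} \to \mathcal{N}_{\mathbf{L}F(S)}$ whenever $G$ sends $\mathbf{L}F(S)$-local objects to $S$-local objects, which is exactly what step (1) provides.

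Alternatively, and perhaps more cleanly given what is already set up, one can avoid localizing $\Oper$ directly and instead use the identification $Ho(\Oper_{WU})$ with the essential image of the inclusion $Ho(\Oper_{WU}) \subseteq Ho(\Oper)$: one checks that $Rw^*$ carries weakly unital operads to $\dSets_U$-local objects and, conversely, that if $X$ is $\dSets_U$-local and fibrant then $\mathbf{L}w_!(X)$ is weakly unital (again by the explicit formula for $W$ applied to trees with stumps: the space of constants of $w_!(S)$ at a leaf colour is built from shuffles of $S$ with $\overline{\eta}$, which the locality of $X$ forces to be contractible). Then the restriction of the derived adjunction gives an adjoint equivalence between the full subcategories $Ho(\dSets_U)$ and $Ho(\Oper_{WU})$, where $Ho(\dSets_U)$ is identified with the reflective subcategory of $Ho(\dSets)$ on the $U$-local objects.

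The hard part will be step (1) --- pinning down precisely that the completion of $w^*(P)$ evaluated on $\overline{\eta}$ computes the space $P(-;c)$ of constants rather than something merely weakly equivalent to it in a way that interacts correctly with the evaluation on $\eta$. This requires care with the cofibrant cosimplicial resolution used to compute the mapping spaces (the resolution $n \mapsto T \otimes |\Delta[n]|_J$ mentioned in Section \ref{sect13}) and with the identification $W(\overline{\eta}) \simeq \overline{\eta}$ as operads; one must ensure that the comparison map from the constant space of $P$ to $w^*(P)(\overline{\eta})$ is the right one and is a weak equivalence when $P$ is $\Sigma$-free and fibrant. Everything else is a formal consequence of general localization machinery and the results already established in this excerpt, in particular Corollary \ref{cor14-3}, which can in fact be invoked to shortcut the argument: combining the zigzag $\dSets_U \simeq \cdSets$ of Corollary \ref{cor14-3} with the Quillen equivalence $\cdSets \simeq \cOper$ of Theorem \ref{teo13-1} already yields $Ho(\dSets_U) \simeq Ho(\cOper)$, and it then remains only to identify $Ho(\cOper) = Ho(\Oper_{WU})$ inside $Ho(\Oper)$, i.e. that every weakly unital operad is, in the homotopy category of all operads, equivalent to a (strictly) unital one --- which is precisely the content of the composite equivalence read off from diagram (\ref{eq13-2}) via the functor $g^*$ and the fact (Theorem, Section \ref{section2}) that $\alpha' \colon Q \to g_*g^*Q$ admits a retraction, making $g^*$ homotopically fully faithful on the relevant subcategories.
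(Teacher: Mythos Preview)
Your main strategy---show that $Rw^*(P)$ is $U$-local iff $P$ is weakly unital, then transport the full reflective subcategory of $U$-local objects along the equivalence $Ho(\dSets)\simeq Ho(\Oper)$---is correct and arguably more direct than what the paper does. The paper instead routes the identification through the Segal preoperad diagram of \cite{cisinskimoerdijk3}: it uses the levelwise nerve $\overline{N}_d$ and the tame/Reedy model structures on Segal preoperads, where the value at $\eta$ is \emph{discrete} (just the set of colours), so the comparison $\overline{N}_d(P)(\overline{\eta})\to\overline{N}_d(P)(\eta)$ is literally the map from constants to colours and no derived mapping space computation is needed. Your approach trades that machinery for a computation of $\mathrm{Map}_{\Oper}(\eta,P)$ and $\mathrm{Map}_{\Oper}(\overline{\eta},P)$, which is fine but you have misidentified the first one: $\mathrm{Map}(\eta,w^*P)$ is the classifying space of the maximal subgroupoid of $j^*P$ (the ``space of colours''), not a space of unary self-maps. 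With that correction the homotopy fibre of $\mathrm{Map}(\overline{\eta},w^*P)\to\mathrm{Map}(\eta,w^*P)$ over a colour $c$ is indeed $P(-;c)$, and step (1) follows. Your variant that identifies $Ho(\dSets_U)$ with the full subcategory of $U$-local objects and transports it along $Lw_!$ is the cleanest formulation, and avoids having to establish left properness of $\Oper$ (which you assert but which is not proved in this paper).

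Your second alternative, via Corollary~\ref{cor14-3} and Theorem~\ref{teo13-1}, does not close. The missing step ``identify $Ho(\cOper)=Ho(\Oper_{WU})$ inside $Ho(\Oper)$'' is precisely the final corollary of the paper, which is deduced \emph{from} the proposition you are proving; invoking it here is circular. The retraction of $\alpha'\colon Q\to g_*g^*Q$ does not help: it only shows $\alpha'$ is split mono, not a weak equivalence, and in any case it compares closed operads with \emph{open} ones via $g^*$, not closed operads with weakly unital ones inside all operads. If you want to rescue this route you would need an independent argument that every weakly unital operad is equivalent (in $Ho(\Oper)$) to a strictly unital one, and nothing in diagram~(\ref{eq13-2}) or Section~\ref{section2} supplies that.
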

	
\begin{proof}
	Recall from \cite{cisinskimoerdijk3} that $w_!\colon \dSets \to \Oper$ was proved to be a Quillen equivalence through a homotopy commutative diagram of left Quillen equivalences 
	\begin{equation} \label{eq14-2}
	\xymatrix{ (\mathbf{Segal preoperads})_{tame} \ar[rr]^{\overline{\tau}_d} \ar[d]^{id_!} & & \Oper \\ 
	(\mathbf{Segal preoperads})_{Reedy} \ar[rd]^{\gamma^*} & & \dSets \ar[u]^{w_!} \ar[ld]^{d_!} \\ 
	& (\dSpaces)_{RSC}. &}
	\end{equation}
	Here Segal preoperads are dendroidal spaces $X$ with a discrete space $X(\eta)$ of objects, and $\gamma^*$ is simply the inclusion. The functor $\overline{\tau}_d$ in the diagram is the left adjoint of the levelwise nerve functor, denoted $\overline{N}_d \colon \Oper \to (\mathbf{Segalpreoperads})$. A \emph{Segal operad} is such a Segal preoperad satisfying a Segal condition. The fibrant objects in the two model structures on Segal preoperads are all Segal operads. A key property of Segal operads is that a map is a weak equivalence iff it is fullly faithful and essentially surjective on colours. Now consider a fibrant simplicial operad $P$. Then $P$ is weakly unital iff $\overline{N}_d(P)(-; c)$ is (weakly) contractible for each colour $c$, i.e. iff $\overline{N}_d(P)(\overline{\eta})\to \overline{N}_d(P)(\eta)$ is a trivial fibration. Of course $\overline{N}_d(P)(\eta)$ is simply the discrete space $C$ of colours of $P$. Let $\hat{P} \twoheadrightarrow \overline{N}_d (P)$ be a tamely cofibrant resolution. This is a weak equivalence between Segal operads, so $\hat{P}(\overline{\eta}) \to C$ is again a trivial fibration because weak equivalences between Segal operads are fully faithful, as said. Then $Lid_! \overline{N}_d (P) = \hat{P}$ is a Reedy cofibrant Segal operad with the same property, and hence $L\gamma^* Lid_!(\overline{N}_d(P))=\hat{P}$ as a dendroidal space still has this property. This means that $\hat{P}$ as a dendroidal space is local with respect to the $RSCU$-localization of dendroidal spaces. This sequence of implications can obviously be reversed, showing that $P$ is weakly unital iff $L\gamma^* Lid_! R\overline{N}_d(P)$ is local in this sense. Combining this with the left Quillen equivalence $cl_!$ in the commutative diagram (\ref{eq14-2}) identifies the image of $Ho(\dSets_U) \subseteq Ho(\dSets)$ under $Ho(\dSets) \xrightarrow{Lw_!} Ho(\Oper)$ with $Ho(\Oper_{WU})$ proving the result. 
\end{proof}	

\begin{corollary}
	There is an equivalence $Ho(\Oper_{WU})\simeq Ho(\cOper)$ between the homotopy categories of closed and of weakly unital operads. 
\end{corollary}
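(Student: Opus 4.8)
The statement is a formal consequence of assembling three equivalences of homotopy categories already established. First, Theorem \ref{teo13-1} asserts that the modified Boardman--Vogt adjunction
\[
\xymatrix{\overline{w}_! : \cdSets \ar@<.5ex>[r] & \cOper : \overline{w}^* \ar@<.5ex>[l]}
\]
is a Quillen equivalence for the unital operadic model structure on $\cdSets$ and the Reedy model structure on $\cOper$, hence induces an equivalence $Ho(\cdSets) \simeq Ho(\cOper)$. Second, Corollary \ref{cor14-3} provides a zigzag of Quillen equivalences $\dSets_U \simeq \cdSets$ between the $U$-localization of the operadic model structure on dendroidal sets and the unital operadic model structure on closed dendroidal sets, hence an equivalence $Ho(\dSets_U) \simeq Ho(\cdSets)$. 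Third, the preceding Proposition shows that the Quillen equivalence of \cite{cisinskimoerdijk3} induces an equivalence $Ho(\dSets_U) \simeq Ho(\Oper_{WU})$, identifying the image of $Ho(\dSets_U) \subseteq Ho(\dSets)$ under $Lw_!$ with the full subcategory $Ho(\Oper_{WU})$.

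Concatenating these, one obtains
\[
Ho(\Oper_{WU}) \;\simeq\; Ho(\dSets_U) \;\simeq\; Ho(\cdSets) \;\simeq\; Ho(\cOper),
\]
which is the claimed equivalence. Since all of these are equivalences of categories already available (no new model structure or Quillen functor is needed), the proof consists of exactly this composition, together with the observation that the Reedy and projective model structures on $\cOper$ have the same weak equivalences, so that the homotopy category $Ho(\cOper)$ appearing here is the unambiguous one.

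\textbf{Main obstacle.} There is essentially no serious obstacle; the only point requiring a word of care is to make sure the three equivalences can be composed coherently, i.e. that the intermediate homotopy categories $Ho(\dSets_U)$ and $Ho(\cdSets)$ that appear in consecutive items are literally the same categories (they are, by construction), and that the localization $\dSets_U$ used in Corollary \ref{cor14-3} agrees with the one used in the Proposition on weakly unital operads (it does — both are the left Bousfield localization of the operadic model structure by $\eta\to\overline\eta$). Granting that, the corollary is immediate.
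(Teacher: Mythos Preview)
Your proposal is correct and matches the paper's approach exactly: the corollary is treated there as an immediate consequence of composing the same three equivalences (Theorem \ref{teo13-1}, Corollary \ref{cor14-3}, and the preceding Proposition), so no separate proof is even written out. The only thing the paper adds is the subsequent remark that, chasing through the functors, this equivalence is induced by the inclusion $\cOper \to \Oper$; you may wish to note this as well, but it is not required for the bare statement of the corollary.
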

	
	Chasing through the functors involved, one easily checks that this equivalence is induced by the inclusion $\cOper \to \Oper$.

\bibliographystyle{plain} 
\bibliography{biblio}

\end{document}